\setlist[description]{leftmargin=\parindent,labelindent=\parindent}
\newtheorem{theorem}{Theorem}
\newtheorem{lemma}[theorem]{Lemma}
\newtheorem{proposition}[theorem]{Proposition}
\newtheorem{corollary}[theorem]{Corollary}
\numberwithin{theorem}{section}
\numberwithin{equation}{section}
\theoremstyle{definition}
\newtheorem{definition}[theorem]{Definition}
\newtheorem{remark}[theorem]{Remark}
\newtheorem{example}[theorem]{Example}
\newtheorem{assumption}[theorem]{Assumption}
\newtheorem{convention}[theorem]{Convention}
\title[Open enumerative mirror symmetry for lines in the mirror quintic]{Open enumerative mirror symmetry for lines in the mirror quintic}
\author{Sebastian Haney}
\address{Department of Mathematics, Columbia University, 2990 Broadway, New York, NY 10027}
\DeclareMathOperator\id{id}
\DeclareMathOperator\Log{Log}
\DeclareMathOperator\hol{hol}
\DeclareMathOperator\Int{Int}
\DeclareMathOperator\pt{pt}
\DeclareMathOperator\Coh{Coh}
\DeclareMathOperator\ch{ch}
\DeclareMathOperator\evi{evi}
\DeclareMathOperator\evb{evb}
\DeclareMathOperator\out{out}
\DeclareMathOperator\Wein{Wein}
\DeclareMathOperator\Ob{Ob}
\DeclareMathOperator\Spec{Spec}
\DeclareMathOperator\Gr{Gr}
\DeclareMathOperator\even{even}
\DeclareMathOperator\odd{odd}
\DeclareMathOperator\GGM{GGM}
\DeclareMathOperator\HKR{HKR}
\DeclareMathOperator\td{td}
\DeclareMathOperator\Der{Der}
\DeclareMathOperator\ord{ord}
\DeclareMathOperator\vG{vG}
\DeclareMathOperator\imm{im}
\newcommand{\Limm}{L_{\mathrm{im}}}
\newcommand{\tLimm}{\widetilde{L}_{\mathrm{im}}}
\newcommand{\Lvg}{L_{\mathrm{im}}^5}
\newcommand{\tLvg}{\widetilde{L}_{\mathrm{im}}^5}
\newcommand{\Lsing}{L_{\mathrm{sing}}}
\newcommand{\tLsing}{\widetilde{L}_{\mathrm{sing}}}
\newcommand{\Lvgsing}{L_{\mathrm{sing}}^{5}}
\newcommand{\tLvgsing}{\widetilde{L}_{\mathrm{sing}}^{5}}
\newcommand{\tLvgsmooth}{\widetilde{L}_{\mathrm{sm}}^5}
\newcommand{\pisyz}{\pi_{\mathrm{SYZ}}}
\newcommand{\Wvg}{W^{5}}
\newcommand{\tWvg}{\widetilde{W}^{5}}
\newcommand{\Larc}{L_{\mathrm{arc}}}
\begin{document}

\begin{abstract}
Mirror symmetry gives predictions for the genus zero Gromov--Witten invariants of a closed Calabi--Yau variety in terms of period integrals on a mirror family of Calabi--Yau varieties. We deduce an analogous mirror theorem for the open Gromov--Witten invariants of certain Lagrangian submanifolds of the quintic threefold from homological mirror symmetry, assuming the existence of a negative cyclic open-closed map. These Lagrangians can be thought of as SYZ mirrors to lines, and their open Gromov--Witten (OGW) invariants coincide with relative period integrals on the mirror quintic calculated by Walcher. Their OGW invariants are irrational numbers contained in $\mathbb{Q}(\sqrt{-3})$, and admit an expression similar to the Ooguri--Vafa multiple cover formula involving special values of a Dirichlet $L$-function. We achieve these results by studying the Floer theory of a different immersed Lagrangian in the quintic that supports a one-dimensional family of objects in the Fukaya category mirror to coherent sheaves supported on lines in the mirror quintic. The field in which the OGW invariants lie arises as the invariant trace field of (the smooth locus of) a closely related hyperbolic Lagrangian submanifold with conical singularities in the quintic. These results explain some of the predictions on the existence of hyperbolic Lagrangian submanifolds in the quintic put forward by Jockers--Morrison--Walcher.
\end{abstract}

\maketitle
\tableofcontents
\section{Introduction}
The first mathematical manifestation of mirror symmetry was a prediction for the genus zero Gromov--Witten invariants of the quintic threefold $X$ in terms of period integrals on a mirror family $\mathcal{X}^{\vee}$ of Calabi--Yau threefolds~\cite{COGP91}. Morrison~\cite{Mor93} later observed that these enumerative predictions can be reformulated as an isomorphism between variations of Hodge structures (VHS) associated to $X$ and $\mathcal{X}^{\vee}$. One can construct a VHS from $\mathcal{X}^{\vee}$ using classical Hodge theory, and there is a candidate mirror VHS obtained by equipping the quantum cohomology $QH^*(X)$ with the (small) quantum connection. The genus zero Gromov--Witten invariants of the quintic were calculated by Givental~\cite{Giv95} and Lian--Liu--Yau~\cite{LLY97}, and shown to agree with the values predicted by~\cite{COGP91}.

Kontsevich~\cite{Kon94} proposed homological mirror symmetry as a conceptual framework for explaining enumerative predictions from mirror symmetry. Ganatra--Perutz--Sheridan~\cite{GPS15} showed that homological mirror symmetry for Calabi--Yau varieties implies Hodge-theoretic mirror symmetry in the sense of~\cite{Mor93}, assuming the existence of a \textit{negative cyclic open-closed map} on the Fukaya category. In this setting, Hodge-theoretic mirror symmetry is roughly recovered by comparing the natural (weak proper) Calabi--Yau structure on the Fukaya category with the Calabi--Yau structure on the derived category of the mirror.

It has long been expected~\cite{Wit95, AV, Wal07, MW} that the (genus zero) \textit{open Gromov--Witten} invariants, which count pseudoholomorphic disks with Lagrangian boundary, can also be recovered from mirror symmetry. Using classical mirror principles, Walcher~\cite{Wal07} gave a prediction for the open Gromov--Witten invariants of the `real quintic' (the set of real points in a quintic threefold defined over $\mathbb{R}$), which was later verified by Pandharipande--Solomon--Walcher~\cite{PSW08} using equivariant localization and the real structure. As in the closed case, the predictions for open Gromov--Witten invariants can be rephrased as the prediction that there is an isomorphism between two extensions of variations of mixed Hodge structure (VMHS)~\cite{HW22}. The extension of VMHSs in the $B$-model arises from a classical construction from a smooth family of homologically trivial algebraic cycles (see e.g.~\cite{Voi}), while the candidate mirror $A$-model extension of VMHSs is governed by the open Gromov--Witten invariants of a nullhomologous Lagrangian brane. This extension should essentially come from the relative quantum cohomology constructed by Solomon--Tukachinsky~\cite{ST23}.

The purpose of this paper is to prove a mirror theorem, in a specific example, involving the open Gromov--Witten invariants of an immersed Lagrangian brane in a closed Calabi--Yau threefold, and to illustrate how such results can be recovered from homological symmetry. Our results require that the Fukaya category satisfies some widely expected structural properties related to the existence of a cyclic open-closed map. These assumptions are needed in the proof of homological mirror symmetry~\cite{She15, SS21, GHHPS24} and to extract closed Gromov--Witten invariants from the Fukaya category~\cite{GPS15}.

On the $A$-side, we construct an immersed Lagrangian that supports an infinite family of objects in the Fukaya category mirror to (non-isolated) lines in the mirror quintic. Our main result recovers the open Gromov--Witten potential of one of these objects. The open Gromov--Witten potential is thought of as a formal power series in the Novikov variable $Q$, and the coefficients of this power series are the open Gromov--Witten invariants. In this paper, we denote by $\omega$ a primitive third root of $1$, which we take to be $\omega = e^{\frac{2\pi i}{3}}$ for definiteness.
\begin{theorem}\label{mainprelim}
Suppose that Assumptions~\ref{generationass},~\ref{traceass}, and~\ref{connectionsass} are all satisfied by the quintic threefold. Then there is an embedded Lagrangian submanifold $\tLvgsmooth$ and two rank $1$ local systems over the Novikov ring, denoted $\nabla^{\vG}_{\omega^2}$ and $\nabla^{\vG}_{\omega}$, which give rise to unobstructed Lagrangian branes $(\tLvgsmooth,\nabla^{\omega^i}_{\Lambda})$ for $i = 1,2$ when equipped with suitable bounding cochains. The open Gromov--Witten potentials, in the sense of Fukaya~\cite{Fuk11}, of these two branes satisfy
\begin{align}
\Psi(\tLvgsmooth,\nabla_{\Lambda}^{\omega^2})-\Psi(\tLvgsmooth,\nabla_{\Lambda}^{\omega}) = \sum_{d=1}^{\infty}n_d\sum_{k=1}^{\infty}\frac{\chi(k)}{k^2} Q^{kd} \label{bpspotential}
\end{align}
where $\chi\colon\mathbb{Z}\to\lbrace -1,0,1\rbrace$ is the nontrivial Dirichlet character of order $3$ given by
\begin{align*}
\chi(k)\coloneqq\frac{1}{\sqrt{-3}}(\omega^k-\omega^{2k}) = \begin{cases}
0 & k\equiv 0 \mod 3 \\
1 & k\equiv 1 \mod 3 \\
2 & k\equiv -1 \mod 3
\end{cases}
\end{align*}
and $n_d\in\sqrt{-3}\mathbb{Z}[\frac{1}{3}]$. The coefficients $n_d$ in~\eqref{bpspotential} are two times the corresponding numbers in~\cite[\S{6.1}]{Wal12}. In particular, the open Gromov--Witten invariants, which are the coefficients $\widetilde{n}_d$ in the expansion
\begin{align*}
\Psi(\tLvgsmooth,\nabla_{\Lambda}^{\omega^2})-\Psi(\tLvgsmooth,\nabla_{\Lambda}^{\omega})= \sum_{d=1}^{\infty}\widetilde{n}_d Q^d
\end{align*}
of~\eqref{bpspotential}, lie in the field $\mathbb{Q}(\sqrt{-3})$.
\end{theorem}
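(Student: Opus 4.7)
The plan is to deduce the formula by transporting Walcher's computation of the relevant relative period integral on the mirror quintic to the $A$-side via homological mirror symmetry (HMS) together with the negative cyclic open-closed map postulated by Assumption~\ref{connectionsass}.

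First I would verify that the branes $(\tLvgsmooth,\nabla_{\Lambda}^{\omega^i})$ admit bounding cochains making them unobstructed, so that their open Gromov--Witten potentials $\Psi$ are defined in the sense of Fukaya~\cite{Fuk11}. Using the version of HMS established earlier in the paper (which relies on Assumptions~\ref{generationass} and~\ref{traceass}), I would then identify these two branes with two members of a natural $\mathbb{Z}/3$-symmetric family of coherent sheaves supported on a line in the mirror quintic, the three members being indexed by the cube roots of unity that parameterize a family of line bundle twists. Taking the difference of two such branes should naturally single out a nullhomologous cycle in the mirror that is amenable to a relative period computation.

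The key step is to express the difference $\Psi(\tLvgsmooth,\nabla_{\Lambda}^{\omega^2}) - \Psi(\tLvgsmooth,\nabla_{\Lambda}^{\omega})$ in terms of a pairing between the image of a distinguished Floer-theoretic class under the negative cyclic open-closed map and an element of the relative Hodge bundle. This is the open/relative analogue of the Ganatra--Perutz--Sheridan recovery of closed Gromov--Witten invariants from the cyclic open-closed map, and its validity in this nullhomologous immersed setting is precisely the content of Assumption~\ref{connectionsass} together with the compatibility of the weak proper Calabi--Yau structures on the Fukaya category and on the derived category of the mirror. Under HMS this pairing transports to the $B$-side relative period integral for the difference cycle $[\mathcal{F}_{\omega^2}]-[\mathcal{F}_{\omega}]$, which is exactly the quantity Walcher evaluated in~\cite[\S{6.1}]{Wal12}.

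Invoking Walcher's formula yields~\eqref{bpspotential} with $n_d$ twice his coefficients; the factor of two is attributable to the fact that the difference of two of the three $\mathbb{Z}/3$-symmetric branes equals twice a basic cycle in the isotypic decomposition. The rationality claim $\widetilde{n}_d \in \mathbb{Q}(\sqrt{-3})$ then follows formally: since $\chi(k)\in\{0,\pm 1\}$ and each $n_d$ lies in $\sqrt{-3}\mathbb{Z}[\tfrac{1}{3}]\subset\mathbb{Q}(\sqrt{-3})$, regrouping the double series by $kd$ exhibits $\widetilde{n}_d$ as a $\mathbb{Q}(\sqrt{-3})$-linear combination of the $n_{d'}$ with $d'\mid d$. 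The main obstacle will be the second step: establishing the precise open-closed pairing formula in this immersed nullhomologous setting with nontrivial bounding cochains, and verifying that it matches the $B$-side relative period under HMS. This demands careful compatibility between the Gauss--Manin connection and the Calabi--Yau pairings on the two sides; once that compatibility is in place, the remainder of the argument amounts to transcribing Walcher's $B$-side calculation and organizing the resulting double series.
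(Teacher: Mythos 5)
Your high-level plan---transport Walcher's $B$-side relative period computation to the $A$-side via homological mirror symmetry and the negative cyclic open-closed map---matches the paper's strategy, which proceeds through Theorem~\ref{main2}. However, there are several concrete gaps and one outright error in the argument as you present it.

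First, the identification of the mirror objects is wrong in a way that matters. You describe the two branes as mirror to ``a natural $\mathbb{Z}/3$-symmetric family of coherent sheaves supported on a line in the mirror quintic, the three members being indexed by the cube roots of unity that parameterize a family of line bundle twists.'' In fact the two branes are mirror to pushforwards of line bundles supported on \emph{two distinct} van Geemen lines $C^{\omega}$ and $C^{\omega^2}$. The cube root $\omega$ enters as a structural constant defining the line itself (via $1+\omega+\omega^2=0$ in~\eqref{vangeemeneqn2}), not as a line bundle twist; and there are only two lines (the two primitive cube roots), not three. The homologically trivial cycle you need is $C^{\omega}-C^{\omega^2}$. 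This matters because your subsequent derivation of the factor of two rests on an ``isotypic decomposition'' for a $\mathbb{Z}/3$-action that does not exist in this setup.

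Consequently your explanation of the factor of two is incorrect. The paper derives it from two separate observations: (i) by the $\omega\leftrightarrow\omega^2$ symmetry, the relative period integral~\eqref{relativeperiodintegral} over a chain bounding $C^{\omega}-C^{\omega^2}$ is twice the solution of Walcher's inhomogeneous Picard--Fuchs equation (which is computed for a single van Geemen line); and (ii) the rank $m$ of the mirror vector bundle is $1$ for the branes on $\tLvgsmooth$ (versus $m=2$ for those on $\tLvg$, yielding a factor of four in Corollary~\ref{oldmain}). Your ``twice a basic cycle in the isotypic decomposition'' does not correspond to either of these.

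You also omit the step that realizes the branes as supported on $\tLvgsmooth$ at all: in the paper, $\tLvgsmooth$ is not produced directly but arises as a direct summand via a clean surgery on the immersed Lagrangian $\tLvg$. Lemma~\ref{isotopyequiv} and Lemma~\ref{surgeryequiv}, together with the $A_\infty$-functor~\eqref{a-inf-functor}, are what show that $(\tLvgsmooth,\nabla_\Lambda)$ is mirror to the pushforward of a \emph{rank one} line bundle---precisely the input that fixes $m=1$ in Theorem~\ref{main2} and hence the normalization. Finally, the step you flag as the ``main obstacle''---expressing the difference of potentials as a normal function and matching it under HMS to the $B$-side normal function---is exactly the content of Theorem~\ref{main2} and requires the machinery of Propositions~\ref{normalfunprop} through~\ref{comparingbnormalfunctions}, including C\u{a}ld\u{a}raru's comparison of the Chern character with the HKR isomorphism. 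A correct write-up would have to carry out that reduction rather than merely identify it as the hard part.
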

The Lagrangian submanifold $\tLvgsmooth$ can intuitively be thought of as the lift of a tropical curve, similar the examples in mirror quintic threefolds constructed in~\cite{MR20}. The irrationality of the open Gromov--Witten invariants is determined by the holonomy representations of the local systems $\nabla_{\Lambda}^{\omega^i}$, which are defined over the Novikov ring with coefficients in $\mathbb{Q}(\sqrt{-3})$. The first few values of $\widetilde{n}_d$ are
\begin{align*}
\widetilde{n}_1 &= 280000\sqrt{-3} \\
\widetilde{n}_2 &=\frac{22296200000}{3}\sqrt{-3} \\
\widetilde{n}_3 &= \frac{10031895589000000}{27}\sqrt{-3} \\
\widetilde{n}_4 &= \frac{660275805871745000000}{27}\sqrt{-3} \,.
\end{align*}
Notice that
\begin{align*}
\sum_{k=1}^{\infty}\frac{\chi(k)}{k^2}
\end{align*}
can be written as $L(2;\chi)$, where $L(s;\chi)$ is the Dirichlet $L$-function associated to $\chi$. This is also a special value of the D-logarithm introduced by Walcher~\cite{Wal12}. One will notice a similarity between this formula and the Ooguri--Vafa multiple cover formula (see~\cite{OV00, AV, Liu}). As explained more thoroughly in Remark~\ref{hyperbolicconj}, the result of Theorem~\ref{mainprelim}, and the auxiliary results appearing in its proof, can be regarded as confirming a prediction on the existence of \textit{hyperbolic Lagrangians} in the quintic threefold due to Jockers--Morrison--Walcher based partially on the results of~\cite{Wal12}.

The open Gromov--Witten invariants of Theorem~\ref{mainprelim} are obtained from homological mirror symmetry for the quintic threefold. This is achieved by studying the Floer theory of a \textit{different} Lagrangian immersion $\tLvg$ (cf. Theorem~\ref{main1}) in a way that we summarize in the rest of this introduction. We are also able to compute the open Gromov--Witten invariants of $\tLvg$ using this framework (cf. Corollary~\ref{oldmain} below). To the author's knowledge, these are the first examples of an enumerative invariant to be recovered from homological mirror symmetry before being calculated using classical techniques (e.g. Atiyah--Bott localization).

\subsection{Background on lines in the mirror quintic}
Let $\Delta$ denote the unit disk in $\mathbb{C}$ and let $\Delta^*\coloneqq\Delta\setminus\lbrace 0\rbrace$ denote the punctured unit disk. To define the mirror quintic family, we first consider the so-called Dwork family quintics with fibers given by
\begin{align}\label{dwork}
X_z^5\coloneqq\left\lbrace\prod_{j=1}^5 x_j - \frac{z^{1/5}}{5}\sum_{j=1}^5 x_j^5 = 0\right\rbrace\subset\mathbb{CP}^4 \, ; \; z\in\Delta^* \, .
\end{align}
There is an action of $(\mathbb{Z}/5)^3$ on each $X_z^5$, which is inherited from the natural action of $(\mathbb{Z}/5)^5$ on $\mathbb{CP}^4$ by restricting to the subgroup that preserves the monomial $\prod_{j=1}^5 x_j$, and taking the quotient by the diagonal subgroup. The mirror quintic family $\pi\colon\mathcal{X}^{5,\vee}\to\Delta^*$ has fibers $\pi^{-1}(z)\coloneqq X_z^{5,\vee}$ which are crepant resolutions of the orbifold quotients $X_z^5/(\mathbb{Z}/5)^3$.

As was first observed by van Geemen, cf.~\cite{AK91}, the Dwork quintics $X_z$ all contain infinite families of lines. This can be seen by considering the special lines of the form
\begin{align}\label{vangeemeneqn}
\widetilde{C}_z^{\omega}\coloneqq\left\lbrace x_1+\omega x_2+\omega^2 x_3 = 0 \, , \, x_4 = \frac{a}{3}(x_1+x_2+x_3) \, , \, x_5 = \frac{b}{3}(x_1+x_2+x_3) \right\rbrace\subset X_{z}^5
\end{align}
where $a,b,\omega\in\mathbb{C}$ are constants subject to the relations
\begin{align}\label{vangeemeneqn2}
1+\omega+\omega^2 &= 0 \\
a^5+b^5 &= 27 \nonumber \\
ab &= 6z^{1/5} \, . \nonumber
\end{align}
Each of the lines $\widetilde{C}_z^{\omega}$ is called a \textit{van Geemen line}. Fix one such line, and note that the orbit of $\widetilde{C}_z^{\omega}$ under the group $(\mathbb{Z}/5)^3$ contains $125$ elements. Moreover, the orbit of $\widetilde{C}_z^{\omega}$ under the action of the symmetric group $S_5$, which permutes the coordinates on $\mathbb{CP}^4$, has order $40$, since the subgroup given by permuting the first three coordinates preserves such a line. These facts imply that there are at least $5000$ lines on $X_z$, and since this exceeds the virtual number of lines, $2875$, there must actually be infinitely many. 

The lines on a generic Dwork quintic $X_z^5$ were described explicitly in~\cite{COVV12}. In addition to $375$ isolated lines, there are two families of lines, each of which can be identified with a $125$-fold cover of a genus six curve, meaning that they are both genus $626$ curves. It follows from the results of~\cite{COVV12} that the families of lines in the mirror quintic can be identified with two genus six curves. Denote by $C_z^{\omega}$ the line in the mirror quintic $X_z^{\vee}$ corresponding to any of the lines $\widetilde{C}_z^{\omega}$, for some $a,b\in\mathbb{C}$ as in~\eqref{vangeemeneqn2}. Notice that all such choices of $a,b\in\mathbb{C}$ yield the same line. Abusing terminology, we also call the lines $C_z^{\omega}$ in the mirror quintic van Geemen lines.
\begin{convention}
Note that the mirror quintic family can be viewed as a scheme over $\mathbb{C}((z))$, where $z$ here denotes a formal variable. We denote this scheme by $X^{5,\vee}$. Similarly, the van Geemen lines determine lines in $X^{5,\vee}$, which we denote by $C^{\omega}$ and $C^{\omega^2}$.
\end{convention}

\subsection{Tropical Lagrangian submanifolds in the quintic}
The Lagrangian $\tLvgsmooth$ of Theorem~\ref{mainprelim} is constructed as the lift of a tropical curve, which comes from considering the tropicalization of a van Geemen line. To explain the meaning of this statement, consider the cycles $\widetilde{C}_0^{\omega}$ in the toric boundary $X_0^5\coloneqq\left\lbrace\prod_{j=1}^5 x_j = 0\right\rbrace$ of $\mathbb{CP}^4$ obtained from~\eqref{vangeemeneqn} by setting $z=0$. It follows from~\eqref{vangeemeneqn2} that either $a = 0$ or $b = 0$, meaning that $\widetilde{C}_z^{\omega}$ either lies in one of the hyperplanes $\lbrace x_4 = 0\rbrace$ or $\lbrace x_5 = 0\rbrace$. For concreteness, we will consider the case where $b=0$, so we can think of $\widetilde{C}_z^{\omega}$, for some fixed $a\in\mathbb{C}$, as a cycle in $\mathbb{CP}^3\cong\lbrace x_5 = 0\rbrace$. An easy computation shows that $\widetilde{C}_z^{\omega}$ intersects the toric boundary of $\mathbb{CP}^3$ in exactly four points, meaning that its restriction to the big torus $(\mathbb{C}^*)^3\subset\mathbb{CP}^3\cong\lbrace x_5 = 0\rbrace$ is a rational curve with four punctures.

The tropicalization of a curve $C\subset(\mathbb{C}^*)^n$ can be thought of as the image $\Log_t(C)$, in the limit $t\to\infty$, under the map $\Log_t\colon(\mathbb{C}^*)^n\to\mathbb{R}^n$ given by
\begin{align}\label{syzfibr}
\Log_t(x_1,\ldots,x_n) = (\log_t|x_1|,\ldots,\log_t|x_n|) \, .
\end{align}
The tropicalization will be a $1$-dimensional polyhedral complex whose $1$-dimensional faces are affine line segments in $\mathbb{R}^n$ and whose $0$-dimensional faces satisfy a  balancing condition that dictates how the $1$-faces meet at the $0$-faces. A direct calculation shows that the tropicalization of $\widetilde{C}_z^{\omega}\cap(\mathbb{C}^*)^3$ is a \textit{$4$-valent} tropical curve with a single vertex. The tropicalization is independent of the choice of $a$, since all such choices have the same norm.

There is a general theory for constructing Lagrangian lifts of \textit{smooth} tropical subvarieties of $\mathbb{R}^3$ that should correspond, under homological mirror symmetry, to (structure sheaves on) the corresponding complex subvarieties of $(\mathbb{C}^*)^3$; see inter alia~\cite{Mik19, Hic20, Hic21, Mat20}. As shown in~\cite[Theorem 1.1]{Han24a}, the natural analogue of these constructions applied to a $4$-valent tropical curve arising from the very affine van Geemen lines will only produce a singular Lagrangian submanifold $\Lsing\subset T^*T^3$ in the cotangent bundle of a $3$-torus. It is shown in loc. cit. that $\Lsing$ has one conical singular point modeled on the \textit{Harvey--Lawson cone} of~\cite{HL}, and that the smooth part of $\Lsing$ is diffeomorphic to the complement in $S^3$ of the \textit{minimally-twisted five-component chain link} drawn in Figure~\ref{chainlink}.

The starting point for our construction is to identify a particular Lagrangian torus $L_{\tau,0}$ in a Dwork quintic $X_{\tau}^5$, which supports a family of objects mirror to points on a quintic. By this we mean that the objects of the Fukaya category $\mathcal{F}(X_{\tau})$ obtained by equipping $L_{\tau,0}$ with a rank $1$ local system are all mirror to skyscraper sheaves on the mirror quintic. The Lagrangian torus in question is obtained from a moment fiber in toric boundary of $\mathbb{CP}^4$ under symplectic parallel transport. One would expect the Lagrangian tori constructed this way to be smooth SYZ fibers of an SYZ fibration on the quintic (cf.~\cite{Rua01, Sol20}).

We then consider a $125$-fold cover of $\Lsing$ contained inside a Weinstein neighborhood of this torus. Intuitively, taking this cover corresponds, under mirror symmetry, to taking a quotient by the action of $(\mathbb{Z}/5)^3$ in the construction of the mirror quintic. We obtain a compact singular Lagrangian $\tLvgsing$ by attaching suitable copies of the Harvey--Lawson cone liing near the intersection of the toric boundary with $X_{\tau}^5$ to this (non-compact) singular Lagrangian. The approach we use to perform this gluing construction is similar to, but much easier than, the construction of tropical Lagrangians in mirror quintics in~\cite{MR20}.

The Lagrangian submanifold $\tLvgsmooth$ of the quintic is obtained from $\tLvgsing$ using the (a topological version of) the construction of~\cite{Joy04}. This roughly replaces the a neighborhood of each cone point of such a singular Lagrangian with a copy of an Aganagic--Vafa brane~\cite{AV}. Thought of in these terms, it is unclear how to calculate the open Gromov--Witten invariants of $\tLvgsmooth$ directly: it has very few apparent symmetries given its construction by soft topological methods, and there are currently few available tools for counting disks with Lagrangian boundary tropically in this context. This might make one hopeful that the open Gromov--Witten invariants can be extracted from homological mirror symmetry using the framework of~\cite{Hug24b}, but there is no reason to expect the Floer theory of $\tLvgsmooth$ to tractable. Instead of studying $\tLvgsmooth$ directly as an object of the Fukaya category, we will study the Floer theory of a closely related immersed Lagrangian denoted $\tLvg$.

An immersed Lagrangian lift, denoted $\Limm$, of a curve with the same combinatorial type as the tropicalization of $\widetilde{C}_z^{\omega}\cap(\mathbb{C}^*)^3$ was constructed in~\cite{Han24a} by `doubling' the singular Lagrangian $\Lsing$. Moreover, it was shown~\cite[Theorem 1.2]{Han24a} that equipping $\Limm$ with suitable local systems yields objects of the wrapped Fukaya category of $T^*T^3$ mirror to lines in $\mathbb{CP}^3$. This was achieved by computing the Floer cohomology of such objects with the $0$-section in $T^*T^3$, which amounts to computing the support of the mirror sheaf. This immersed Lagrangian appears as a local piece of the immersed Lagrangian $\tLvg$. Near the singular points of $\tLsing$, we can also think of $\tLvg$ as being obtained by replacing the conical pieces of $\Lsing$ with `doubled' immersed Lagrangian submanifolds~\cite[\S{5.3}]{Han24a} that miss the toric boundary of $\mathbb{CP}^4$. Consequently, we can think of objects of the Fukaya category supported on $\tLvg$ as objects of the \textit{relative Fukaya category}, as is usually considered in proofs homological mirror symmetry for closed Calabi--Yau manifolds~\cite{She15, SS21, GHHPS24}. This fact makes the Floer theory of $\tLvg$ significantly easier to work with than the Floer theory of $\tLvgsmooth$.

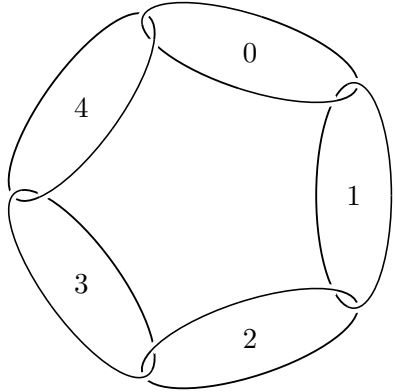
\begin{figure}
\begin{tikzpicture}[scale = 1.0]
\draw(2,0)[line width = 0.25 mm] ellipse (0.5 and 1.5);
\draw[rotate = 72, line width = 0.25 mm] (2,0) ellipse (0.5 and 1.5);
\draw[rotate = 144, line width = 0.25 mm] (2,0) ellipse (0.5 and 1.5);
\draw[rotate = 216, line width = 0.25 mm] (2,0) ellipse (0.5 and 1.5);
\draw[rotate = 288, line width = 0.25 mm] (2,0) ellipse (0.5 and 1.5);

\draw (2.5,0)[white, double = black, ultra thick] arc (0:100:0.5 and 1.5);
\draw (2.5,0)[white, double= black, ultra thick] arc (0:-100:0.5 and 1.5);
\draw[white, double = black, ultra thick, rotate = 72] (2.5,0) arc (0:100:0.5 and 1.5);
\draw[white, double = black, ultra thick, rotate = 144] (1.5,0) arc (180:240:0.5 and 1.5);
\draw[white, double = black, ultra thick, rotate = 72] (1.5,0) arc (180:240:0.5 and 1.5);

\draw[white, double = black, ultra thick, rotate = 144] (1.5,0) arc (180:120:0.5 and 1.5);

\draw[white, double = black, ultra thick, rotate = 216] (2.5,0) arc (0:-100:0.5 and 1.5);

\draw[white, double = black, ultra thick, rotate = 216] (2.5,0) arc (0:100:0.5 and 1.5);

\draw[white, double = black, ultra thick, rotate = 288] (1.5,0) arc (180:240:0.5 and 1.5);
\draw[white, double = black, ultra thick, rotate = 288] (1.5,0) arc (180:120:0.5 and 1.5);

\node[] at (2,0) {1};
\node[] at (0.62,1.90) {0};
\node[] at (-1.62,1.17) {4};
\node[] at (-1.62,-1.17) {3};
\node[] at (0.62,-1.90) {2};

\end{tikzpicture}
\caption{The minimally-twisted five-component chain link complement, and a labeling of its components.}\label{chainlink}
\end{figure}
\begin{remark}\label{hyperbolicconj}
The smooth part of $\tLvgsing$, denoted $\widetilde{L}'$ is a cover of the minimally-twisted five-component chain link complement $L'$, and hence it is a hyperbolic $3$-manifold. The hyperbolicity of $L'$ determines a faithful representation $\pi_1(L')\to SL(2,\mathbb{C})$. Given any subgroup $\Gamma$ of $SL(2,\mathbb{C})$, one can construct an extension of $\mathbb{Q}$ called the \textit{invariant trace field} of $\Gamma$ by adjoining the traces of certain element of $\Gamma$ to $\mathbb{Q}$ (see~\cite[Chapter 3]{MR03} for a precise definition). This turns out to depend only on the commensurability class of $\Gamma$. From this we obtain a field-valued invariant of hyperbolic $3$-manifolds which is preserved under taking finite degree covers.

The invariant trace field of $L'$ is $\mathbb{Q}(\sqrt{-3})$, which can be seen from an ideal triangulation~\cite[\S{2.6}]{DT03}, and hence it is also the invariant trace field of $\widetilde{L}'$. This is also the field in which the open Gromov--Witten invariants appearing in Theorem~\ref{mainprelim} lie. Such a relation between invariant trace fields and open Gromov--Witten invariants has been long-expected in string theory based on computations of Jockers--Morrison--Walcher. In this way, one can thin of the field $\mathbb{Q}(\sqrt{-3})$ as a \textit{period ring} for the branes $(\tLvgsmooth,\nabla^{\Lambda}_{\omega^i}$, which corresponds to the role that this field plays on the $B$-side. By~\cite[Corollary 1.2]{Han24b}, there is a general relation on the $A$-side between the field of definition of brane structures on a Lagrangian submanifold and the field of definition of its open Gromov--Witten invariants.
\end{remark}

\begin{remark}
It is remains unclear whether there should also be a general relationship between topological invariants of a Lagrangian submanifold, such as the hyperbolic volume or invariant trace field, and the values of its open Gromov--Witten invariants. In particular, we note that $\tLvgsmooth$ is not a hyperbolic $3$-manifold. The results of \S\ref{embeddedtrop} show that $\tLvgsmooth$ is obtained by Dehn filling (a cover of) the Lagrangian lift of a tropical curve given in~\eqref{tropsmoothing}. It follows that $\tLvgsmooth$ contains an essential torus, which can be thought of in this context as a conormal fiber over the finite edge (cf. Figure~\ref{smthfig}) of this tropical curve. We also remark that one of the original motivations for the predictions of Jockers--Morrison--Walcher comes from their announced computation of a constant first approximated by Laporte--Walcher~\cite{LW}, which they conjectured to be the volume of a hyperbolic $3$-manifold mirror to the van Geemen lines. The hyperbolic volume of $\widetilde{L}'$ is $1250\operatorname{Im}\operatorname{Li}_2(-\omega)$, where $\operatorname{Im}\operatorname{Li}_2(-\omega)$ denotes the imaginary part of the dilogarithm. We compute this volume using the fact that the hyperbolic volume of the minimally-twisted five=component chain link complement is $10\operatorname{Im}\operatorname{Li}_2(-\omega)$, since it can be written explicitly as the union of $10$ regular ideal tetrahedra. This disagrees with the predicted volume $130\operatorname{Im}\operatorname{Li}_2(-\omega)$, which we obtain by rewriting~\cite[(17)]{LW} in terms of the dilogarithm. We also point out that one should not expect to obtain an \textit{embedded} hyperbolic Lagrangian submanifold from $\tLvgsing$, per the discussion in~\cite[\S{1}]{Han24a} on Dehn fillings of the chain link complement.
\end{remark}

Given its close relation to the Lagrangian torus in $X_{\tau}^5$ supporting mirrors to points, we can describe mirror sheaves, with respect to the mirror equivalence of~\cite{She15} to objects supported on $L_{\mathrm{im}}^5$. We use a Morse--Bott model for the Fukaya category following~\cite{Fuk17}, but with some modifications that allow for immersed Lagrangians equipped with local systems and gradings~\cite{Sei00}. A description of the Fukaya category we use is given in Appendix~\ref{immersedfloerappendix}. The results of~\cite{She15} still apply provided our model for the Fukaya category satisfies certain technical assumptions, namely Assumption~\ref{generationass}, which are expected to hold for any reasonable version of the Fukaya category. Most crucially, these assumptions say that the Fukaya category should have all of the algebraic structure needed to mimic the proof of Abouzaid's generation criterion~\cite{Abo10}.
\begin{theorem}[Paraphrasing of Theorem~\ref{mirrorsheaf}]\label{main1}
Under Assumption~\ref{generationass}, consider a generic non-isolated line $C$ in the mirror quintic. Then there is an unobstructed rank one $\mathbb{C}$-local system $\nabla_C$ on $\tLvg$ such that the resulting object $(\tLvg,\nabla_C)$ of the Fukaya category is mirror, under the equivalence of~\cite{She15}, to a rank $2$ vector bundle supported on $C$.
\end{theorem}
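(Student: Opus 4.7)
The plan is to bootstrap from the local calculation in \cite[Theorem 1.2]{Han24a}, which identifies suitably chosen local systems on the immersed Lagrangian $\Limm\subset T^*T^3$ with lines in $\mathbb{CP}^3$, into the closed setting of the quintic. The key geometric input is that $\tLvg$ was built by taking a $(\mathbb{Z}/5)^3$-cover of $\Limm$ inside a Weinstein neighborhood of the SYZ-type fiber $L_{\tau,0}\subset X_\tau^5$ (whose local-system-equipped branes are mirror to points of $X^{5,\vee}$), and then attaching doubled Harvey--Lawson pieces away from the toric anticanonical divisor. Because $\tLvg$ misses this anticanonical boundary, it is an object of the relative Fukaya category and its obstruction theory is governed by a $Q$-adically filtered Maurer--Cartan equation; its self-intersections decompose into copies of the $\Limm$-intersections plus gluing contributions near the cone points of $\tLsing$.

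First I would construct $\nabla_C$. A generic non-isolated line $C$ in the mirror quintic arises from a van Geemen line $\widetilde{C}_z^{\omega}$; the set of such $C$ forms a genus six curve. For each $C$, I define $\nabla_C$ by prescribing holonomy on the local piece of $\tLvg$ modelled on $\Limm$ so as to cut out, via the recipe of \cite[Theorem 1.2]{Han24a}, the corresponding line in the ambient $\mathbb{CP}^3$ chart, and extending this prescription symmetrically across the cover and the doubled Harvey--Lawson pieces. Unobstructedness is then checked by solving the Maurer--Cartan equation order by order in $Q$: the leading order reduces chart-by-chart to the already-solved Maurer--Cartan equation for $(\Limm,\nabla_C)$ in $T^*T^3$, and higher-order corrections are absorbed by a bounding cochain supported at the self-intersection points, whose degrees are correctly arranged to accommodate such absorbtion.

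To identify the mirror sheaf, I use Assumption~\ref{generationass} to reduce to computing $HF^*((\tLvg,\nabla_C),(L_{\tau,0},\nabla_p))$ for the family of branes $(L_{\tau,0},\nabla_p)$ mirror to skyscraper sheaves $\mathcal{O}_p$ on $X^{5,\vee}$, together with the analogous pairing with the mirrors of structure sheaves. Modulo higher Novikov order the contributing disks localize inside the Weinstein neighborhood, so these Floer cohomologies reduce to the zero-section--versus--$\Limm$ computation in $T^*T^3$ from \cite{Han24a}; that computation pins down the support locus to be exactly $C$. The rank two statement follows from the doubling: each skyscraper pairing receives a two-dimensional contribution from the local model, which is preserved by the relative Fukaya deformation. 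Thus the mirror of $(\tLvg,\nabla_C)$ is a coherent sheaf on $X^{5,\vee}$ with scheme-theoretic support $C$ and generic rank two, i.e.\ a rank two vector bundle on $C$.

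The main obstacle will be controlling the higher-order Novikov corrections coming from holomorphic disks that escape the Weinstein neighborhood and thread through the doubled Harvey--Lawson pieces or the global geometry of $X_\tau^5$. In principle such corrections could move the mirror sheaf off the rank two locus, destroy unobstructedness, or shift the support. Overcoming this requires the relative Fukaya filtration combined with the openness of rank two vector bundles inside the moduli of coherent sheaves on $C$: any correction weighted by positive powers of $Q$ deforms the $t=0$ mirror within this open locus rather than off of it, and matching the particular van Geemen line $C$ amounts to tuning $\nabla_C$ within the one-parameter family above.
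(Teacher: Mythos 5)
Your overall geometric outline (localize near the Weinstein neighborhood of $L_{\tau,0}$, reduce to the computation in \cite[Theorem 1.2]{Han24a}, use the relative Fukaya filtration to control higher Novikov order) matches the skeleton of the paper's proof, and your identification of the genus-six family of local systems is correct. However, there are two genuine gaps where your proposed mechanism differs from the paper's and would not actually close the argument.

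First, unobstructedness. You propose to solve the Maurer--Cartan equation order by order in $Q$, absorbing higher corrections into a bounding cochain supported at the self-intersection points because ``degrees are correctly arranged.'' This is not enough: the obstruction $\mathfrak{m}_0$ lives in degree $2$, and absorbing it into $\mathfrak{m}_1(b)$ for a degree-$1$ cochain $b$ requires $\mathfrak{m}_0$ to be exact, which degree bookkeeping alone does not give. Moreover, the diagonal part $\mathfrak{m}_0^0\in\Omega^2(\tLvg)\widehat{\otimes}\Lambda_0$ counts disks with smooth boundary and no corners, and these disks can travel far from the Weinstein neighborhood — there is no chart-by-chart reduction to $\Limm$ available for this piece. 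The paper instead shows $\mathfrak{m}_0 = 0$ exactly (bounding cochain zero), splitting $\mathfrak{m}_0 = \mathfrak{m}_0^s + \mathfrak{m}_0^0$: the switching part $\mathfrak{m}_0^s$ vanishes by an SFT neck-stretching argument (Proposition~\ref{m0svanishing}) that reduces to the teardrop cancellations of Proposition~\ref{localm0}, and the diagonal part $\mathfrak{m}_0^0$ vanishes by Solomon's anti-symplectic involution argument (the real quintic involution acts by $-\mathrm{id}$ on $H^1(\tLvg)$ and hence by $+\mathrm{id}$ on $H^2(\tLvg)$, forcing $\mathfrak{m}_0^0 = -\mathfrak{m}_0^0$). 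Your proposal contains neither ingredient.

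Second, the upgrade from ``generically rank two'' to ``pushforward of a rank two vector bundle.'' You appeal to openness of vector bundles in the moduli of coherent sheaves on $C$ and to the observation that the $Q$-deformation starts in the open locus. But a coherent sheaf on $\mathbb{P}^1$ over the Novikov ring can acquire skyscraper summands, and flatness of the family is exactly what needs to be verified, not a freely available hypothesis. Knowing the stalks are rank two on a Zariski-open subset (which is what the pairings against skyscraper mirrors give) is compatible with the sheaf having extra torsion at special points, and the mirror functor alone does not prevent this. The paper rules this out by a separate, concrete computation: the self-Floer cohomology $HF^*(\tLvg,\nabla)$ is bounded above by the local Floer cohomology $HF^*_{\tWvg}(\tLvg,\nabla)$ computed via the energy spectral sequence (\S\ref{localhfsection}), which gives a hard upper bound on the total rank and thereby excludes skyscraper summands. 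Without this computation (or some substitute), the conclusion that the mirror is a \emph{vector bundle} rather than an arbitrary rank-two-generic sheaf is unsupported.

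In short, the proposal correctly sees where the localization and the covering trick are useful, but replaces the two hardest steps of the paper — the anti-symplectic-involution vanishing of $\mathfrak{m}_0^0$, and the self-Floer cohomology upper bound — with heuristics (degree bookkeeping, openness of moduli) that do not discharge the obligations they stand in for.
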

We can, in particular, take $C$ to be a van Geemen line $C^{\omega}$, and the corresponding local system is denoted $\nabla^{\vG}_{\omega}$ as above. Here the mirror quintic is understood as a variety defined over the Novikov field. It is clear from their definition that the van Geemen lines can be naturally identified with lines in this variety. To prove Theorem~\ref{main1}, we will calculate its Floer cohomology with the torus $L_{\tau,0}$. This computation uses~\cite[Theorem 1.2]{Han24a} in an essential way. By computing $HF^*(\tLvg,\nabla_C)$, we will show that this object is mirror to the pushforward of a vector bundle. This determines the \textit{algebraic second Chern class} of the mirror sheaf, which is crucial for deducing Theorem~\ref{mainprelim} from Theorem~\ref{main1}. 

To obtain Theorem~\ref{mainprelim} from Theorem~\ref{main1}, we will relate the $A_{\infty}$-algebra of one of the Lagrangian branes $(\tLvg,\nabla)$ to the $A_{\infty}$-algebra of $\tLvgsmooth$ equipped with a suitable brane structure. This is carried out in \S\ref{lsmthchern}, and involves studying the behavior of the Fukaya $A_{\infty}$-algebra under clean Lagrangian surgery in a particularly simple case.

We will also be able to compute the open Gromov--Witten invariants of this immersed Lagrangian from Theorem~\ref{main1} given the result of Theorem~\ref{main2} below.
\begin{corollary}\label{oldmain}
Let $\nabla^{\vG}_{\omega^i}$, for $i = 1,2$, denote the local systems corresponding to the van Geemen lines as given in Example~\ref{vangeemenlocalsystem}. Then the difference of open Gromov--Witten potentials 
\begin{align*}
\Psi(\tLvg,\nabla^{\vG}_{\omega^2})
-\Psi(\tLvg,\nabla^{\vG}_{\omega})
\end{align*}
satisfy formulas of the same form as in Theorem~\ref{mainprelim}, except that the constants $n_d$ and $\widetilde{n}_d$ are four times those appearing in~\cite[\S{6.1}]{Wal12}.
\end{corollary}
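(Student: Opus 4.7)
The plan is to derive Corollary~\ref{oldmain} directly from Theorem~\ref{main1} by transferring the calculation of the open Gromov--Witten potentials to a B-side computation of a truncated normal function, which has already been carried out by Walcher. The key point is that the difference $\Psi(\tLvg,\nabla^{\vG}_{\omega^2}) - \Psi(\tLvg,\nabla^{\vG}_{\omega})$ is designed so that contributions depending only on the underlying immersed Lagrangian (in particular, the potentially problematic superpotential-type terms that are independent of the local system) cancel out, leaving an expression that is categorically determined by the mirror cycle classes of the two branes.

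First, I would combine Theorem~\ref{main1} with the mirror functor of~\cite{She15} to identify the Lagrangian brane $(\tLvg,\nabla^{\vG}_{\omega^i})$ with a rank $2$ vector bundle $E^{\omega^i}$ supported on the van Geemen line $C^{\omega^i}\subset X^{5,\vee}$. Under the negative cyclic open-closed map provided by Assumptions~\ref{traceass} and~\ref{connectionsass}, the open Gromov--Witten potential of a Lagrangian brane corresponds, in the spirit of~\cite{HW22, Hug24b}, to a truncated normal function on the B-side whose extension data is controlled by the algebraic second Chern class of the mirror sheaf. Because $E^{\omega^i}$ has rank $2$, its associated Chow class is $2[C^{\omega^i}]$, and the relevant extension datum is therefore twice the normal function associated with the homologically trivial $1$-cycle $[C^{\omega^2}] - [C^{\omega}]$.

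Next I would appeal to~\cite[\S{6.1}]{Wal12}, where Walcher explicitly computed this normal function and extracted its $q$-expansion in the Ooguri--Vafa/Dirichlet $L$-function form appearing in~\eqref{bpspotential}. Combining the factor of $2$ from the rank of the mirror sheaf with the additional factor of $2$ arising from the normalization conventions relating the VMHS extension data to Fukaya's open Gromov--Witten potential (as dictated by the cyclic open-closed map) produces the overall factor of $4$. The integrality statement $n_d \in \sqrt{-3}\mathbb{Z}[\tfrac{1}{3}]$, the appearance of $\chi$, and the $1/k^2$ multiple cover denominators are all inherited directly from Walcher's B-side calculation, and the values $\widetilde{n}_d$ then follow by rescaling the tables in \emph{loc. cit.} by $4$.

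The main obstacle is making precise, under Assumptions~\ref{traceass} and~\ref{connectionsass}, the identification between the Fukaya-theoretic open Gromov--Witten potential of $(\tLvg,\nabla^{\vG}_{\omega^i})$ and the normal function attached to the mirror cycle class. This requires an open-sector analogue of the Ganatra--Perutz--Sheridan argument from~\cite{GPS15}, passing through the comparison of the relative quantum connection with the Gauss--Manin connection on a variation of mixed Hodge structure mirror to the one constructed from the family of van Geemen lines; it is also at this step that the scalar normalizations must be tracked carefully in order to rigorously pin down the factor of $4$. Once these ingredients are in place, the formula of Corollary~\ref{oldmain} is immediate from Theorem~\ref{main1} and Walcher's computation.
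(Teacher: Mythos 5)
Your proposal follows the same route as the paper: use Theorem~\ref{main1} (together with Theorem~\ref{mirrorsheaf}) to identify $(\tLvg,\nabla^{\vG}_{\omega^i})$ with a rank $2$ sheaf supported on $C^{\omega^i}$, pass to the $B$-side normal function via the mirror comparison of extensions (Theorem~\ref{main2}, i.e.\ Corollary~\ref{categoricalnormalfunctions} and Proposition~\ref{comparingbnormalfunctions}), and then read off the coefficients from Walcher's computation. The overall factor of $4$ is correct.

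However, your accounting for one of the two factors of $2$ is wrong. You attribute it to ``normalization conventions relating the VMHS extension data to Fukaya's open Gromov--Witten potential (as dictated by the cyclic open-closed map).'' That is not where it comes from, and if one tried to carry out the proof along those lines the factor would not actually materialize: the isomorphism of extensions of VSHS furnished by Corollary~\ref{categoricalnormalfunctions} does not introduce any additional scalar. The correct source is purely a $B$-side bookkeeping fact about Walcher's conventions. Walcher's truncated normal function in \cite[\S 6.1]{Wal12} solves an inhomogeneous Picard--Fuchs equation normalized for a single van Geemen line, whereas the normal function relevant here is the relative period integral~\eqref{relativeperiodintegral} of a chain whose boundary is the homologically trivial difference $C^{\omega}_z - C^{\omega^2}_z$; by \cite[(2.9)]{Wal12} that period is twice Walcher's published function. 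Multiplying by the rank $m=2$ of the mirror sheaf (via $c_2 = -2[C_{\nabla}]$ from Corollary~\ref{algc2computation}) then gives the overall factor of $4$. This is the same bookkeeping as in the proof of Theorem~\ref{mainprelim}, where the mirror objects instead have rank $m=1$ and the overall factor is $2$.
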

\subsection{Open Gromov--Witten invariants and homological mirror symmetry} The statement of Theorem~\ref{mainprelim} involves the difference of open Gromov--Witten potentials for two Lagrangian branes in the quintic supported on the same embedded Lagrangian submanifold $\tLvgsmooth$. Since this Lagrangian is not nullhomologous, the open Gromov--Witten potential defined for graded Lagrangian submanifolds of Calabi--Yau threefolds by Fukaya~\cite{Fuk11} is not independent of the almost complex structure used to define it. We can, however, show that the difference of open Gromov--Witten potentials is independent of the almost complex structure by viewing it as the open Gromov--Witten potential of an \textit{immersed Lagrangian submanifold} given by two copies of $\tLvgsmooth$.

This requires a definition of the open Gromov--Witten potential can be defined for Lagrangian immersions. The construction of the open Gromov--Witten in~\cite{Fuk11} does not immediately generalize to the immersed case, as the Fukaya $A_{\infty}$-algebra of a clean Lagrangian immersion does not possess cyclic symmetry~\cite{Fuk10}. We will avoid this issue by following the strategy of~\cite{Han24b}, where it was shown that the open Gromov--Witten invariants can be defined using the Calabi--Yau structure obtained from a \textit{cyclic open closed map}. We recall that this is a map from the cyclic homology of the Fukaya category to the quantum cohomology which intertwines the  relevant $S^1$-actions. Incidentally, the (negative) cyclic open-closed map is also needed to extract genus zero Gromov--Witten invariants from the Fukaya category, as shown by Ganatra--Perutz--Sheridan~\cite{GPS15}. Hence, we will assume (cf. Assumptions~\ref{connectionsass} and~\ref{traceass}) that suitable versions of the cyclic open-closed map exist.

Using results of Cho and Lee~\cite{CL}, we can construct a homotopy cyclic $\infty$-inner product $\psi$ on $\mathcal{A}$ from the trace of Assumption~\ref{traceass}. More specifically, this is an $A_{\infty}$-bimodule homomorphism $\psi\colon\mathcal{A}_{\Delta}\to\mathcal{A}^{\vee}$ from the diagonal bimodule to the linear dual satisfying some additional symmetries. By closely following the arguments in~\cite{Han24b}, we can use this to construct the open Gromov--Witten potential of a clean Lagrangian immersion.
\begin{theorem}\label{immersedogw}
Let $\mathcal{A}$ denote the curved Fukaya $A_{\infty}$-algebra of a graded clean Lagrangian immersion $L$ in a Calabi--Yau threefold equipped with a  rank $1$ local system, and suppose that it satisfies Assumption~\ref{traceass}. Then there is a well-defined open Gromov--Witten potential $\Psi\colon\mathcal{M}(L)\to\Lambda$, where $\mathcal{M}(L)$ denotes the space of gauge-equivalence classes of bounding cochains on $L$, given by
\begin{align*}
\Psi(b)\coloneqq\mathfrak{m}_{-1}+\sum_{N=0}^{\infty}\sum_{p+q+k = N}\frac{1}{N+1}\psi_{p,q}(b^{\otimes p}\otimes\underline{\mathfrak{m}_k(b^{\otimes k})}\otimes b^{\otimes q})(b) \, 
\end{align*}
where $\mathfrak{m}_{-1}$ is a count of pseudoholomorphic disks with boundary on $L$ and no boundary marked points. This is invariant of the almost complex structure on $M$ up to a wall-crossing term given by counting closed pseudoholomorphic curves intersecting $L$ in a point.
\end{theorem}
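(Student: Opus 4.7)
The strategy is to convert the trace provided by Assumption~\ref{traceass} into a homotopy cyclic inner product via Cho--Lee~\cite{CL} and then follow the argument of \cite{Han24b}, but working with this weaker structure in place of the strict cyclic symmetry unavailable for a clean immersion. First, apply the Cho--Lee construction to produce the homotopy cyclic $\infty$-inner product $\psi = \{\psi_{p,q}\}$, viewed as an $A_\infty$-bimodule homomorphism $\mathcal{A}_\Delta \to \mathcal{A}^\vee$ whose components satisfy cyclic symmetry only up to a prescribed chain homotopy. The formula for $\Psi(b)$ is then literally the one in the statement, with the constant term $\mathfrak{m}_{-1}$ being the count of pseudoholomorphic disks with no boundary marked points coming from the curvature of $\mathcal{A}$.

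Next, verify that $\Psi$ descends to a well-defined function $\mathcal{M}(L)\to\Lambda$. Convergence in $\Lambda$ is immediate: every $b\in\mathcal{M}(L)$ has positive Novikov valuation, the $A_\infty$-structure maps $\mathfrak{m}_k$ and the components $\psi_{p,q}$ are filtered by symplectic energy, and only finitely many terms contribute below any fixed energy level. The substantive content is gauge-invariance, which reduces to showing $\tfrac{d}{dt}\Psi(b_t)=0$ for any path $\{b_t\}$ generated by an infinitesimal gauge transformation. Plugging in the Maurer--Cartan equation satisfied by $b_t$ together with the bimodule structure equations for $\psi$, most terms cancel pairwise; the potentially obstructing terms come from the failure of strict cyclic symmetry, and these are absorbed by the symmetrization factor $\tfrac{1}{N+1}$ over the position of the underlined insertion once the Cho--Lee homotopy is used to cycle $\mathfrak{m}_k(b^{\otimes k})$ around the bimodule slot. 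This gauge-invariance calculation is the main obstacle: it requires tracking a combinatorially involved collection of signs, and it is the step at which the clean immersed case genuinely departs from the essentially cyclic situation in \cite{Han24b}.

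Finally, prove invariance up to wall-crossing by studying a generic path $\{J_t\}_{t\in[0,1]}$ of compatible almost complex structures. The parametrized moduli spaces give rise to a parametrized curved $A_\infty$-algebra $\mathcal{A}_{[0,1]}$, and applying Cho--Lee in families to a family of traces produces a corresponding parametrized homotopy cyclic inner product. Extending the formula for $\Psi$ to this setting and differentiating in $t$ expresses $\tfrac{d}{dt}\Psi(b_t)$ as a signed count of codimension-one degenerations in the parametrized moduli spaces. The contributions from disk bubbling and from degenerations of the auxiliary moduli spaces defining the $\psi_{p,q}$ are killed by the bimodule equations combined with the gauge-invariance identity already established, and the only surviving stratum is the one in which a closed pseudoholomorphic sphere bubbles off at an interior marked point lying on $L$. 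A standard dimension count identifies this contribution with the wall-crossing term in the statement, paralleling the analysis carried out in \cite{Han24b}.
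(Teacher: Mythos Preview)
Your overall strategy matches the paper's: construct the homotopy cyclic $\infty$-inner product $\psi$ from the trace of Assumption~\ref{traceass} via Cho--Lee, define $\Psi$ by the stated formula, and then prove the wall-crossing formula using pseudo-isotopies. However, there are two points where your emphasis and your sketch diverge from what actually needs to be done.

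First, you identify gauge-invariance as ``the main obstacle'' and the place where ``the clean immersed case genuinely departs from the essentially cyclic situation in \cite{Han24b}.'' This is backwards. The gauge-invariance argument in \cite[Theorem~2.17]{Han24b} is purely algebraic: it uses only the $A_\infty$-bimodule equations for $\psi$ and the skew-symmetry/closedness axioms for a homotopy cyclic inner product, and it makes no reference to how the underlying $A_\infty$-algebra was constructed. Once $\psi$ exists, gauge-invariance follows verbatim from \cite{Han24b}; nothing new happens here for immersions. The paper simply cites this result.

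Second, and more seriously, your wall-crossing sketch misses the specific mechanism by which the immersed case is handled. The genuinely new geometric phenomenon is that the moduli space $\widetilde{\mathcal{M}}_0(\beta;\underline{J})$ of disks with \emph{no} boundary marked points acquires extra codimension-one boundary strata in which the disk breaks at a \emph{switching component} of the self-intersection, not just at a smooth boundary point. These strata contribute a term of the form $\langle\widetilde{\mathfrak{m}}_0,\widetilde{\mathfrak{m}}_0\rangle$ to the variation of $\mathfrak{m}_{-1}$. On the other side, applying Stokes' theorem to the open-closed moduli spaces defining $\widetilde{\psi}$ and using the Maurer--Cartan equation reduces the variation of $\Psi'$ to an error term which, after passing to a canonical model (again via \cite{CL}), is exactly $\langle\widetilde{\mathfrak{m}}_0,\widetilde{\mathfrak{m}}_0\rangle$ with the opposite sign. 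The cancellation of these two terms is what leaves only the closed-curve wall-crossing $\widetilde{GW}(L)$. Your sketch says ``contributions from disk bubbling \ldots\ are killed by the bimodule equations combined with the gauge-invariance identity,'' but the switching-component bubbling in $\mathfrak{m}_{-1}$ is not killed by any such identity; it is cancelled against the residual error from $\Psi'$, and seeing this requires the specific expression of $\psi$ as an antisymmetrized open-closed map (so that certain boundary strata cancel in pairs before one ever reaches the error term).
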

\begin{remark}
Assuming that $L$ is graded and $3$-dimensional allows us to work over the Novikov ring, rather than an extension thereof as in~\cite{ST21, ST23}. There should be no essential difficulty in defining the open Gromov--Witten potential of a general Lagrangian immersion using the techniques of this paper by considering a larger coefficient ring. It would be interesting to formulate the notion of a \textit{point-like} bounding cochain in this situation. Such a definition might make it possible to obtain meaningful counts of disks with boundary and corners on a Lagrangian immersion.
\end{remark}

One can compensate for the wall-crossing term mentioned in Theorem~\ref{immersedogw} when $L$ is nullhomologous using a choice of bounding $4$-chain, obtaining a potential which is truly invariant under changes of almost complex structure. In this setting, Solomon--Tukachinsky~\cite{ST23} construct a connection on the relative cohomology $QH^*(X,L)$, essentially constructing an extension of VHSs, but without specifying an integral structure. The connection on $QH^*(X,L)$ is defined using the open Gromov--Witten invariants, and so one would expect open enumerative mirror symmetry to be implied by an isomorphism between $QH^*(X,L)$ and an extension of VHSs defined in the $B$-model, as described in~\cite{Car87}.

Let $X$ be a Calabi--Yau $3$-fold, and let $X^{\vee}$ be a mirror family of Calabi--Yau threefolds, thought of here as a variety over a Novikov field. We further assume that $X^{\vee}$ is \textit{maximally unipotent}, in the sense of~\cite[\S{1.1}]{GPS15}, a suitable analogue over the Novikov field of the notion of a large complex structure limit point~\cite{CK99}. In~\cite{GPS15} Ganatra--Perutz--Sheridan show how to recover the enumerative predictions of~\cite{COGP91} from homological mirror symmetry using variations of \textit{semi-infinite} Hodge structures (VSHS) defined at the categorical level. A VSHS can be thought of as a variation of Hodge structure without a choice of integral local system. If $X^{\vee}$ is smooth and compact, one can endow its negative cyclic homology $HC_{\bullet}^{-}(D^b\Coh(X^{\vee}))$ with the structure of a VSHS. Assuming the existence of a weak proper Calabi--Yau structure on the Fukaya category (cf. Assumption~\ref{traceass}), and using the fact that $X^{\vee}$ is maximally unipotent, one can also construct a VSHS associated to $HC_{\bullet}^{-}(\mathcal{F}(X))$ on the $A$-side. Homological mirror symmetry determines an isomorhpism between these two VSHSs. Ganatra--Perutz--Sheridan use the negative cyclic open-closed map (cf. Assumption~\ref{connectionsass}) and an appropriate version of the HKR isomorphism (cf.~\cite{Tu24}) to show that these VSHSs are isomorphic to the ones relevant to homological mirror symmetry, which suffices to deduce the genus zero Gromov--Witten invariants of $X$ from homological mirror symmetry.

To prove Theorem~\ref{mainprelim}, we will establish a partial analogue of the main Theorem of~\cite{GPS15} in settings like the one considered in Theorem~\ref{main1} for \textit{extensions} of VSHS. We do so under similar assumptions about the existence and properties of the cyclic open-closed map.
\begin{remark}
Hugtenburg~\cite{Hug24b} proposes a construction of extensions of VSHSs at the categorical level, as well as a framework for showing that this extension is isomorphic to relative quantum cohomology. This relies on the definition of the cyclic open-closed map in~\cite{Hug24a}, which was defined on the negative cyclic homology of a Fukaya $A_{\infty}$-algebra, as opposed to the Fukaya category, in a way the relies on cyclic symmetry. The same issues that arise when one attempts to construct a cyclically symmetric pairing on the $A_{\infty}$-algebra of an immersed Lagrangian would also arise when one attempts to construct such a pairing on the Fukaya category, which would pose a challenge in attempting to extend the chain-level arguments of~\cite{Hug24b} to the Fukaya category of a Calabi--Yau manifold. Similar difficulties would arise when attempting to adapt the results of Solomon--Tukachinsky~\cite{ST21, ST23} to the present setting.

Instead of comparing extensions of VSHS at the categorical level, our approach transfers this difficulty to a comparison of $B$-model VSHSs, which admit a description in terms of normal functions that is particularly helpful in the setting of Theorem~\ref{main1}. This means that our arguments can be carried out at the homological level, albeit in significantly less general settings than one could hope to address with the techniques of~\cite{Hug24b}.
\end{remark}

For the purpose of relating the open Gromov--Witten invariants to \textit{mirror symmetry}, we find it convenient to formulate the extension of VSHSs associated to a Lagrangian submanifold slightly differently than in~\cite{ST23}. The relative period integrals are determined by a normal function, which can be thought an element of an Ext group in the category of VSHSs. This normal function is itself determined by a choice of bounding chain for a nullhomologous algebraic cycle. Therefore, we choose a formulation of the $A$-model extension that privileges the normal function and a bounding smooth singular chain for an immersed Lagrangian submanifold. To carry out this approach, we work under some restrictive assumptions on the Hodge numbers  of $X$, which are satisfied by the quintic.

The following theorem encapsulates the results of \S\ref{comparisonthms}, which show how the open Gromov--Witten invariants are recovered from homological mirror symmetry in sufficiently simple geometric situations. We assume that $X$ and $X^{\vee}$ are a mirror pair of Calabi--Yau $3$-folds, where $X^{\vee}$ is defined over a certain Novikov ring. Alternatively, one can think of $X^{\vee}$ as a family $\mathcal{X}^{\vee}$ of Calabi--Yau threefolds over the punctured unit disk. We assume that $X$ is simply connected and that its even degree Hodge numbers coincide with those of the quintic. These conditions are summarized in Assumptions~\ref{BHodgenumberass} and~\ref{AHodgenumberass}. In this setting, we consider a pair of Lagrangian branes $L_0,L_1\in\mathcal{F}(X)$ with the same open-closed image, and let $\mathcal{L}_0,\mathcal{L}_1\in D_{dg}^b\Coh(X^{\vee})$ denote the mirror objects. These should satisfy Assumption~\ref{functass}, meaning that $\mathcal{L}_i$ is required be the pushforward of a vector bundle on an algebraic curve $C^i$, and that $C^0$ and $C^1$ are homologous. Theorem~\ref{main1} verifies this assumption for the pair of branes $(\tLvg,\nabla^{\vG}_{\omega^2})$ and $(\tLvg,\nabla^{\vG}_{\omega})$. Since a nontrivial algebraic curve is never nullhomologous, it is natural to phrase our results in terms of a pair of Lagrangian branes on the $A$-side, rather than a single Lagrangian brane.

\begin{theorem}\label{main2}
Suppose that $X$ and $X^{\vee}$ are a mirror pair of Calabi--Yau $3$-folds subject to Assumptions~\ref{BHodgenumberass} and~\ref{AHodgenumberass}, and that $L_0$ and $L_1$ are a pair of immersed Lagrangian branes subject to Assumption~\ref{functass}. Then there is a normal function in the quantum cohomology of $X$ determined by the open Gromov--Witten invariants of the immersion $L_0\cup L_1\subset X$. This normal function coincides, up to changing coordinates by the mirror map, with a normal function determined by $mC$, where $C$ is the algebraic cycle $C_0-C_1$.
\end{theorem}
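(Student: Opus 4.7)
The plan is to prove Theorem~\ref{main2} by realizing the open Gromov--Witten invariants of the immersion $L_0\cup L_1$ and the periods of $mC = C^0 - C^1$ as extension classes of two isomorphic variations of semi-infinite Hodge structures (VSHSs), one on each side of the mirror, and then to identify these extensions via homological mirror symmetry together with the negative cyclic open-closed map of Assumption~\ref{connectionsass}. Assumptions~\ref{BHodgenumberass} and~\ref{AHodgenumberass} on the even Hodge numbers ensure that the relevant graded piece of each VSHS is one-dimensional, so each extension records a single normal function and the comparison reduces to matching scalars.

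First, on the $B$-model side, Assumption~\ref{functass} ensures that $mC = C^0 - C^1$ is nullhomologous in $X^{\vee}$, so the classical Griffiths--Abel--Jacobi construction produces a normal function and an extension of the Hodge-theoretic VSHS over the mirror family. Via the HKR isomorphism in the form of~\cite{Tu24}, this extension lifts to an extension of the categorical VSHS on $HC_\bullet^{-}(D^b_{dg}\Coh(X^{\vee}))$ whose extending class is the difference of Hochschild classes $[\mathcal{L}_0] - [\mathcal{L}_1]$. This is the step where working with a normal function, rather than a full VMHS, pays off: the category of VSHSs has good enough $\Ext^1$ groups to encode the Griffiths invariant directly.

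On the $A$-model side, Theorem~\ref{immersedogw} applies to the graded clean Lagrangian immersion $L_0\cup L_1$ to produce an open Gromov--Witten potential $\Psi$, defined using the homotopy cyclic $\infty$-inner product built from the trace of Assumption~\ref{traceass} via~\cite{CL}. Following the strategy of~\cite{Han24b}, I interpret $\Psi$ as an $A_\infty$-bimodule morphism whose image under the negative cyclic open-closed map is a normal function in $QH^*(X)$, determining an extension of the categorical VSHS on $HC_\bullet^{-}(\Fuk(X))$ with extending class $[L_0] - [L_1]$ in Hochschild homology. This refines, in the immersed setting and at the level of a single normal function, the connection on $QH^*(X,L)$ built by Solomon--Tukachinsky~\cite{ST23}.

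Finally, homological mirror symmetry identifies the Hochschild classes on the two sides, and the compatibility of the negative cyclic open-closed maps with the HKR isomorphism, established in the style of~\cite{GPS15}, upgrades this to an isomorphism of the two categorical extensions of VSHSs. Transporting back through the open-closed maps on each side, and passing to coordinates via the mirror map, yields the stated equality of the $A$-model normal function in $QH^*(X)$ with the $B$-model Griffiths normal function of $mC$. The main obstacle is the chain-level identification of the open Gromov--Witten potential from Theorem~\ref{immersedogw}---which for a Lagrangian immersion is defined only up to the homotopy cyclic inner product of~\cite{CL}, since the Fukaya $A_\infty$-algebra lacks strict cyclic symmetry---with the genuine extension class produced categorically on $HC_\bullet^{-}(\Fuk(X))$. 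This requires careful bookkeeping of homotopies adapted from~\cite{Han24b} so that the resulting normal function in $QH^*(X)$ is well-defined at the level relevant for comparison with the Griffiths normal function.
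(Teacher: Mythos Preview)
Your overall architecture matches the paper's: build an $A$-model normal function from $\Psi$, lift it through $\mathcal{OC}^{-}$ to $HC_*^{-}(\mathcal{F}(X))$, transport via homological mirror symmetry and $\widetilde{\mathfrak{J}}_{\HKR}$ to the $B$-side, and compare with the Griffiths normal function of $mC$. The one-dimensionality from Assumptions~\ref{BHodgenumberass} and~\ref{AHodgenumberass} is indeed what makes the comparison tractable.

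Where your route diverges is exactly at what you flag as the ``main obstacle.'' You propose to interpret $\Psi$ as an $A_\infty$-bimodule morphism and match it with a categorical extension class in Hochschild homology, anticipating chain-level bookkeeping adapted from~\cite{Han24b}. The paper deliberately avoids this. Instead it works entirely at the homological level: using the Schwarz--Vologodsky--Walcher basis $\lbrace e_3,e_2,e_1,e_0\rbrace$ of Remark~\ref{svwbasisA}, it simply \emph{writes down} $\widetilde{\nu}_A = Q\partial_Q\Psi(Q)\,e_1 + \Psi(Q)\,e_0$ and checks horizontality directly from the connection matrix~\eqref{connectionmatrixA}. To lift $\nu_A$ through $\mathcal{OC}^{-}$, the paper does not identify $\Psi$ categorically; it only needs to show $\nabla\widetilde{\nu}_A = -(Q\partial_Q)^2\Psi\cdot e_1$ lies in the image of $\mathcal{OC}^{-}$, which follows from the divisor axiom (Lemma~\ref{qass}) and the horocyclic operators. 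The paper explicitly remarks that it has \emph{not} checked whether the resulting $\nu_{\mathcal{F}}$ agrees with Hugtenburg's relative-cyclic-homology extension class, so the identification you propose is stronger than what is actually needed.

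On the $B$-side, your sketch is correct but omits the key lemma: the paper invokes C\u{a}ld\u{a}raru's result~\cite[Theorem 4.5]{Cal04} that the ordinary Chern character factors through the HKR isomorphism, which is what forces the image of $\mathcal{L}_i$ under $\widetilde{\mathfrak{J}}_{\HKR}$ to be $c_2(\mathcal{L}_i)$, hence $-m[C^i]$ by Assumption~\ref{functass}. This is the step (Proposition~\ref{comparingbnormalfunctions}) that pins the categorical normal function to the geometric one attached to $mC$; without it the comparison with the Griffiths normal function is not complete.
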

\begin{remark}[Open enumerative mirror symmetry for the real quintic]
Consider the set of real points in a Dwork quintic threefold $X_{\tau}^5$ for a small real constant $\tau$, i.e. the \textit{real quintic}. This is an embedded Lagrangian submanifold diffeomorphic to $\mathbb{RP}^3$, which supports two objects of the Fukaya category corresponding to its two unitary local systems. Provided that one can show that these objects supported on the real quintic are mirror to the expected $B$-model objects, which can be thought of as structure sheaves of conics in the mirror quintic~\cite{MW}, it should follow from Theorem~\ref{main2} that the main theorem of~\cite{PSW08} can be recovered from homological mirror symmetry.
\end{remark}
This result does not establish a relationship between period integrals and and open Gromov--Witten invariants in the maximum conceivable generality, but it does apply in the setting of Theorem~\ref{main1}, and it is also expected to apply to the real quintic and its mirror conics~\cite{Wal07, PSW08, MW}. The strategy of the proof is to define a candidate normal function in quantum cohomology by hand, using the simple form of the quantum connection on the quintic to check that we can indeed define a Hodge structure this way. For this purpose, we use an expression for the quantum connection in terms of a basis on quantum cohomology specified by Schwarz--Vologodsky--Walcher~\cite{SVW17}. The determination of this basis is where a \textit{splitting} of the Hodge filtration (cf.~\cite{Mor93} or~\cite[Def. 2.9 and \S{2.2}]{GPS15}) is used in our argument. Our construction of the open Gromov--Witten potential, combined with assumptions about the cyclic open-closed map, shows that this normal function lifts to a normal function in the negative cyclic homology of the Fukaya category. Under homological mirror symmetry, this corresponds to a normal function in the derived category of the mirror, and hence yields a normal function for the $B$-model VSHS associated to $X^{\vee}$.

We can compare this extension of VSHSs to the one derived from the cycle $mC$ by appealing to~\cite{Cal04}, which relates the Chern character to the HKR isomorphism used in~\cite{GPS15} to compare closed $B$-model Hodge structures. This, combined with the calculations of~\cite{Wal12} then implies Theorem~\ref{mainprelim}, since the extensions of $B$-model Hodge structures are both essentially determined by the algebraic second Chern class of an object in the derived category. For the pushforward of a vector bundle on a curve, the algebraic second Chern class is just an integer multiple of the support. The need to determine the second Chern class is the reason that merely computing the support of the mirror sheaf to $(\tLvg,\nabla^{\vG}_{\omega})$ would be insufficient to prove Theorem~\ref{mainprelim}.

\subsection*{Acknowledgments} I thank my advisor, Mohammed Abouzaid, for first suggesting this problem to me, for comments on a draft of this paper, and for his guidance and encouragement. I owe a special debt of gratitude to Nick Sheridan, Jake Solomon, and Johannes Walcher for several important suggestions and questions that decisively influenced this work. I am also grateful to Andrew Hanlon and Nick Sheridan for pointing out an error in an earlier version of this paper. Finally, I thank Denis Auroux, Shaoyun Bai, Soham Chanda, Mark McLean, Paul Seidel, and Mohan Swaminathan for edifying discussions related to various aspects of this paper. This project was partially supported through Abouzaid's NSF grant DMS-2103805 and by the Simons Collaboration on Homological Mirror Symmetry.

\section{Mirrors to points in the mirror quintic}\label{mirrorstopointssect} Since all quintic threefolds are symplectomorphic, we will usually work in a Dwork quintic threefold $X_{\tau}^5$, where $\tau$ is a small real number. In particular, complex conjugation on $\mathbb{CP}^4$ restricts to an anti-symplectic involution on $X_{\tau}^5$, which will allow us to apply the results of~\cite{Sol20} to prove that certain Lagrangian submanifolds are unobstructed.

Studying tropical Lagrangian submanifolds in the quintic using homological mirror symmetry requires some knowledge of how the mirror functor interacts with the putative SYZ fibration. This section draws a partial connection between these two pictures of mirror symmetry, as we show that certain smooth Lagrangian tori in the quintic (which are traditionally thought of as nonsingular SYZ fibers), correspond to skyscraper sheaves under the mirror functor of~\cite{She15}.

For any Dwork quintic $X_{\tau}^5$, recall from~\cite[Example 1.8]{Sol20} and~\cite[\S{3.1}]{Rua01} that there is a natural family of Lagrangian tori in $X_{\tau}^5$, which are constructed as follows. The central fiber $X_0^5$ in the Dwork pencil is the union of coordinate hyperplanes in $\mathbb{CP}^4$. Each of these hyperplanes can be identified with $\mathbb{CP}^3$, so the smooth locus of $X_0^5$ consists of the union of $5$ disjoint copies of $(\mathbb{C}^*)^3$. There is a moment map $\Log\coloneqq\Log_e\colon(\mathbb{C}^*)^3\to\mathbb{R}^3$ which is just the SYZ fibration discussed in the introduction. We obtain Lagrangian tori in $X_{\tau}^5$, for sufficiently small $\tau$, by deforming these moment fibers under symplectic parallel transport.
\begin{definition}
Consider the meromorphic function
\begin{align*}
s\coloneqq\frac{\prod_{j=1}^5 x_j}{\sum_{j=1}^5 x_j^5}
\end{align*}
on $\mathbb{CP}^4$. Let $f$ denote the real part of this function, and consider its gradient $\nabla f$ with respect to the metric on $\mathbb{CP}^4$ induced by the Fubini--Study form and the integrable complex structure. If $L_{0,q}\in(\mathbb{C}^*)^3$ denotes a smooth moment fiber in one of the components of $X_0$ over a point $q\in\mathbb{R}^3$, let $L_{\tau,q}$ denote its image in $X_{\tau}^5$ under the gradient flow of $f$.
\end{definition}
The Lagrangian torus $L_{\tau,q}$ will live near one of the coordinate hyperplanes of $\mathbb{CP}^4$, meaning that there are actually five Lagrangian tori obtained this way. We do not include this in the notation since the choice of coordinate hyperplane will usually be irrelevant or otherwise clear from context.
\begin{remark}
We emphasize that our main results, and in particular Theorem~\ref{main1}, do not assume the existence of an SYZ fibration on the quintic. Instead, we only need to consider certain smooth Lagrangian tori, and in particular we do not need to consider the singular fibers or discriminant locus of an SYZ fibration.
\end{remark}
\begin{remark}\label{graphtorus}
As shown in~\cite[\S{3.2}]{Rua01}, the normalized gradient of $f$ is given by
\begin{align*}
\frac{\nabla f}{|\nabla f|^2} = \mathrm{Re}\left(\frac{x_1^5+x_2^5+x_3^5+1}{x_1x_2x_3}\frac{\partial}{\partial x_5}\right)
\end{align*}
when restricted to the chart $\lbrace x_4 = 1 \, , \; x_5 = 0\rbrace$. In particular, we can think of the Lagrangian tori $L_{\tau,q}$, for small $\tau$, as the graph of a holomorphic function restricted to a moment fiber on $(\mathbb{C}^*)^3$.
\end{remark}
Since these Lagrangian tori in $X_{\tau}^5$ come in a family (as $q$ varies), they will not all be exact in the complement of a divisor, and thus they cannot all be objects of the relative Fukaya category as considered by~\cite{PS24}. For this reason, it is more convenient for us to construct mirrors to points by identifying an appropriate family of $A$-branes supported on a single Lagrangian torus, which we will arrange to be exact.

We define the torus $L_{\tau,0}$ is to be the result of parallel transporting the SYZ fiber $L_{0,0}$ over the origin in a copy of $(\mathbb{C}^*)^3$. The fibration is given in~\eqref{syzfibr}. In one of the hyperplanes $\lbrace x_5 = 0\rbrace\subset X_0^5$, the torus $L_{0,0}$ is given by
\begin{align}\label{l00def}
\lbrace|x_1| = |x_2| = |x_3| = |x_4| = 1\colon x_1x_2x_3x_4 = 1\rbrace\subset\lbrace x_1x_2x_3x_4 = 1\rbrace\cong(\mathbb{C}^*)^3\subset\mathbb{CP}^3
\end{align}
meaning that it is the unique Lagrangian torus in $X_0^5$ preserved by all permutations of the first four coordinates on $\mathbb{CP}^4$. It follows from Remark~\ref{graphtorus}, that $L_{\tau,0}$ possesses the same symmetries.
\begin{definition}\label{toricdivisor}
Let $D_{\tau}\subset X_{\tau}^5$ denote the divisor given by the intersection
\begin{align}
D_{\tau}\coloneqq X_{\tau}^5\cap X_0^5
\end{align}
of $X_{\tau}^5$ with the coordinate hyperplanes in $\mathbb{CP}^4$.
\end{definition}
By construction, $L_{\tau,0}$ lies in the complement of $D_{\tau}$, so we will be able to think of it as an object of the relative Fukaya category~\cite{She15}. Since $X^5_{\tau}\setminus D_{\tau}$ is an affine variety, it carries a Liouville structure induced by pulling back the standard symplectic form on $(\mathbb{C}^*)^4$. We fix a primitive for this symplectic form which makes $L_{\tau,0}\subset X^5_{\tau}\setminus D_{\tau}$ an exact Lagrangian submanifold. The split-generators used in the proof of homological mirror symmetry of~\cite{She15} are represented by immersed Lagrangian spheres, and thus they are exact with respect to any choice of primitive.
\begin{proposition}\label{mirrors2points}
For $\tau\in\mathbb{R}$, the Lagrangian branes obtained by equipping $L_{\tau,0}$ with rank one complex local systems correspond to skyscraper sheaves in the derived category $D^b\Coh(\mathcal{X}^{5,\vee})$ of the mirror quintic with respect to the mirror functor of~\cite{She15}.
\end{proposition}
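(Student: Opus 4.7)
The plan is to combine Sheridan's homological mirror symmetry equivalence with a Floer-cohomological characterization of skyscraper sheaves, using the real structure on the Dwork family in an essential way. Since complex conjugation on $\mathbb{CP}^4$ restricts to an anti-symplectic involution $\sigma\colon X_{\tau}^5\to X_{\tau}^5$ (when $\tau\in\mathbb{R}$) that preserves $L_{\tau,0}$, the first step is to invoke~\cite{Sol20} to conclude that $L_{\tau,0}$ is tautologically unobstructed: the obstruction classes $\mathfrak{m}_0$ in the Fukaya $A_{\infty}$-algebra vanish because the moduli of holomorphic disks bounding $L_{\tau,0}$ carry involutions, induced by $\sigma$, that reverse orientation and pair off disks in cancelling families. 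Consequently, $(L_{\tau,0},\nabla)$ is a well-defined object of the relative Fukaya category $\mathcal{F}(X_{\tau}^5,D_{\tau})$ for every rank one $\mathbb{C}$-local system $\nabla$.

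Next I would compute the endomorphism algebra $HF^{*}((L_{\tau,0},\nabla),(L_{\tau,0},\nabla))$. The same involution-cancellation argument forces the full Floer differential and all disk contributions to the Yoneda product to vanish, so after choosing a Morse--Bott model for the diagonal the algebra reduces to the classical cohomology ring of $T^3$. This gives an identification
\begin{equation*}
HF^{*}\bigl((L_{\tau,0},\nabla),(L_{\tau,0},\nabla)\bigr)\cong H^{*}(T^3;\Lambda)\cong\Lambda^{*}\bigl(\Lambda^{\oplus 3}\bigr)
\end{equation*}
as graded $\Lambda$-algebras, which is exactly the Ext algebra of a skyscraper sheaf $\mathcal{O}_p$ on a smooth Calabi--Yau threefold. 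Transporting across Sheridan's equivalence $\Phi\colon D^{\pi}\mathcal{F}(X_{\tau}^5)\to D^b\Coh(X_{z(\tau)}^{5,\vee})$, the mirror object $\mathcal{L}_{\nabla}\coloneqq\Phi(L_{\tau,0},\nabla)$ is simple with this same Ext algebra.

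To conclude that $\mathcal{L}_{\nabla}$ is a genuine skyscraper sheaf I would argue as follows. The tangent space $\Ext^1(\mathcal{L}_{\nabla},\mathcal{L}_{\nabla})\cong\Lambda^{\oplus 3}$ to the formal deformation space is three-dimensional, and by the usual obstruction theory on a smooth compact Calabi--Yau threefold (together with the CY pairing identifying obstructions with deformations dually), the formal moduli are smooth of dimension three. The assignment $\nabla\mapsto\mathcal{L}_{\nabla}$ gives a holomorphic map from the three-dimensional moduli $\Hom(\pi_1(L_{\tau,0}),\mathbb{C}^{*})\cong(\mathbb{C}^{*})^3$ of local systems into this moduli space of simple objects, and it is injective on first-order deformations by the HMS isomorphism of $\Ext^1$'s. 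On $X_{z(\tau)}^{5,\vee}$ the connected component of the moduli space of simples with Ext algebra $\Lambda^{*}(\mathbb{C}^3)$ containing any one skyscraper sheaf is the mirror quintic itself, so it suffices to exhibit a single $\nabla$ for which $\mathcal{L}_{\nabla}$ is known to be a skyscraper; this is natural to do using the $S_4$-symmetric torus $L_{0,0}$ in the toric boundary (where its mirror can be read off from the toric picture of the large complex structure limit) and then deforming in $\tau$.

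The main obstacle is ruling out the possibility that $\mathcal{L}_{\nabla}$ is a nontrivial derived deformation of a skyscraper sheaf (for instance, a shifted or Fourier--Mukai-twisted skyscraper) rather than an honest $\mathcal{O}_p$. The grading is pinned down by the Maslov-zero condition on $L_{\tau,0}$, so no integer shifts intervene. The subtler point is formality of the $A_{\infty}$-structure on $HF^{*}$: I would use the $S_4$-symmetry permuting the first four homogeneous coordinates together with homological perturbation to force all higher products in the minimal model to vanish, matching the formal model of the Ext algebra of a skyscraper. Combined with the identification of the basepoint mirror in the toric limit, this gives the desired conclusion that $\mathcal{L}_{\nabla}$ is a skyscraper sheaf on $X_{z(\tau)}^{5,\vee}$ and that the resulting map $(\mathbb{C}^{*})^3\hookrightarrow X_{z(\tau)}^{5,\vee}$ is an open immersion.
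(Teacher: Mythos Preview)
Your approach is genuinely different from the paper's, and it has a real gap at the key step. The paper does not attempt an abstract characterization of skyscraper sheaves via their Ext-algebras. Instead, it uses the concrete geometry of Sheridan's construction: the affine Fermat quintic covers the three-dimensional pair of pants $M^5$, and Sheridan's split-generators are lifts of the immersed sphere whose argument projection is the boundary of the coamoeba. The paper writes down an explicit symplectomorphism $X_{\tau}^5\setminus D_{\tau}\cong X_{\infty}^5\setminus D_{\infty}$, checks that the image of $L_{\tau,0}$ projects to a boundary \emph{facet} of that coamoeba, and then invokes~\cite[Theorem 7.4]{She11} together with the description of the mirror functor in~\cite{She15} as a deformation thereof to conclude that this torus is mirror to the structure sheaf of a smooth point on $\{z_1z_2z_3z_4z_5=0\}\subset\mathbb{C}^5$, hence to a skyscraper after deformation. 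This is a direct geometric identification, not a categorical one.

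The gap in your argument is precisely the step you defer: ``it suffices to exhibit a single $\nabla$ for which $\mathcal{L}_{\nabla}$ is known to be a skyscraper; this is natural to do using \ldots\ the toric picture of the large complex structure limit.'' That sentence is the entire content of the proposition. Knowing that $HF^{*}\cong\Lambda^{*}(\Lambda^{\oplus 3})$ as a (formal) $A_{\infty}$-algebra only tells you that $\mathcal{L}_{\nabla}$ is \emph{point-like}; it does not distinguish $\mathcal{O}_p$ from its image under any autoequivalence of $D^b\Coh(X^{5,\vee})$ (tensoring by a line bundle, a spherical twist, etc.), all of which have the same Ext-algebra and sit in three-dimensional components of the moduli of simples. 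Your moduli argument therefore cannot close without an independent identification of a basepoint, and the only available route to such an identification is exactly the coamoeba comparison the paper performs. Once that basepoint is in hand, incidentally, the deformation over $(\mathbb{C}^*)^3$ is immediate and the formality/involution-cancellation discussion becomes unnecessary.
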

A previous version of this article claimed an analogue of this result using the mirror functor of~\cite{GHHPS24}, but the proof of homological mirror symmetry as written in op. cit. does not apply to the quintic threefold and its mirror.
\begin{proof}
First note that complex conjugation on $X^5_{\tau}$ acts on $L_{\tau,0}$ and acts on $H^1(L_{\tau,0};\mathbb{C})$ as $-\id$ (cf.~\cite[Example 1.8]{Sol20}). Thus, by~\cite{Sol20}, equipping $L_{\tau,0}$ with any rank one $\mathbb{C}$-local system yields an unobstructed Lagrangian brane (with bounding cochain $0$).

The split-generators for the Fukaya category described in~\cite{She15} are naturally thought of as Lagrangian branes in the Fermat quintic threefold $X^5_{\infty}$. Letting $M^5$ denote the pair of pants
\begin{align*}
M^5\coloneqq\lbrace z_1+z_2+z_3+z_4+z_5 = 0\rbrace\subset\mathbb{CP}^4\setminus\bigcup_{j=1}^5\lbrace z_j = 0\rbrace
\end{align*}
and noting that $X_{\infty}^5\setminus D_{\infty}$ is the affine Fermat quintic
\begin{align*}
X^5_{\infty}\setminus D_{\infty} = \lbrace x_1^5+x_2^5+x_3^5+x_4^5+x_5^5 = 0\rbrace\subset\mathbb{CP}^4\setminus\bigcup_{j=1}^5\lbrace x_j = 0\rbrace
\end{align*}
we recall that there is a covering map
\begin{align*}
X^5_{\infty}\setminus D_{\infty}&\to M^5 \\
x_j &\mapsto x_j^5
\end{align*}
Sheridan's split-generators~\cite{She15} for $\mathcal{F}(X^5_{\infty})$ are obtained by taking lifts of an immersed Lagrangian sphere $S^3\looparrowright M^5$ constructed in~\cite{She11}. The projection of $M^5$ to the moment fiber of $(\mathbb{C}^*)^4$ defines a \textit{coamoeba} whose boundary is a polyhedral $3$-sphere with self-intersections. Sheridan's immersed sphere is, intuitively, characterized as a lift of this sphere to the pair of pants. The result of the proposition will follow from comparing the Lagrangian torus $L_{\tau,0}$ to a boundary facet of the coamoeba.

Since all smooth quintic threefolds are symplectomorphic, we can think of the Lagrangian tori $L_{\tau,0}\subset X^5_{\tau}$ as Lagrangian submanifolds of $X^5_{\infty}$. It will be convenient to specify a symplectomorphism between the very affine quintic threefolds $X_{\tau}^5\setminus D_{\tau}$ and $X_{\infty}^5\setminus D_{\infty}$. Representing the big torus in $(\mathbb{C}^*)^4\subset\mathbb{CP}^4$ by $\lbrace x_1x_2x_3x_4x_5 = \xi\rbrace$, for some generic constant $\xi\in\mathbb{C}^*$ of small norm, we can represent the very affine Dwork quintic as the vanishing locus of a polynomial as follows
\begin{align}\label{veryaffinecoords}
X_{\tau}^5\setminus D_{\tau} = X_{\tau}^5\cap\left\lbrace\prod_{j=1}^5 x_j = \xi\right\rbrace = \left\lbrace \xi-\frac{\tau^{1/5}}{5}\sum_{j=1}^5 x_j^5 = 0\right\rbrace\subset\mathbb{CP}^4\setminus\bigcup_{j=1}^5\lbrace x_j = 0\rbrace \,.
\end{align}
Scaling the constant term of~\eqref{veryaffinecoords} by small positive real constants determines a symplectomorphism between $X_{\tau}^5\setminus D_{\tau}$ and $X_{\infty}^5\setminus D_{\infty}$ which carries $L_{\tau,0}$ to a Lagrangian torus in the affine Fermat quintic which we can take to be given by the set of points $\lbrace |x_1| = |x_2| = |x_3| = \epsilon\rbrace$, for a constant $\epsilon\in\mathbb{C}^*$ of small norm, as can be seen from Remark~\ref{graphtorus}.

The image of $L_{\tau,0}$ under this covering map is a Lagrangian torus in the pair of pants whose argument projection is a boundary facet of the coamoeba, using the description in~\cite[Proposition 2.1]{She11}, and this corresponds to the structure sheaf of a smooth point on the mirror variety $\lbrace z_1z_2z_3z_4z_5 = 0\rbrace\subset\mathbb{C}^5$. Consequently $L_{\tau,0}$, thought of as an object of $\mathcal{F}(X_{\tau}^5)$, is mirror to the structure sheaf of a point by the description of the mirror equivalence in~\cite{She15} as a deformation of a mirror functor discussed in~\cite[Theorem 7.4]{She11}.
\end{proof}
An easy computation in a Weinstein neighborhood shows that if one equips $L_{\tau,0}$ with two different rank $1$ local systems, then the Floer cohomology of the resulting pair of Lagrangian branes (in the quintic) vanishes. This Floer cohomology group corresponds to the Ext group between two skyscraper sheaves under mirror symmetry, implying that no two distinct local systems on $L_{\tau,0}$ yield mirrors to the same point of the mirror quintic.

\section{Tropical Lagrangians in the quintic threefold}\label{tropicallagrsect} In this section, we will construct the immersed Lagrangian of Theorem~\ref{main1}. A brief outline of its construction is as follows.

A Weinstein neighborhood of the Lagrangian torus of Proposition~\ref{mirrors2points} yields an open neighborhood in the Dwork quintic $X_{\tau}^5$ in which we can include copies of tropical Lagrangians. Using this chart, we can identify a copy of (a cover of) the tropical Lagrangian of~\cite[Theorem 1.1]{Han24a} in the quintic. By attaching certain Lagrangian cones to this noncompact Lagrangian in $X_{\tau}^5$, we obtain a Lagrangian denoted $\tLvgsing$ which has isolated conical singularities, all of which are modeled on the Harvey--Lawson cone, see Definition~\ref{hlcone} below. The immersed Lagrangian $\tLvg\to X_{\tau}^5$ lies in a small Weinstein neighborhood of $\tLvgsing$, and is obtained by `doubling' the singular Lagrangian as in~\cite[\S{5.3}]{Han24a}. This immersed Lagrangian can also be thought of as the result of attaching immersed Lagrangian handles to (a cover of) the Lagrangian immersion $\Limm$ constructed in~\cite[Theorem 1.2]{Han24a}. This section draws heavily from~\cite{Han24a}, but we have attempted to keep our exposition in the current paper mostly self-contained.
\begin{remark}
The notation for Lagrangian immersions in this paper differs slightly from the notation of~\cite[Theorem 1.2]{Han24a}. In loc. cit., we denoted by $\tLimm$ the domain of the immersed Lagrangian lift of a $4$-valent tropical vertex. The image of this immersion was denoted $\Limm$. In this paper, we will use the symbol $\tLimm$ to refer to a particular \textit{cover} of $\Limm$ instead.
\end{remark}

\subsection{Singular tropical Lagrangians near an SYZ fiber} We begin by recalling some facts about the singular Lagrangian $\Lsing$ in $T^*T^3$ from~\cite{Han24a}. Let $Q$ be a $3$-dimensional integral affine space, meaning that it comes equipped with a choice of full-rank lattice. This induces a local system $T^*_{\mathbb{Z}}Q$ of integral $1$-forms, from which we can form $T^*Q/T^*_{\mathbb{Z}}Q$. We can naturally identify this space with $T^*T^3$, and under this identification the projection $\pisyz\colon T^*Q/T^*_{\mathbb{Z}}Q\to Q$ corresponds to the projection from $T^*T^3$ to the cotangent fiber, which we also denote by $\pisyz$. Denote by $(q_1,q_2,q_3)$ the coordinates on $Q$, and by $(\theta_1,\theta_2,\theta_3)$ the dual coordinates on the fiber of $T^*Q$, which descend to coordinates in the $T^3$-fibers of $T^*Q/T^*_{\mathbb{Z}}Q$.

It will sometimes be helpful to identify $T^*T^3\cong T^*Q/T^*_{\mathbb{Z}}Q$ with $(\mathbb{C}^*)^3$ using the identification
\begin{align*}
(q_j,\theta_j)\mapsto x_j\coloneqq \exp(q_j+2\pi i\theta_j) \, .
\end{align*}
Under this identification, the map $\pisyz$ corresponds to the map $\Log\coloneqq\Log_e$ defined in the introduction.

\begin{definition}
A tropical curve $W\subset Q$ is an embedded graph with edges $\lbrace W_s\rbrace$ equipped with positive integers weights $\lbrace w_s\in\mathbb{Z}_{>0}\rbrace$ which satisfy the following conditions:
\begin{itemize}
\item[•] each edge $W_s$ is contained in an integral affine subspace of $Q$;
\item[•] for every vertex $p$ of $W$, the edges $W_1,\ldots,W_m$ adjacent to $p$, with corresponding weights $w_1,\ldots,w_m$ and tangent vectors $v_1,\ldots,v_m$ satisfy the balancing condition
\begin{align*}
\sum_{i=1}^m w_iv_i = 0\, .
\end{align*}
\end{itemize}
\end{definition}
The tropical curve $V\subset Q$ we will consider most often in this paper is given by the union of the four $1$-dimensional cones in $Q$, each of which has weight one
\begin{align*}
V_1 &= \left\lbrace(q_1,0,0)\in Q\colon q_1 > 0\right\rbrace \\
V_2 &= \left\lbrace(0,q_2,0) \in Q\colon q_2 > 0\right\rbrace \\
V_3 &= \left\lbrace(0,0,q_3) \in Q\colon q_3 > 0\right\rbrace \\
V_4 &= \left\lbrace(q_1,q_2,q_3) \in Q\colon q_1 = q_2 = q_3 < 0\right\rbrace \, .
\end{align*}
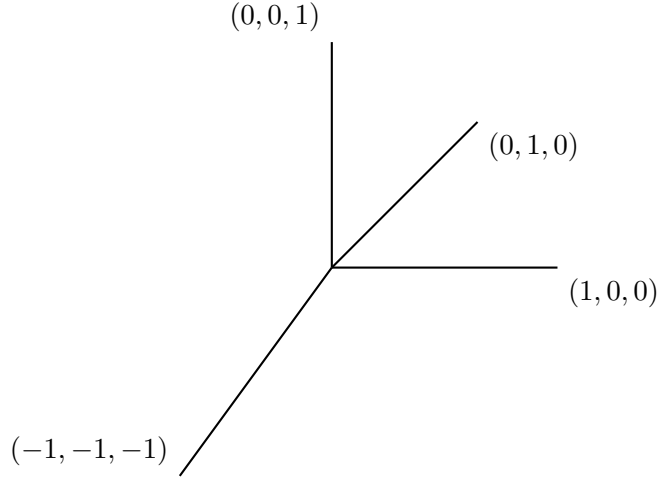
\begin{figure}
\begin{tikzpicture}
\begin{scope}[scale = 0.75]
  \draw[thick] (0,0,0) -- (4,0,0) node[anchor = north west]{$(1,0,0)$};
  \draw[thick] (0,0,0) -- (0,4,0) node[anchor = south east]{$(0,0,1)$};
  \draw[thick] (0,0,0) -- (0,-1,7) node[anchor = south east]{$(-1,-1,-1)$};
  \draw[thick] (0,0,0) -- (4.2,4.2,4.2) node[anchor = north west]{$(0,1,0)$};
\end{scope}
\end{tikzpicture}
\caption{The tropical curve $V$.}\label{tropicalcurve}
\end{figure}
We note that the $4$-valent curve considered in~\cite{Han24a} had edges pointing in the opposite directions, but this change makes no difference to the results proved there.

If $W\subset Q$ is an integral affine subspace, recall that the \textit{periodized conormal} to $W$ in $T^*Q/T^*_{\mathbb{Z}}Q$, denoted $N^*W/N_{\mathbb{Z}}^*W$, is given by taking the quotient of the conormal bundle by the sublattice of integral covectors.
\begin{remark}
In $(\mathbb{C}^*)^3$, the periodized conormals to the legs of $V$ can be written as
\begin{align}
N^*V_i/N^*_{\mathbb{Z}}V_i = 
\left\lbrace |u_j| = |u_k|\text{ and }u_j\in[1,\infty)\right\rbrace \label{conormal123}
\end{align}
for $\lbrace i,j,k\rbrace = \lbrace 1,2,3\rbrace$, and
\begin{align}
N^*V_i/N^*_{\mathbb{Z}}V_i = \left\lbrace |u_1| = |u_2| = |u_3| \text{ and }u_1u_2u_3\in(0,1]\right\rbrace \, . \label{conormal4}
\end{align}
\end{remark}
\begin{definition}
A Lagrangian submanifold $L\subset T^*Q/T^*_{\mathbb{Z}}Q$ is said to be a Lagrangian lift of a tropical curve $W$ if it agrees with the periodized conormals to the edges of $W$ away from subset of the form $\pisyz^{-1}(B)\subset T^*Q/T^*_{\mathbb{Z}}Q$, where $B\subset Q$ is a small open neighborhood of the vertices of $W$.
\end{definition}
We constructed a singular lift of $V$ in~\cite{Han24a}, as we recall below.
\begin{theorem}[{\cite[Theorem 4.1]{Han24a}}]\label{singularsyzlift}
The tropical curve $V$ admits a Lagrangian lift $\Lsing$ with a single singular point modeled on the Harvey--Lawson cone, and the complement $L'$ of this singular point is diffeomorphic to the minimally-twisted five-component chain link complement. In particular, there is a small open ball $B\subset Q$, which we can take to be arbitrarily small, centered at the origin such that $\Lsing\setminus\pisyz^{-1}(B)$ coincides with the periodized conormals to the $1$-dimensional cones of $V$.
\end{theorem}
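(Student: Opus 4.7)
My strategy is to build $\Lsing$ by combining an explicit local model at the vertex with the four periodized conormals at infinity, and then separately analyze the topology of the smooth locus $L' = \Lsing \setminus \{\text{singular point}\}$.

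First, identify $\pisyz^{-1}(B)$ with a Darboux neighborhood inside $(\mathbb{C}^*)^3$ via the exponential chart $(q,\theta) \mapsto \exp(q + 2\pi i \theta)$, and place a Harvey--Lawson-type cone $C_{HL} := \{|z_1| = |z_2| = |z_3|,\; z_1 z_2 z_3 \in \mathbb{R}_{\geq 0}\}$ so that its apex projects under $\pisyz$ to $0 \in Q$. This $C_{HL}$ is a $T^2$-invariant singular Lagrangian with one conical singularity and link diffeomorphic to a $2$-torus. Outside $\pisyz^{-1}(B)$ take the union of the four periodized conormals $N^*V_i/N^*_{\mathbb{Z}}V_i$. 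These conormals, restricted to $\pisyz^{-1}(\partial B)$, appear as four disjoint $T^2$'s inside the $T^3$-fiber; using the $T^2$-symmetry compatible with the $4$-valent balancing condition at the vertex, one constructs a $T^2$-equivariant Lagrangian cobordism inside $\pisyz^{-1}(B)$ interpolating between these four tori on $\pisyz^{-1}(\partial B)$ and the link of $C_{HL}$. The Weinstein neighborhood theorem together with equivariance lets one arrange this gluing to be smooth and Lagrangian, yielding a well-defined singular Lagrangian $\Lsing$ whose restriction to the complement of $\pisyz^{-1}(B)$ is, by construction, the union of the four periodized conormals.

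Next, analyze $L'$. By construction $L'$ has exactly five torus cusps: one arising from the neighborhood of the cone point (whose link is $T^2$ by the Harvey--Lawson model) and four from the ``infinite'' ends of the periodized conormals along $V_1, \ldots, V_4$. To identify $L'$ with the minimally-twisted five-component chain link complement, I would produce an explicit ideal triangulation of $L'$ adapted to the $S_3$-symmetry permuting $V_1, V_2, V_3$, and compare it with the decomposition of the chain link complement into ten regular ideal tetrahedra recorded in~\cite{DT03}.

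The main obstacle in this program is the topological identification in the last step: one has to extract a concrete ideal triangulation of $L'$ from the Lagrangian construction and match its combinatorics with the known triangulation of the chain link. The cusp count of five already narrows the candidates, and the $S_3$-symmetry of $V$ strongly constrains the remaining combinatorial data, so I expect the identification to succeed once the triangulation is written down carefully. Checking consistency by way of the hyperbolic volume $10\operatorname{Im}\operatorname{Li}_2(-\omega)$ of the chain link complement provides a natural sanity check once the diffeomorphism is established.
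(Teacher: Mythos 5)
The statement is a citation to~\cite[Theorem 4.1]{Han24a}, which this paper does not re-prove; the cited construction proceeds by describing $\Lsing$ in terms of its \emph{coamoeba} (its projection to the $T^3$-fiber) and extracting an ideal cubulation of $L'$ directly from that combinatorics, rather than by an equivariant gluing of local pieces. Your outline captures the right global picture — a Harvey--Lawson singularity at the vertex, conormals along the legs, something interpolating in between, and five torus ends (one at the cone point, four at infinity) — but the central step you propose does not work as stated. There is no common $T^2 \subset T^3$ preserving all the pieces: $C_{HL}$ and $N^*V_4/N^*_{\mathbb{Z}}V_4$ are preserved by $\{\theta_1+\theta_2+\theta_3=0\}$, whereas $N^*V_i/N^*_{\mathbb{Z}}V_i$ for $i=1,2,3$ is preserved by $\{\theta_i=0\}$; the intersection of these four tori is trivial, so a ``$T^2$-equivariant Lagrangian cobordism'' between the HL link and the four conormal boundary tori is not available, and the Weinstein neighborhood theorem does not produce such an interpolating Lagrangian for you.

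There is also a coordinate mismatch: under the exponential chart $(q,\theta)\mapsto\exp(q+2\pi i\theta)$, the fiber over $0\in Q$ sits at $(1,1,1)\in(\mathbb{C}^*)^3$, not at the origin of $\mathbb{C}^3$, so the apex of the standard $C_{HL}$ cannot be placed there without a further change of coordinates; this is precisely the role of the linear map $\phi_0$ defining the Darboux ball $B_0$ in~\eqref{darbouxlsing}, which you have not accounted for. Finally, and most importantly, the identification of $L'$ with the minimally-twisted five-component chain link complement is the essential content of the theorem, and your plan defers it entirely to ``write down the ideal triangulation and compare'' — the cusp count and $S_3$-symmetry do narrow things, but they are nowhere near enough to pin down the diffeomorphism type, and the hyperbolic-volume check you mention presupposes hyperbolicity, which itself follows only after the identification. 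In the cited source, the ideal cubulation is read off from the coamoeba decomposition of $\Lsing$, and it is that combinatorial step, not an equivariance argument, that carries the proof.
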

The $3$-manifold $L'$ has an ideal triangulation constructed in~\cite{DT03}, and is represented schematically in Figure~\ref{idealtriangulation}. This triangulation consists of $10$ hyperbolic ideal tetrahedra, but it is helpful to arrange these tetrahedra $L'$ as the union of two ideal cubes in the construction of $\Lsing$.
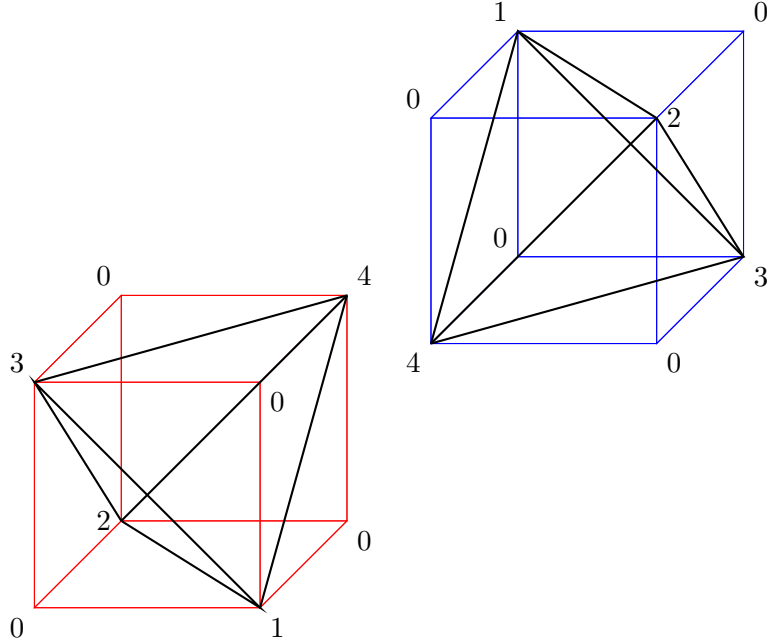
\begin{figure}
\begin{tikzpicture}
\begin{scope}[xshift =  75, yshift = 50, scale = 3]
\draw[blue] (0,0,0) -- (1,0,0) -- (1,1,0) -- (0,1,0) -- (0,0,0);
\draw[blue] (0,0,0) -- (0,1,0) -- (0,1,1) -- (0,0,1) -- (0,0,0);
\draw[blue] (0,0,0) -- (0,0,1) -- (1,0,1) -- (1,0,0) -- (0,0,0);
\draw[blue] (1,0,0) -- (1,1,0) -- (1,1,1) -- (1,0,1) -- (1,0,0);
\draw[blue] (0,1,0) -- (1,1,0) -- (1,1,1) -- (0,1,1) -- (0,1,0);
\draw[blue] (0,0,1) -- (1,0,1) -- (1,1,1) -- (0,1,1) -- (0,0,1);

\draw[thick, black] (0,0,1) -- (0,1,0) -- (1,0,0) -- (0,0,1);
\draw[thick, black] (0,0,1) -- (1,1,1);
\draw[thick, black] (0,1,0) -- (1,1,1);
\draw[thick, black] (1,0,0) -- (1,1,1);
\node[anchor = west] at (1,1,1) {$2$};
\node[anchor = north east] at (0,0,1) {$4$};
\node[anchor = south east] at (0,1,0) {$1$};
\node[anchor = north west] at (1,0,0) {$3$};

\node[anchor = south east] at (0,0,0) {$0$};
\node[anchor = south west] at (1,1,0) {$0$};
\node[anchor = north west] at (1,0,1) {$0$};
\node[anchor = south east] at (0,1,1) {$0$};
\end{scope}

\begin{scope}[xshift = -75, yshift = -50, scale = 3]
\draw[red] (0,0,0) -- (1,0,0) -- (1,1,0) -- (0,1,0) -- (0,0,0);
\draw[red] (0,0,0) -- (0,1,0) -- (0,1,1) -- (0,0,1) -- (0,0,0);
\draw[red] (0,0,0) -- (0,0,1) -- (1,0,1) -- (1,0,0) -- (0,0,0);
\draw[red] (1,0,0) -- (1,1,0) -- (1,1,1) -- (1,0,1) -- (1,0,0);
\draw[red] (0,1,0) -- (1,1,0) -- (1,1,1) -- (0,1,1) -- (0,1,0);
\draw[red] (0,0,1) -- (1,0,1) -- (1,1,1) -- (0,1,1) -- (0,0,1);

\draw[thick, black] (0,0,0) -- (1,0,1) -- (0,1,1) -- (0,0,0);
\draw[thick, black] (0,0,0) -- (1,1,0);
\draw[thick, black] (1,0,1) -- (1,1,0);
\draw[thick, black] (0,1,1) -- (1,1,0);
\node[anchor = east] at (0,0,0) {$2$};
\node[anchor = south west] at (1,1,0) {$4$};
\node[anchor = north west] at (1,0,1) {$1$};
\node[anchor = south east] at (0,1,1) {$3$};

\node[anchor = north west] at (1,1,1) {$0$};
\node[anchor = north east] at (0,0,1) {$0$};
\node[anchor = south east] at (0,1,0) {$0$};
\node[anchor = north west] at (1,0,0) {$0$};
\end{scope}
\end{tikzpicture}
\caption{A decomposition of $L'$ into two ideal cubes, each of which is written as the union of five ideal tetrahedra.}\label{idealtriangulation}
\end{figure}

\begin{remark}\label{spinstructure}
Using this ideal triangulation, we specified a spin structure on $L'$ from a smoothing of the $2$-dimensional complex dual to the triangulation. More precisely, from the dual complex, one can produce a branched surface in a non-canonical way. The particular smooth branched surface which determines our choice of spin structure essentially comes from an ordering of the ideal vertices in the triangulation.  For details on the construction of the smoothing, and the procedure used to produce a spin structure from this data, see the discussion near~\cite[Lemma 4.2]{Han24a}.
\end{remark}

\begin{remark}\label{weinvertex}
Given an arbitrary tubular neighborhood of the zero-section in $T^*T^3$, we can assume that $\pisyz^{-1}(B)$ is contained in this neighborhood by taking $B$ to be sufficiently small.
\end{remark}
We will now describe the singular point of $\Lsing$ in more detail.
\begin{definition}\label{hlcone}
The Harvey--Lawson cone is defined to be the subset
\begin{align*}
C_{HL}\coloneqq\left\lbrace |y_1| = |y_2| = |y_3|\text{ and }y_1y_2y_3\in\mathbb{R}_{\geq0}\right\rbrace\subset\mathbb{C}^3 \, .
\end{align*}
Let $\Lambda_{HL}\coloneqq C_{HL}\cap S^5\subset\mathbb{C}^3$ denote the Legendrian link of the cone point.
\end{definition}
The link of $C_{HL}$ is a standard Legendrian torus in $S^5(\epsilon)\subset\mathbb{C}^3$, there sphere of radius $\epsilon>0$, and it is given by the set of points $(x_1,x_2,x_3)\in C_{HL}$ for which $|x_1| = |x_2| = |x_3| = \sqrt{\frac{\epsilon}{3}}$. Note that $C_{HL}$ is homeomorphic to the cone over a $2$-torus, where the cone point lies at the origin in $\mathbb{C}^3$. One can also see that, up to a change of coordinates, the periodized conormals~\eqref{conormal123} and~\eqref{conormal4} can be identified with the smooth part of $C_{HL}$, under an appropriate change of coordinates.

The cone point of $\Lsing$ lies at $(0,0)\in T^*Q/T^*_{\mathbb{Z}}Q$. We will let $v_0$ denote this cone point. Consider the symplectomorphism $\phi_0\colon T^*\mathbb{R}^3\to T^*\mathbb{R}^3$ induced by the linear map
\begin{align*}
\begin{pmatrix} -1 & 1 & 1 \\ 1 & -1 & 1 \\ 1 & 1 & -1 \end{pmatrix}
\end{align*}
acting on $\mathbb{R}^3$. Denote the image of a ball in $T^*\mathbb{R}^3\cong\mathbb{C}^3$ of sufficiently small radius under the composition $\phi_0$ with the universal covering map map by
\begin{align}
B_0\subset T^*T^3 \, .\label{darbouxlsing}
\end{align}
This is a Darboux ball centered at $v_0$.

We can write an explicit set of generators for $H_1(L';\mathbb{Z})$ using the diagram for the minimally-twisted five-component chain link. A set of meridians $\lbrace m_i\rbrace$ and longitudes $\lbrace\ell_i\rbrace$ for the link components are depicted in Figure~\ref{loops}.

By examining the diagram, one finds the following relations between (the homology classes of) the meridians and longitudes.
\begin{lemma}\label{relations}
The longitudes can be expressed in terms of the meridians in $H_1(L';\mathbb{Z})$ by the following formulas:
\begin{align*}
\ell_0 &= -m_1-m_4 \\
\ell_1 &=-m_0+m_2 \\
\ell_2 &= m_1-m_3 \\
\ell_3 &= -m_2+m_4 \\
\ell_4 &= -m_0+m_3 \, .
\end{align*}
\qed
\end{lemma}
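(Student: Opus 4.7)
The plan is to use the standard principle that in the complement of an $n$-component link $K_0\cup\cdots\cup K_{n-1}\subset S^3$, each preferred longitude is homologous to a specific linear combination of the meridians of the other components, with coefficients given by linking numbers. Concretely, if we fix compatible orientations on all components, then for each $i$ one has the relation
\begin{align*}
\ell_i + \sum_{j\neq i}\mathrm{lk}(K_i,K_j)\, m_j = 0
\end{align*}
in $H_1(S^3\setminus\bigcup_j K_j;\mathbb{Z})$. This identity is obtained by taking a Seifert surface $\Sigma_i$ for $K_i$ in $S^3$, perturbing it to meet each $K_j$ (for $j\neq i$) transversely in $\mathrm{lk}(K_i,K_j)$ signed points, and observing that $\Sigma_i\cap L'$ is a bounding surface in the link complement whose oriented boundary equals $\ell_i$ plus one small meridian of $K_j$ for each signed intersection point with $K_j$.

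Given this general identity, the proof reduces to computing the five pairwise linking numbers among the components of the minimally-twisted chain link, with respect to the orientations used to define the $m_i$ and $\ell_i$ in Figure~\ref{loops}. Since each component of the chain link is unknotted and links only its two cyclic neighbors, all linking numbers of non-adjacent components vanish, and we only need to determine the five nearest-neighbor linking numbers $\mathrm{lk}(K_i,K_{i+1})$ for $i\in\mathbb{Z}/5$. These can be read off directly from the crossings in Figure~\ref{chainlink}, using the labeling indicated there and the specified orientations. Matching signs with the stated formulas amounts to checking that the cyclic sequence of linking numbers is $\mathrm{lk}(K_0,K_1)=-1$, $\mathrm{lk}(K_1,K_2)=+1$, $\mathrm{lk}(K_2,K_3)=-1$, $\mathrm{lk}(K_3,K_4)=+1$, and $\mathrm{lk}(K_4,K_0)=-1$, with all other linking numbers equal to zero.

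Substituting these values into the general relation yields the five formulas of the lemma in order. The only real bookkeeping is the sign verification in the previous paragraph: one must simultaneously fix conventions for the orientation of each chain component, the induced orientation of each meridian (via the outward normal on $\partial N(K_i)$), and the orientation of each preferred longitude. This is the step where one could most easily introduce a global sign error, but since the paper fixes these orientations together with the spin structure choice of Remark~\ref{spinstructure}, the signs are fully determined by inspection of the diagram. As a consistency check, the matrix of linking numbers we read off is symmetric, as it must be, and the alternating pattern of signs is exactly what one expects for the minimally-twisted chain with an odd number of components.
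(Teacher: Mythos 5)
Your approach --- expressing each Seifert longitude as a linking-number combination of the other meridians --- is a correct and systematic way to prove this, and it is more principled than the paper's own treatment, which simply asserts the formulas as read off from Figure~\ref{loops} with no written argument. The reduction to the five nearest-neighbor linking numbers, with all others vanishing since non-adjacent chain components are unlinked, is exactly the right thing to do.

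However, there is a global sign inconsistency between the relation you state and the linking numbers you claim. You write the relation as $\ell_i + \sum_{j\neq i}\mathrm{lk}(K_i,K_j)\,m_j = 0$, i.e.\ $\ell_i = -\sum_{j\neq i}\mathrm{lk}(K_i,K_j)\,m_j$. Plugging your claimed values $\mathrm{lk}(K_0,K_1) = \mathrm{lk}(K_0,K_4) = -1$ into this gives $\ell_0 = m_1 + m_4$, which is the \emph{negative} of the lemma's $\ell_0 = -m_1 - m_4$; the same global sign flip occurs in the other four formulas. With the sign convention you chose for the Seifert-surface relation, the linking numbers consistent with the lemma are $\mathrm{lk}(K_0,K_1)=+1$, $\mathrm{lk}(K_1,K_2)=-1$, $\mathrm{lk}(K_2,K_3)=+1$, $\mathrm{lk}(K_3,K_4)=-1$, $\mathrm{lk}(K_4,K_0)=+1$; alternatively, one keeps your linking numbers but uses the opposite (and also common) sign convention $\ell_i = +\sum_{j\neq i}\mathrm{lk}(K_i,K_j)\,m_j$, which arises from orienting the boundary of the punctured Seifert surface the other way. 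Either fix is a one-line change, but as written the argument does not literally yield the lemma's formulas, and since you flagged sign bookkeeping as the one place to be careful, it is worth correcting. You should also verify that the signed intersection count in your Seifert-surface argument is taken with a fixed, explicit convention (outward normal of $\Sigma_i$, right-hand rule for the small meridian loops) before declaring the pattern alternating; the minimally-twisted chain has an odd number of components, so the sequence of nearest-neighbor linking numbers cannot be uniformly alternating around the cycle, and indeed two consecutive entries in your list share a sign.
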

In~\cite[Lemma 4.10]{Han24a}, we determined the map $H_1(L';\mathbb{Z})\to H_1(T^3;\mathbb{Z})$ induced by the composition $L'\to T^*T^3\xrightarrow{\pisyz}T^3$ in terms of these generators. Let
\begin{align*}
e_1,e_2,e_3\in H_1(T_0^*Q/T_{\mathbb{Z},0}^*Q;\mathbb{Z}) = H_1(T^3;\mathbb{Z})
\end{align*}
denote the homology classes of the circles which lift to the coordinate axes in the cotangent fiber $T^*_0Q$.
\begin{lemma}\label{inducedmap}
The map $H_1(L')\to H_1(T^3)$ is determined by the following values
\begin{align*}
m_0 &\mapsto 0, \\
m_1 &\mapsto e_2-e_3, \\
m_2 &\mapsto e_3, \\
m_3 &\mapsto -e_1+e_2, \\
m_4 &\mapsto -e_2+e_3.
\end{align*}
Consequently, the values of this map on the longitudes are as follows.
\begin{align*}
\ell_0 &\mapsto 0, \\
\ell_1 &\mapsto e_3, \\
\ell_2 &\mapsto e_1-e_3, \\
\ell_3 &\mapsto -e_2, \\
\ell_4 &\mapsto -e_1+e_2.
\end{align*}
\qed
\end{lemma}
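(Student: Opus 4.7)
The plan is first to match each of the five link components in the chain link diagram of Figure~\ref{chainlink} with a specific end of $L'$, and then compute the image in $H_1(T^3;\mathbb{Z})$ of the meridian loop at each end. Recall that $L'$ has five ends: one at the Harvey--Lawson cone point $v_0$, and four cusp ends extending along the edges $V_1, V_2, V_3, V_4$ via the periodized conormals~\eqref{conormal123} and~\eqref{conormal4}. Each end is a torus: for the cone point, the link $\Lambda_{HL}$; for edge $V_i$, a $T^2$-fiber of $N^*V_i/N^*_{\mathbb{Z}}V_i \cong V_i \times T^2$. Under $\pisyz$, the end along $V_i$ projects into the sublattice of $H_1(T^3;\mathbb{Z})$ perpendicular to the direction of $V_i$, while the end at the cone point maps into the contractible Darboux ball $B_0$ of~\eqref{darbouxlsing}.

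These observations immediately force $m_0 \mapsto 0$ and identify link component $i$ (for $i\geq 1$) with the end along edge $V_i$, consistent with the perpendicular lattice at each end: for instance $m_1 \in \langle e_2, e_3\rangle$ since $V_1$ points along $e_1$, while $m_4 \in \langle e_1-e_2, e_2-e_3\rangle$ since $V_4$ points along $-e_1-e_2-e_3$. To pin down the precise meridian value at each end, I trace each meridian $m_i$ from the chain link diagram through the decomposition of $L'$ into two ideal cubes shown in Figure~\ref{idealtriangulation}, and then through the explicit matching of this decomposition with the pieces of $\Lsing$ used in the proof of Theorem~\ref{singularsyzlift}. This identifies each meridian as a specific primitive integral $1$-cycle on the relevant perpendicular $T^2$-fiber, producing the values stated in the lemma.

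The values on longitudes then follow by applying the linear map just determined to the relations in Lemma~\ref{relations}; for example $\ell_0 \mapsto -m_1 - m_4 = -(e_2-e_3) - (-e_2+e_3) = 0$, $\ell_1 \mapsto -m_0 + m_2 = e_3$, and $\ell_2 \mapsto m_1 - m_3 = (e_2 - e_3) - (-e_1 + e_2) = e_1 - e_3$, with the remaining cases handled identically. The main obstacle is the combinatorial bookkeeping in the meridian step: one must carefully match the meridian loops drawn in Figure~\ref{chainlink} with specific $1$-cycles on the periodized conormal tori, which requires closely comparing the ideal triangulation of~\cite{DT03} with the cube-by-cube gluing construction of $\Lsing$ performed in~\cite{Han24a} (and in particular with the smoothing used to fix the spin structure discussed in Remark~\ref{spinstructure}). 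Once this combinatorial matching is settled, all of the projections can be read off directly from the perpendicular lattice structure of the periodized conormals.
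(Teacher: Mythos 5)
The paper gives no argument for Lemma~\ref{inducedmap}: it is stated without a proof environment, immediately after a remark citing~\cite[Lemma 4.10]{Han24a} as the source of the computation, so there is no in-paper proof to compare your attempt against. Your general framework is nonetheless sound and faithful to the geometry: the five ends of $L'$ are the cone-point link and the four conormal cusps along $V_1,\ldots,V_4$; the fiber projection to $T^3$ sends a conormal cusp along $V_i$ into the rank-two sublattice of $H_1(T^3;\mathbb{Z})$ annihilating the direction of $V_i$; a small neighborhood of the cone point projects to a contractible subset of $T^3$; and the longitude values follow mechanically from Lemma~\ref{relations} once the meridian values are known (you carry out this last step correctly).

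The gap is that everything beyond this framework is asserted rather than derived, and that missing piece is precisely the content of the lemma. The perpendicularity constraints place each $m_i$ in a rank-two sublattice but do not single out the specific primitive classes $e_2-e_3$, $e_3$, $-e_1+e_2$, $-e_2+e_3$, nor do they determine the matching between the labelled components of Figure~\ref{chainlink} (equivalently, the ideal-vertex labels of Figure~\ref{idealtriangulation}) and the ends of $\Lsing$: your claim that the constraints ``identify link component $i$ with the end along edge $V_i$'' is circular, since the proposed matching is only \emph{consistent with}, not implied by, the lattice constraints. Pinning down both the component-to-end bijection and the precise meridian classes requires explicitly tracing the loops of Figure~\ref{loops} through the identification of the ideal cubes of Figure~\ref{idealtriangulation} with the pieces of $\Lsing$ from the construction underlying Theorem~\ref{singularsyzlift} and~\cite{Han24a}; you flag this as the ``main obstacle'' but do not carry it out. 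As written, the argument verifies that the stated values are compatible with the conormal structure of $\Lsing$, but it does not produce them.
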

Instead of considering $\Lsing$ in our discussion of the quintic threefold, we will need to consider a certain cover of it, denoted $\tLsing$. More precisely, this is a Lagrangian lift of the tropical curve $\widetilde{V}$ obtained from $V$ by giving all of its edges weight $5$. One can construct $\tLsing$ from $\Lsing$ using the procedure described in~\cite[\S{5.2}]{Mat20}. Recall that $\Lsing$ was constructed in terms of a subset of $T^3$ called the \textit{coamoeba}, which is identified as the image of $\Lsing$ under the bundle projection $T^*T^3\to T^3$. In this language, the coamoeba of $\tLsing$ is obtained from the coamoeba of $\Lsing$ by taking its preimage under the $125$-fold covering map $T^3\to T^3$ corresponding to the subgroup $(5\mathbb{Z})^3\subset\mathbb{Z}^3$. It is easy to see that $\tLsing$ has $125$ cone points. Let $\widetilde{L}'$ denote the (cusped hyperbolic) $3$-manifold obtained by taking the complement of singular points on $\tLsing$.
\begin{remark}\label{125foldcover}
Using Lemma~\ref{inducedmap}, we can describe the cover $\widetilde{L}'\to L'$ explicitly. By choosing a basepoint in the center of Figure~\ref{loops}, we can obtain an element $\widehat{m}_i\in\pi_1(L')$ from each meridian of the minimally-twisted five-component chain link. Consider the normal closure in $\pi_1(L')$ of the subgroup generated by the elements $\lbrace\widehat{m}_0,\widehat{m}_1^5,\widehat{m}_2^5,\widehat{m}_3^5,\widehat{m}_4^5\rbrace$. A straightforward calculation shows that the quotient of $\pi_1(L')$ by this subgroup is isomorphic to $(\mathbb{Z}/5)^3$. It is also clear from the construction of $\widetilde{L}'$ that the image of $\pi_1(\widetilde{L}')\to\pi_1(\widetilde{L})$ contains this subgroup. In~\cite{Han24a}, we wrote $L'$ as the union of two ideal cubes in the hyperbolic upper half space $\mathbb{H}^3$, and it is interesting to note that $\widetilde{L}'$ can be written as the union of $250$ such cubes. These are glued together in a manner determined by the combinatorics of the lift of the \textit{coamoeba} used to construct $\Lsing$~\cite[\S{4.2}]{Han24a}, but we will not need this fact.
\end{remark}

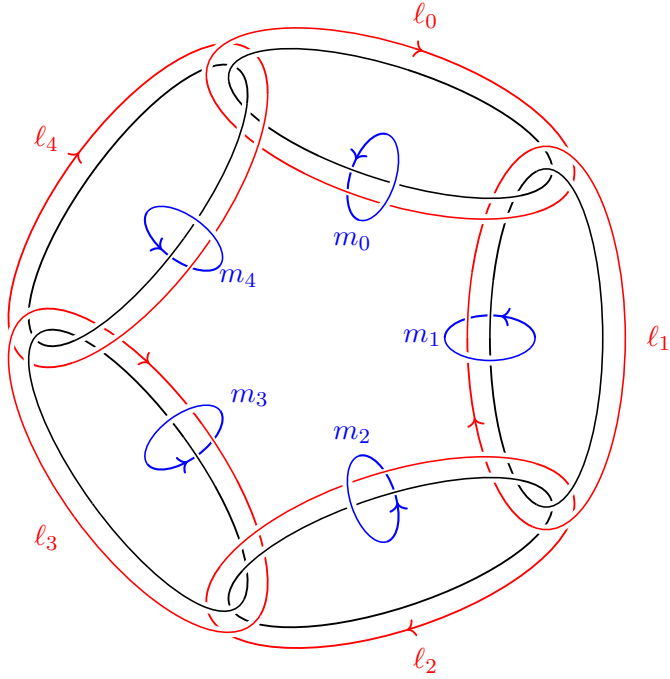
\begin{figure}
\tikzset{
 redarrows/.style={postaction={decorate},decoration={markings,mark=at position 0 with {\arrow[draw=red]{<}}},
           }}
 \tikzset{
  redarrows2/.style={postaction={decorate},decoration={markings,mark=at position -0.4 with {\arrow[draw=red]{<}}},
  			}}
\tikzset{
 bluearrows/.style={postaction={decorate},
 decoration={markings,mark=at position 0.2 with {\arrow[draw=blue]{>}}},
           }}
\begin{tikzpicture}[scale = 1.5]
\draw(2,0)[line width = 0.25 mm] ellipse (0.5 and 1.5);

\draw(2,0)[red, line width = 0.25 mm, redarrows2] ellipse (0.7 and 1.7);
\draw[rotate = 72, line width = 0.25 mm] (2,0) ellipse (0.5 and 1.5);
\draw(2,0)[rotate = 72, red, line width = 0.25 mm, redarrows] ellipse (0.7 and 1.7);

\draw[rotate = 144, line width = 0.25 mm] (2,0) ellipse (0.5 and 1.5);
\draw(2,0)[rotate = 144, red, line width = 0.25 mm, redarrows] ellipse (0.7 and 1.7);

\draw[rotate = 216, line width = 0.25 mm] (2,0) ellipse (0.5 and 1.5);
\draw(2,0)[rotate = 216, red, line width = 0.25 mm, redarrows2] ellipse (0.7 and 1.7);

\draw[rotate = 288, line width = 0.25 mm] (2,0) ellipse (0.5 and 1.5);
\draw(2,0)[rotate = 288, red, line width = 0.25 mm, redarrows] ellipse (0.7 and 1.7);

\draw (2.5,0)[white, double = black, ultra thick] arc (0:100:0.5 and 1.5);
\draw (2.7,0)[white, double = red, ultra thick] arc (0:100:0.7 and 1.7);

\draw (2.5,0)[white, double= black, ultra thick] arc (0:-100:0.5 and 1.5);
\draw (2.7,0)[white, double= red, ultra thick] arc (0:-100:0.7 and 1.7);

\draw[white, double = black, ultra thick, rotate = 72] (2.5,0) arc (0:100:0.5 and 1.5);
\draw[white, double = red, ultra thick, rotate = 72] (2.7,0) arc (0:100:0.7 and 1.7);

\draw[white, double = black, ultra thick, rotate = 144] (1.5,0) arc (180:240:0.5 and 1.5);
\draw[white, double = red, ultra thick, rotate = 144] (1.3,0) arc (180:240:0.7 and 1.7);

\draw[white, double = black, ultra thick, rotate = 72] (1.5,0) arc (180:240:0.5 and 1.5);
\draw[white, double = red, ultra thick, rotate = 72] (1.3,0) arc (180:240:0.7 and 1.7);

\draw[white, double = black, ultra thick, rotate = 144] (1.5,0) arc (180:120:0.5 and 1.5);
\draw[white, double = red, ultra thick, rotate = 144] (1.3,0) arc (180:120:0.7 and 1.7);

\draw[white, double = black, ultra thick, rotate = 216] (2.5,0) arc (0:-100:0.5 and 1.5);
\draw[white, double = red, ultra thick, rotate = 216] (2.7,0) arc (0:-100:0.7 and 1.7);

\draw[white, double = black, ultra thick, rotate = 216] (2.5,0) arc (0:100:0.5 and 1.5);
\draw[white, double = red, ultra thick, rotate = 216] (2.7,0) arc (0:100:0.7 and 1.7);

\draw[white, double = black, ultra thick, rotate = 288] (1.5,0) arc (180:240:0.5 and 1.5);
\draw[white, double = red, ultra thick, rotate = 288] (1.3,0) arc (180:240:0.7 and 1.7);

\draw[white, double = black, ultra thick, rotate = 288] (1.5,0) arc (180:120:0.5 and 1.5);
\draw[white, double = red, ultra thick, rotate = 288] (1.3,0) arc (180:120:0.7 and 1.7);

\draw[blue, thick, bluearrows] (1.5,0) ellipse (0.4 and 0.2);
\draw[white, double = blue, ultra thick] (1.1,0) arc (180:360:0.4 and 0.2);

\draw[white, double = black, thick] (1.5,0) arc (180:160:0.5 and 1.5);
\draw[white, double = red, thick] (1.3,0) arc (180:160:0.7 and 1.7);

\draw[blue, thick, rotate = 72, bluearrows] (1.5,0) ellipse (0.4 and 0.2);
\draw[white, double = blue, ultra thick, rotate = 72] (1.1,0) arc (180:360:0.4 and 0.2);

\draw[white, double = black, thick, rotate = 72] (1.5,0) arc (180:160:0.5 and 1.5);
\draw[white, double = red, thick, rotate = 72] (1.3,0) arc (180:160:0.7 and 1.7);

\draw[blue, thick, rotate = 144, bluearrows] (1.5,0) ellipse (0.4 and 0.2);
\draw[white, double = blue, ultra thick, rotate = 144] (1.1,0) arc (180:360:0.4 and 0.2);

\draw[white, double = black, thick, rotate = 144] (1.5,0) arc (180:160:0.5 and 1.5);
\draw[white, double = red, thick, rotate = 144] (1.3,0) arc (180:160:0.7 and 1.7);

\draw[blue, thick, rotate = 216, bluearrows] (1.5,0) ellipse (0.4 and 0.2);
\draw[white, double = blue, ultra thick, rotate = 216] (1.1,0) arc (180:360:0.4 and 0.2);

\draw[white, double = black, thick, rotate = 216] (1.5,0) arc (180:160:0.5 and 1.5);
\draw[white, double = red, thick, rotate = 216] (1.3,0) arc (180:160:0.7 and 1.7);

\draw[blue, thick, rotate = 288, bluearrows] (1.5,0) ellipse (0.4 and 0.2);
\draw[white, double = blue, ultra thick, rotate = 288] (1.1,0) arc (180:360:0.4 and 0.2);

\draw[white, double = black, thick, rotate = 288] (1.5,0) arc (180:160:0.5 and 1.5);
\draw[white, double = red, thick, rotate = 288] (1.3,0) arc (180:160:0.7 and 1.7);

\node[] at (3,0) {\color{red}$\ell_1$};
\node[] at (0.93,2.86) {\color{red}$\ell_0$};
\node[] at (-2.43,1.77) {\color{red}$\ell_4$};
\node[] at (-2.43,-1.77) {\color{red}$\ell_3$};
\node[] at (0.93,-2.86) {\color{red}$\ell_2$};

\node[] at (0.9,0) {\color{blue}$m_1$};
\node[] at (0.28,0.86) {\color{blue}$m_0$};
\node[] at (-0.73,0.53) {\color{blue}$m_4$};
\node[] at (-0.63,-0.53) {\color{blue}$m_3$};
\node[] at (0.28,-0.86) {\color{blue}$m_2$};

\end{tikzpicture}
\caption{The meridians $m_i$ (blue) and longitudes $\ell_i$ (red).}\label{loops}
\end{figure}

If we view $\tLsing$ as a subset of $(\mathbb{C}^*)^3$, then $\tLsing\setminus\pisyz^{-1}(B)$ can be written as the union of the following subsets
\begin{align}
\left\lbrace |u_j| = |u_k|\text{ and }u_{\ell} = re^{i\theta} \big\vert r\in[1,\infty)\text{ and }\theta = \frac{2m\pi}{5}\text{ for }m\in\mathbb{Z}\right\rbrace\subset(\mathbb{C}^*)^3 \label{coverconormal123}
\end{align}
whenever $\lbrace j,k,\ell\rbrace = \lbrace 1,2,3\rbrace$, and
\begin{align}
\left\lbrace |u_1| = |u_2| = |u_3| \text{ and }u_1u_2u_3 = re^{i\theta} \big\vert r\in(0,1] \text{ and }\theta = \frac{2m\pi}{5} \text{ for }m\in\mathbb{Z}\right\rbrace\subset(\mathbb{C}^*)^3 \, . \label{coverconormal4}
\end{align}
These are of course just lifts of~\eqref{conormal123} and~\eqref{conormal4}, respectively, under the symplectic covering map $(\mathbb{C}^*)^3\to(\mathbb{C}^*)^3$.

The Darboux ball $B_0$ centered at the cone point of $\Lsing$ has $125$ lifts, denoted
\begin{align}\label{darbouxtlsing}
\widetilde{B}_{\ell,0}\subset T^*T^3
\end{align}
for all $\ell = 1,\ldots,125$. These are all Darboux charts centered at the cone points of $\tLsing$. 

\subsection{A singular tropical Lagrangian in the quintic}
Consider the Lagrangian torus $L_{\tau,0}\subset X_{\tau}^5$ of Proposition~\ref{mirrors2points}, and fix a Weinstein neighborhood $\Wein(L_{\tau,0})\subset X_{\tau}^5$, together with a symplectomorphism $N^*_{\epsilon}T^3\to\Wein(L_{\tau,0})$ whose domain is a tubular neighborhood of the zero section in $T^*T^3$ of radius $\epsilon>0$. By Theorem~\ref{singularsyzlift} (cf. Remark~\ref{weinvertex}), we can isotope $\Lsing$, and hence $\tLsing$, so that it coincides with the periodized conormal Lagrangians outside of $N^*_{\epsilon}T^3\subset T^*T^3$. Abusing notation, we will identify $\tLsing$ with its image in $\Wein(L_{\tau,0})$. To compactify this noncompact Lagrangian in $X_{\tau}^5$, we will construct Lagrangian cones away from $\Wein(L_{\tau,0})$, and show that they patch smoothly with $\tLsing$. This gluing is controlled by the combinatorics of the tropical curve $\widetilde{V}$, as in the construction of tropical Lagrangians of~\cite{MR20}.

Let $\lbrace i,j,k,\ell\rbrace=\lbrace 1,2,3,4\rbrace$. Observe that there are five points in $X_{\tau}^5$ with homogneous coordinates $x_i = x_j = x_k = 0$ on $\mathbb{CP}^4$. Let $\widetilde{v}_{\ell,m}$, where $m = 1,\ldots,5$, denote these points. We define charts near each $\widetilde{v}_{\ell,m}$ as follows. If we set $x_{\ell} = 1$, then there is a ball in $B_0(\epsilon)\subset\mathbb{C}^3$ centered at $0$, where $\epsilon>0$ denotes the radius of the ball, such that if $(x_i,x_j,x_k)\in B_0(\epsilon)$, then any $x_5$ for which $\left[x_1:x_2:x_3:x_4:x_5\right]$ lies in $X_{\tau}^5$ is nonzero. The value of $x_5$ then determines a section of the (trivial) $5$-fold cover of $B_0(\epsilon)$. Let
\begin{align}
\widetilde{B}_{\ell,m}\subset X_{\tau}^5 \label{darbouxquintic}
\end{align}
denote the sheet of this cover containing the point $\widetilde{v}_{\ell,m}$. The ball $\widetilde{B}_{\ell,m}$ is a Darboux ball centered at $\widetilde{v}_{\ell,m}$, since the symplectic form on $X_{\tau}^5$ is pulled back from the Fubini--Study form on $\mathbb{CP}^4$.

In each of the balls ~\eqref{darbouxquintic}, there is a Lagrangian cone defined by
\begin{align}
L_{\ell,m}\coloneqq\left\lbrace\left[x_1:x_2:x_3:x_4:x_5\right]\mid|x_i| = |x_j| = |x_k|\text{ and }x_ix_jx_k\in\mathbb{R}_{\geq0}\right\rbrace\cap\widetilde{B}_{\ell,m} \, . \label{conesinxt}
\end{align}
These are just the images of the Harvey--Lawson cone in $\widetilde{B}_{\ell,m}$. To complete the construction of $\tLvgsing$, we will show that these cones can be patched smoothly with the copy of $\tLimm$ in $\Wein(T^3_{t,0})$.
\begin{theorem}\label{singularlagrquintic}
There is a singular Lagrangian $\tLvgsing$ in $X_{\tau}^5$, where $\tau$ is any sufficiently small real number, which coincides with a copy of $\tLsing$ contained in $\Wein(L_{\tau,0})\subset X_{\tau}^5$. The singular locus of $\tLvgsing$ consists of $145$ singular points modeled on the Harvey--Lawson cone, and the complement of these points in $\tLvgsing$ is diffeomorphic to the hyperbolic $3$-manifold $\widetilde{L}'$ as discussed in Remark~\ref{125foldcover}.
\end{theorem}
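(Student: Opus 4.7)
The plan is to construct $\tLvgsing$ by assembling three pieces: a copy of $\tLsing$ placed inside the Weinstein neighborhood $\Wein(L_{\tau,0})$, Harvey--Lawson cones $L_{\ell,m}$ placed in the Darboux balls $\widetilde{B}_{\ell,m}$ at each $\widetilde{v}_{\ell,m}$, and smooth Lagrangian cylinders in the intermediate region of $X_\tau^5$ joining them.

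The first piece is immediate from Theorem~\ref{singularsyzlift} and Remark~\ref{weinvertex}: using the symplectomorphism $N^*_\epsilon T^3 \cong \Wein(L_{\tau,0})$, I place $\tLsing$ inside $\Wein(L_{\tau,0})$ arranging that, outside small balls around its $125$ cone points, the Lagrangian coincides with the $5$-fold covers of the periodized conormals given in~\eqref{coverconormal123} and~\eqref{coverconormal4}. This accounts for the $125$ Harvey--Lawson cone singularities inside $\Wein(L_{\tau,0})$. Since each of the $4$ edges of $\widetilde{V}$ has weight $5$, there are $20$ leg strands of $\tLsing$ exiting $\Wein(L_{\tau,0})$. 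The extension of each leg strand into $X_\tau^5$ is obtained from the periodized conormal (which lies in the toric branch $\{x_5=0\}\subset X_0^5$ containing $L_{0,0}$) by symplectic parallel transport from $X_0^5$ to $X_\tau^5$ along the gradient flow of $\mathrm{Re}(s)$; each of the five strands in direction $V_\ell$ limits to one of the five points $\widetilde{v}_{\ell,m}$, the identification being governed by which fifth root of $-1$ arises as the $x_5$-coordinate in the chart $\{x_\ell=1\}$.

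The technical heart of the proof, which I expect to be the main obstacle, is to verify that the boundary $2$-torus of each leg strand (on $\partial\Wein(L_{\tau,0})$) and the boundary $2$-torus of the corresponding Harvey--Lawson cone $L_{\ell,m}$ from~\eqref{conesinxt} (on $\partial\widetilde{B}_{\ell,m}$) can be joined by a Lagrangian cylinder inside the annular region $X_\tau^5\setminus(\Wein(L_{\tau,0})\cup\widetilde{B}_{\ell,m})$. The two boundary pieces are presented in different affine charts of $\mathbb{CP}^4$ (the Weinstein chart of $L_{\tau,0}$ in $\{x_4=1,x_5=0\}$ versus the Darboux chart $\{x_\ell=1\}$), related by a nonlinear change of coordinates, and the matching is not apparent from either chart alone. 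The argument proceeds by an explicit model computation in the overlap of the two charts, using that $X_\tau^5$ is a small perturbation of the union of coordinate hyperplanes near each $\widetilde{v}_{\ell,m}$ and that both boundary pieces carry residual circle symmetries compatible with this structure. After a compactly supported Hamiltonian isotopy in the annular region, the two $2$-tori are then connected by a Lagrangian cobordism.

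Finally, the count of singular points is $125+20=145$, all modeled on the Harvey--Lawson cone. To identify the smooth part of $\tLvgsing$ with $\widetilde{L}'$, I note that $\widetilde{L}'$ has $145$ cusps---$125$ from the removed cone points of $\tLsing$ and $20$ from the legs of $\widetilde{V}$ at infinity---and that the gluing of the Harvey--Lawson cones replaces each of the $20$ leg-at-infinity cusps with a Harvey--Lawson cusp, both of which are topologically $T^2\times[0,\infty)$, so the diffeomorphism type of the smooth locus is preserved.
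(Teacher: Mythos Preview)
Your overall architecture is right: place $\tLsing$ in $\Wein(L_{\tau,0})$, place the cones $L_{\ell,m}$ in the balls $\widetilde{B}_{\ell,m}$, and bridge them along the legs. Where your proposal diverges from the paper is in how the bridging is done, and your version leaves the hardest step essentially unproved.

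You frame the interpolation as an abstract matching problem: two Lagrangian $2$-tori on either side of an annular region, to be joined by a ``Lagrangian cobordism'' after a compactly supported Hamiltonian isotopy. This is vague (a Lagrangian cylinder in $X_\tau^5$ is what you need, not a cobordism in the usual sense), and you do not actually carry out the ``explicit model computation in the overlap of the two charts'' that you promise. The paper's approach avoids this entirely. Rather than working only in the two end charts and trying to match across an opaque intermediate region, it covers the leg by a chain of Weinstein neighborhoods $\Wein(L_{\tau,q})$ of parallel-transported moment fibers for a sequence of points $q$ along the edge $\widetilde{V}_\ell$. In each such neighborhood, both the leg strand of $\tLsing$ and the smooth part of the cone $L_{\ell,m}$ pull back (under parallel transport) to exactly the same periodized conormal pieces~\eqref{coverconormal123} or~\eqref{coverconormal4}. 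So there is no separate cobordism to construct: the two pieces are literally equal where they overlap, and smoothness is immediate. Your parallel-transport description of the leg extension is already pointing in this direction; the missing idea is to use the intermediate tori $L_{\tau,q}$ as charts in which the identification becomes a tautology.
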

There are $125$ cone points that come from lifting the cone point of $\Lsing$, and another $20$ cone points attached to the ends of $\tLsing$, specifically $5$ cone points corresponding to each leg of the tropical curve $V$.
\begin{proof}
Consider the tropical curve $\widetilde{V}\subset Q\cong\mathbb{R}^3$. Choose a point $q\in\widetilde{V}_{\ell}$ which is not the vertex. If we view $(\mathbb{C}^*)^3$ as the big torus in $\mathbb{CP}^3\cong\lbrace x_5 = 0\rbrace$, then $\mathbb{R}^3$ can be identified with the interior of the moment polytope for (this copy of) $\mathbb{CP}^3$. Consider the moment fiber $L_{0,q}$ over $q$ inside $\mathbb{CP}^3$. As before, we can parallel transport this to a Lagrangian torus $L_{\tau,q}\subset X_{\tau}^5$. Choose a Weinstein neighborhood $\Wein(L_{\tau,q})$ contained in $X_{\tau}^5$, whose domain is a small tubular neighborhood of the $0$-section in $T^*T^3$.

If $W\subset Q$ is a $1$-dimensional integral affine subspace containing $\widetilde{V}_{\ell}$, then we can consider the intersection periodized conormal $N^*W/N^*_{\mathbb{Z}}W$ with the tubular neighborhood specified above. This will give us the portion of the periodized conormal lying over a finite segment of $\widetilde{V}_{\ell}$ containing $q$. Taking a sequence of such points on $\widetilde{V}_{\ell}$, we can assume that the union of Weinstein neighborhoods $\Wein(L_{\tau,q})$ is connected and intersects each of the balls $\widetilde{B}_{\ell,m}$, as well as $\Wein(L_{\tau,q})$.

Examining~\eqref{conesinxt} shows that the intersection of the union of cones $\bigcup_m\widetilde{L}_{\ell,m}$ with the Lagrangian segments in $\Wein(L_{\tau,q})$ constructed above pulls back to a segment of one of the subsets~\eqref{coverconormal123} or~\eqref{coverconormal4} inside $(\mathbb{C}^*)^3\subset\lbrace x_5 = 0\rbrace$ under symplectic parallel transport. In particular the union of the cones with the Lagrangian segment in $\Wein(L_{\tau,q})$, where $q\in Q$ is sufficiently far away from $0$, is smooth except at the cone points. For the same reason, one sees that the union of these pieces with the copy of $\tLsing\subset\Wein(L_{\tau,0})$ is smooth (away from the cone points) as well. By taking the union of all local Lagrangian pieces considered so far, we obtain $\tLvgsing$.
\end{proof}
\begin{remark}
There are several obvious similarities between the proof of Theorem~\ref{singularlagrquintic} and Mak--Ruddat's construction of tropical Lagrangians in mirror quintic threefolds of~\cite{MR20}. The Weinstein neighborhoods of moment fibers in our setting can be thought of as analogues of the charts used by Mak--Ruddat to construct the parts of their tropical Lagrangians lying away from the boundary of the moment polytope. Because we have attached Lagrangian cones to the noncompact ends of $\tLsing$, we do not require an analogue of their construction of a Lagrangian solid torus. Consequently, we have not undertaken a detailed study of the discriminant locus or singular fibers as in op. cit. In the language of the Gross--Siebert program, the relevant dual intersection complex would not be simple, and so constructing tropical Lagrangians in these terms would be different than in~\cite{MR20}.
\end{remark}
\begin{remark}
Since five subsets in~\eqref{coverconormal123}, for some fixed indices $j$, $k$, and $\ell$ are carried to each other under multiplication of the $\ell$th coordinate of $(\mathbb{C}^*)^3$ by fifth roots of unity, one might expect that they should all approach the same cone point. Since the points $\widetilde{v}_{\ell,m}$ are also carried to each other by multiplication by fifth roots of unity in the $\ell$th coordinate, however, each end of $\tLsing$ approaches a different cone point when compactified in $X_{\tau}^5$.
\end{remark}
The proof of Theorem~\ref{main1} involves understanding disks of small symplectic area with boundary on $\tLvg$. It is illuminating to recast these arguments as calculations of \textit{local} Lagrangian Floer cohomology in a Weinstein neighborhood of $\tLvgsing$ inside $X_{\tau}^5$. We will construct a Weinstein neighorhood $\widetilde{W}_5\coloneqq\Wein(\tLvgsing)$ which is invariant under the action of $(\mathbb{Z}/5)^3$ on $X_{\tau}^5$. It was shown by Joyce~\cite{Joy04} that any dilation-invariant Lagrangian cone $C$ in $\mathbb{C}^n$ with link $\Sigma$ has a dilation-invariant Weinstein neighborhood, meaning that there is an open neighborhood $U_C\subset T^*(\Sigma\times\mathbb{R}_{>0})$ of the $0$-section and a symplectic embedding $\Phi_C\colon U_C\to\mathbb{C}^n$. We also have that $\Phi_C$ restricts to the inclusion map along the $0$-section, and that it intertwines the $\mathbb{R}_{>0}$-actions on $T^*(\Sigma\times\mathbb{R}_{>0})$ and $\mathbb{C}^n$.

Let $\widetilde{v}_{\ell,m}$ denote one of the singular points of $\tLvgsing$, and let $\Psi_{\ell,m}\colon B_0(\epsilon)\to X_{\tau}^5$ denote one of the Darboux balls~\eqref{darbouxtlsing} or~\eqref{darbouxquintic}, where we take the domain to be  a ball of radius $\epsilon>0$ centered at $0\in\mathbb{C}^3$. Denote by $\psi_{\ell,m}\colon\mathbb{C}^3\to T_{x_s}X_{\tau}^5$ the linear isomorphism induced from the differential of $\Psi_{\ell,m}$ at the origin.

We can write $\Psi_{\ell,m}^{-1}(\tLvgsing)$ as the image of the $0$-section in $T^*(\Lambda_{HL}\times\mathbb{R}_{>0})$, where we recall that $\Lambda_{HL}$ denotes the link of the Harvey--Lawson cone. This induces a map $\phi_{\ell,m}\colon\Lambda_{HL}\times\mathbb{R}_{>0}\to B_0(\epsilon)$ which parametrizes the $0$-section, implying that $\Psi_{\ell,m}\colon\phi_s$ has image contained in $\widetilde{L}'\subset X_{\tau}^5$. Define the compact subset
\begin{align*}
K\coloneqq\tLvgsing\setminus\bigcup_{\ell,m}\widetilde{B}_{\ell,m}
\end{align*}
of $X_{\tau}^5$.
\begin{lemma}
There is an open neighborhood $U_{\widetilde{L}'}$ of the $0$-section in $T^*\widetilde{L}'$ and a symplectic embedding $\Phi_{\widetilde{L}'}\colon U_{\widetilde{L}'}\to X_{\tau}^5$ which restricts to the inclusion over the $0$-section. This embedding also satisfies
\begin{align}\label{singularweinsteinidentity}
\Phi_{\widetilde{L}'}\circ(d\Psi_{\ell,m}\circ\phi_{\ell,m}) = \Psi_{\ell,m}\circ\Phi_{C_{HL}}
\end{align}
meaning that the Darboux balls centered at the cone points of $\tLvgsing$ patch smoothly with the image of $U_{\widetilde{L}'}$ to give a symplectic subdomain $\tWvg$ of $X_{\tau}^5$. Moreover, we can choose all symplectic embeddings as above so that $\tWvg$ is invariant under the action of $(\mathbb{Z}/5)^3$.
\end{lemma}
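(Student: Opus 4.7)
The plan is to apply a relative version of the Weinstein neighborhood theorem, using the Joyce cone embeddings $\Psi_{\ell,m}\circ\Phi_{C_{HL}}$ as prescribed initial data near the punctured cone points of $\tLvgsing$, and then to upgrade everything to a $(\mathbb{Z}/5)^3$-equivariant construction by averaging. Once this is done, the identity~\eqref{singularweinsteinidentity} is built into the definition, and the symplectic subdomain $\tWvg$ is obtained as the union of the Joyce cone neighborhoods with $\Phi_{\widetilde{L}'}(U_{\widetilde{L}'})$.

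The first step is to repackage the Joyce data as a partial Weinstein embedding defined over an open neighborhood of a portion of the zero section in $T^*\widetilde{L}'$. For each $(\ell,m)$, the map $\phi_{\ell,m}\colon\Lambda_{HL}\times\mathbb{R}_{>0}\to\widetilde{L}'$ is an open embedding with image in $\widetilde{B}_{\ell,m}\cap\widetilde{L}'$, and its canonical cotangent lift identifies $T^*(\Lambda_{HL}\times\mathbb{R}_{>0})$ symplectically with an open subset of $T^*\widetilde{L}'$. Composition with $\Psi_{\ell,m}\circ\Phi_{C_{HL}}$ therefore yields a symplectic embedding $\Phi_{\ell,m}\colon V_{\ell,m}\to X_{\tau}^5$ from a neighborhood $V_{\ell,m}$ of this part of the zero section, restricting to the inclusion along the zero section. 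I would choose slightly smaller precompact neighborhoods $V''_{\ell,m}\Subset V'_{\ell,m}\Subset V_{\ell,m}$ of the non-compact ends of $\widetilde{L}'$ and invoke the relative Weinstein neighborhood theorem to extend this data to a symplectic embedding $\Phi_{\widetilde{L}'}$ defined on a neighborhood of the full zero section and agreeing with $\Phi_{\ell,m}$ on $V''_{\ell,m}$. Concretely this is the standard Moser argument: pick a Lagrangian subbundle complement to $T\widetilde{L}'$ in $TX_{\tau}^5|_{\widetilde{L}'}$ which on $V'_{\ell,m}$ coincides with the one induced by $\Phi_{\ell,m}$, form a reference diffeomorphism from a neighborhood of the zero section into $X_{\tau}^5$ using the exponential of an adapted metric, and then apply Moser's trick to the difference of the pulled-back symplectic form and the canonical cotangent form, with the generating vector field cut off so that the isotopy is stationary on $V''_{\ell,m}$.

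For $(\mathbb{Z}/5)^3$-invariance I would make every auxiliary choice equivariantly from the outset. The group action permutes the Darboux charts $\widetilde{B}_{\ell,m}$ within each $\ell$, so Joyce's construction can be carried out on one orbit representative and propagated by the group, yielding equivariant cone neighborhoods. The ambient Fubini--Study metric on $X_{\tau}^5$ is $(\mathbb{Z}/5)^3$-invariant by construction, and the Lagrangian subbundle complement and associated exponential map used in Moser's trick can be averaged over $(\mathbb{Z}/5)^3$ without spoiling their matching with the $\Phi_{\ell,m}$ on the ends, since those were chosen equivariantly. Naturality of Moser's trick under symplectomorphisms preserving all the data then produces an equivariant $\Phi_{\widetilde{L}'}$, whence $\tWvg$ is automatically invariant.

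The main technical point is ensuring that the subbundle complement to $T\widetilde{L}'$ used near the punctured cone ends agrees precisely with the one induced by $\Phi_{\ell,m}$, so that the Moser isotopy can be arranged to vanish identically on $V''_{\ell,m}$ and the equality~\eqref{singularweinsteinidentity} holds on the nose rather than only up to isotopy. This is standard in the compact case, but here some care is required because $\widetilde{L}'$ is non-compact and accumulates on the cone points; the Joyce model $T^*(\Lambda_{HL}\times\mathbb{R}_{>0})$, which is dilation-equivariant near $r\to 0$, provides exactly the structure needed to perform the cut-off between $V''_{\ell,m}$ and $V'_{\ell,m}$ uniformly in the conical direction, so the extension is controlled all the way to the ends.
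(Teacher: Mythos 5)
Your proposal follows essentially the same approach as the paper, whose proof simply cites the cone-neighborhood techniques of Joyce and the earlier construction in $T^*T^3$ for the existence of $\Phi_{\widetilde{L}'}$, and argues $(\mathbb{Z}/5)^3$-invariance by noting that the Darboux balls and the singular Lagrangian $\tLvgsing$ with its symplectic parallel transport are preserved by the group. You correctly unpack the underlying relative Moser argument (matching the Joyce data near the conical ends) and the equivariant-averaging mechanism that the paper leaves implicit.
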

\begin{proof}
The construction of $\Phi_{\widetilde{L}'}$ and $U_{\widetilde{L}'}$ satisfying~\eqref{singularweinsteinidentity} is straightforward using the techniques of~\cite{Joy04} and~\cite[Section 5.1]{Han24a}. Thus we can take $\tWvg$ to be the union of $\Phi_{\widetilde{L}'}(U_{\widetilde{L}'})$ with the balls $\widetilde{B}_{\ell,m}$. To prove that $\tWvg$ is $(\mathbb{Z}/5)^3$-invariant, first note that the cone points $\widetilde{v}_{\ell,m}$, for $\ell = 1,2,3,4$, are permuted by the group action. Because there are only finitely many such points, we can choose the Darboux balls centered at these points so that the disjoint union of all of the balls is preserved under the group action.

The action of $(\mathbb{Z}/5)^3$ on $X_{\tau}^5$ is the restriction of a corresponding $(\mathbb{Z}/5)^3$-action on $\mathbb{CP}^4$. This group action also restricts to an action of $(\mathbb{Z}/5)^3$ on $(\mathbb{C}^*)^3$, and this is in fact the action by deck transformations of the covering map $(\mathbb{C}^*)^3\to(\mathbb{C}^*)^3$ arising in the construction of $\widetilde{V}$. In particular $\tLsing$ is preserved under this action and, by Remark~\ref{graphtorus}, its symplectic parallel transport is also $(\mathbb{Z}/5)^3$-invariant. Hence wee can assume without loss of generality that the image of $\Phi_{\widetilde{L}'}$ is preserved by the $(\mathbb{Z}/5)^3$-action as well.
\end{proof}
The quotient of $\tWvg$ by the action of $(\mathbb{Z}/5)^3$ admits a similar description.
\begin{lemma}
The quotient space $\Wvg\coloneqq\tWvg/(\mathbb{Z}/5)^3$ is a smooth symplectic manifold. It contains a singular Lagrangian $\Lvgsing$ given by the image of $\tLvgsing$ under the quotient map. The Lagrangian $\Lvgsing$ has five cone points, denoted $v_0,\ldots,v_4$, all of which are modeled on the cone point of a Harvey--Lawson cone.
\end{lemma}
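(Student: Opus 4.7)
My plan is to work piece-by-piece with the decomposition of $\tWvg$ from the previous lemma into the Weinstein neighborhood $\Phi_{\widetilde{L}'}(U_{\widetilde{L}'})$ of the smooth locus of $\tLvgsing$ together with the $145$ Darboux balls $\widetilde{B}_{\ell,m}$, and to analyze the $(\mathbb{Z}/5)^3$-action on each piece. The action is symplectic throughout since it restricts from the holomorphic group action on $\mathbb{CP}^4$ preserving the Fubini--Study form.

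First I would verify $(\mathbb{Z}/5)^3$-invariance of $\tLvgsing$. Its smooth part, diffeomorphic to $\widetilde{L}'$, is by Remark~\ref{125foldcover} the regular $(\mathbb{Z}/5)^3$-cover of $L'$, and this cover is realized by the restriction of the group action inherited from $(\mathbb{C}^*)^3$. The cone pieces $\widetilde{L}_{\ell,m}$ in~\eqref{conesinxt} are preserved as a collection because the group acts by coordinate-wise multiplication by fifth roots of unity, which leaves the defining loci $\lbrace |x_i|=|x_j|=|x_k|,\ x_ix_jx_k\in\mathbb{R}_{\geq 0}\rbrace$ invariant. Hence $\Lvgsing$ is well-defined as the image of $\tLvgsing$.

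Next I would enumerate orbits on the cone points and read off the local structure. The $125$ cone points obtained from lifting the vertex of $\Lsing$ form a single free orbit (the deck group action of Remark~\ref{125foldcover}), so the $125$ disjoint Darboux balls descend to a single Darboux ball around a point $v_0\in\Lvgsing$, in which $\Lvgsing$ is modeled on the Harvey--Lawson cone. For each $\ell\in\lbrace 1,2,3,4\rbrace$, a direct calculation in homogeneous coordinates shows that the points $[0{:}0{:}0{:}1{:}\alpha_m]$ with $\alpha_m^5=-1$ (up to a permutation of coordinates depending on $\ell$) form a single orbit of size five, producing one image cone point $v_\ell\in\Lvgsing$.

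The main obstacle is verifying smoothness of $\Wvg$ together with the Harvey--Lawson model at each $v_\ell$ with $\ell\geq 1$, since these orbits are not free. The key observation is that the stabilizer at $\widetilde{v}_{\ell,m}$ acts on a linearized Darboux chart as a subgroup of $SU(3)$ preserving both the symplectic form and the Harvey--Lawson cone (the cone is cut out by equations invariant under scalar multiplication of coordinates by roots of unity). I would arrange the Weinstein model at $\widetilde{v}_{\ell,m}$ so that the stabilizer action linearizes equivariantly, and then exhibit a smooth symplectic chart on the quotient (compatibly with the interpretation of the mirror quintic as a crepant resolution of $X_{\tau}^5/(\mathbb{Z}/5)^3$) in which the descended cone is again Harvey--Lawson. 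The Lagrangian assertion then follows since the quotient map is a local symplectomorphism away from these four points, with cone structure preserved by construction.
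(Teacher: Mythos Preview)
Your decomposition and orbit analysis match the paper's: one free orbit of $125$ balls producing $v_0$, and for each $\ell\in\{1,2,3,4\}$ a single orbit of five balls with nontrivial stabilizer producing $v_\ell$. The divergence is in the last step, where you must show the quotient is smooth at $v_1,\dots,v_4$ and that the Harvey--Lawson model survives.

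Your appeal to the crepant resolution is a genuine gap. The space $\Wvg$ is \emph{defined} as the quotient $\tWvg/(\mathbb{Z}/5)^3$; it is a piece of the orbifold $X_\tau^5/(\mathbb{Z}/5)^3$, not of its crepant resolution. Replacing the quotient by the resolution would alter the local geometry by inserting exceptional divisors, and there is no reason the Harvey--Lawson cone would persist through that. Simply noting that the stabilizer sits in $SU(3)$ and preserves the cone does not, by itself, produce a smooth chart on the quotient --- generic finite subgroups of $SU(3)$ give singular quotients. What is needed is a direct argument about the quotient chart itself.

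The paper's argument is concrete at exactly this point. It identifies the stabilizer at $\widetilde{v}_{\ell,m}$ explicitly as the sum-zero subgroup
\[
(\mathbb{Z}/5)^2\cong\bigl\{(a_1,a_2,a_3)\in(\mathbb{Z}/5)^3:a_1+a_2+a_3=0\bigr\}
\]
acting diagonally on the chart coordinates $(x_i,x_j,x_k)\in\mathbb{C}^3$, and asserts that this action realizes a branched cover of $\mathbb{C}^3$ by itself. The sum-zero condition is precisely what fixes the monomial $x_ix_jx_k$, so the defining conditions $|x_i|=|x_j|=|x_k|$ and $x_ix_jx_k\in\mathbb{R}_{\geq 0}$ of the Harvey--Lawson cone descend verbatim. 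The paper then describes $\Wvg$ abstractly as five symplectic balls glued to $T^*L'$ along the Legendrian links $\Lambda_{HL}$ at the cusps. What your proposal is missing, relative to the paper, is this explicit identification of the stabilizer action and the resulting direct description of the quotient ball --- not an appeal to resolution.
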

\begin{proof}
Consider the disjoint union
\begin{align*}
\coprod_{m=1}^5\widetilde{B}_{\ell,m}
\end{align*}
for a fixed $\ell = 1,2,3,4$. We can choose a generating set for $(\mathbb{Z}/5)^3$ with the property that one of the generators cyclically permutes the balls $\widetilde{B}_{\ell,m}$, and for which the action of the other two generators on $\widetilde{B}_{\ell,m}\cong\mathbb{C}^2$ is given by the action of
\begin{align*}
(\mathbb{Z}/5)^2\cong\lbrace(a_1,a_2,a_3)\in(\mathbb{Z}/5)^3\mid a_1+a_2+a_3 = 0\rbrace
\end{align*}
on $\mathbb{C}^3$. This group action is associated to a branched cover of $\mathbb{C}^3$ by itself. The action of $(\mathbb{Z}/5)^3$ on $\Phi_{\widetilde{L}'}(U_{\widetilde{L}'})$ is free, and thus $\Wvg$ is a smooth manifold. Abstractly, it is obtained by gluing five symplectic balls to a copy of $T^*L'$ such that the Legendrians $\Lambda_{HL}\subset S^5$ in the boundaries of these balls are identified with the cusps of $L'$, thought of as the $0$-section.
\end{proof}
We denote the Darboux ball centered at the cone point $v_{\ell}$ of $\Lvgsing$ by
\begin{align}
B_{\ell}\subset\Wvg \, .
\end{align}
This is consistent with the notation~\eqref{darbouxlsing} for the Darboux chart near the cone point of $\Lsing$ established above.

\subsection{An immersed tropical Lagrangian in the quintic} The Lagrangian immersion $\tLvg$ is closely related to the immersion $\Limm$ studied in~\cite{Han24a}. More precisely, one can think of $\tLvg$ as being obtained from a cover $\tLimm$ of $\Limm$ by attaching immersed `doubles' of the Harvey--Lawson cone, as constructed in~\cite[\S{5.3}]{Han24a}. A consequence of this is that we will be able to view $\tLvg$ as an object of the relative Fukaya category of the quintic, which will later allow us to quote the results of~\cite{She15}.

Recall that the construction of the double takes place in a small neighborhood of the origin in $\mathbb{C}^3$. We can think of this as a neighborhood of $0\in\mathbb{C}^3\setminus\lbrace y_1y_2y_3 = 1\rbrace$, which we can identify with the variety
\begin{align}\label{gssing}
Y\coloneqq\left\lbrace(y_1,y_2,y_3,u)\in\mathbb{C}^3\times\mathbb{C}^*\mid y_1y_2y_3 = v\right\rbrace \, .
\end{align}
This is, not coincidentally, one of the local Gross--Siebert models studied in~\cite{AS21}. Consider the projection
\begin{align} \label{degeneration}
w\colon Y &\to\mathbb{C} \nonumber \\
w(y_1,y_2,y_3,u) &= y_1y_2y_3 \, . 
\end{align}
The fiber $D\coloneqq w^{-1}(0)$ is the union of coordinate hyperplanes in $\mathbb{C}^3$. As shown in~\cite[Lemma 2.2]{AS21}, the coordinates $(y_1,y_2,w)$ on $Y\setminus D$ induce a commutative diagram
\begin{equation}
\begin{tikzcd}
Y\setminus D\arrow{r}{\cong}\arrow{d} & \mathbb{C}^2\times(\mathbb{C}^*\setminus\lbrace 1\rbrace)\arrow{d} \\
Y\arrow{r} & \mathbb{C}^2\times\mathbb{C}
\end{tikzcd}\label{commutative}
\end{equation}
where the top arrow is a symplectomorphism.

Let $\Larc$ denote the immersed Lagrangian arc in $\mathbb{C}\setminus\lbrace 0,1\rbrace$ depicted in Figure~\ref{immersedarc}. Notice $\Larc\subset\mathbb{C}$ bounds a holomorphic teardrop, i.e. a disk with one corner, through the origin. By choosing $\Larc$ appropriately, we can assume that this teardrop is arbitrarily small.
\begin{figure}
\begin{tikzpicture}\draw[smooth, thick, fill = gray!30] 
  plot[domain=135:225,samples=200] (\x:{3*cos(2*\x)});
  
  \draw[rounded corners, thick] (0,0) -- (0.7,0.7) -- (5.7,0.7);
  \draw[rounded corners, thick] (0,0) -- (0.7,-0.7) -- (5.7,-0.7);
  
  \node[] at (-3.5,0) {$\Larc$};
  \node[circle, fill, inner sep = 1.5 pt] at (-1.75,0) {};
  \node[anchor = west] at (-1.75,0) {$0$};
 
\end{tikzpicture}
\caption{The immersed Lagrangian $\Larc\subset\mathbb{C}$, and the teardrop it bounds.}\label{immersedarc}
\end{figure}
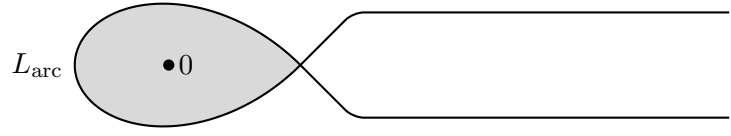
If we restrict to a neighborhood of $0\in\mathbb{C}^3$, then the portion of the Harvey--Lawson cone contained in this neighborhood projects, under $w$, to a portion of the nonnegative real axis in $\mathbb{C}$. Using~\eqref{commutative}, we can define an immersed Lagrangian in $\mathbb{C}^3$.
\begin{definition}\label{lydef}
Let $T^2_{Cl}\subset\mathbb{C}^2$ denote the standard Clifford torus, and consider the product $T^2\times\Larc\subset\mathbb{C}^2\times(\mathbb{C}^*\setminus\lbrace 1\rbrace)$. Let $L_Y\subset Y\setminus D$ denote the image of this Lagrangian under the symplectomorphism at the top of~\eqref{commutative}.
\end{definition}
Near the cone points $\widetilde{v}_{\ell,m}$, the Lagrangian immersion $\tLvg$ will be given by the images in $\widetilde{B}_{\ell,m}$ of the Lagrangian $L_Y\cap B_0(\epsilon)\subset B_0(\epsilon)$ contained in a small ball centered at the origin.
\begin{remark}\label{orderingcoordinates}
Although the construction of $L_Y$ depends on an ordering of coordinates on $\mathbb{C}^3$, the fibers of $L_Y$ over $\Larc$, as subsets of $\mathbb{C}^3$, are independent of this choice. Therefore we can construct $\tLvg$ using any ordering of coordinates on $\mathbb{C}^3$. It will only be important to specify a choice of coordinates later, when we study the Floer theory of this Lagrangian.
\end{remark}

The part of $\tLvg$ lying outside the balls $\widetilde{B}_{\ell,m}$ will consist of two copies of $\widetilde{L}'$.
\begin{lemma}\label{morsefn}
There is a Morse function $h\colon L'\to\mathbb{R}$ with $2$ index $0$ critical points, $6$ index $1$ critical points, and $4$ index $2$ critical points. Furthermore, there exist collar neighborhoods $T^2\times\mathbb{R}\to L'$ of each cusp of $L'$ in which the gradient vector field of $h$ points outward in the $\mathbb{R}$-direction. Here the gradient vector field is taken with respect to the metric on $L'$ determined by the symplectic form on $T^*L'$ and a compatible almost complex structure.
\end{lemma}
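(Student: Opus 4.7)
Each cusp of $L'$ admits a product collar $T^2 \times [0,\infty)$ coming from its horospherical cross-section, so removing a small horospherical neighborhood of each cusp yields a compact 3-manifold $\bar L'$ with five torus boundary components, and $L'$ is diffeomorphic to the interior of $\bar L'$. The lemma reduces to producing a Morse function $\bar h \colon \bar L' \to [0,1]$ with $\bar h^{-1}(1) = \partial \bar L'$, only interior critical points, and critical-point counts $(c_0,c_1,c_2,c_3) = (2,6,4,0)$: one then grafts $\bar h$ onto the cusp depth coordinate in each collar to obtain $h$, whose gradient automatically points outward in the $\mathbb{R}$-direction. No index-$3$ critical points are required because a Morse function on a 3-manifold with non-empty boundary in which $\partial M$ sits at the top has no $3$-handles.

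To build $\bar h$, start from the decomposition of $L'$ into two ideal cubes $C_1, C_2$ (each a union of five ideal tetrahedra) from Figure~\ref{idealtriangulation} and~\cite{DT03}. Truncating each cube at its eight ideal vertices produces a compact polyhedron whose boundary splits into small triangles (forming the horospherical cross-sections of the cusps) and interior faces, and $\bar L'$ is obtained by gluing the two truncated cubes along the latter in six identified pairs. Produce a handle decomposition by placing one $0$-handle at the interior of each cube (giving $c_0 = 2$), attaching one $1$-handle dual to each of the six identified face-pairs and joining the two $0$-handles (giving $c_1 = 6$), and then attaching $2$-handles to complete the build. The count $c_2$ is forced by the Euler-characteristic identity: using the standard computation $H_\ast(\bar L') = (\mathbb{Z}, \mathbb{Z}^5, \mathbb{Z}^4, 0)$ for a five-component link exterior in $S^3$, the relation $c_0 - c_1 + c_2 = \chi(\bar L') = 0$ yields $c_2 = 4$, which saturates the Morse lower bound $c_2 \geq b_2 = 4$. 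The prescribed counts $(2,6,4,0)$ exceed the minimal counts $(1,5,4,0)$ dictated by homology by a single cancelling pair in indices $(0,1)$, which is naturally accounted for by placing one $0$-handle in each cube.

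The main obstacle is showing that the four $2$-handles can be attached with attaching circles whose framings and homology classes actually recover $\bar L'$ rather than some other 3-manifold with the same Betti numbers. I would resolve this by identifying the $2$-handle cores with the components of the dual $2$-skeleton of the cubical decomposition, whose boundaries run along the $1$-handles in precisely the pattern dictated by the edge cycles of the two truncated cubes; the relations in $\pi_1(\bar L')$ are then realized by the boundaries of these cores. Once the topological handle structure is in place, a smooth Morse function on $\bar L'$ compatible with it is produced by standard smoothing, and the outward-pointing gradient condition along each cusp collar is automatic because by construction no handles approach $\partial \bar L'$.
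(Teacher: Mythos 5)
Your proof takes essentially the same approach as the paper's: build $h$ from the decomposition of $L'$ into two ideal cubes, with critical points dual to the cells of the cubulation, so that the index-$0$, -$1$, -$2$ counts are the numbers of $3$-, $2$-, $1$-cells, namely $(2,6,4)$. Your Euler-characteristic route to $c_2 = 4$ is a harmless substitute for the paper's implicit direct count of four edge classes in the cubulation, and your stated concern about whether the attached $2$-handles recover $\bar{L}'$ is moot, since you are simply dualizing an existing cell decomposition of $\bar{L}'$, which automatically returns $\bar{L}'$.
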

\begin{proof}
The Morse function $h$ is associated to the decomposition of $L'$ into two ideal cubes shown in Figure~\ref{idealtriangulation}. The numbers of index-$0$, -$1$, and -$2$ critical points of $h$ correspond to the numbers of $3$-cells, $2$-cells, and $1$-cells in the ideal cubulation.
\end{proof}

\begin{definition}
Let $\widetilde{h}\colon\widetilde{L}'\to\mathbb{R}$ be the Morse function obtained by precomposing $h$ with the covering map $\widetilde{L}'\to L'$.
\end{definition}
Since $\widetilde{h}$ is obtained from $h$ by lifting it to a covering space, it admits a similar description near the cusps of $\widetilde{L}'$.
\begin{corollary}
There exist collar neighborhoods $T^2\times\mathbb{R}\to\widetilde{L}'$ of each cusp of $\widetilde{L}'$ in which the gradient vector field of $\widetilde{h}$ points outward in the $\mathbb{R}$-direction. \qed
\end{corollary}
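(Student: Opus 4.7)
The strategy is to lift every piece of Lemma \ref{morsefn} along the covering map $p\colon\widetilde{L}'\to L'$. Since $p$ is a finite covering (of degree $125$ by Remark \ref{125foldcover}), it is a local diffeomorphism, so $\widetilde{h}=h\circ p$ is smooth and each critical point of $\widetilde{h}$ lies over a critical point of $h$ with the same Morse index. In particular, the claim about outward-pointing gradient vectors is local on the target, and it suffices to understand how $p$ restricts over a cusp collar of $L'$.

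Fix a cusp $c$ of $L'$ with collar $\iota\colon T^2\times\mathbb{R}\to L'$ supplied by Lemma \ref{morsefn}. The preimage $p^{-1}(\iota(T^2\times\mathbb{R}))$ is a disjoint union of open subsets of $\widetilde{L}'$, each of which is a connected cover of $T^2\times\mathbb{R}$. Since $\widetilde{L}'$ is a cusped hyperbolic $3$-manifold with torus cusps (the cusps arising from the cone points of $\tLsing$ as described in Theorem \ref{singularsyzlift} and Remark \ref{125foldcover}), each such component must have compact torus cross-section. Equivalently, it corresponds to a finite-index subgroup of $\pi_1(T^2)\cong\mathbb{Z}^2$, so the component is diffeomorphic to $T^2\times\mathbb{R}$ and $p$ restricts to a finite covering $T^2\to T^2$ on the torus factor and to the identity on the $\mathbb{R}$-factor. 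This gives the desired collar neighborhoods for each cusp of $\widetilde{L}'$.

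It remains to check that the gradient of $\widetilde{h}$ points outward in the $\mathbb{R}$-direction. We equip $\widetilde{L}'$ with the pullback of the metric on $L'$ used in Lemma \ref{morsefn}; equivalently, we use the metric induced by the symplectic form on $T^*\widetilde{L}'$ together with the pullback of the compatible almost complex structure. With this choice, $p$ is a local isometry, so $\nabla\widetilde{h}=p^{*}\nabla h$ pointwise, and the outward-pointing property descends through $\iota$ directly.

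The argument has no real obstacle; the only substantive point is confirming that each cusp of $\widetilde{L}'$ lies over a cusp of $L'$ via a finite covering of tori, which is built into the construction of $\tLsing$ as a $(\mathbb{Z}/5)^3$-cover and the identification of $\widetilde{L}'$ with a cusped hyperbolic $3$-manifold in Theorem \ref{singularlagrquintic}.
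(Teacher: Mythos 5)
Your proof is correct and takes essentially the same route as the paper, which disposes of the corollary in one sentence by noting that $\widetilde{h}$ is pulled back along a covering map. You spell out the details the paper leaves implicit (that each cusp of $\widetilde{L}'$ covers a cusp of $L'$ via a finite covering of tori, and that the gradient is preserved because the covering is a local isometry for the pulled-back metric), but the underlying idea is identical.
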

Let $\Gamma(d\widetilde{h})\subset T^*\widetilde{L}'$ denote the graph of the $1$-form $d\widetilde{h}$. The graphs $\Gamma(d\widetilde{h})$ and $\Gamma(-d\widetilde{h})$ intersect each other transversely, and the intersection points correspond to the critical points of $\widetilde{h}$. Choosing $\widetilde{h}$ with small $C^1$-norm means that we can assume that both of these graphs are arbitrarily $C^0$-close to the $0$-section, and in particular that they are contained in the neighborhood $U_{\widetilde{L}'}$.

\begin{lemma}
The union
\begin{align}
\tLvg\coloneqq\Phi_{\widetilde{L}'}(\Gamma(d\widetilde{h})\cup\Gamma(-d\widetilde{h}))\cup\bigcup_{\ell,m}\Psi_{\ell,m}(L_Y)\subset\tWvg
\end{align}
is the image of a smooth Lagrangian immersion, possibly after applying Hamiltonian isotopies to $L_Y$ away from the origin in $\mathbb{C}^3$.
\end{lemma}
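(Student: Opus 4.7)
The domain of the putative immersion will be a smooth $3$-manifold obtained by gluing two disjoint copies of $\widetilde{L}'$ (which map to $\Gamma(\pm d\widetilde{h})$ respectively) to a copy of a suitable cylindrical piece modeled on $L_Y$ at each pair of corresponding cusps. Each of the three types of pieces is Lagrangian by construction: the graph of a closed $1$-form is Lagrangian in a cotangent bundle, and $L_Y$ is the image under a symplectomorphism of the product of the Clifford torus with the Lagrangian arc $\Larc$. The maps $\Phi_{\widetilde{L}'}$ and $\Psi_{\ell,m}$ are symplectic embeddings, so the Lagrangian property is preserved on each piece. Thus the only real content of the lemma is that, after a Hamiltonian isotopy of each copy of $L_Y$ supported away from the origin, the three families of pieces fit together into a single smoothly immersed Lagrangian.

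All of the relevant analysis is local, concentrated on a pair consisting of a cusp of $\widetilde{L}'$ and the Darboux ball $\widetilde{B}_{\ell,m}$ centered at the corresponding cone point. Using the collar $T^2\times\mathbb{R}\hookrightarrow\widetilde{L}'$ provided by Lemma~\ref{morsefn} and its corollary, in this collar we may arrange $\widetilde{h}$ to depend only on the $\mathbb{R}$-coordinate $t$ and to have $d\widetilde{h} = f'(t)\,dt$ with $f'(t)>0$. Via $\Phi_{\widetilde{L}'}$, the zero-section on this collar is identified with the end $\Lambda_{HL}\times(r_0,\infty)$ of the Harvey--Lawson cone in $\Psi_{\ell,m}(\widetilde{B}_{\ell,m})$, and $\Gamma(\pm d\widetilde{h})$ is identified with two Lagrangian pushoffs of this end in opposite (radial) conormal directions, which are $C^1$-close to the cone itself.

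On the $L_Y$ side, I would analyze the two ends of $L_Y$ via the commutative diagram~\eqref{commutative}. The arc $\Larc\subset\mathbb{C}\setminus\lbrace 0,1\rbrace$ has two asymptotic rays parallel to the positive real axis, symmetric under complex conjugation. Pulling back through the symplectomorphism $Y\setminus D\to\mathbb{C}^2\times(\mathbb{C}^*\setminus\lbrace 1\rbrace)$ and then through $\Psi_{\ell,m}$, these two ends become Lagrangian cylinders in $\widetilde{B}_{\ell,m}$ over copies of the Clifford torus, each of which is a radial pushoff of the Harvey--Lawson cone's smooth part and which sit on opposite sides of $C_{HL}$ (because $\Larc$ has branches on either side of $\mathbb{R}_{\geq 0}$). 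In particular, the two ends of $L_Y$ and the two graphs $\Gamma(\pm d\widetilde{h})$ both converge, in the Weinstein neighborhood picture, to the same compact Lagrangian cylinder $\Lambda_{HL}\times(r_0,r_1)$, and differ from it by $1$-forms that can be made arbitrarily $C^1$-small by choosing the Weinstein neighborhood, the Darboux ball, and the $C^1$-norm of $\widetilde{h}$ sufficiently small.

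With this setup in place, the gluing is a standard application of the Lagrangian neighborhood theorem: two $C^1$-close Lagrangian cylinders in a Weinstein neighborhood of a common Lagrangian differ by the graphs of closed $1$-forms which, after a small isotopy supported in an annular collar, can be made to agree identically. I would apply such a compactly supported Hamiltonian isotopy to each end of each copy of $L_Y$, in a region bounded away from the origin in $\mathbb{C}^3$ (equivalently, bounded away from $0\in\Larc$ in the coordinates of~\eqref{commutative}), so that the modified $L_Y$ matches $\Gamma(d\widetilde{h})$ along one end and $\Gamma(-d\widetilde{h})$ along the other. The isotopies at different cone points can be performed independently since the balls $\widetilde{B}_{\ell,m}$ are disjoint. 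The resulting object is then a smoothly immersed Lagrangian, with self-intersections coming only from the critical points of $\widetilde{h}$ (where $\Gamma(d\widetilde{h})$ and $\Gamma(-d\widetilde{h})$ meet transversely) and from the self-intersections of the copies of $L_Y$ inherited from the teardrop self-intersection of $\Larc$. The only genuinely delicate point is bookkeeping the matching of the two conormal directions of $\Gamma(\pm d\widetilde{h})$ with the two branches of $L_Y$ at every cusp simultaneously; this follows from the fact that $\Larc$ is preserved by complex conjugation and that $\widetilde{h}$ was chosen globally on $\widetilde{L}'$, so a consistent sign choice exists.
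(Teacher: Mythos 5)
Your proposal is correct and expands what is, in the paper, a one‑line proof that simply refers to the analogous gluing in \cite[\S{5.3}]{Han24a} and notes that Hamiltonian isotopies of $L_Y$ away from the origin give enough freedom to achieve smoothness away from the self‑intersection locus. Your detailed account — identifying the collar of each cusp with an annular region in the Darboux ball, observing that the two ends of $L_Y$ and the two graphs $\Gamma(\pm d\widetilde{h})$ are all $C^1$‑close pushoffs of the same Lagrangian cylinder over $\Lambda_{HL}$, and then matching them by a compactly supported Hamiltonian isotopy via the Lagrangian neighborhood theorem — is exactly the argument the reference is meant to encapsulate.
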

\begin{proof}
This is similar to the discussion in~\cite[\S{5.3}]{Han24a}. Modifying $L_Y$ by Hamiltonian isotopies gives us enough freedom to guarantee that $\tLvg$ is smooth away from the self-intersection points.
\end{proof}
Notice that $\tLvg$ is contained in the very affine quintic hypersurface $X_{\tau}^5\setminus D_{\tau}$, where we recall that $D_{\tau}$ is the intersection of the quintic with the toric boundary of $\mathbb{CP}^4$. By construction, $L_Y$ avoids the coordinate hyperplanes (which coincide with the coordinate hyperplanes of $\mathbb{CP}^4$ in our choices of coordinates). A straightforward adaptation of the proof of~\cite[Lemma 5.5]{Han24a} yields the following.
\begin{lemma}\label{exactness}
The Lagrangian immersion $\tLvg\subset X^5_{\tau}\setminus D_{\tau}$ is exact with respect to the Liouville form on $X^5_{\tau}$ for which $L_{\tau,q}$ is exact. \qed
\end{lemma}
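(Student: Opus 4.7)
The plan is to adapt the proof of \cite[Lemma 5.5]{Han24a} to the present setting. I would first decompose $\tLvg$ into three families of local Lagrangian pieces: the two graph pieces $\Phi_{\widetilde{L}'}(\Gamma(\pm d\widetilde{h}))$ sitting inside $\Phi_{\widetilde{L}'}(U_{\widetilde{L}'})$, and the local Lagrangian pieces $\Psi_{\ell,m}(L_Y)$ sitting inside the Darboux balls $\widetilde{B}_{\ell,m}$. The overlaps are annular collars near the cusps of $\widetilde{L}'$, on which both descriptions restrict to the periodized conormals to the legs of the tropical curve $\widetilde{V}$.

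Next, I would produce local primitives of $\lambda|_{\tLvg}$ on each piece. For the graph pieces, the canonical Liouville form on $T^*\widetilde{L}'$ restricts to $\pm d\widetilde{h}$, which is tautologically exact. The ambient Liouville form on $X_{\tau}^5\setminus D_{\tau}$ differs from the pullback of the canonical Liouville form along $\Phi_{\widetilde{L}'}$ by a closed one-form; since the Weinstein neighborhood retracts onto the smooth part of the zero section, the cohomology class of this difference is controlled by its restriction to the smooth part of $\tLvgsing$, which was already understood in the proof of \cite[Lemma 5.5]{Han24a}. For the Darboux pieces, the ball $\widetilde{B}_{\ell,m}$ is contractible, so any Lagrangian in it is automatically exact; more concretely, one can use the symplectomorphism \eqref{commutative} to reduce exactness of $L_Y$ to exactness of the immersed arc $\Larc\subset\mathbb{C}\setminus\lbrace 0,1\rbrace$, which can be arranged by choosing $\Larc$ appropriately (as in Figure~\ref{immersedarc}).

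The main step is to patch these local primitives into a single primitive on the domain of the immersion $\tLvg$, which amounts to checking that $\int_{\gamma}\lambda = 0$ for every loop $\gamma$ in a generating set of the first homology of that domain. Loops entirely contained in one of the graph pieces contribute zero period by construction, and loops contained in a single Darboux ball contribute zero because the ball is simply connected. The main obstacle is loops that alternate between the two graph pieces via the handle pieces $\Psi_{\ell,m}(L_Y)$. Here I would exploit two pieces of flexibility already built into the construction: the small Hamiltonian isotopies of $L_Y$ away from the cone points permitted in the lemma immediately preceding the statement, and the ability to take the teardrop bounded by $\Larc$ arbitrarily small. Together these permit one to absorb any mismatch between the local primitives on the overlapping collars into the choice of additive constants. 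Exactness of $L_{\tau,0}$ enters at the final step to ensure that any loop in $\tLvg$ which is homologous in $X_{\tau}^5\setminus D_{\tau}$ to a loop on $L_{\tau,0}$ has vanishing period, completing the construction of a global primitive.
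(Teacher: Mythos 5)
The opening of your argument is sound: the decomposition into the two graph pieces $\Phi_{\widetilde{L}'}(\Gamma(\pm d\widetilde{h}))$ and the local pieces $\Psi_{\ell,m}(L_Y)$ is the right cover, each piece admits a local primitive, and both the reference to Lemma 5.5 of \cite{Han24a} and the reduction to vanishing of periods are correct. The problem is your patching step. The mismatches between primitives on overlaps form a \v{C}ech $1$-cocycle whose class in $H^1(\mathrm{dom}(\tLvg);\mathbb{R})$ is exactly $[\iota^*\lambda]$ and is unaffected by adding constants to the local primitives, so you cannot ``absorb any mismatch into the choice of additive constants.'' Nor does shrinking the teardrop or applying Hamiltonian isotopies of $L_Y$ help: Hamiltonian isotopy changes $\iota^*\lambda$ by an exact form and hence leaves $[\iota^*\lambda]$ unchanged, while making the teardrop area small only makes certain periods small rather than zero, and periods are not a priori confined to any discrete set.

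Your final step has a similar gap. If a loop $\gamma\subset\tLvg$ is homologous in $X_\tau^5\setminus D_\tau$ to a loop $\gamma'\subset L_{\tau,0}$ via a surface $\Sigma$, Stokes gives $\int_\gamma\lambda-\int_{\gamma'}\lambda=\int_\Sigma\omega$, so exactness of $L_{\tau,0}$ only controls $\int_\gamma\lambda$ up to a symplectic area term, which is exactly what you have not bounded. What the argument actually needs, and what Lemma 5.5 of \cite{Han24a} supplies, is the observation that the ambient Liouville form restricts trivially (in cohomology) to the smooth part of the singular model: periodized conormals lie in the kernel of the canonical Liouville form on $T^*T^3$, Lagrangian cones lie in the kernel of the radial Liouville form on $\mathbb{C}^3$, and the discrepancy with the ambient $\lambda$ is a closed form on each Weinstein/Darboux chart whose class is killed either by contractibility of the chart or by the exactness of $L_{\tau,0}$ (the Weinstein neighborhood retracts onto $L_{\tau,0}$). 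From there the periods on $\tLvg$ vanish because the graph pieces are exact deformations of the zero section in $T^*\widetilde{L}'$, and replacing cone points by copies of $L_Y$ inside contractible balls introduces no new $H^1$. This is the explicit computation that your step 3 and step 4 are silently replacing with the incorrect claims above.
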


\begin{remark}\label{immersiondomain}
The domain of $\tLvg$ is a closed $3$-manifold that has a JSJ decomposition with two irreducible pieces, both of which are diffeomorphic to $\widetilde{L}'$. More precisely, we can describe the domain of $\tLvg$ as the $3$-manifold obtained by gluing together two copies of $\widetilde{L}'$ by deleting small collar neighborhoods of their cusps. Abusing notation, we will also call this manifold with boundary $\widetilde{L}'$. The curves $\lbrace m_i,\ell_i\rbrace$ drawn in Figure~\ref{loops} lift to curves in $\widetilde{L}'$ as in Remark~\ref{125foldcover}. We glue these two manifolds together in a way that respects these homology classes.
\end{remark}
Since we defined the Morse function $\widetilde{h}$ by pulling back a Morse function on $L'$, we have the following.
\begin{lemma}
The Lagrangian $\tLvg$ is invariant under the action of $(\mathbb{Z}/5)^3$, and thus it descends to a Lagrangian immersion whose image is $\Lvg\subset\Wvg$. \qed
\end{lemma}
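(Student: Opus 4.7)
The plan is to verify $(\mathbb{Z}/5)^3$-invariance separately for each term in the explicit decomposition
\[
\tLvg=\Phi_{\widetilde{L}'}\bigl(\Gamma(d\widetilde{h})\cup\Gamma(-d\widetilde{h})\bigr)\cup\bigcup_{\ell,m}\Psi_{\ell,m}(L_Y),
\]
and then deduce the quotient statement from the freeness of the group action on $\tLvg$. The preceding lemma already arranges $\tWvg$, the Weinstein embedding $\Phi_{\widetilde{L}'}$, and the Darboux balls $\widetilde{B}_{\ell,m}$ to be equivariant, so the problem reduces to checking that each local Lagrangian model respects the symmetries of its chart.

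The Weinstein-neighborhood piece is immediate: $\widetilde{h}=h\circ p$ for the $(\mathbb{Z}/5)^3$-covering $p\colon\widetilde{L}'\to L'$ of Remark~\ref{125foldcover}, so $\widetilde{h}$ is manifestly invariant, the graphs $\Gamma(\pm d\widetilde{h})\subset T^*\widetilde{L}'$ are preserved by the cotangent lift of the deck action, and pushing forward through the equivariant embedding $\Phi_{\widetilde{L}'}$ finishes the argument.

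The more substantive step is to check invariance of the cone-point models $\Psi_{\ell,m}(L_Y)$, since $L_Y$ is constructed using the explicit local coordinate system~\eqref{commutative} and one must see that this coordinate system is compatible with the group action. By the previous lemma, one generator of $(\mathbb{Z}/5)^3$ cyclically permutes the balls $\widetilde{B}_{\ell,1},\ldots,\widetilde{B}_{\ell,5}$ for each fixed $\ell$, which permutes the copies $\Psi_{\ell,m}(L_Y)$ among themselves. The remaining two generators act within each $\widetilde{B}_{\ell,m}\cong\mathbb{C}^3$ by coordinatewise multiplication by fifth roots of unity $(\zeta_1,\zeta_2,\zeta_3)\in\mu_5^3$ satisfying $\zeta_1\zeta_2\zeta_3=1$. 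This action preserves the projection $w(y)=y_1y_2y_3$, and hence the fibration~\eqref{degeneration}; under the splitting in~\eqref{commutative} it therefore acts trivially on the $w$-coordinate while inducing phase rotations on the $(y_1,y_2)$-coordinates that preserve the Clifford torus $T^2_{Cl}\subset\mathbb{C}^2$. Consequently the local model $L_Y$, corresponding to $T^2_{Cl}\times\Larc$, is preserved within each ball, and this is the main point at which the construction of $L_Y$ must be seen to match the combinatorics of the group action.

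Combining the two verifications shows that $\tLvg$ is setwise invariant. Since the $(\mathbb{Z}/5)^3$-action on $\tWvg$ is free away from the cone points $\widetilde{v}_{\ell,m}$, and $\tLvg$ avoids those points (as $L_Y\subset Y\setminus D$ excludes the origin of $\mathbb{C}^3$), the quotient map restricts to a free covering $\tLvg\to\Lvg\subset\Wvg$. This endows $\Lvg$ with the structure of a smooth Lagrangian immersion, completing the argument.
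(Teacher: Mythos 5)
Your proof is correct and takes the same underlying route the paper suppresses: the paper treats the lemma as immediate from the fact that $\widetilde{h}$ is pulled back from a Morse function on $L'$, whereas you also spell out the (genuinely necessary) check that the local models $\Psi_{\ell,m}(L_Y)$ are preserved, using that the stabilizer of each ball acts by $(\zeta_1,\zeta_2,\zeta_3)\in\mu_5^3$ with $\zeta_1\zeta_2\zeta_3=1$ so that $w=y_1y_2y_3$ is fixed and $T^2_{Cl}$ is rotated to itself. The added observation that the action is free on $\tLvg$ (since $L_Y$ avoids the origin of each $\widetilde{B}_{\ell,m}$) is a worthwhile detail the paper leaves implicit.
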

The intersection of $\Lvg$ with one of the balls $B_{\ell}$ can be identified with a copy of $L_Y$. The intersection of $\Lvg$ with $\Wvg\setminus\bigcup_{\ell} B_{\ell}$ can be identified with a copy of
\begin{align}\label{smoothpart}
\Gamma(dh)\cup\Gamma(-dh)\subset T^*L' \, . 
\end{align}
There is a Lagrangian immersion whose image is $\Lvg$ and whose domain is a quotient of the domain of $\tLvg$ under the action of $(\mathbb{Z}/5)^3$. Specifically, the domain of this immersion is the $3$-manifold obtained by gluing two copies of the minimally-twisted five component chain link complement together using an identification of their boundaries which respects the distinguished generators of $H_1(L';\mathbb{Z})$, as in Remark~\ref{immersiondomain}.

Notice that there is an immersed Lagrangian $\Limm\subset T^*T^3$ gotten by gluing a copy of $L_Y$ in the Darboux ball $B_0$ of~\eqref{darbouxlsing} to the images of the graphs~\eqref{smoothpart} of $\pm dh$ inside a Weinstein neighborhood of $\Lsing$. This is essentially the Lagrangian of~\cite[Theorem 1.2]{Han24a}.
\begin{remark}\label{differentmorse}
In~\cite{Han24a} we constructed $\Limm$ using a different Morse function, which we denote by $h'$ in this remark. The choice of Morse function in the present paper simplifies the computations of Floer theory carried out in \S\ref{localhfsection}. We will now explain why the conclusion of Theorem 1.2. in op. cit. still holds if one uses the Morse function $h$ constructed in Lemma~\ref{morsefn} instead. Recall that $h'$ is obtained from the ideal triangulation on $L'$ subordinate to the ideal cubulation used to construct $h$ (cf. Figure~\ref{idealtriangulation}).

There are three natural projections $T^*T^3\to T^*T^2$, which we can use to naturally identify the Lagrangian pair of pants in $T^*T^2$ constructed by Matessi~\cite{Mat20} with smooth submanifolds of $L'$. In Figure~\ref{idealtriangulation}, they can be thought of as the union of the two triangles with black edges and vertices at the $j$th, $k$th and $4$th vertices, where $j,k\in\lbrace 1,2,3\rbrace$. By generically perturbing $h$ if necessary, we can assume that it restricts to a Morse function on each of these three copies of the $2$-dimensional pair of pants. Inspecting the proof of~\cite[Lemma 6.4]{Han24a}, which describes the image of $\Limm$ under certain Lagrangian correspondences in $(T^*T^3)^{-}\times(T^*T^2)$, shows that the conclusion of this Lemma holds with no changes when we replace $h'$ with $h$. Thus the main results of~\cite{Han24a}, in particular the formulas for the support of the mirror sheaf to $\Limm$, hold for the version of $\Limm$ constructed using $h$ with no additional changes to the proofs.
\end{remark}

To make sense of the Floer theory of $\tLvg$, we need to equip it with a grading, in the sense of~\cite{Sei00}, and a spin structure. A suitable grading is obtained from a grading on $\Limm$ in a straightforward way.
\begin{lemma}\label{gradingchoice}
There is a grading $\alpha^{\#}\colon\tLvg\to\mathbb{R}$ which is approximately equal to $0$ near the critical points of $-\widetilde{h}$ and approximately equal to $1$ near the critical points of $\widetilde{h}$. Moreover, it is lifted from a grading on $\Lvg$.
\end{lemma}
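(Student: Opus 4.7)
The plan is to define $\alpha^{\#}$ piece by piece on the three kinds of local components of $\tLvg$---the two graphs $\Gamma(\pm d\widetilde{h})$ inside $U_{\widetilde{L}'}\subset T^*\widetilde{L}'$, and the copies of $L_Y$ in the Darboux balls $\widetilde{B}_{\ell,m}$---and then patch them using the integer ambiguity available on each connected piece. Recall that a grading on a Lagrangian $L\subset X^5_{\tau}$ is a lift to $\mathbb{R}$ of the phase $\alpha_L\colon L\to S^1$ determined by the holomorphic volume form $\Omega$ on the Calabi--Yau threefold, and that such lifts form a $\mathbb{Z}$-torsor whenever the Maslov class of $L$ vanishes.

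First I would grade the two graphs $\Gamma(\pm d\widetilde{h})\subset T^*\widetilde{L}'$. Both are $C^1$-close to the $0$-section $\widetilde{L}'$, and graphs of closed $1$-forms are Hamiltonian isotopic to the $0$-section, which is canonically graded with constant phase (pulled back from $X^5_{\tau}$ via the Weinstein embedding $\Phi_{\widetilde{L}'}$). Consequently each of $\Gamma(\pm d\widetilde{h})$ admits a grading $C^0$-close to a constant integer. Using the integer freedom on each graph, I would normalize the grading on $\Gamma(-d\widetilde{h})$ to be approximately $0$; the grading on $\Gamma(d\widetilde{h})$ is then determined up to a single integer shift, which will be fixed by the gluing with $L_Y$.

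Next I would grade $L_Y$ inside a Darboux ball and compare the two sheets meeting at the self-intersection point. Via the identification~\eqref{commutative}, $L_Y\cong T^2_{Cl}\times\Larc$, so the grading arises from the product of the standard grading of the Clifford torus and a grading on $\Larc\subset\mathbb{C}\setminus\lbrace 0,1\rbrace$ determined by the holomorphic volume form on the base $(\mathbb{C}^*\setminus\lbrace 1\rbrace)$. A direct phase computation along $\Larc$---using the fact that $\Larc$ traverses a small loop around the puncture at $0$ where the teardrop is attached---shows that the two horizontal branches of $\Larc$ asymptoting to infinity carry gradings that differ by exactly $1$. Transplanting this to $\tLvg$, this forces the neighboring piece of $\Gamma(d\widetilde{h})$ to have grading approximately $1$, so that the two sheets of $\tLvg$ at any critical point of $\pm\widetilde{h}$ on the $0$-section carry the stated gradings.

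The main obstacle is ensuring that these integer shifts patch consistently at \emph{every} cone point $\widetilde{v}_{\ell,m}$ and across the collar regions joining the $L_Y$ pieces to the graphs. This amounts to the vanishing of a $1$-cocycle of integer jumps on the overlap cover of $\tLvg$, and it reduces to the observation that every gluing is a local model of the same type (a copy of $L_Y$ joined to two branches of a graph Lagrangian) together with the connectedness of the union of the two graphs; in particular, once the grading on $\Gamma(-d\widetilde{h})$ is fixed, all other shifts are determined. Finally, equivariance under the $(\mathbb{Z}/5)^3$-action is automatic because $\widetilde{h}$ is pulled back from $h\colon L'\to\mathbb{R}$, the Darboux balls $\widetilde{B}_{\ell,m}$ are permuted by the group, and the local model $L_Y$ is identical in each ball; hence $\alpha^{\#}$ descends to a grading on $\Lvg$, as claimed.
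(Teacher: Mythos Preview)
Your approach is a direct piece-by-piece construction, whereas the paper's proof is essentially a citation: it invokes \cite[Lemma~5.6]{Han24a}, which already builds the desired grading on $\Limm\subset T^*T^3$ (with values $\approx 0,1$ near the critical points of $\pm h$ and constant over the periodized conormal legs), lifts it to the cover $\tLimm$, identifies this with the portion of $\tLvg$ lying in $\Wein(L_{\tau,0})$, and then extends uniquely over the remaining $L_Y$ pieces. Your equivariance argument for the descent to $\Lvg$ is fine and matches the paper's implicit reasoning.

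The genuine gap in your argument is the claim that the zero section $\widetilde{L}'$ is ``canonically graded with constant phase (pulled back from $X^5_\tau$ via $\Phi_{\widetilde{L}'}$)''. The embedding $\Phi_{\widetilde{L}'}$ is symplectic, not holomorphic, so the holomorphic volume form on $X^5_\tau$ has no reason to pull back to the canonical one on $T^*\widetilde{L}'$, and hence the phase of the zero section has no reason to be constant. Without this, you cannot arrange the grading on $\Gamma(-d\widetilde{h})$ to be $\approx 0$ simultaneously at \emph{all} critical points (the domain of $\tLvg$ is connected, so there is only a single overall integer to spend). The cited lemma earns this by working in $(\mathbb{C}^*)^3$ with its explicit volume form, where the pieces of $\Lsing$ (periodized conormals and the vertex piece) have phases one can compute directly and show to be close to constant. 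Your jump-by-$1$ argument across $L_Y$ is essentially right---under~\eqref{commutative} the standard form on $\mathbb{C}^3$ becomes $\tfrac{dy_1}{y_1}\wedge\tfrac{dy_2}{y_2}\wedge dw$, so the Clifford factor has constant phase and the tangent line of $\Larc$ rotates by $\pi$ around the teardrop---but note that the shift comes from this rotation, not from winding around the puncture at $0$.
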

\begin{proof}
In~\cite[Lemma 5.6]{Han24a}, we constructed a grading on $\Limm$ with the desired values near the critical points of $h\colon L'\to\mathbb{R}$, which we could take to be constant when restricted to the part of $\Limm$ lying over the $1$-cones of the underlying tropical curve $V$. This lifts to a grading on $\tLimm\subset T^*T^3$. Under the identification of a Weinstein neighborhood of the zero section $T^*T^3$ with $\Wein(L_{\tau,0})\subset X_{\tau}^5$, it follows that we can define a grading on the part of $\tLvg$ lying in this neighborhood. This extends to a grading on the rest of $\tLimm$ in a unique way.
\end{proof}
We can equip $\tLvg$ with a spin structure by lifting the spin structure on $L'$ specified in~\cite[\S{4.1}]{Han24a} to a spin structure on $\widetilde{L}'$, and gluing the spin structures on the two copies of $\widetilde{L}'$ in the JSJ decomposition of $\tLvg$. We can further assume that this spin structure coincides with the one obtained by lifting the spin structure on $\Limm$ used in~\cite{Han24a} and extending it over the copies of $L_Y$ in the Darboux charts $\widetilde{B}_{\ell,m}$, for $\ell\in\lbrace 1,2,3,4\rbrace$. This does not describe a unique spin structure on $\tLvg$, but we will specify a particular choice for one during the proof of Proposition~\ref{localm0}.

Having fixed a grading on $\tLvg$, we can determine its Floer cochain space. See Appendix~\ref{immersedfloerappendix} for a review of the relevant definitions.
\begin{lemma}\label{floercochainsvg}
As a graded $\Lambda_0$-module, the Floer cochain complex $CF^*(\tLvg)$ has underlying $\mathbb{C}$-vector space given $\overline{CF}^*(\tLvg)$ given by
\begin{align}
\Omega^*(\tLvg)\oplus\bigoplus_{\ell,m}\Omega^*(T^2)[1]\oplus\bigoplus_{\ell,m}\Omega^*(T^2)[-2]\oplus CM^*(\widetilde{L}')[-1]\oplus CM^*(\widetilde{L}',\partial\widetilde{L}')[1]
\end{align}
where $\Omega^*$ denotes the de Rham cochain space, and $CM^*$ denotes the Morse cochain space. The direct sums range over the set
\begin{align*}
\left\lbrace(\ell,m)\in\mathbb{Z}\times\mathbb{Z}\mid \ell\in\lbrace 0,\ldots,4\rbrace \, , \; m\in\lbrace 1,\ldots,125\rbrace\text{ if }\ell = 0 \text{ and } m\in\lbrace 1,\ldots,5\rbrace\text{ if }\ell\neq 0\right\rbrace
\end{align*}
which has $145$ elements.
\end{lemma}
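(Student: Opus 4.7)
The plan is to apply the Morse--Bott model of the Fukaya $A_\infty$-algebra for a clean Lagrangian immersion reviewed in Appendix~\ref{immersedfloerappendix}. In that model, the underlying graded vector space $\overline{CF}^*(\tLvg)$ decomposes as a direct sum indexed by the connected components of the Lagrangian fiber product $\tLvg \times_{X_\tau^5} \tLvg$: the diagonal contributes $\Omega^*(\tLvg)$, and each off-diagonal component $R$ contributes two copies of its (de Rham or Morse-theoretic) cochain complex, one for each branch ordering, with degree shifts determined by the local Maslov data together with the grading $\alpha^{\#}$ from Lemma~\ref{gradingchoice}.

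I would first enumerate the off-diagonal components of $\tLvg \times_{X_\tau^5} \tLvg$ using the explicit construction of $\tLvg$. In each of the $145$ Darboux balls $\widetilde{B}_{\ell,m}$ (with $\ell=0$ and $m\in\{1,\ldots,125\}$ accounting for the lifts of the cone point of $\Lsing$, and $\ell\in\{1,2,3,4\}$ with $m\in\{1,\ldots,5\}$ for the five cone points at each leg of $\widetilde{V}$), the Lagrangian coincides with $L_Y$ from Definition~\ref{lydef}. Using the symplectomorphism in diagram~\eqref{commutative}, $L_Y$ is the image of $T^2_{Cl}\times\Larc$, and since $\Larc$ has a single transverse self-intersection at the origin, $L_Y$ has exactly one clean self-intersection component, diffeomorphic to $T^2$, entirely contained in $\widetilde{B}_{\ell,m}$. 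In the complementary region $\Phi_{\widetilde{L}'}(U_{\widetilde{L}'})$ the Lagrangian is $\Gamma(d\widetilde{h})\cup\Gamma(-d\widetilde{h})$; since $\widetilde{h}$ is Morse, these two graphs meet transversely precisely at the critical points of $\widetilde{h}$. The smooth gluing used in the construction of $\tLvg$ is designed so that no further self-intersections arise, so the off-diagonal locus is exactly the disjoint union of $145$ Clifford tori with a discrete collection of transverse self-intersection points.

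Next, I would compute the degree shifts component by component. For each torus component the two shifts can be read off from the local model for $L_Y$ in $\mathbb{C}^3$, where $\alpha^{\#}$ restricts to a standard grading on the doubled Harvey--Lawson cone, and a direct Maslov index calculation yields the shifts $[1]$ and $[-2]$ for the two branch orderings. For the transverse intersections, each critical point of $\widetilde{h}$ of Morse index $i$ contributes a single Floer generator per ordering; Lemma~\ref{gradingchoice} (with $\alpha^{\#}\approx 0$ on $\Gamma(-d\widetilde{h})$ and $\alpha^{\#}\approx 1$ on $\Gamma(d\widetilde{h})$) fixes the corresponding grading shifts as $[-1]$ and $[1]$. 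Because the gradient of $\widetilde{h}$ points outward along the boundary tori $\partial\widetilde{L}'$ introduced in the JSJ description of the domain (Remark~\ref{immersiondomain} and Lemma~\ref{morsefn}), the resulting Morse complex for one ordering computes absolute cohomology and contributes $CM^*(\widetilde{L}')[-1]$. The reversed ordering is governed by the Morse complex of $-\widetilde{h}$, whose gradient points inward along $\partial\widetilde{L}'$ and which therefore computes relative cohomology; after the grading shift this contributes $CM^*(\widetilde{L}',\partial\widetilde{L}')[1]$. Assembling the diagonal contribution with these off-diagonal contributions gives the stated direct sum decomposition.

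The main obstacle will be the precise bookkeeping of degree shifts, which requires three coordinated ingredients: a local Maslov index computation at the Clifford-torus self-intersection of $L_Y$ yielding the shifts $[1]$ and $[-2]$; consistent tracking of $\alpha^{\#}$ across the two graphs $\Gamma(\pm d\widetilde{h})$ to pin down the shifts $[-1]$ and $[1]$ for the Morse contributions; and identification of the Morse complex of $-\widetilde{h}$ with $CM^*(\widetilde{L}',\partial\widetilde{L}')$ via the inward-pointing gradient along the boundary collars obtained by truncating the cusps of $\widetilde{L}'$.
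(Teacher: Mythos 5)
Your proposal follows the paper's overall structure: decompose $\tLvg \times_{X_\tau^5}\tLvg$ into the diagonal, the $145$ Clifford-torus switching components (one per Darboux ball $\widetilde{B}_{\ell,m}$), and the transverse double points coming from $\crit(\widetilde{h})$, then compute the degree shifts component by component. The treatment of the Morse-theoretic contributions matches the paper's, and your explanation of why the outward- vs.\ inward-pointing gradient of $\pm\widetilde{h}$ along the cusps produces $CM^*(\widetilde{L}')$ versus $CM^*(\widetilde{L}',\partial\widetilde{L}')$ is slightly more explicit than what the paper writes.

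The one genuine divergence is in how you pin down the shifts $[1]$ and $[-2]$ on the torus components. You propose a direct Maslov-index calculation from Definition~\ref{index}, which requires knowing the values of $\alpha^{\#}$ on the two branches at the $T^2$ switching locus inside the balls $\widetilde{B}_{\ell,m}$ --- a point the grading of Lemma~\ref{gradingchoice} only describes approximately, and which you correctly flag as the main obstacle. The paper avoids this entirely: since $\Larc$ bounds an isolated teardrop, one of the two generators at its self-intersection must live in degree $2$ (so that $\mathfrak{m}_0$ lands in the correct degree), and Poincar\'e duality on the one-dimensional $\Larc$ forces the other into degree $-1$; the degree shifts on $\Omega^*(T^2)$ then come for free from the product structure of $L_Y$. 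Your route would work, but the teardrop argument is shorter and bypasses the $\alpha^{\#}$ bookkeeping you identified as delicate. One further small omission: the paper also notes that the orientation local systems $\Theta^-_a$ are trivial on the $T^2$ components, because $L_Y$ is locally the product of an embedded Lagrangian torus with a one-dimensional Lagrangian with transverse double points; this is needed to replace $\Omega^*(T^2;\Theta^-_a)$ by $\Omega^*(T^2)$ in the stated formula.
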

We think of $CM^*(\widetilde{L}')$ and $CM^*(\widetilde{L}',\partial\widetilde{L}')$ as copies of the spaces of differential forms on zero-dimensional manifolds given by the Morse critical points of $\widetilde{h}$ or $-\widetilde{h}$, respectively.
\begin{proof}[Proof of Lemma~\ref{floercochainsvg}]
Consider the fiber product $\tLvg\times_{X_{\tau}^5}\tLvg$, and note that its switching components are either copies of $T^2$ or transverse double points.

The $1$-dimensional Lagrangian $\Larc\subset\mathbb{C}$ has a single transverse double point, which should contribute two generators to its Floer cochain space. Since $\Larc$ bounds an isolated teardrop, i.e. a disk with one corner on a switching component, it follows that one of these generators must have degree $2$, and the Poincar{\'e} dual generator must have degree $-1$. Taking the product to form $L_Y$ shows that each of these critical points are replaced by copies of $\Omega^*(T^2)$ with the corresponding grading shifts. Here we can assume that the orientation local system, which appears in the general definition~\eqref{underlyingvs} of the $\mathbb{C}$-vector space underlying the Floer cochain complex, is trivial, since the immersion is locally described as the product of an embedded Lagrangian torus in $\mathbb{C}^2$ with an immersed Lagrangian submanifold with at worst transverse double points.

Note that the $0$-dimensional switching components all correspond to pairs $(p_{-},p_{+})$, where $p_{-}$ is a Morse critical point of $-\widetilde{h}$ and $p_{+}$ is the corresponding critical point of $\widetilde{h}$. It follows from~\cite{Sei00} that the grading shifts associated to these points are determined by the Morse indices. More precisely
\begin{align*}
\deg(p_{-}) &= \deg_{\mathrm{Morse}}(p_{-})-\alpha^{\#}(p_{-})+\alpha^{\#}(p_{+}) \\
&= \deg_{\mathrm{Morse}}(p_{-})-\alpha^{\#}(p_{-})+\alpha^{\#}(p_{+})+1 \\
&= \deg_{\mathrm{Morse}}(p_{-})+1
\end{align*}
Symmetrically, one has that
\begin{align*}
\deg(p_{+}) = \deg_{\mathrm{Morse}}(p_{+})+1
\end{align*}
for generators on the positive sheet. These yield the expected grading shifts on the copies of $CM^*(\widetilde{L}')$ and $CM^*(\widetilde{L}',\partial\widetilde{L}')$.
\end{proof}

\subsection{An embedded tropical Lagrangian in the quintic}\label{embeddedtrop}
We can use $\tLvgsing$ to construct three families of \textit{embedded} Lagrangian submanifolds $\tLvgsmooth(i;\epsilon)$, for $i = 1,2,3$, of $X_{\tau}^5$ which can be thought of as the tropical Lagrangian lifts of smooth tropical curves in the quintic, similar to the tropical Lagrangians in the \textit{mirror quintic} constructed by Mak--Ruddat~\cite{MR20}. Instead of relying on Mak--Ruddat's construction of Lagrangian solid tori near the singular fibers of an SYZ fibration, however, we will instead make use of Lagrangian solid tori which are thought of as asymptotically conical fillings of the links of the cone points of $\tLvgsing$.

The link of the Harvey--Lawson cone $C_{HL}$ has three asymptotically conical fillings given in coordinates by
\begin{align}
C_{HL}(i;\epsilon) = \lbrace|y_i|^2-\epsilon = |y_j|^2 = |y_k|^2 \, , \, y_1y_2y_3\in\mathbb{R}_{\geq0}\rbrace \label{acsmoothings}
\end{align}
for all $\lbrace i,j,k\rbrace = \lbrace 1,2,3\rbrace$. We can associate to each of these Lagrangian solid tori an embedded Lagrangian submanifold of $T^*T^3$ which is $C^0$-close to $\Lsing$ and which can be thought of as the lift of a tropical smoothing of $V\subset Q$. By a tropical smoothing of $V$, we mean one of the tropical curves $V(i;\epsilon)$, for $i = 1,2,3$, where $V(1;\epsilon)$ is given explicitly by
\begin{align}\label{tropsmoothing}
V(1;\epsilon) = \left\lbrace\left(0,t+\frac{\epsilon}{2},\frac{\epsilon}{2}\right)\colon t\in[0,\infty)\right\rbrace &\cup\left\lbrace\left(0,\frac{\epsilon}{2},t+\frac{\epsilon}{2}\right)\colon t\in[0,\infty)\right\rbrace \\
&\cup\left\lbrace\left(0,-t+\frac{\epsilon}{2},-t+\frac{\epsilon}{2}\right)\colon t\in[0,\epsilon]\right\rbrace \nonumber\\
&\cup\left\lbrace\left(t,-\frac{\epsilon}{2},-\frac{\epsilon}{2}\right)\colon t\in[0,\infty)\right\rbrace \nonumber\\
&\cup\left\lbrace\left(-t,-t-\frac{\epsilon}{2},-t-\frac{\epsilon}{2}\right)\colon t\in[0,\infty)\right\rbrace \nonumber 
\end{align}
and where $V(2;\epsilon)$ and $V(3;\epsilon)$ are obtained by cyclically permuting coordinates on $Q$.
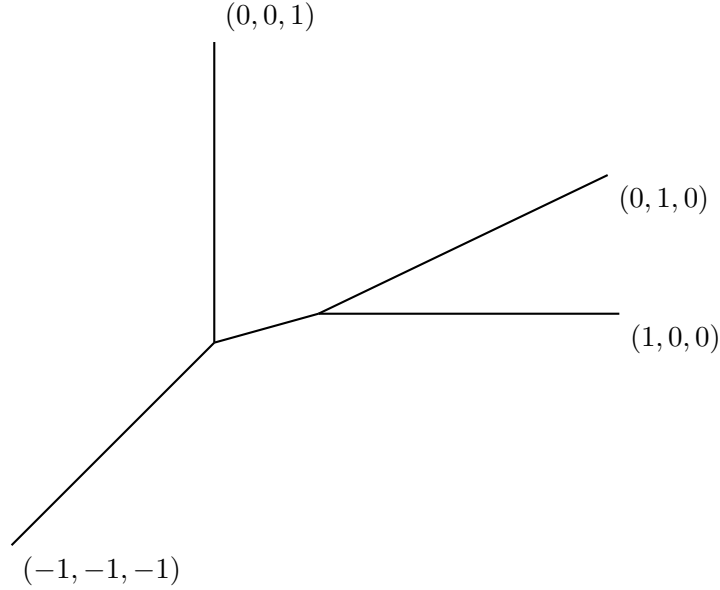
\begin{figure}
\begin{tikzpicture}
\begin{scope}
  \draw[thick] (0,0,0) -- (4,0,0) node[anchor = north west]{$(1,0,0)$};
    \draw[thick] (0,0,0) -- (5,3,3) node[anchor = north west]{$(0,1,0)$};
    \draw[thick] (0,0,0) -- (-1,0,1);
  \draw[thick] (-1,0,1) -- (-1,4,1) node[anchor = south west]{$(0,0,1)$};
  \draw[thick] (-1,0,1) -- (-1,0,8) node[anchor = north west]{$(-1,-1,-1)$};
\end{scope}
\end{tikzpicture}

\caption{A tropical curve $V(i;\epsilon)$}\label{smthfig}
\end{figure}

To construct the smoothings of $\Lsing$, recall from the proof of~\cite[Lemma 4.7]{Han24a} that the link of the singular point in $\Lsing$ maps to a $2$-sphere in $Q$ centered at the origin. The restriction of this projection map to such a Legendrian link is generically $2$-to-$1$, except over a tetrahedral graph embedded on the sphere, over which it is $1$-to-$1$. The vertices of this graph lie on the $1$-dimensional cones of $V$ (cf. Figure~\ref{front}).
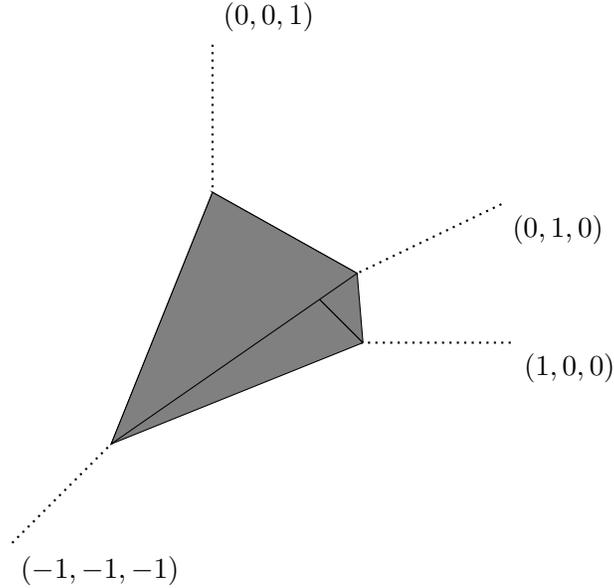
\begin{figure}
\begin{tikzpicture}
\begin{scope}
  \draw[thick, dotted] (0,0,0) -- (4,0,0) node[anchor = north west]{$(1,0,0)$};
  \draw[thick, dotted] (0,0,0) -- (0,4,0) node[anchor = south west]{$(0,0,1)$};
  \draw[thick, dotted] (0,0,0) -- (0,0,7) node[anchor = north west]{$(-1,-1,-1)$};
  \draw[thick, dotted] (0,0,0) -- (5,3,3) node[anchor = north west]{$(0,1,0)$};

  \draw[fill = gray, fill opacity = 0.6] (2,0,0) -- (0,2,0) -- (0,0,3.5) -- (2,0,0);
  \draw[fill = gray, fill opacity = 0.4] (2,0,0) -- (2.5,1.5,1.5) -- (0,2,0) -- (2,0,0);
  \draw[fill = gray, fill opacity = 0.1] (2.5,1.5,1.5) -- (0,0,3.5) -- (0,2,0) -- (2.5,1.5,1.5);
\end{scope}
\end{tikzpicture}

\caption{The image in $Q$ of a link of the cone point of $\Lsing$ is obtained by smoothing the edges of this tetrahedron. The dotted lines are the edges tropical curve $V$.}\label{front}
\end{figure}
In the Darboux coordinates we have chosen near $B_0$ in~\eqref{darbouxlsing}, the asymptotically conical fillings~\eqref{acsmoothings} will project to subsets of $Q$ containing the origin bounded by (smoothings of) tetrahedra with vertices on the semi-infinite $1$-dimensional cones of the smooth tropical curves $V(i;\epsilon)$. We can represent the subsets
\begin{align*}
\lbrace|y_i|^2-\epsilon = |y_j|^2 = |y_k|^2 \, , \, y_1y_2y_3\in\mathbb{R}_{>0}\rbrace
\end{align*}
as graphs of closed $1$-forms on the complement $T^2\times(0,1)\cong C_{HL}\setminus\lbrace 0\rbrace$ of the singular point of the Harvey--Lawson cone. Let $\partial_0 L'$ denote the link of the singular point in $\Lsing$. Since the map $H^1(L';\mathbb{R})\to H^1(\partial_0 L';\mathbb{R})$ is a surjection, we can extend such a $1$-form to a closed $1$-form $\delta_i$ on the complement $L' = \Lsing\setminus\lbrace(0,0)\rbrace$ of the cone point in $\Lsing$. 

Without loss of generality, we can assume that $\delta_i$ takes a certain standard form outside of the compact subset $\pisyz^{-1}(B)$ of Theorem~\ref{singularsyzlift} of $T^*Q/T^*_{\mathbb{Z}}Q$, in which $\Lsing$ coincides with the periodized conormals to the legs $V_{\ell}$ of $V$. More precisely, $\delta_i$ should coincide with a closed $1$-form on each of the periodized conormals $N^*V_{\ell}/N_{\mathbb{Z}}^*V_{\ell}\cong T^2\times\mathbb{R}_{>0}$ which is constant in the $\mathbb{R}_{>0}$-direction. Deforming the periodized conormals by such a $1$-form corresponds to translating them in $Q$. Thus, perturbing $\Lsing\setminus\lbrace(0,0)\rbrace$ by $\delta_i$ and taking the union of the resulting (non-properly) embedded Lagrangian with $C_{HL}(i;\epsilon)$ yields an embedded Lagrangian submanifold in $T^*Q/T^*_{\mathbb{Z}}Q$ which projects to a neighborhood of $V(i;\epsilon)$ in $Q$. These give embedded Lagrangian submanifolds in $X_{\tau}^5$ as follows.
\begin{theorem}\label{lsmthconst}
For each $i = 1,2,3$, there is a family of embedded Lagrangian submanifolds $\tLvgsmooth(i;\epsilon)$ contained in $\tWvg$, where $\epsilon>0$ is sufficiently small, which are topologically obtained by replacing neighborhoods of the cone points $\widetilde{v}_{\ell,m}$ in $\tLvgsing$ with copies of one of the fillings~\eqref{acsmoothings}.
\end{theorem}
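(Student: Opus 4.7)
The plan is to lift the local smoothings of the Harvey--Lawson cone to global smoothings of $\tLvgsing$ using the $(\mathbb{Z}/5)^3$-invariant Weinstein neighborhood structure of $\tWvg$. Recall that $\tWvg$ decomposes as the union of $\Phi_{\widetilde{L}'}(U_{\widetilde{L}'})$ with the Darboux balls $\widetilde{B}_{\ell,m}$, patched together via the identity~\eqref{singularweinsteinidentity}. In each ball $\widetilde{B}_{\ell,m}$, the singular submanifold $\tLvgsing$ coincides, via $\Psi_{\ell,m}$, with a piece of the Harvey--Lawson cone $C_{HL}$, so the three local models $C_{HL}(i;\epsilon)$ from~\eqref{acsmoothings} provide candidate smoothings at each of the $145$ cone points.

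Away from the cone point, each $C_{HL}(i;\epsilon)$ can be written as the graph of a closed $1$-form on the cylindrical end $T^2\times(0,1)\cong C_{HL}\setminus\lbrace 0\rbrace$. My first substantive step is to extend these prescribed asymptotic $1$-forms, uniformly across all $145$ cusps of the smooth locus $\widetilde{L}'$ of $\tLvgsing$, to a single closed $1$-form $\delta_i$ on $\widetilde{L}'$. Exactly the cohomological argument sketched for $\Lsing$ in the paragraph preceding the theorem applies: the image of $H^1(\widetilde{L}';\mathbb{R})\to H^1(\partial\widetilde{L}';\mathbb{R})$ is a Lagrangian subspace with respect to the intersection pairing, and the boundary classes contributed by Lagrangian fillings all lie in this subspace. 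After subtracting an exact form, one may further arrange that $\delta_i$ coincides \emph{exactly} with the asymptotic $1$-form of $C_{HL}(i;\epsilon)$ on a collar of each cusp, and that it takes the standard constant form on the periodized conormals used to build $\tLsing$.

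Having fixed $\delta_i$, I would then define
\begin{align*}
\tLvgsmooth(i;\epsilon)\coloneqq\Phi_{\widetilde{L}'}\bigl(\Gamma(\delta_i)\bigr)\cup\bigcup_{\ell,m}\Psi_{\ell,m}\bigl(C_{HL}(i;\epsilon)\bigr),
\end{align*}
where $\Gamma(\delta_i)\subset T^*\widetilde{L}'$ is the graph of $\delta_i$, truncated outside the collars used to specify $U_{\widetilde{L}'}$. For $\epsilon$ sufficiently small we may scale $\delta_i$ so that $\Gamma(\delta_i)\subset U_{\widetilde{L}'}$, and the patching identity~\eqref{singularweinsteinidentity} guarantees that the two pieces glue smoothly along the boundary of each Darboux ball, because $\delta_i$ agrees there with the asymptotic $1$-form of $C_{HL}(i;\epsilon)$ by construction. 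Embeddedness is then automatic: $\Gamma(\delta_i)$ is embedded as the graph of a $1$-form, each $C_{HL}(i;\epsilon)$ is embedded in its Darboux ball, and the two pieces coincide on the overlap. Choosing $\delta_i$ equivariantly (by averaging over the group action in cohomology, which preserves the Lagrangian subspace picked out by fillings) makes the resulting families $\tLvgsmooth(i;\epsilon)$ invariant under the $(\mathbb{Z}/5)^3$-action.

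The step that requires the most care is the global extension: verifying that the prescribed asymptotic classes at the $145$ cusps really do glue to a single closed $1$-form on $\widetilde{L}'$. Because $\widetilde{L}'$ is built from two copies of a cover of the chain-link complement glued along tori, with additional cusps at the ends of the legs of $\widetilde{V}$, its first cohomology can be computed by Mayer--Vietoris from the analogous facts for $L'$ used in constructing $\delta_i$ for $\Lsing$, combined with the $125$-fold cover structure from Remark~\ref{125foldcover}. The key point is that we use a \emph{uniform} choice of $i$ across all cusps, so the asymptotic classes form a $(\mathbb{Z}/5)^3$-symmetric collection whose compatibility reduces to the single cohomological statement already verified for $\Lsing$ in the proof of~\cite[Lemma 4.7]{Han24a}.
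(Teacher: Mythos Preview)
Your argument is correct and arrives at the same construction as the paper, but the paper takes a shorter route. Rather than working directly on $\widetilde{L}'$ with its $145$ cusps and invoking the half-dimensionality of the image of $H^1(\widetilde{L}';\mathbb{R})\to H^1(\partial\widetilde{L}';\mathbb{R})$, the paper simply observes that the smooth locus of the quotient $\Lvgsing\subset\Wvg$ is again $L'$, so the closed $1$-form $\delta_i$ already constructed on $L'$ (in the discussion preceding the theorem, using only surjectivity of $H^1(L';\mathbb{R})\to H^1(\partial_0 L';\mathbb{R})$ and the standard form on the legs) can be pulled back to $\widetilde{\delta}_i$ on $\widetilde{L}'$ via the $(\mathbb{Z}/5)^3$-covering map. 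This automatically gives equivariance without any averaging, and reduces the cohomological input to a single $5$-cusped manifold rather than $145$ cusps. Your final paragraph essentially recognizes this reduction, so the arguments converge.

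One small correction: you describe $\widetilde{L}'$ as ``built from two copies of a cover of the chain-link complement glued along tori,'' but that is the description of the domain of the \emph{immersion} $\tLvg$ (Remark~\ref{immersiondomain}). The manifold $\widetilde{L}'$ itself is just the $125$-fold cover of $L'$ from Remark~\ref{125foldcover}, with $145$ cusps; there is no gluing of two pieces. This does not affect your argument, since all you actually use is that $\widetilde{L}'$ covers $L'$.
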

\begin{proof}
Consider the singular Lagrangian submanifold $\Lvgsing\subset\Wvg$. Recall that the smooth locus of $\Lvgsing$ is diffeomorphic to the minimally-twisted five-component chain link complement $L'$, and by abuse of notation let $\partial L_{\ell}$ denote the link of the $\ell$th cusp of $L'$, thought of here as a link of one of the conical singular points of $\Lvgsing$, where $\ell = 0,1,2,3,4$. The $1$-form $\delta_i$ on $L'$ can then be pulled back to a closed $1$-form $\widetilde{\delta}_i$ on $\widetilde{L}'$.

In the charts $\widetilde{B}_{\ell,m}$, for $\ell = 1,2,3,4$, the $1$-form $\widetilde{\delta}_i$ restricts to a $1$-form on the smooth part of the Harvey--Lawson cone $C_{HL}\setminus\lbrace 0\rbrace$ from which one obtains the asymptotically conical smooth fillings. From this we obtain the embedded Lagrangian submanifolds $\tLvgsmooth(i;\epsilon)$ in the statement of the theorem.
\end{proof}
We will often omit $i$ and $\epsilon$ from the notation for these embedded Lagrangian submanifolds and the $1$-forms used to construct them. As we will see in \S\ref{lsmthchern}, any of the Lagrangian submanifolds of Theorem~\ref{lsmthconst} will support an isomorphic family of objects in the Fukaya category of the quintic, so the specific choices of $i$ and $\epsilon$ will be irrelevant for our purposes.
\section{Mirrors to lines in the mirror quintic}\label{mirrorstolinessect}
In this section, we will prove Theorem~\ref{main1}. We established a local version of this result in~\cite[Theorem 1.2]{Han24a}. Equipping the Lagrangian torus $L_{\tau,0}$ with a $GL(1,\mathbb{C})$-flat connection yields an object of the Fukaya category mirror to a point. Viewing this torus as an object of the \textit{affine} Fukaya category, it follows from homological symmetry at large radius that these objects correspond to an analytic chart on the mirror. It is then almost immediate from the results of~\cite{Han24a} that Lagrangian branes supported on $\tLvg$ of a suitable form (cf. Proposition~\ref{localm0}) are mirror to sheaves supported on a line. By~\cite[Corollary 6.1]{Han24a}, this sheaf has rank $2$ away from a Zariski open subset (of $\mathbb{CP}^1$). To check that the sheaf has rank $2$ at all points on the line, we will calculate the Floer differential of $\tLvg$ with an unobstruced local system to low order. Because $\tLvg$ bounds many disks of low energy, this turns out to be enough for us to obtain an upper bound on the ranks the stalks of the sheaf.

The rest of this section is organized as follows. In \S\ref{localsystems}, we will identify a subspace of the deformation space of $\tLvg$ in the Fukaya category of the quintic. We will then explain the calculations at large radius which determine the supports of the mirrors to these objects in \S\ref{supportsection}. Finally, \S\ref{localhfsection} completes the proof of Theorem~\ref{main1}. Combined with our results about the support of the mirror sheaf, the local computation of \S\ref{localhfsection} actually determines the full Floer homology of $\tLvg$.

\subsection{Local systems}\label{localsystems}
It is clear from the construction of $\tLvg$ that it bounds holomorphic teardrops, i.e. disks with a corner on a switching component. By equipping $\tLvg$ with suitable local systems, we can ensure that the algebraic counts of these teardrops vanish, and in fact that $\mathfrak{m}_0$ vanishes for all of these local systems. We will identify a $1$-dimensional space of unobstructed rank $1$ complex local systems. We do not attempt to compute the full deformation space of $\tLvg$ in this paper, though we conjecture that all other brane structures on $\tLvg$ give trivial objects of the Fukaya category. For simplicity, we will only consider certain local systems on $\tLvg$ which come from local systems on the minimally-twisted five-component chain link complement. Working within this restricted class of brane structures already suffices to recover partial $A$-model analogues of nontrivial results~\cite{COVV12} on the moduli spaces of lines in the mirror quintic.

An easy calculation with the Mayer--Vietoris sequence shows that $H_1(\Lvg;\mathbb{Z})$, the first homology of the quotient $\tLvg/(\mathbb{Z}/5)^3$, is a free $\mathbb{Z}$-module of rank $9$. Fix a basis for $H_1(\Lvg;\mathbb{Z})$ of the form
\begin{align}
\lbrace m_0,m_1,m_2,m_3,m_4\rbrace\cup\lbrace s_1,s_2,s_3,s_4\rbrace \label{singularh1basis}
\end{align}
where $m_0,\ldots,m_4$ denote the homology classes of the meridians in $L'$, as in Figure~\ref{loops}, and $s_1,\ldots,s_4$ are classes which do not lie in the span of the meridians. Explicitly, we can assume that $s_i$ is represented by a simple closed curve which intersects the zeroth boundary torus and the $i$th boundary torus of $L'$ transversely and once each.

Consider a local system, thought of as a representation $H_1(\Lvg;\mathbb{Z})\to\mathbb{C}^*$, of the form
\begin{align}\label{localsystemslvg}
\begin{cases}
m_i\mapsto\mu_i & i = 0,\ldots, 4 \\
s_j\mapsto 1 & j = 1,\ldots,4 \, .
\end{cases}
\end{align}
In this situation, we let $\lambda_i$ denote the image of the longitude $\ell_i$, for all $i = 0,\ldots,4$. Let $\nabla$ be a rank $1$ local system on $\tLvg$ induced by a local system on $\Lvg$ of the form~\eqref{localsystemslvg}.

The following lemma classifies all holomorphic teardrops on $\tLvg$, defined with respect to the \textit{integrable} almost complex structure. There are obvious families of teardrops bounded by $\tLvg$ which are the images of teardrops on the local immersed Lagrangian $L_Y\subset Y$. The teardrops in $Y$ arise as sections of $w\colon Y\to\mathbb{C}$ as in~\eqref{degeneration}. Observe that any holomorphic teardrop on $L_Y$ which contributes to $\mathfrak{m}_0$ must come in a family of virtual dimension $2$.
\begin{lemma}\label{localtears}
Any holomorphic teardrop on $L_Y$ which contributes to $\mathfrak{m}_0$ can be written as the product, with respect to~\eqref{commutative}, of the teardrop bounded by $\Larc$ with (cf. Figure~\ref{immersedarc}) with a holomorhpic disk in $\mathbb{C}^2$ bounded by the Clifford torus. The latter disk is either constant, or contained in one of the coordinate axes of $\mathbb{C}^2$.
\end{lemma}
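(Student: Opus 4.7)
The strategy is to exploit the explicit product structure of $L_Y$ afforded by the symplectomorphism in~\eqref{commutative}. Via that identification, $L_Y$ becomes the product $T^2_{Cl} \times \Larc$ inside $\mathbb{C}^2 \times (\mathbb{C}^*\setminus\{1\})$, and the switching locus of $L_Y$ is the torus $T^2_{Cl}\times\{p\}$, where $p\in\mathbb{C}^*\setminus\{1\}$ is the unique transverse self-intersection of $\Larc$. I would equip the target with the product of the standard integrable complex structures on the two factors, which tames the product symplectic form, and compute $\mathfrak{m}_0$ with respect to this $J$.

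Under the product $J$, any holomorphic map $u\colon D\to\mathbb{C}^2\times(\mathbb{C}^*\setminus\{1\})$ splits as $u=(u_1,u_2)$, with each component holomorphic for its respective complex structure. The Lagrangian boundary condition splits accordingly: $u_1$ has boundary on $T^2_{Cl}$ and $u_2$ has boundary on $\Larc$. Since the switching locus sits entirely in the second factor, any teardrop corner of $u$ is a corner of $u_2$ at $p$, while $u_1$ remains a smooth holomorphic disk without corners. Consequently every teardrop bounded by $L_Y$ automatically factors as a product of a disk bounded by $T^2_{Cl}$ and a teardrop bounded by $\Larc$.

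It remains to identify each factor. For $u_2$, the arc $\Larc$ was constructed so that its only holomorphic teardrop below a chosen area threshold is the one through the origin depicted in Figure~\ref{immersedarc}; since the teardrop may be taken arbitrarily small, this will be the only teardrop of $\Larc$ contributing to $\mathfrak{m}_0$ at any fixed energy level. For $u_1$, I would invoke the classification of Cho--Oh of holomorphic disks bounded by the Clifford torus in $\mathbb{C}^2$: every such disk is a product of two Blaschke products, and in particular every nonconstant Maslov-index-$2$ disk is entirely contained in a single coordinate axis. The expected-dimension constraint on teardrops contributing to $\mathfrak{m}_0$ on the $3$-fold $Y$ forces the $u_1$-factor either to be constant or to carry all of the $2$ units of Maslov index, yielding exactly the stated trichotomy.

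The principal obstacle is a transversality/regularity issue: the product complex structure $J$ is rarely regular for all of the relevant moduli problems, so one must argue that the product count reproduces the abstractly-perturbed count defining $\mathfrak{m}_0$. In this case the argument is tractable because $T^2_{Cl}\subset\mathbb{C}^2$ is monotone with a known disk potential and the moduli of teardrops on $\Larc$ is a Morse--Bott circle, so standard automatic-regularity and gluing results identify the two counts. A secondary routine point is checking that the symplectomorphism~\eqref{commutative} can be chosen so that the integrable complex structure inherited from $Y$ is close enough to the product one for the energy-filtered count to be unchanged.
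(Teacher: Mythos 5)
Your argument matches the paper's proof in its essential structure: the splitting of holomorphic disks forced by the product Lagrangian $T^2_{Cl}\times\Larc$, the observation that the corner lives entirely in the $\Larc$ factor, and the index/dimension constraint forcing the Clifford torus factor to be constant or contained in a coordinate axis via the Cho--Oh classification (which the paper uses implicitly). The paper's own proof is two sentences and defers regularity to the remark immediately following, citing the argument of~\cite[Lemma 2.10]{AS21} rather than appealing to monotonicity; your transversality discussion reaches the same conclusion by a defensible alternative route, but one small inaccuracy is worth flagging: the teardrop bounded by $\Larc$ in $\mathbb{C}$ is a rigid isolated object (its corner is a transverse double point of a $1$-dimensional Lagrangian), not a Morse--Bott circle --- the $2$-dimensional family of teardrops on $L_Y$ arises entirely from the Clifford torus factor, as can be read off from the statement that each moduli space is identified with a $2$-torus.
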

\begin{proof}
Since $L_Y$ is a product Lagrangian in $\mathbb{C}\times\mathbb{C}^2$, all holomorphic disks with boundary on $L_Y$ split. For any such disk to have index $2$, its second factor must be one of the disks listed in the statement of the Lemma.
\end{proof}
All disks described in Lemma~\ref{localtears} are regular, by the argument of~\cite[Lemma 2.10]{AS21}, and each moduli space of teardrops can naturally be identified with a $2$-torus.
\begin{lemma}
Any holomorphic teardrop in $\tWvg$ bounded by $\tLvg$ is one of the teardrops of Lemma~\ref{localtears}.
\end{lemma}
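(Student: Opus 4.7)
The statement classifies all holomorphic teardrops with boundary on $\tLvg$ inside $\tWvg$, asserting they all arise as the local teardrops described in Lemma~\ref{localtears}. My plan is to show that any such teardrop must actually lie inside one of the Darboux balls $\widetilde{B}_{\ell,m}$, after which Lemma~\ref{localtears} classifies it directly. The self-intersection locus of $\tLvg$ splits into two types: the $2$-dimensional $T^2$ switching components sitting inside each $\widetilde{B}_{\ell,m}$ (coming from the local model $L_Y$), and the $0$-dimensional switching components at the Morse critical points of $\pm\widetilde{h}$ in the cotangent neighborhood $\Phi_{\widetilde{L}'}(U_{\widetilde{L}'})$. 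Since a teardrop has exactly one corner, I would split the argument based on the location of this corner.

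For a corner on a $T^2$ component in some $\widetilde{B}_{\ell,m}$, I would first invoke exactness (Lemma~\ref{exactness}): the symplectic area of the teardrop equals the action at its corner, which in turn matches the area of the small teardrop bounded by $\Larc$ in $\mathbb{C}$ (cf.\ Figure~\ref{immersedarc}). Since $\Larc$ can be chosen so this area is arbitrarily small, monotonicity for $J$-holomorphic curves in $\tWvg$ forces the image of the teardrop to lie in a ball of radius $O(\sqrt{\operatorname{area}})$ about the corner, hence entirely inside $\widetilde{B}_{\ell,m}$. In that chart the projection $w \colon Y \to \mathbb{C}$ of~\eqref{degeneration} sends the teardrop to a holomorphic map $D^2 \to \mathbb{C}$ with boundary on $\Larc$ and corner at its self-intersection, which by the open mapping theorem must coincide with the standard teardrop bounded by $\Larc$. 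Applying the splitting of diagram~\eqref{commutative} then produces the product form of Lemma~\ref{localtears}.

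For a corner at a Morse critical point $p$ of $\pm\widetilde{h}$, the two relevant branches of $\tLvg$ are $C^1$-close to the zero section of $T^*\widetilde{L}'$, and their primitives differ at $p$ by essentially $\pm 2\widetilde{h}(p)$ up to a constant determined by the gluing with the $L_Y$ pieces. By rescaling $\widetilde{h}$ to have sufficiently small $C^1$-norm (and adjusting the primitive), I would arrange that the action at every such corner is either zero or negative, ruling out a positive-area teardrop outright. Alternatively, the same monotonicity argument localizes any such teardrop inside a small Darboux chart about $p$, in which the two sheets are $C^1$-close to transverse linear Lagrangian planes in $\mathbb{C}^3$ meeting only at the origin. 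Such a configuration bounds no non-constant holomorphic teardrop, as can be seen by a rescaling argument that would otherwise produce a non-compact family of teardrops of varying positive area; together with Gromov compactness, this eliminates teardrops with corner on a $0$-dimensional switching component.

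The main obstacle is the choice of primitive: one must verify that the global primitive on the domain of $\tLvg$ can be arranged so that the action at every $0$-dimensional corner is controlled, which requires a careful comparison between the two sheets of the domain that are glued through the $L_Y$ pieces in each $\widetilde{B}_{\ell,m}$. Once this bookkeeping is done, combining the monotonicity-based localization with the local classifications in each Darboux chart reduces both cases to Lemma~\ref{localtears}, completing the proof.
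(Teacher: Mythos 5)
Your approach diverges from the paper's in a way that introduces a genuine gap. The paper's proof does not split into cases by corner type and does not use exactness, action bounds, or monotonicity at all. Instead it makes a single topological observation: the boundary of any teardrop lifts to an arc in the domain of $\tLvg$ that must pass between the two sheets of the immersion (the two copies of $\widetilde{L}'$ mapping to $\Gamma(\pm d\widetilde{h})$), and since the domain is glued together only through the $L_Y$ pieces inside the balls $\widetilde{B}_{\ell,m}$, the image of $u$ necessarily meets some $\widetilde{B}_{\ell,m}$. It then applies the open mapping theorem to $w_{\ell,m}\circ u$ — using that the image of $\tLvg$ under $w_{\ell,m}$ is the arc $\Larc$ — to conclude that $u$ is entirely contained in $\widetilde{B}_{\ell,m}$, which automatically rules out a corner at a $0$-dimensional switching component since those lie outside the balls. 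This handles both corner types in one stroke.

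Your Case~2 is where the gap lies, and you have correctly identified it yourself: the action-sign bookkeeping at the Morse corners is not controlled by rescaling $\widetilde{h}$ alone, because the global primitive on the two sheets also differs by a constant set by the $\Larc$ gluing, and the sign of the resulting action depends on the direction in which the corner is traversed — for each pair $(p_-,p_+)$ one direction gives positive action and the other negative, so one cannot make all of them nonpositive. Your fallback (localize to a Darboux chart about $p$ and use that transverse Lagrangian planes in $\mathbb{C}^3$ bound no teardrops) still presupposes the same small action bound in order to localize, so it does not escape the problem. By contrast, the paper's argument never needs to estimate this action, since the topological constraint already forces the teardrop through a ball and the open mapping theorem traps it there. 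Your Case~1, while more elaborate than necessary (the monotonicity localization step is redundant once one has the topological observation), would work as stated.
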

\begin{proof}
Let $u\colon\overline{\Delta}\setminus\lbrace 1\rbrace$ be such a teardrop. The boundary of $u$ would have to lift to a path on (the domain of) $\tLvg$ which passes between the two branches. Thus, the image of $u$ would have to intersect one of the neighborhoods $\widetilde{B}_{\ell,m}$. There are holomorphic maps $w_{\ell,m}\colon\widetilde{B}_{\ell,m}\to\mathbb{C}$ induced from~\eqref{degeneration}. By the open mapping theorem, the image of $w_{\ell,m}\circ u$ must be an open subset of $\mathbb{C}$. Since the image of $\tLsing$ under this map is an arc with a self-intersection point, it follows that the image of $u$ must be contained in $\widetilde{B}_{\ell,m}$.
\end{proof}
By reordering the coordinates on the domains of the Darboux charts $\widetilde{B}_{\ell,m}$ if necessary, cf. Remark~\ref{orderingcoordinates}, we can calculate $\mathfrak{m}_0$ for $\tLvg$ to low order.
\begin{proposition}\label{localm0}
Let $\nabla$ be a rank one $\mathbb{C}$-local system on $\tLvg$ which is induced from a local system on $\Lvg$ of the form~\eqref{localsystemslvg}. Then $(\tLvg,\nabla)$, equipped with an appropriate choice of spin structure, is unobstructed with bounding cochain $0\in CF^*(\tLvg)$ in $\tWvg$ if the holonomies $\lbrace\mu_i,\lambda_i\rbrace_{i=0}^4$ satisfy
\begin{align}
1+\mu_0^{-1}+\mu_0^{-1}\lambda_0^{-1} &= 0 \label{interiorconeobs}\\
-1-\mu_1^5+\lambda_1^{-5} &= 0 \label{1obs}\\
-1-\mu_2^{-5}-\lambda_2^5 &= 0 \label{2obs}\\
-1-\mu_3^{-5}+\lambda_3^5 &= 0 \label{3obs}\\
-1+\mu_4^5+\mu_4^5\lambda_4^5 &= 0 \, . \label{4obs}
\end{align}
There is a subspace of the space of local systems satisfying~\eqref{interiorconeobs}--\eqref{4obs} and containing the local system $\nabla^{\vG}_{\omega}$, to be described in Example~\ref{vgexample} below which can be identified with a punctured genus $6$ curve.
\end{proposition}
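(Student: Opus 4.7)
The plan is to compute $\mathfrak{m}_0$ for the brane $(\tLvg, \nabla)$ and show that each of the five obstruction terms vanishes under the stated equations. By the preceding lemma, every holomorphic teardrop bounded by $\tLvg$ is contained in a Darboux ball $\widetilde{B}_{\ell,m}$ centered at one of the $145$ cone points of $\tLvgsing$. By Lemma~\ref{localtears}, at each such cone point there are exactly three regular families of $\mathfrak{m}_0$-contributing teardrops, each given as the product of the teardrop on $\Larc$ with a disk in $\mathbb{C}^2$ bounded by the Clifford torus that is either constant or sweeps out one of the two coordinate axes. Each family is a smooth $T^2$-family of regular teardrops, and its contribution to $\mathfrak{m}_0$ is a signed multiple of the fundamental class of the switching torus $T^2_{\ell,m}$.

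First I would identify the three boundary classes in $H_1(\Lvg;\mathbb{Z})$ of these three families. By construction of $L_Y$ and the ordering of coordinates fixed in~\eqref{gssing} (see Remark~\ref{orderingcoordinates}), the three boundary loops at any given cone point differ precisely by the meridian and longitude of the corresponding cusp of $\tLvgsing$. For the interior cone point $v_0$ of $\Lvgsing$, the $125$ lifts in $\tLvg$ each have a link torus which projects isomorphically onto the $0$-th cusp torus $T^2_0\subset L'$, so the three holonomies are monomials in $\mu_0$ and $\lambda_0$, producing~\eqref{interiorconeobs}. For each exterior cone point $v_\ell$ with $\ell\in\{1,2,3,4\}$, the link torus at each of the $5$ lifts is a $25$-fold cover of $T^2_\ell$ whose fundamental group is $\langle m_\ell^5,\ell_\ell^5\rangle$ (as follows from the description in Remark~\ref{125foldcover} of the cover $\widetilde{L}'\to L'$), so the three holonomies are monomials in $\mu_\ell^5$ and $\lambda_\ell^5$, giving~\eqref{1obs}--\eqref{4obs}. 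The signs in each equation come from the spin structure on $\tLvg$ lifted from the one on $\Lvg$ fixed just above the statement; I would determine them by a standard computation of spin signs for a product Lagrangian acting on product disks in $L_Y$.

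For the final assertion, I would use Lemma~\ref{relations} to eliminate the $\lambda_i$ in favor of monomials in the $\mu_j$, reducing the five equations to a polynomial system in the five-parameter space $(\mu_0,\ldots,\mu_4)\in(\mathbb{C}^*)^5$. After this substitution~\eqref{interiorconeobs} becomes a single binomial constraint among $\mu_0,\mu_1,\mu_4$, while~\eqref{1obs}--\eqref{4obs} become four polynomial constraints in the fifth powers of the $\mu_i$. The common zero locus is thus naturally a $125$-fold cover of a curve in $(\mathbb{C}^*)^5$ cut out by equations in the $\mu_i^5$, and I would identify the latter with the $125$-fold cover of a genus $6$ curve parametrizing van Geemen lines in the mirror quintic by matching the polynomial system to the defining relations~\eqref{vangeemeneqn2}, using the explicit description of these moduli spaces in~\cite{COVV12}. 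The fact that the distinguished one-parameter subfamily contains $\nabla^{\vG}_\omega$ will follow from its construction in Example~\ref{vgexample}.

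The main obstacle is the careful identification of the three boundary classes with the meridian and longitude of each cusp, and the careful determination of the spin signs. The product form of $L_Y$ makes the underlying disk counts tractable, but the boundary classes depend on the chosen identification of each $\widetilde{B}_{\ell,m}$ with the local model $Y$ of~\eqref{gssing}, and hence on the ordering of coordinates there. A secondary difficulty is matching the resulting polynomial system, after eliminating the $\lambda_i$, precisely with the defining equations of the van Geemen curve; this requires a careful bookkeeping exercise but should follow from the parametrization in~\cite{COVV12}.
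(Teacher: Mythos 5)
Your teardrop classification and the strategy of converting each cone point's three teardrop families into a holonomy-weighted sum in $\Omega^*(T^2)$ matches the paper's proof. Two points, however, deserve flagging.

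First, the sign analysis. You defer to ``a standard computation of spin signs for a product Lagrangian acting on product disks in $L_Y$,'' but this misses that the proposition's signs are not automatic: the paper must first \emph{choose} the spin structure on $\Lvg$ so that the teardrops contribute with the stated signs. The paper's argument explicitly uses two nontrivial symmetry facts from~\cite{Han24a}: (i) the spin structure on the chain link complement is preserved by a hyperbolic isometry exchanging $m_4$ and $m_4+\ell_4$, which forces the $\mu_4^5$ and $\mu_4^5\lambda_4^5$ terms of~\eqref{4obs} to carry the same sign; and (ii) the freedom afforded by $\mathrm{rank}\,H_1(\Lvg)=9$ to adjust the remaining signs independently at each neck. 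A ``standard'' product computation would only determine the signs relative to a fixed spin structure; without arguing that a favorable spin structure exists, you would not know that the conditions can be satisfied with the given sign pattern.

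Second, the genus six claim. The paper argues \emph{directly}: after eliminating the $\lambda_i$ via Lemma~\ref{relations}, the explicit manipulation of~\eqref{1obs}--\eqref{4obs} shows that $\lambda_4^5$ (and hence every $\mu_i^5,\lambda_i^5$) is determined by $(\mu_0,\lambda_0)\in(\mathbb{C}^*)^2$ satisfying~\eqref{interiorconeobs}, exhibiting the solution space as a Zariski open in a $25$-fold cover of the standard pair of pants, and the genus then follows from Riemann--Hurwitz. Your proposal instead describes a $125$-fold cover of a curve cut out by equations in the $\mu_i^5$ and seeks to identify the base with a cover of a genus six curve by matching the polynomial system to the relations~\eqref{vangeemeneqn2} of~\cite{COVV12}. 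Besides the covering degree discrepancy ($125$ vs.\ $25$), your covers go in the wrong direction: you would conclude that the space of unobstructed local systems covers (with degree $\geq 125$) a genus six curve, whereas the proposition asserts that the space \emph{is} a punctured genus six curve. Matching with~\cite{COVV12} is also delicate because that paper parametrizes lines on the $B$-side, and the holonomy coordinates $\mu_i,\lambda_i$ are related to the parameters $(a,b,\omega)$ of~\eqref{vangeemeneqn2} only after inverting the mirror map; the direct algebraic identification you envision is not available without further justification. A self-contained argument along the paper's lines avoids these difficulties.
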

\begin{definition}\label{locallyunobstructed}
A local system on $\tLvg$ satisfying the conclusion of Proposition~\ref{localm0} is said to be \textit{locally unobstructed}.
\end{definition}
Before proving Proposition~\ref{localm0}, we will check that the set of locally unobstructed local systems is nonempty by exhibiting local systems mirror to van Geemen lines.
\begin{example}\label{vgexample}[Very affine van Geemen lines]
Let $\omega,a\in\mathbb{C}^*$ be constants satisfying
\begin{align*}
1+\omega+\omega^2 &= 0 \\
a^5  &= 27 \, .
\end{align*}
Consider the local system on $\Lvg$ with holonomy
\begin{equation}\label{vangeemenlocalsystem}
\begin{aligned}[c]
\mu_0 &= \omega \, ; \\
\mu_1 &= -\omega \, ; \\
\mu_2^{-1} &= \frac{a}{3}(1-\omega^2) \, ; \\
\mu_3^{-1} &= -\omega \, ; \\
\mu_4^{-1} &= -\omega^2 \, ;
\end{aligned}
\qquad
\begin{aligned}[c]
\lambda_0 &= \omega \\
\lambda_1^{-1} &= -\frac{a}{3}(1-\omega) \\
\lambda_2 &= -\omega^2 \\
\lambda_3 &= -\frac{a}{3}(1-\omega) \\
\lambda_4 &= -\omega \, .
\end{aligned}
\end{equation}
Since there are two primitive third roots of unity~\eqref{vangeemenlocalsystem} determines two local systems, which we refer to interchangeably. It is easy to check that these satisfy~\eqref{interiorconeobs}--\eqref{4obs}. Thus this local system lifts to one on $\tLimm$ via the covering map $\tLimm\to\Limm$. By~\eqref{localsystemslvg} this yields a local system on $\tLvg$ as well, which is the local system $\nabla^{\vG}_{\omega}$ mentioned in the statement of Proposition~\ref{localm0}. The proof of~\cite[Theorem 1.2]{Han24a} shows that equipping $\tLimm\subset T^*T^3$ with a local system with holonomy as in~\eqref{vangeemenlocalsystem} gives an object of the wrapped Fukaya category of $T^*T^3$ which is mirror to a coherent sheaf supposed on the line $(\mathbb{C}^*)^3$ cut out by
\begin{align*}
\lambda_{3}u_2-\mu_{2}^{-1}u_3+1 &= 0 \\
-\mu_{2}^{-1}\lambda_{2}^{-1}u_1+\mu_{1}^{-1}\lambda_{1}^{-1}u_2+1 &= 0\\
\mu_{3}\lambda_{3}u_1-\lambda_{1}^{-1}u_3+1 &= 0 \, .
\end{align*}
In this example, these linear forms specialize to
\begin{align*}
-\frac{a}{3}(1-\omega)u_2-\frac{a}{3}(1-\omega^2)u_3+1 &= 0 \\
-\frac{a}{3}(1-\omega)u_1-\frac{a}{3}(1-\omega^2)u_2+1 &= 0 \\
-\frac{a}{3}(1-\omega^2)u_1-\frac{a}{3}(1-\omega)u_3+1 &= 0 \, . 
\end{align*}
The projective closure of this line in $\mathbb{CP}^3$ is easily seen to be the limiting cycle, in $X_0^5$, of a van Geemen line $C_0^{\omega}$. By interchanging $\omega$ with $\omega^2$, we obtain another local system $\nabla^{\vG}_{\omega^2}$, to which we can associate a curve in $(\mathbb{C}^*)^3$ whose projective closure in $X_0^5$ is $C_0^{\omega^2}$.
\end{example}
\begin{proof}[Proof of Proposition~\ref{localm0}]
All three families of teardrops contained in $\widetilde{B}_{\ell,m}$, as in Lemma~\ref{localtears}, have the same output in the de Rham complex $\Omega^*(T^2)$ of the switching component. The definitions of the balls $\widetilde{B}_{\ell,m}$ and Lemma~\ref{inducedmap} imply that the contributions of these teardrops to $\mathfrak{m}_0$ weighted by holonomies given in~\eqref{interiorconeobs}--\eqref{4obs}, up to signs. 

We claim that there is a spin structure on $\tLvg$ with respect to which these teardrops contribute with the signs given in the statement of the Proposition. We will describe this as the lift of a spin structure on the quotient $\Lvg$. As previously discussed, the spin structure we choose for $\Lvg$ should restrict to the spin structure on $\Limm$ specified in~\cite[Lemma 4.2]{Han24a}. This determines the signs appearing in~\eqref{interiorconeobs} (cf.~\cite[Lemma 5.7]{Han24a}).

In~\eqref{4obs}, the terms with holonomy $\pm\mu_4^5$ and $\pm\mu_4\lambda_4^5$ must carry the same sign, since the spin structure on the minimally-twisted five-component chain link complement of~\cite[Lemma 4.2]{Han24a} is preserved by a hyperbolic isometry which interchanges the homology classes $m_4$ and $m_4+\lambda_4$. We can then choose the spin structure on $\Lvg$ such that the remaining disk, which we can assume has boundary lying on a curve representing the class $s_4\in H_1(\Lvg)$, contributes with the opposite sign. Again by~\cite[Lemma 4.2]{Han24a}, the terms of~\eqref{1obs},~\eqref{2obs}, and~\eqref{3obs} weighted by nontrivial holonomy contribute with opposite signs. Because $H_1(\Lvg)$ has rank $9$, the spin structure can be chosen so that it has the desired behavior at all four necks of $\Lvg$.

In particular the first term in each expression corresponds to the product of the teardrop bounded by $L_Y$ with a constant disk. The signs are determined using the symmetries of the spin structure on $L$ (cf. Section 6.3 and Remark 6.3 of~\cite{Han24a}).

It is clear that the subvariety of $(\mathbb{C}^*)^5$ cut out by~\eqref{interiorconeobs}-\eqref{4obs} is at most $2$-dimensional, since the value of $\mu_0$ (which determines the value of $\lambda_0$) and the value of $\mu_i^5$ or $\lambda_i^5$, for any $i = 1,2,3,4$, will uniquely determine a local system satisfying these relations using Lemma~\ref{relations}. Moreover, we calculate
\begin{align*}
1+\mu_1^5 = \lambda_1^{-5} &= \mu_0^5\mu_2^{-5} \\
&= \mu_0^5\lambda_3^5\mu_4^{-5} \\
&= \mu_0^5(1+\mu_3^{-5})\mu_4^{-5} \\
&= \mu_0^5(1+\mu_0^{-5}\lambda_4^{-5})\mu_4^{-5}.
\end{align*}
Multiplying by $\mu_4^5$ gives us
\begin{align*}
\mu_4^5+\mu_1^5\mu_4^5 &= \mu_0^5(1+\mu_0^{-5}\lambda_4^{-5}) \\
\mu_4^5+\lambda_0^{-5} &= \mu_0^5+\lambda_4^{-5} \,
\end{align*}
which we rewrite using~\eqref{interiorconeobs} and~\eqref{4obs} as
\begin{align*}
\frac{1}{1+\lambda_4^5}-\lambda_5^4 = \mu_0^5-\lambda_0^{-5} \, .
\end{align*}
This implies that the values of $\mu_0^5$ and $\lambda_0^5$ determine a value of $\lambda_4^5$ satisfying~\eqref{interiorconeobs}--\eqref{4obs}, and thus that the space of unobstructed local systems is $1$-dimensional (as we have already seen that it is nonempty). For values of $\mu_0$ and $\lambda_0$, contained outside of a finite subset of the pair of pants in $(\mathbb{C}^*)^2$ cut out by~\eqref{interiorconeobs}, the value of $\lambda_4^5$ we obtain will be nonzero. It follows that the space of local systems satisfying~\eqref{interiorconeobs}--\eqref{4obs} is naturally a Zariski open subset of a $25$-fold cover the standard pair of pants in $(\mathbb{C}^*)^2$. An application of the Riemann--Hurwitz formula shows that this cover has genus $6$.
\end{proof}
\begin{remark}\label{orderingcoordinates2}
As pointed out in Remark~\ref{orderingcoordinates}, the intersection of $\tLvg$ with the balls $\widetilde{B}_{\ell,m}$ depends on an ordering of coordinates on the domains of these charts. One observes, however, that reordering coordinates in the domains corresponds to scaling~\eqref{interiorconeobs}--\eqref{4obs} by nonzero scalars, so any choices of coordinates used to construct $L_Y$ give us Lagrangians with \textit{canonically} isomorphic deformation spaces.
\end{remark}
\begin{remark}
The appearance of a genus six curve is consistent with the results of~\cite{COVV12}, though its description here is very different from the genus six curves of op. cit. One expects the mirror map, which by~\cite{GPS15} induces the change of variables underlying the mirror functor, to embed the punctured curve of Proposition~\ref{localm0} into the moduli space of lines on the mirror quintic in a nontrivial way. Some of the punctures in the curve of Proposition~\ref{localm0}, in particular the punctures corresponding to lifts of punctures in the pair of pants~\eqref{interiorconeobs}, should, informally, correspond to taking limits of local systems on $\tLvg$ such that the holonomy around certain loops approaches $0$ or $\infty$. This suggests that $A$-branes corresponding to these punctures in the moduli space of lines should be supported on different Lagrangians.
\end{remark}

\subsection{Unobstructedness}\label{unobstructedness} Recall from~\cite{Fuk17}, or Appendix~\ref{immersedfloerappendix}, that the obstruction term $\mathfrak{m}_0$ can be thought of as a cochain in $CF^2(L)$, for any clean Lagrangian immersion $\iota\colon L\to M$. Moreover, we can represent it as a sum of elements 
\begin{align*}
\mathfrak{m}_0^0+\mathfrak{m}_0^s\in\left((\Omega^2(L))\widehat{\otimes}_{\mathbb{C}}\Lambda_0\right)\oplus\bigoplus_{\substack{a\in A\setminus\lbrace 0\rbrace \\ k-\deg(L_a) = 2}}\left((\Omega^k(L_a))\widehat{\otimes}_{\mathbb{C}}\Lambda_0\right)
\end{align*}
where $\Lambda_0$ is the Novikov ring~\eqref{novikovring} over $\mathbb{C}$ and $\widehat{\otimes}_{\mathbb{C}}$ denotes the completed tensor product. More succinctly, we split $\mathfrak{m}_0$ as a sum of elements in summands of the Floer cohcain space corresponding to the diagonal and switching components of the fiber product $L\times_{\iota}L$.

In the case of $\tLvg\to X_{\tau}^5$, we will show that these two summands of the curvature both vanish. The vanishing of $\mathfrak{m}_0^s$ in this setting follows from the definition of a locally unobstructed local system (Definition~\ref{locallyunobstructed}) combined with an SFT compactness theorem for disks with Lagrangian boundary appearing in~\cite{Cha23}.
\begin{proposition}\label{m0svanishing}
The switching part $\mathfrak{m}_0^{s}$ of the obstruction class of $\tLvg$ vanishes.
\end{proposition}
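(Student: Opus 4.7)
\emph{Proof proposal.} The plan is to decompose $\mathfrak{m}_0^s$ according to the two types of switching components of $\tLvg\times_{X^5_{\tau}}\tLvg$ identified in Lemma~\ref{floercochainsvg}: the $145$ copies of $T^2$ lying inside the Darboux balls $\widetilde{B}_{\ell,m}$, and the $0$-dimensional clean intersection points inside the Weinstein neighborhood $U_{\widetilde{L}'}$ coming from pairs of Morse critical points of $\pm\widetilde{h}$.  I then verify vanishing separately on each type.  The main tool throughout is an SFT-type compactness theorem for pseudoholomorphic disks with Lagrangian boundary~\cite{Cha23}, applied to a neck-stretching degeneration along the contact boundary spheres of the Darboux balls $\widetilde{B}_{\ell,m}$.

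For the $T^2$-summands, I would stretch the neck separating each $\widetilde{B}_{\ell,m}$ from the rest of $\tWvg$.  In the limit, any disk contributing to $\mathfrak{m}_0^s$ with output on the $T^2$ corner inside $\widetilde{B}_{\ell,m}$ degenerates to a configuration whose output-carrying component is contained entirely inside that ball.  By Lemma~\ref{localtears}, such a component must be one of the explicit teardrops bounded by $L_Y$, and the signed, holonomy-weighted sum of their contributions to $\mathfrak{m}_0^s$ is exactly the left-hand side of one of the relations (\ref{interiorconeobs})--(\ref{4obs}), all of which vanish for a locally unobstructed local system $\nabla$ (Definition~\ref{locallyunobstructed}).

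For the $0$-dimensional Morse-type summands, I would additionally rescale $\widetilde{h}$ so that its $C^1$-norm becomes arbitrarily small.  A hypothetical disk contributing to $\mathfrak{m}_0^s$ at a transverse intersection point $(p_-,p_+)$ would have boundary alternating between the sheets $\Gamma(\pm d\widetilde{h})$ and meeting only at $(p_-,p_+)$; its symplectic area is bounded above by a difference of primitives of the Liouville form on these two sheets plus the total area absorbed in the Darboux balls.  If the disk lies entirely inside $U_{\widetilde{L}'}$, then the bound tends to zero as $\|\widetilde{h}\|_{C^1}\to 0$, contradicting positivity of area.  If instead it enters some $\widetilde{B}_{\ell,m}$, the SFT compactness argument forces its ball-component to be one of the teardrops of Lemma~\ref{localtears}, which already carries a $T^2$ corner; gluing such a teardrop to a flow line of $\pm\widetilde{h}$ ending at $(p_-,p_+)$ would produce a disk with two switching corners, contradicting the one-corner condition imposed on configurations contributing to $\mathfrak{m}_0^s$.

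The main obstacle is making the SFT compactness and gluing rigorous in the presence of the conical singular points $\widetilde{v}_{\ell,m}$ and the clean self-intersections of $\tLvg$; concretely, one must rule out bubbling of holomorphic spheres or Reeb chord asymptotic disks with boundary on the Legendrian link $\Lambda_{HL}$ in the neck-stretching limit, and verify that the Morse flow-tree and ball components are jointly transverse in the sense of~\cite{Cha23}.  Once this analytical input is in place, the two localization steps above combine to give the proposition.
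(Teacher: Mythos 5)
Your $T^2$ branch is essentially the paper's argument, but the overall proposal diverges in two significant ways, and the Morse branch has real gaps.

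First, the paper does not split $\mathfrak{m}_0^s$ by switching-component type at all; it argues uniformly for every teardrop, using the topological observation that the boundary arc $\widetilde{\partial u}$ must pass through a neck (since it changes sheets, and the sheets are only glued along the necks) to conclude that the interior of every such teardrop intersects some $\widetilde{B}_{\ell,m}$. The key analytic step you are missing is what happens after the SFT degeneration: the paper treats the punctured component inside $\widetilde{B}_{\ell,m}$ as a triple of meromorphic functions in the coordinates of the Gross--Siebert model~\eqref{commutative}, clears the poles coming from Reeb-orbit asymptotics, and thereby produces an honest holomorphic teardrop bounded by a Lagrangian isotopic to $L_Y$. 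It is this step that lets one identify the homology class $[\partial u]\in H_1(\tLvg)$ with the boundary class of a disk from Lemma~\ref{localtears}. Without it, saying that the ``output-carrying component is contained entirely inside that ball'' and ``must be one of the explicit teardrops'' is not justified: the SFT limit has several levels, none of which need be a teardrop of the original, unpunctured form, and a priori its boundary class is unconstrained.

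Second, your Morse branch has two concrete problems. The area estimate for disks contained entirely in $U_{\widetilde{L}'}$ is addressing a vacuous case: a teardrop with corner at a Morse-type switching component must have boundary arc passing between the two copies of $\widetilde{L}'$, which can only happen through a neck, so no such disk stays in $U_{\widetilde{L}'}$ regardless of $\|\widetilde{h}\|_{C^1}$. More seriously, the ``gluing a teardrop to a flow line yields two switching corners'' contradiction does not reflect what SFT neck-stretching actually produces: the degenerate configuration has components joined along Reeb chords/orbits on $\partial\widetilde{B}_{\ell,m}$, not at new switching corners of $\tLvg$, so there is no violation of the one-corner condition. After stretching, the inside component is a punctured disk, and you still need the meromorphic-function argument (or some replacement) to pin down its boundary homology class before invoking the cancellations~\eqref{interiorconeobs}--\eqref{4obs}. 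As written, the proposal does not establish that a teardrop with Morse-type output has boundary class matching one of the local teardrop families, which is exactly what makes the holonomy-weighted sum vanish.
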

\begin{proof}
Fix a class $\beta\in H_2(X^5_{\tau},\tLvg;\mathbb{Z})$. Consider the moduli space $\mathcal{M}_1(\tLvg;\beta)$ of holomorphic teardrops, i.e. holomorphic disks with one boundary marked point asymptotic to a switching component, and let $[u]$ be an element of this moduli space. For topological reasons, the boundary of $u$ is an arc which must pass through one of the necks in $\tLvg$, and thus the image of the interior of $u$ intersects one of the balls $\widetilde{B}_{\ell,m}$, for $\ell = 0,1,2,3,4$.

The boundary $\widetilde{S}_{\ell,m} = \partial\widetilde{B}_{\ell,m}$ is a hypersurface of contact type which divides $X_{\tau}^5$ into two connected components. In particular, there is a tubular neighborhood $N\cong(-\epsilon,\epsilon)\times\widetilde{S}_{\ell,m}$ and a contact form $\alpha$ on $\widetilde{S}_{\ell,m}$ such that the symplectic form on $X_{\tau}^5$ restricts to $d(e^{kt}\alpha)$ on $N$, where $t\in(-\epsilon,\epsilon)$ and $k$ is a positive integer. We can take $\alpha$ to be the standard contact form on $S^5$. By construction, we can write $\tLvg\cap N = \Lambda\times(-\epsilon,\epsilon)$ for a Legendrian $\Lambda\subset S^5$.

We will apply an SFT compactness theorem  for disks with Lagrangian boundary, specifically~\cite[Theorem 3.13]{Cha23}, in this situation. The discussion around~\cite[Example 3.1]{Cha23} shows that the SFT compactness theorem applies in the setting described in the last paragraph. Neck stretching along $\widetilde{S}_{\ell,m}$ shows that given any teardrop $u$ as in the first paragraph, there is a holomorphic teardrop in $\widetilde{B}_{\ell,m}$ with interior punctures asymptotically converging to Reeb orbits in the standard contact $S^5$. Moreover the boundary of $u$ has the same homology class in $H_1(\tLvg)$ as the boundary of this new teardrop. 

We will think of this punctured teardrop as a curve contained in the Gross--Siebert space $Y$ of~\eqref{gssing}. This can be thought of as a meromorphic function $\overline{u}\colon\Delta\to Y$. We can replace this with a holomorphic teardrop as follows. Using the product decomposition of~\eqref{commutative}, we can write $\overline{u} = (\overline{u}_i,\overline{u}_j,\overline{w})$. Each of these component functions is meromorphic. The singular points of these functions are all (at worst) poles at points in $\Int\Delta$ corresponding to the punctures. The punctures cannot correspond to essential singularities, since the condition that they are asymptotic to Reeb orbits would contradict Picard's great theorem, for instance. If $P_i$ denotes the (finite) set of poles of $\overline{u}_i$, we can form a holomorphic function by setting
\begin{align*}
\widetilde{u}_i(z)\coloneqq\overline{u}_i(z)\cdot\prod_{p\in P_i}(z-p)^{\ord(p)} \,.
\end{align*}
This is a holomorphic disk with boundary on simple closed curve in $\mathbb{C}$ enclosing the origin. We also define $\widetilde{u}_j$ and $\widetilde{w}$ analogously. Note that $\widetilde{u}_j$ and $\widetilde{w}$ will have boundary on $1$-dimensional submanifolds of $\mathbb{C}$ that are isotopic to the unit circle or to $\Larc$, respectively. Thus we obtain a holomorphic disk $\widetilde{u}\coloneqq(\widetilde{u}_i,\widetilde{u}_j,\widetilde{w})$ with boundary on a different Lagrangian submanifold, denoted $L_Y'$, contained in $Y$. Although $L_Y'$ is only Lagrangian isotopic to $L_Y$, it is clear that teardrops bounded by $L_Y'$ can be classified in the same way as teardrops on $L_Y$. In particular, $\widetilde{u}$ is a section over the teardrop $\widetilde{w}$ per the proof of Lemma~\ref{localtears}. From this we can determine the homology class of $\partial\widetilde{u}$ in $H_1(L_Y')$, and in turn the homology class of $\partial\overline{u}\in H_1(L_Y)$. The latter must be the homology class of the boundary of a disk described in Lemma~\ref{localtears}, as is the case for the boundary $\partial u$ of the original teardrop as well.

From the teardrop $u$, we can form two other teardrops with boundary on $\tLvg$ by cyclically permuting the coordinates $(x_i,x_j,x_k)$ on $\mathbb{CP}^4$, where we recall our convention that $\lbrace i,j,k,\ell\rbrace = \lbrace 1,2,3,4\rbrace$. By the result of the previous paragraph, we can determine the first homology classes of the boundaries of each of these teardrops. Since we have equipped $\tLvg$ with a locally unobstructed local system, it follows that the contributions of these three teardrops to $\mathfrak{m}_0^{s}$ cancel.
\end{proof}
\begin{remark}
The technique of treating punctured curves arising from neck stretching as meromorphic functions with poles is inspired by the proof of~\cite[Theorem 6.27]{Cha23}, and our arguments can be thought of as a weaker version of those in~\cite[\S{6.3}]{Cha23}.
\end{remark}

To show that diagonal obstruction term $\mathfrak{m}_0^0$ vanishes, we will adapt the results of~\cite{Sol20}. The results of op. cit. apply to an embedded Lagrangian $L\subset M$ fixed, as a set, by an anti-symplectic involution $\mathfrak{d}$. Under additional assumptions about the action of $\mathfrak{d}$ on $H^*(L)$, \cite[Theorem 1.2]{Sol20} shows that $CF^*(L)$ is unobstructed (and in fact that it is formal), by calculating the sign of $\mathfrak{d}$ as it acts on the moduli spaces $\mathcal{M}_{k+1}(L)$. These arguments to apply in our setting show that $\mathfrak{m}_0^0$ for $\tLvg\to X_{\tau}^5$ vanishes with essentially no changes, since the diagonal obstruction term only counts disks with smooth boundary. We only need to explain how a canonical model for the Fukaya $A_{\infty}$-algebra of an immersed Lagrangian is constructed, and to check that complex conjugation (which acts on $X_{\tau}^5$ for real $\tau$) acts suitably on the de Rham cohomology of $H^*(\tLvg)$.

\begin{lemma}
The involution $\mathfrak{d}$ preserves $\tLvg$ as a set.
\end{lemma}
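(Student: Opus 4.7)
The plan is to check $\mathfrak{d}$-invariance piece by piece on the decomposition of $\tLvg$ into the copy of $\tLsing$ near $L_{\tau,0}$, the Harvey--Lawson cones in the Darboux balls $\widetilde{B}_{\ell,m}$, the local doublings $L_Y$, and the graphs of $\pm d\widetilde{h}$. First I would show that $\mathfrak{d}$ preserves the Lagrangian torus $L_{\tau,0}$ of Proposition~\ref{mirrors2points}. The meromorphic function $s$ and hence its real part $f$ are conjugation-equivariant, so the gradient flow of $f$ commutes with $\mathfrak{d}$; since $L_{0,0}\subset\lbrace x_5=0\rbrace$ is defined in~\eqref{l00def} by $|x_j|=1$, it is manifestly conjugation-invariant, and so is its symplectic parallel transport $L_{\tau,0}$. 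This lets me choose the Weinstein neighborhood $\Wein(L_{\tau,0})$ to be $\mathfrak{d}$-invariant with $\mathfrak{d}$ identified with the standard anti-symplectic involution $(q,\theta)\mapsto(q,-\theta)$ on $T^*T^3\cong(\mathbb{C}^*)^3$.

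Next, under this identification, $\mathfrak{d}$ is exactly complex conjugation on $(\mathbb{C}^*)^3$. The periodized conormals~\eqref{coverconormal123} and~\eqref{coverconormal4} are cut out by real conditions (norms and arguments $2m\pi/5$), hence are preserved. The interior piece of $\Lsing$ over the vertex of $V$ is constructed in~\cite{Han24a} via a coamoeba that is symmetric under inversion in the $T^3$-fiber, so lifting to $\tLsing$ inherits this symmetry, and the identification with $\Wein(L_{\tau,0})$ can be arranged $\mathfrak{d}$-equivariantly. For the local pieces near $\widetilde{v}_{\ell,m}$: for $\ell\in\lbrace 1,2,3,4\rbrace$ the conjugation $\mathfrak{d}$ permutes the five points $\lbrace\widetilde{v}_{\ell,m}\rbrace_{m=1}^5$ (they differ by multiplication of $x_5$ by a fifth root of $-1$), while for $\ell=0$ it acts compatibly with the $(\mathbb{Z}/5)^3$-action on the lifts; in either case I choose the balls $\widetilde{B}_{\ell,m}$ so that their union is $\mathfrak{d}$-stable. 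Inside each such ball, the Harvey--Lawson cone~\eqref{conesinxt} is defined by the real conditions $|x_i|=|x_j|=|x_k|$ and $x_ix_jx_k\in\mathbb{R}_{\geq0}$, and the local double $L_Y$ of Definition~\ref{lydef} is the product, via~\eqref{commutative}, of the Clifford torus in $\mathbb{C}^2$ (conjugation-invariant) with the arc $\Larc\subset\mathbb{C}$ which can be chosen symmetric about the real axis; so both cones and doublings are carried by $\mathfrak{d}$ to the corresponding pieces in the conjugate ball.

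Finally, on the smoothing $\Phi_{\widetilde{L}'}(\Gamma(d\widetilde{h})\cup\Gamma(-d\widetilde{h}))$, the induced action of $\mathfrak{d}$ on $U_{\widetilde{L}'}\subset T^*\widetilde{L}'$ flips the cotangent fiber direction, so it exchanges $\Gamma(d\widetilde{h})$ with $\Gamma(-d\widetilde{h})$ provided $\widetilde{h}$ itself is chosen $\mathfrak{d}$-invariant; the Morse function $h$ of Lemma~\ref{morsefn} comes from the ideal cubulation of $L'$ shown in Figure~\ref{idealtriangulation}, which is preserved by the hyperbolic isometries that realize $\mathfrak{d}$ on $L'$, so such a choice is available (and can be enforced by averaging if needed). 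The main technical point — really the only nontrivial one — is to make all the choices (Weinstein neighborhoods of $L_{\tau,0}$ and of $\widetilde{L}'$, Darboux balls $\widetilde{B}_{\ell,m}$, the Morse function $\widetilde{h}$, and the Hamiltonian perturbations of $L_Y$ used to smooth $\tLvg$ at the overlaps) simultaneously $\mathfrak{d}$-equivariantly; this is a standard equivariant refinement of Weinstein's and Joyce's theorems, since $\mathfrak{d}$ is an involution of finite order acting anti-symplectically on compatible pairs of tubular neighborhoods. Patching the equivariant local pieces then yields $\mathfrak{d}(\tLvg)=\tLvg$.
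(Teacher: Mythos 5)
Your proof is correct and takes essentially the same route as the paper: decompose $\tLvg$ into its constituent pieces and verify $\mathfrak{d}$-invariance of each, using that $\mathfrak{d}$ acts by conjugation on the $(\mathbb{C}^*)^3$-model of the Weinstein neighborhood, permutes the cone points and Darboux balls, preserves $L_Y$ via the product decomposition~\eqref{commutative}, and (once $\widetilde{h}$ is chosen $\mathfrak{d}$-invariantly) swaps $\Gamma(d\widetilde{h})$ with $\Gamma(-d\widetilde{h})$. One small imprecision: the induced action on $T^*\widetilde{L}'$ is not merely fiberwise negation but also acts nontrivially on the base $\widetilde{L}'$ (it swaps the two ideal cubes of Figure~\ref{idealtriangulation}, as the paper emphasizes), though your requirement that $\widetilde{h}$ be $\mathfrak{d}$-invariant absorbs this and keeps the argument sound.
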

\begin{proof}
First we will show that the singular Lagrangian $\tLvgsing$ is $\mathfrak{d}$-invariant. Recall from~\cite{Han24a} that the tropical Lagrangian $\Lsing$ is fixed by complex conjugation on $(\mathbb{C}^*)^3$, which just acts by the inverse map on the $T^3$-fibers. Note that complex conjugation also restricts to the inverse map on the $2$-torus fibers of the periodized conormal bundles. There is a corresponding involution on $T^*T^3$. Thus by Remark~\ref{125foldcover}, the cover $\tLsing$ is also $\mathfrak{d}$-invariant.

All of the Lagrangian tori $L_{\tau,q}\subset X_{\tau}^5$ is preserved by $\mathfrak{d}$, and its action on $\Wein(L_{\tau,0})$ can be identified with the action by complex conjugation on $T^*T^3$. Similarly, the Lagrangian cones near $X_{\tau}^5$ used to construct $\tLvgsing$ are also preserved by $\mathfrak{d}$, and its action on the link of each cone is easily seen to coincide with the action of conjugation on the ends of the tropical Lagrangians. Therefore $\mathfrak{d}$ respects all of the gluings carried out in the proof of Theorem~\ref{singularlagrquintic}, meaning that it acts on $\tLvgsing$. In fact, the action of $\mathfrak{d}$ restricts to an action of the smooth locus $\widetilde{L}'$ as well.

The Morse function $\widetilde{h}\colon\widetilde{L}'\to\mathbb{R}$ can be made $\mathfrak{d}$-invariant, since, by construction, the action by complex conjugation on $\Lsing\subset T^*T^3$ swaps the two ideal cubes drawn in Figure~\ref{idealtriangulation}. This means that the graphs $\Gamma(d\widetilde{h})$ and $\Gamma(-d\widetilde{h})$ contained in the Weinstein neighborhood $\Phi_{\widetilde{L}'}(U_{\widetilde{L}'})$ are swapped by the action of $\mathfrak{d}$. Correspondingly, the action of complex conjugation the neighborhood $Y$ defined in~\eqref{gssing} descends to complex conjugation on $\mathbb{C}$ under the map $Y\to\mathbb{C}$ appearing in~\eqref{commutative}. Since the ends of $L_Y$ are glued to the two branches $\Gamma(d\widetilde{h})$ and $\Gamma(-d\widetilde{h})$, it follows that the action of $\mathfrak{d}$ respects this gluing.
\end{proof}
Consider a holomorphic disk $u\colon(D^2,\partial D^2)\to(X_{\tau}^5,\tLvg)$ in the homology class $\beta\in H_2(X_{\tau}^5,\tLvg;\mathbb{Z})$. There is a map on the corresponding moduli space
\begin{align}
\widetilde{\mathfrak{d}}\colon\mathcal{M}_1(\tLvg;\beta)\to\mathcal{M}_1(\tLvg;-\delta_{*}\beta) \label{mapmoduli}
\end{align}
induced by $\mathfrak{d}$ and defined as follows. Recall that an element of $\mathcal{M}_1(\tLvg;\beta)$ is represented by a holomorphic map $u\colon(\Sigma,j)\to(X_{\tau}^5,J)$ Let $(\Sigma,j)$ from a (nodal) bordered Riemann surface $(\Sigma,j)$ for which $[u] = \beta\in H_2(X_{\tau}^5,\tLvg)$ and a boundary marked point $z_0\in\partial\Sigma$. Let $\psi_{\Sigma}\colon\overline{\Sigma}\to\Sigma$ denote the antiholomorphic involution whose underlying map of sets is the identity. Then the map of moduli spaces~\eqref{mapmoduli} is defined by
\begin{align}
\widetilde{\mathfrak{d}}(\Sigma,u,z_0)\coloneqq(\overline{\Sigma},\delta\circ u\circ\psi_{\Sigma},z_0) \, .
\end{align}
The proof of~\cite[Proposition 5.6]{Sol20} extends to the case of Lagrangian immersions with clean self-intersection to show the following.
\begin{lemma}\label{signm0diag}
The sign of~\eqref{mapmoduli} is given by
\begin{align}
sgn(\widetilde{\mathfrak{d}}) = sgn(\mathfrak{d}\vert_{\tLvg})+1 \,  \label{signmodulimap}
\end{align}
provided that $\beta\in H_2(X_{\tau}^5,\tLvg;\mathbb{Z})$ is a relative homology class as described above.
\qed
\end{lemma}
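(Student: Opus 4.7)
The plan is to verify that the sign computation of~\cite[Proposition 5.6]{Sol20} for anti-symplectic involutions acting on moduli spaces of holomorphic disks goes through unchanged when $L$ is a clean Lagrangian immersion, provided we restrict attention to disks whose boundaries are smooth maps into the image of the immersion. This restriction is harmless for the intended application: the diagonal obstruction $\mathfrak{m}_0^0$ only counts disks whose boundary paths avoid the self-intersection locus of $\tLvg$, so the relevant component of the moduli space consists precisely of such disks.

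First I would check that $\widetilde{\mathfrak{d}}$ is well-defined as a map $\mathcal{M}_1(\tLvg;\beta)\to\mathcal{M}_1(\tLvg;-\mathfrak{d}_*\beta)$. The preceding lemma establishes that $\mathfrak{d}$ preserves $\tLvg$ setwise, and since $\mathfrak{d}$ is anti-symplectic the composition $\mathfrak{d}\circ u\circ\psi_{\Sigma}$ of a $J$-holomorphic disk with the antiholomorphic reparametrization of its source is again $J$-holomorphic. The change of homology class to $-\mathfrak{d}_*\beta$ records the combined orientation reversal on source and target.

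Second, I would set up the orientation of $\mathcal{M}_1(\tLvg;\beta)$ exactly as in the embedded case. For a disk $u$ with smooth boundary, the boundary loop lifts uniquely to the domain of the immersion, so the linearized Cauchy--Riemann operator $D_u$ depends only on the pullback of $T\tLvg$ along a circle in that domain and on the ambient bundle $u^*TX^5_\tau$. The spin structure on the domain of the immersion, fixed at the end of \S\ref{tropicallagrsect}, trivializes the determinant line bundle $\det(D_u)$ by the standard pin-structure construction, and together with the complex orientation on the interior of $\Sigma$ and the canonical orientation of the boundary marked point this orients the moduli space. Nowhere in this recipe does one use embeddedness of $\tLvg$ anywhere beyond the boundary.

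With orientations in hand, the sign of $\widetilde{\mathfrak{d}}$ factors into two contributions, following~\cite[\S 5]{Sol20}: the reparametrization $\psi_{\Sigma}$ reverses the complex orientation of the source, producing the additive $+1$ shift; and the composition with $\mathfrak{d}$ on $X^5_\tau$ twists the totally real boundary bundle, producing the factor $\mathrm{sgn}(\mathfrak{d}\vert_{\tLvg})$. Summing these modulo $2$ yields the stated formula. The main (though essentially routine) obstacle is to confirm that every step of Solomon's sign computation is genuinely local to the boundary of the disk, so that it is insensitive to self-intersections of $\tLvg$ in the interior. This amounts to a direct inspection of the orientation conventions in~\cite[\S 5]{Sol20}, noting that the pin-structure machinery and the determinant line bundle are formulated purely in terms of the pullback of $T\tLvg$ along the boundary circle, which is well-defined and transforms under $\mathfrak{d}$ exactly as in the embedded case.
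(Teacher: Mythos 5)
Your proposal is correct and follows essentially the same route as the paper: both observe that the moduli spaces in question consist of disks without corners (since the relevant relative homology classes have smooth boundary), so Solomon's orientation calculation from~\cite[Proposition 5.6]{Sol20} applies verbatim, with the sign splitting into the reparametrization contribution and the $\mathrm{sgn}(\mathfrak{d}|_{\tLvg})$ factor. The paper additionally notes, in one sentence after the lemma, that the boundary strata of the Gromov-compactified moduli spaces are oriented coherently so that the sign persists on those strata; your argument is confined to the irreducible locus but otherwise matches.
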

Since the irreducible locus of any such moduli space consits only of maps whose domain is a disk without corners, the orientation calculation from the proof of~\cite[Proposition 5.6]{Sol20} is unaffected. Because the boundary strata of the moduli spaces of disks with boundary on $\tLvg$ are oriented coherently, the sign of~\eqref{mapmoduli} is the same on the boundary strata.

Note that because $\tLvg$ is graded, the Maslov class evaluated at any $\beta\in H_2(X_{\tau}^5;\tLvg)$ as above vanishes, explaining the relative simplicity of the formula in~\eqref{signmodulimap}.

A canonical model for the Lagrangian Floer cochain complex can be constructed as described in~\cite{FOOOI} or~\cite{Sol20}. Consider the $\mathbb{C}$-vector space $\overline{C}^*\coloneqq\overline{CF}^*(\tLvg)$ of~\eqref{underlyingvs}. Let $\overline{D}^*$ denote the finite-dimensional $\mathbb{C}$-vector space obtained by taking the cohomology of $\overline{C}^*$ with respect to the de Rham differential, denote $\mathfrak{m}_{1,0}$. By the Hodge decomposition theorem, there exist linear maps
\begin{align*}
i\colon\overline{D}\to\overline{C} \\
p\colon\overline{C}\to\overline{D} \\
h\colon\overline{C}\to\overline{C}
\end{align*}
satisfying
\begin{align*}
p\circ\mathfrak{m}_{1,0} = 0\, &, \qquad \mathfrak{m}_{1,0}\circ i = 0 \, , \\
p\circ i = \id \, &, \qquad \mathfrak{m}_{1,0}\circ h + h\circ\mathfrak{m}_{1,0} = i\circ p-\id \, .
\end{align*}
The existence of such maps implies that there is a gapped filtered $A_{\infty}$-structure on the $\mathfrak{m}_{1,0}$-cohomology $H^*(CF^*(\tLvg),\mathfrak{m}_{1,0})$ of $CF^*(\tLvg)$ which is quasi-isomorphic to $CF^*(\tLvg)$. Note that the decomposition of $\mathfrak{m}_0$ into $\mathfrak{m}_0^{0}+\mathfrak{m}_0^{s}$ still makes sense  in $H^*(CF^*(\tLvg),\mathfrak{m}_{1,0})$, since $\mathfrak{m}_{1,0}$ respects the splitting of $CF^*(\tLvg)$.
\begin{proposition}
We have that $\mathfrak{m}_0^{0}\in H^2(CF^*(\tLvg),\mathfrak{m}_{1,0})$ vanishes, implying that $\mathfrak{m}_0^{0}\in CF^*(\tLvg)$ vanishes as well.
\end{proposition}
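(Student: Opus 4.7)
The plan is to adapt the argument of Solomon~\cite[\S 5]{Sol20}, which deduces the vanishing of the curvature of a real Lagrangian in a Calabi--Yau threefold from the sign formula of~\cite[Prop.~5.6]{Sol20}, to the present setting, with Lemma~\ref{signm0diag} playing the role of that proposition. After passing to the Hodge-theoretic canonical model, only the $\Omega^*(\tLvg)$ summand of Lemma~\ref{floercochainsvg} can contribute to the diagonal obstruction, so $\mathfrak{m}_0^0$ may be regarded as a class in $H^2(\tLvg;\mathbb{C})\widehat{\otimes}_{\mathbb{C}}\Lambda_0$. One exploits the fact that the involution $\widetilde{\mathfrak{d}}$ of~\eqref{mapmoduli} becomes an involution of each relevant moduli space $\mathcal{M}_1(\tLvg;\beta)$ after identifying $\beta$ with $-\mathfrak{d}_*\beta$ (which have equal symplectic area since $\mathfrak{d}$ is anti-symplectic), and that the resulting pushforward yields an action of $\mathfrak{d}^*$ on $\mathfrak{m}_0^0$ via the evaluation map.

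The concrete steps are as follows. I would first choose a $\mathfrak{d}$-invariant Riemannian metric on $\tLvg$, a $\mathfrak{d}$-anti-invariant compatible almost complex structure $J$ on $X_\tau^5$ (the integrable complex structure suffices), and a $\mathfrak{d}$-equivariant choice of Hodge data $(i,p,h)$ by averaging. The $A_\infty$-operations on $\overline{D}$ then inherit a $\mathfrak{d}^*$-equivariance, so Lemma~\ref{signm0diag} yields the identity
\begin{align*}
\mathfrak{d}^*\mathfrak{m}_0^0 = \mathrm{sgn}(\widetilde{\mathfrak{d}})\cdot\mathfrak{m}_0^0
\end{align*}
in $\overline{D}^2$. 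Next I would verify that $\mathfrak{d}|_{\tLvg}$ is orientation-reversing on the closed $3$-manifold $\tLvg$: the involution swaps the two copies of $\widetilde{L}'$ in the JSJ decomposition of Remark~\ref{immersiondomain} (corresponding to exchanging the graphs $\Gamma(d\widetilde{h})$ and $\Gamma(-d\widetilde{h})$), and this swap reverses the global orientation of the double, since the two halves are glued along their boundary tori with consistent but opposite outward normals. Hence $\mathrm{sgn}(\mathfrak{d}|_{\tLvg}) = -1$, and Lemma~\ref{signm0diag} fixes the sign factor appearing above.

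The final step is to compute $\mathfrak{d}^*$ on the relevant piece of $H^2(\tLvg;\mathbb{C})$ and combine this with the identity above to force $\mathfrak{m}_0^0$ to equal its own negative. Since $\tLvg$ is a double of $\widetilde{L}'$ with $\mathfrak{d}$ exchanging the two halves, a Mayer--Vietoris argument decomposes $H^2(\tLvg;\mathbb{C})$ into $(+1)$- and $(-1)$-eigenspaces of $\mathfrak{d}^*$. The sector in which $\mathfrak{m}_0^0$ can lie is determined by the geometry of the disks contributing to it, which after neck stretching all factor through one of the balls $\widetilde{B}_{\ell,m}$ (as in Proposition~\ref{m0svanishing}); projecting to the two sheets, these contributions land in the antisymmetric eigenspace, by the same computation that puts $\mathfrak{d}^*|_{H^1(L_{\tau,0};\mathbb{C})} = -\mathrm{id}$ in~\cite[Example 1.8]{Sol20} combined with Poincar\'e duality. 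Comparing with the sign identity gives $\mathfrak{m}_0^0 = -\mathfrak{m}_0^0 = 0$ in $\overline{D}^2$, and quasi-isomorphism with $CF^*(\tLvg)$ promotes this to vanishing of $\mathfrak{m}_0^0$ in the original cochain complex. The main obstacle will be the last eigenvalue computation: one must carefully track the Mayer--Vietoris decomposition of $H^2(\tLvg;\mathbb{C})$ and verify that the a priori sector of $\mathfrak{m}_0^0$ is genuinely contained in the $(-1)$-eigenspace of $\mathfrak{d}^*$, since otherwise the sign identity becomes vacuous.
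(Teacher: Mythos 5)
Your overall strategy --- exploit the antisymplectic involution together with the sign formula of Lemma~\ref{signm0diag} to force $\mathfrak{m}_0^0$ to equal its own negative --- is the right one, and matches the paper's. The setup (canonical model, $\mathfrak{d}$-equivariant auxiliary data, equating $\beta$ and $-\mathfrak{d}_*\beta$) is fine. However, the last step, where you try to locate $\mathfrak{m}_0^0$ inside an eigenspace of $\mathfrak{d}^*$ on $H^2(\tLvg;\mathbb{C})$, has two concrete problems.

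First, the appeal to neck-stretching is misplaced. The operator $\mathfrak{m}_0^0$ is the \emph{diagonal} component of the curvature: it counts pseudoholomorphic disks whose boundary lifts to a smooth loop on $\tLvg$ with no corners at the switching components. Such disks need not intersect any of the balls $\widetilde{B}_{\ell,m}$, so the SFT argument of Proposition~\ref{m0svanishing}, which works precisely because a teardrop's boundary must switch sheets and hence pass through a neck, gives no constraint here. You therefore cannot conclude that the contributions to $\mathfrak{m}_0^0$ ``land in the antisymmetric eigenspace.''

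Second, the eigenspace claim is internally inconsistent. You correctly observe that $\mathfrak{d}\vert_{\tLvg}$ reverses orientation, and you invoke $\mathfrak{d}^* = -\id$ on $H^1$ together with Poincar\'e duality. But an orientation-reversing diffeomorphism \emph{anti}-commutes with Poincar\'e duality, so $-\id$ on $H^1$ pushes to $+\id$ on $H_2$ and hence on $H^2$. That is, the $(-1)$-eigenspace of $\mathfrak{d}^*$ on $H^2(\tLvg;\mathbb{C})$ is trivial. The paper obtains the same conclusion more directly: one checks from~\eqref{singularh1basis} that $\mathfrak{d}_* = -\id$ on $H_1(\tLvg)$, and since $H_2(\tLvg)$ is generated by products of lifts of those $H_1$ classes and the longitudes, $\mathfrak{d}$ acts on $H_2$ --- and by universal coefficients on $H^2$ --- as $(-1)\cdot(-1) = +\id$. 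Then the sign formula~\eqref{signmodulimap} supplies the conflicting relation $\mathfrak{d}^*\mathfrak{m}_0^0 = -\mathfrak{m}_0^0$, and $\mathfrak{m}_0^0 = 0$. The structure of the argument is the reverse of what you wrote: the $(+1)$-eigenspace statement is the \emph{cohomological} input and the $(-1)$ sign is the \emph{geometric} input coming from the moduli spaces, not vice versa.
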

\begin{proof}
Consider the classes $m_i,s_j\in H_1(\Lvg)$ described in~\eqref{singularh1basis}, and notice that they lift to a set of generators for $H_1(\tLvg)$. By examining the action of complex conjugation on these classes in $H_1(\Lvg)$, it is easy to see that $\mathfrak{d}$ induces $-\id$ on $H_1(\tLvg)$. From this one can also see that $\mathfrak{d}$ induces the identity on $H_2(\tLvg)$, since $H_2(\tLvg)$ is generated by products of the lifts of the classes in~\eqref{singularh1basis} and the lifts of the longitudes.  Dualizing, we see that $\mathfrak{d}$ induces the identity on the de Rham cohomology $H^2(\tLvg)$ as well. Thus by Lemma~\ref{signm0diag} we  conclude that $\mathfrak{m}_0^0 = \mathfrak{d}^*\mathfrak{m}_0^0 = -\mathfrak{m}_0^0$, meaning that $\mathfrak{m}_0^0 = 0$.
\end{proof}
\begin{corollary}
For any locally unobstructed local system $\nabla$, the Lagrangian brane $(\tLvg,\nabla)$ is unobstructed with bounding cochain zero. \qed
\end{corollary}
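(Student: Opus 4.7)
The plan is to simply assemble the two vanishing results that were just established into a single statement. Recall that for the clean Lagrangian immersion $\tLvg \to X_\tau^5$ equipped with a local system $\nabla$, the obstruction cochain decomposes as
\begin{align*}
\mathfrak{m}_0 = \mathfrak{m}_0^0 + \mathfrak{m}_0^s
\end{align*}
in the splitting of $CF^2(\tLvg)$ induced by the decomposition of the fiber product $\tLvg \times_{X_\tau^5} \tLvg$ into its diagonal part and its switching components.

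The first step would be to invoke Proposition~\ref{m0svanishing}, which shows that the switching contribution $\mathfrak{m}_0^s$ vanishes precisely because $\nabla$ was assumed to be locally unobstructed: the SFT compactness argument reduces every teardrop to one modeled (up to cyclic permutation of the local Gross--Siebert coordinates) on a product of the planar teardrop bounded by $\Larc$ with a Clifford-torus disk, and the three permuted contributions cancel exactly when the holonomies satisfy the relations~\eqref{interiorconeobs}--\eqref{4obs} defining local unobstructedness.

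For the diagonal term, I would invoke the preceding proposition, which uses the anti-symplectic involution $\mathfrak{d}$ given by complex conjugation. Since $\mathfrak{d}$ preserves $\tLvg$ setwise, acts as $-\id$ on $H_1(\tLvg)$, and therefore as $+\id$ on $H^2(\tLvg;\mathbb{R})$, the sign computation of Lemma~\ref{signm0diag} forces $\mathfrak{m}_0^0 = -\mathfrak{m}_0^0$ in the canonical model, hence $\mathfrak{m}_0^0 = 0$ in $CF^*(\tLvg)$ as well. I note that the local system $\nabla$ does not obstruct this part of the argument, since $\mathfrak{m}_0^0$ only counts disks with smooth boundary, and the weighting of such disks by holonomy is compatible with the $\mathfrak{d}$-symmetry applied to the underlying moduli spaces (complex conjugation reverses boundary orientation, which inverts holonomy, but in combination with the sign in~\eqref{signmodulimap} this still produces the desired cancellation).

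Combining the two vanishing statements yields $\mathfrak{m}_0 = 0$ in $CF^2(\tLvg, \nabla)$, which is exactly the statement that the zero cochain is a bounding cochain for the brane $(\tLvg, \nabla)$. There is no real obstacle here beyond correctly bookkeeping the splitting $\mathfrak{m}_0 = \mathfrak{m}_0^0 + \mathfrak{m}_0^s$ and observing that both pieces have already been handled; the only subtlety worth flagging is to verify that the Hodge-theoretic canonical model transfer used for the diagonal part respects the splitting of $CF^*(\tLvg)$ into diagonal and switching summands, which it does because $\mathfrak{m}_{1,0}$ preserves this splitting.
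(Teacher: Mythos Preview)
Your proposal is correct and matches the paper's approach exactly: the corollary in the paper carries a \qed with no further argument, because it is simply the combination of Proposition~\ref{m0svanishing} (vanishing of $\mathfrak{m}_0^s$) with the immediately preceding proposition (vanishing of $\mathfrak{m}_0^0$), exactly as you have written out.
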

This corollary says that locally unobstructed local systems are unobstructed. We will refer to them as such hereafter. Observe that the gappedness of the $A_{\infty}$-algebra implies that an unobstructed local system is locally unobstructed.

\subsection{Supports of mirror sheaves}\label{supportsection}
Since we have checked that $(\tLvg,\nabla)$ is unobstructed for suitable choices of $\nabla$, we can take these to be objects of the relative Fukaya category $\mathcal{F}(X_{\tau}^5,D_{\tau})$ by Lemma~\ref{exactness}, so by Assumption~\ref{generationass}, we can consider the image of $(\tLvg,\nabla)$ under the mirror functor of~\cite{She15}, which we denote by $\mathcal{L}_{(\tLvg,\nabla)}\in D^b_{dg}\Coh(X^{5,\vee})$. The support of this complex of coherent sheaves can be determined by calculating the Floer cohomology of $(\tLvg,\nabla)$ with objects of $\mathcal{F}(X_{\tau}^5,D_{\tau})$ supported on the Lagrangian torus $L_{\tau,0}$. By Proposition~\ref{mirrors2points}, equipping $L_{\tau,0}$ with any $\mathbb{C}$-local system gives a Lagrangian brane mirror to a point. The stalk of $\mathcal{L}_{(\tLvg,\nabla)}$ at such a point can be identified with Floer cohomology group
\begin{align}
HF^0((\tLvg,\nabla),(L_{\tau,0},\nabla_p)) \label{mirrorstalk}
\end{align}
under homological mirror symmetry.

We can reduce to computation of the Floer cohomology groups~\eqref{mirrorstalk} in the quintic $X^5_{\tau}$, to a computation in the very affine quintic $X^5_{\tau}\setminus D_{\tau}$ using the open mapping theorem.
\begin{lemma}\label{stripclassification}
Suppose that $u\colon\mathbb{R}\times[0,1]\to X_{\tau}^5$ is a holomorphic strip which contributes to the Floer differential on $CF^*((L_{\tau,0},\nabla_p),(\tLvg,\nabla))$. Then the image of $u$ does not intersect the divisor $D_{\tau}$.
\end{lemma}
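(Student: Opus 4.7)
The plan is to argue by contradiction using a plurisubharmonic barrier on the very affine quintic. Suppose the image of $u$ meets $D_\tau$ at some interior point. Since $D_\tau$ is a complex analytic hypersurface of $X_\tau^5$ and $u$ is holomorphic with respect to the integrable complex structure, the preimage $u^{-1}(D_\tau)$ is a closed analytic subset of $\Int(\mathbb{R}\times[0,1])$. Because the boundary of $u$ is contained in $L_{\tau,0}\cup\tLvg\subset X_\tau^5\setminus D_\tau$, this preimage cannot be the whole strip; by the identity principle applied to a local defining equation of $D_\tau$ (an application of the open mapping theorem), $u^{-1}(D_\tau)$ is a discrete subset of the interior. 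Combined with the finite energy of $u$ and the asymptotic convergence at its two ends to intersection points lying outside $D_\tau$, this set is finite.

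Next I would construct a plurisubharmonic function $\phi\colon X_\tau^5\setminus D_\tau\to\mathbb{R}$ that tends to $+\infty$ at $D_\tau$. Let $s$ be the global section of $\mathcal{O}_{X_\tau^5}(5)$ cut out by $x_1x_2x_3x_4x_5$, so that the divisor of $s$ is exactly $D_\tau$. Choosing a hermitian metric $h$ on $\mathcal{O}(5)$ and a K\"ahler potential $\psi$ on $X_\tau^5$ whose Hessian dominates the curvature of $h$, the function $\phi\coloneqq-\log|s|^2_h+C\psi$ is globally plurisubharmonic on $X_\tau^5\setminus D_\tau$ for suitable $C>0$ and blows up at $D_\tau$. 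Since $L_{\tau,0}\cup\tLvg$ is compact and contained in $X_\tau^5\setminus D_\tau$, there is a constant $M$ with $\phi\le M$ on $L_{\tau,0}\cup\tLvg$.

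The contradiction then follows from the fact that a subharmonic function on a punctured disk cannot diverge to $+\infty$ at the puncture. The composition $\phi\circ u$ is subharmonic on $(\mathbb{R}\times[0,1])\setminus u^{-1}(D_\tau)$, and tends to $+\infty$ near each point of $u^{-1}(D_\tau)$ (the pullback $s\circ u$ being a nonzero holomorphic section of $u^*\mathcal{O}(5)$ with fixed order of vanishing $\ord_{z_0}(s\circ u)$ at each preimage). Pick any $z_0\in u^{-1}(D_\tau)$ and a small closed disk $\overline{D(z_0,r)}$ in the interior of the strip, disjoint from the other preimages; then $\phi\circ u$ is continuous and bounded on $\partial D(z_0,r)$. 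Comparing with the harmonic extension of these boundary values to $D(z_0,r)$ shows, by the maximum principle for subharmonic versus harmonic functions, that $\phi\circ u$ is bounded above on $D(z_0,r)\setminus\{z_0\}$, contradicting $\phi\circ u(z)\to+\infty$ as $z\to z_0$.

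The main obstacle is constructing the plurisubharmonic function $\phi$ with the two required properties (being genuinely plurisubharmonic globally while blowing up at $D_\tau$) simultaneously. The logarithmic part $-\log|s|^2_h$ has the correct blow-up behavior, but its Hessian equals the negative of the curvature of $h$, which is typically not nonnegative; the K\"ahler potential correction $C\psi$ absorbs this, at the cost of replacing the boundary bound on the Lagrangians by another finite constant, which is harmless. Once the barrier is in hand, the rest of the argument is a standard application of subharmonic potential theory.
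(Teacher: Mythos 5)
The plurisubharmonic barrier strategy cannot prove this lemma, because the general fact it would establish is false: holomorphic curves with boundary on compact Lagrangians contained in a Stein complement $X\setminus D$ can, and typically do, meet $D$. The Clifford torus in $\mathbb{CP}^2$ lies inside $(\mathbb{C}^*)^2$, which is Stein and hence carries a strictly plurisubharmonic exhaustion tending to $+\infty$ along the toric divisor, yet the Clifford torus bounds Maslov index $2$ holomorphic disks each of which passes through a coordinate line. So a barrier argument of this generality has no chance; any correct proof of Lemma~\ref{stripclassification} must use features specific to the geometry of $\tLvg$, and the paper's proof does exactly this, via SFT neck-stretching along $\partial\widetilde{B}_{\ell,m}$ and the open mapping theorem applied to the composition of the rescaled strip with the local holomorphic projection $w$ to $\mathbb{C}$.

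The precise step at which your argument breaks is the final maximum-principle claim. The comparison between $\phi\circ u$ and the harmonic extension of its boundary values on $D(z_0,r)$ requires $\phi\circ u$ to be subharmonic on the \emph{entire} disk, not merely on the punctured disk $D(z_0,r)\setminus\{z_0\}$; it is exactly the removable-singularity question at $z_0$ that you need to settle and cannot assume. Indeed $-\log|z-z_0|$ is harmonic (hence subharmonic) on the punctured disk, bounded on any circle $\partial D(z_0,r)$, and tends to $+\infty$ at $z_0$, so a subharmonic function on a punctured disk can perfectly well blow up at the puncture. There is also a secondary defect in the construction of the barrier itself: a global K\"ahler potential $\psi$ does not exist on the compact manifold $X_\tau^5$, so $\phi=-\log|s|_h^2+C\psi$ cannot be formed as written; a plurisubharmonic exhaustion of the Stein complement $X_\tau^5\setminus D_\tau$ does exist, but by the above it would not rescue the argument.
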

\begin{proof}
Suppose that $u\colon\mathbb{R}\times[0,1]\to X_{\tau}^5$ is a holomorphic strip whose image intersects $D_{\tau}$. This implies that the boundary component of $u$ mapped to $\tLvg$ must pass through one of the necks of $\tLvg$, i.e. through a copy of $L_Y$ in one of the charts $\widetilde{B}_{\ell,m}$. As in the proof of Proposition~\ref{m0svanishing}, we can neck stretch along $\partial\widetilde{B}_{\ell,m}$ to produce a holomorphic strip $u'$ with interior punctures. Since all points in $\tLvg\cap L_{\tau,0}$ lie away from $\widetilde{B}_{\ell,m}$ it follows that the boundary arc of $u'$ must exit the ball. We can rescale $u'$, using the same argument in the proof of Proposition~\ref{m0svanishing}, to produce a holomorphic strip on $\widetilde{u}$ with a boundary component on $\tLvg$ and a boundary component on $L_{\tau,0}$. This strip has the property that its image intersects $\partial\widetilde{B}_{\ell,m}$ in an arc.

If we consider the restriction $\widetilde{u}'$ of $\widetilde{u}$ to $\widetilde{u}^{-1}(\widetilde{B}_{\ell,m})$, then the composition of this map with $w\colon\widetilde{B}_{\ell,m}\to\mathbb{C}$, which is the projection defined in~\eqref{degeneration}, is holomorphic. Since $\widetilde{u}$ is not contained in $\widetilde{B}_{\ell,m}$, its holomorphicity, and in turn the holomorphicity of $u$, contradicts the open mapping principle.
\end{proof}
Exactness of $L_{\tau,0}$ and $\tLvg$ allows us to control the areas of disks contributing to the Floer differentials on $CF^*((L_{\tau,0}),(\tLvg,\nabla))$, thereby reducing the calculation of Floer cohomology to~\cite[Theorem 1.2]{Han24a}.
\begin{lemma}\label{supportstrips}
Suppose that $u\colon\mathbb{R}\times[0,1]\to X_{\tau}^5\setminus D_{\tau}$ is a holomorphic strip which contributes to the Floer differential on $CF^*((L_{\tau,0},\nabla_p),(\tLvg,\nabla))$. Then the image of $u$ is contained in a Weinstein neighborhood $\Wein(L_{\tau,0})$.
\end{lemma}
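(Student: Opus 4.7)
The plan is to use the exactness of both Lagrangians, together with a neck-stretching and open mapping argument analogous to the proof of Lemma~\ref{stripclassification}, to confine $u$ to $\Wein(L_{\tau,0})$. By Lemma~\ref{exactness} and the choice of Liouville form in \S\ref{mirrorstopointssect}, both $L_{\tau,0}$ and $\tLvg$ are exact in $X_{\tau}^5\setminus D_{\tau}$; let $f_0$ and $f_1$ denote primitives of $\lambda\vert_{L_{\tau,0}}$ and $\lambda\vert_{\tLvg}$, respectively. For any Floer strip $u$ asymptotic to intersection points $p_{\pm}\in L_{\tau,0}\cap\tLvg\subset\Wein(L_{\tau,0})$, Stokes' theorem gives the a priori energy bound
\begin{align*}
E(u) = [f_0(p_+)-f_1(p_+)]-[f_0(p_-)-f_1(p_-)],
\end{align*}
which is finite since the intersection set is finite.

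Suppose for contradiction that the image of $u$ exits $\Wein(L_{\tau,0})$. Since the $L_{\tau,0}$-boundary of $u$ is constrained to $L_{\tau,0}\subset\Wein(L_{\tau,0})$, the excursion occurs through the interior of the strip or along the $\tLvg$-boundary component. In either case, the image of $u$ enters one of the regions comprising $\tLvg\setminus\Wein(L_{\tau,0})$, which by the construction of Theorem~\ref{singularlagrquintic} is the union of the pieces inside the Darboux balls $\widetilde{B}_{\ell,m}$ and the cylindrical pieces contained in Weinstein neighborhoods $\Wein(L_{\tau,q})$ of moment fibers $L_{\tau,q}$ for points $q$ along the tropical edges away from the vertex.

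We then argue by contradiction in each case. In the first case, we neck-stretch along $\partial\widetilde{B}_{\ell,m}$ as in the proofs of Proposition~\ref{m0svanishing} and Lemma~\ref{stripclassification}, extract a nonconstant limit component contained in $\widetilde{B}_{\ell,m}$ with boundary on $\tLvg$, and rescale the composition $w_{\ell,m}\circ u$ as in the proof of Proposition~\ref{m0svanishing} to obtain a holomorphic disk in $\mathbb{C}$ bounded by a curve isotopic to $\Larc$. The possible such disks, by Lemma~\ref{localtears}, are the standard teardrops through the origin; none of these can appear as a bubble of a Floer strip whose asymptotes both lie on $L_{\tau,0}\cap\tLvg\subset\Wein(L_{\tau,0})$, since $L_{\tau,0}$ is disjoint from $\widetilde{B}_{\ell,m}$. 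In the second case, we identify $\Wein(L_{\tau,q})$ with a tubular neighborhood of the zero section in $T^*T^3\cong(\mathbb{C}^*)^3$, observe that the relevant component of $\tLvg$ is (a cover of) a periodized conormal as in \eqref{coverconormal123}--\eqref{coverconormal4}, and apply the open mapping principle to the holomorphic coordinate $u_i$ transverse to the edge. Under this coordinate, $L_{\tau,0}$ projects to a loop around the origin and the relevant component of $\tLvg$ projects to a radial ray, so any excursion of the strip into this region would force a holomorphic disk in $\mathbb{C}$ with boundary in the union of a circle and a radial ray with endpoints on the circle, and the only such disks are constant.

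The main obstacle is making case (b) fully rigorous: unlike the neck-stretching across $\partial\widetilde{B}_{\ell,m}$ or across $D_{\tau}$ (both of which are natural contact-type hypersurfaces), the boundary of the cylindrical Weinstein neighborhoods $\Wein(L_{\tau,q})$ is not a priori of contact type, so one must either verify this property directly from the parallel transport construction or replace the neck-stretching step with a direct monotonicity estimate using the a priori energy bound derived in the first step.
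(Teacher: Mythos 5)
Your proposal and the paper take genuinely different routes, and yours has two gaps, one of which you flag yourself and one of which you do not.

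The paper's argument is a monotonicity argument: since both Lagrangians are exact in $X_{\tau}^5\setminus D_{\tau}$, the energy of a strip is determined by the action difference at its two asymptotic intersection points, and by rescaling the $1$-form used to construct $\tLsing$ one makes this difference \emph{arbitrarily small} (not merely finite). A holomorphic strip of sufficiently small energy cannot exit $\Wein(L_{\tau,0})$, by monotonicity. You observe the exactness and derive a \emph{finite} energy bound, but you never upgrade it to a small one. A finite bound alone does not confine a holomorphic curve; the whole point of the rescaling is to get below the monotonicity threshold for the given Weinstein neighborhood. You mention a ``direct monotonicity estimate using the a priori energy bound'' only as a fallback for case (b), but as stated that fallback does not work, precisely because the bound you derived is not small.

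The more structural gap is in your reduction to the case analysis. You assert that if the image of $u$ exits $\Wein(L_{\tau,0})$, then the image ``enters one of the regions comprising $\tLvg\setminus\Wein(L_{\tau,0})$,'' namely a Darboux ball $\widetilde{B}_{\ell,m}$ or a cylindrical Weinstein neighborhood $\Wein(L_{\tau,q})$. This does not follow: the interior of the strip is unconstrained, and the $L_{\tau,0}$-boundary is already confined, so an excursion through the interior of the strip (or indeed an interior sphere bubble) need not pass anywhere near $\tLvg\setminus\Wein(L_{\tau,0})$. Your neck-stretching and open-mapping arguments in cases (a) and (b) address only excursions that follow the $\tLvg$-boundary, so the dichotomy you set up is not exhaustive. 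By contrast, the paper's small-energy-plus-monotonicity argument handles all excursions uniformly and does not require any case analysis, contact-type hypersurfaces, or SFT compactness (those are used in the preceding Lemma~\ref{stripclassification}, which rules out intersections with $D_{\tau}$, but not here).
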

\begin{proof}
Since $\tLvg$ and $L_{\tau,0}$ are both exact in $X_{\tau}^5\setminus D_{\tau}$ the area of any strip is determined by its two corners. Thus, rescaling the $1$-form used to construct $\tLsing$ if necessary, we can bound the area of any strip contributing to the Floer differential above by an arbitrarily small constant. Consequently, the image of $u$ cannot exit $\Wein(L_{\tau,0})$.
\end{proof}
Hereafter, we will only consider local systems on $L_{\tau,0}$ which are pulled back from local systems on $T^3$ under the $125$-fold cover $L_{\tau,0}\to L_{\tau,0}$ with deck group $(\mathbb{Z}/5)^3$.
\begin{corollary}
There is an isomorphism
\begin{align}
HF^*((L_{\tau,0},\nabla_p),(\tLvg,\nabla))\cong HF^*((T^3,\nabla_p),(\tLimm,\nabla)) \label{localsupport}
\end{align}
of $\mathbb{C}$-vector spaces. The Floer group on the left hand side is computed in $X_{\tau}^5\setminus D_{\tau}$, whereas the Floer group on the right hand side is computed in $T^*T^3$, where $T^3$ is thought of as the $0$-section.
\end{corollary}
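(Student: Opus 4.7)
The plan is to exploit Lemma~\ref{supportstrips} to localize the left-hand Floer complex to the Weinstein neighborhood $\Wein(L_{\tau,0})$, in which the relevant geometry is already modelled on $T^*T^3$. First I would fix the symplectic embedding $\Phi\colon N^*_{\epsilon}T^3 \hookrightarrow X_{\tau}^5\setminus D_{\tau}$ whose image is $\Wein(L_{\tau,0})$ and which, by the construction of $\tLvg$ in Section~\ref{tropicallagrsect}, carries the zero section $T^3$ to $L_{\tau,0}$ and the truncated Lagrangian $\tLimm \cap N^*_{\epsilon}T^3$ to $\tLvg \cap \Wein(L_{\tau,0})$. Shrinking the defining $1$-form used to construct $\tLsing$ if necessary, I may assume $\tLimm$ lies in $N^*_\epsilon T^3$ inside the locus where it meets the zero section, so that the generators of the two Floer complexes are put in canonical bijection by $\Phi$, and the local systems on both sides correspond under this identification.

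Next I would choose an $\omega$-compatible almost complex structure $J$ on $X_\tau^5$ whose restriction to $\Wein(L_{\tau,0})$ is $\Phi$-related to a compatible almost complex structure $J_0$ on $T^*T^3$. Lemma~\ref{supportstrips} then says that every $J$-holomorphic strip contributing to the differential on $CF^*((L_{\tau,0},\nabla_p),(\tLvg,\nabla))$ lies in the image of $\Phi$, so it pulls back to a $J_0$-holomorphic strip in $N^*_\epsilon T^3$ with boundaries on $T^3$ and $\tLimm$. Conversely, since both $T^3$ and $\tLimm$ are exact in $T^*T^3$ (with primitives of arbitrarily small $C^0$-norm, after rescaling), the symplectic areas of strips on the right-hand side are bounded by a constant that can be made smaller than any fixed positive number, forcing all such $J_0$-holomorphic strips to remain inside $N^*_\epsilon T^3$. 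Thus strips on both sides are in canonical bijection via $\Phi$.

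Finally I would verify that this bijection preserves the weights appearing in the Floer differential: the symplectic areas agree because $\Phi$ is a symplectomorphism, so the Novikov weights $T^{\omega(u)}$ match; the holonomy contributions along the boundary arcs match because the local systems match; and the sign contributions match because $\Phi$ respects the orientations of the moduli spaces (the spin structures on $\tLvg$ and $\tLimm$ are defined so as to agree in $\Wein(L_{\tau,0})$, cf. the discussion preceding Lemma~\ref{floercochainsvg}). This yields a chain isomorphism $CF^*((L_{\tau,0},\nabla_p),(\tLvg,\nabla)) \xrightarrow{\cong} CF^*((T^3,\nabla_p),(\tLimm,\nabla))$ over the Novikov ring, whence the asserted isomorphism in cohomology.

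The main obstacle in executing this plan is not the geometric localization itself, which is provided by Lemma~\ref{supportstrips}, but rather the bookkeeping ensuring that the two Floer complexes are compared on the nose: in particular, making sure that the Weinstein identification $\Phi$ is compatible with the grading chosen in Lemma~\ref{gradingchoice} and with the spin structures used to orient moduli spaces on each side. Once these auxiliary data are matched, the argument reduces to a verbatim identification of moduli spaces.
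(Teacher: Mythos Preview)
Your proposal is correct and follows the same approach as the paper: both arguments localize via Lemma~\ref{supportstrips} to the Weinstein neighborhood and identify the resulting strips with strips in $T^*T^3$. Your write-up is in fact considerably more thorough than the paper's, which compresses the entire argument into two sentences; in particular you make explicit the converse containment (that strips in $T^*T^3$ stay in $N^*_\epsilon T^3$ by exactness) and the matching of gradings, spin structures, and holonomy weights, all of which the paper leaves implicit.
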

\begin{proof}
By Lemma~\ref{supportstrips}, any holomorphic strip which contributes to the Floer differential on $CF^*((L_{\tau,0},\nabla_p),(\tLvg,\nabla))$ can also naturally be thought of as a disk in $T^*T^3$ contributing to the Floer differential $CF^*((T^3,\nabla_p),(\tLimm,\nabla))$. The local system $\nabla$ on $\tLimm$ is induced from the local system $\nabla$ on $\tLvg$ in the obvious way.
\end{proof}
The main results of~\cite{Han24a} as written concern the support of the Lagrangian immersion $\Limm$, rather than that of $\tLimm$. We will relate the groups $HF^*((T^3,\nabla_p),(\Limm,\nabla))$ to the groups on the right hand side of~\eqref{localsupport} using a covering argument.

By the construction of $\tLimm$ and our choices of local systems $\nabla$ on $\tLvg$ and $\nabla_p$ on $L_{\tau,0}$, it follows that $(\mathbb{Z}/5)^3$ acts on the Floer cochain space $CF^*((T^3,\nabla_p),(\tLimm,\nabla))$, and thus on homology $HF^*((T^3,\nabla_p),(\tLimm,\nabla))$.
\begin{lemma}\label{trivialactionsupp}
The action of $(\mathbb{Z}/5)^3$ on $HF^*((T^3,\nabla_p),(\tLimm,\nabla))$ is trivial, i.e. the space of invariants can be written as
\begin{align}
HF^*((T^3,\nabla_p),(\tLimm,\nabla))^{(\mathbb{Z}/5)^3} = HF^*((T^3,\nabla_p),(\tLimm,\nabla)) \, .
\end{align}
\end{lemma}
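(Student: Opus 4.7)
My plan is to decompose the Floer cochain complex into isotypic pieces under the $G = (\mathbb{Z}/5)^3$-action, identify each piece with a twisted Floer complex on the base, and invoke \cite[Theorem 1.2]{Han24a} to kill all but the trivial piece. Let $\pi\colon T^*T^3_{\mathrm{cov}} \to T^*T^3_{\mathrm{base}}$ denote the $G$-cover of cotangent bundles induced by the $125$-fold self-cover of the torus. By hypothesis both branes are pullbacks along $\pi$: $\tLimm = \pi^{-1}(\Limm)$ with $\nabla = \pi^*\nabla^{\mathrm{base}}$, and $(T^3,\nabla_p) = (T^3_{\mathrm{cov}}, \pi^*\nabla_p^{\mathrm{base}})$. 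Consequently each intersection point of $T^3_{\mathrm{base}} \cap \Limm$ has a free $G$-orbit of lifts to intersection points in the cover, each base Floer strip has $|G|$ lifts related by the $G$-action, and each lift carries the same parallel-transport weight as the base strip because the local systems are pulled back. This structure expresses the cover cochain complex as a $\mathbb{C}[G]$-twisted version of the base complex, and splitting $\mathbb{C}[G] = \bigoplus_{\chi \in \hat G}\mathbb{C}_\chi$ identifies the $\chi$-isotypic summand with
\begin{align*}
CF^*\bigl((T^3_{\mathrm{base}},\nabla_p^{\mathrm{base}}\otimes\mathcal{L}_\chi),(\Limm,\nabla^{\mathrm{base}})\bigr),
\end{align*}
where $\mathcal{L}_\chi$ is the rank one local system on $T^3_{\mathrm{base}}$ corresponding to $\chi$ via the surjection $\pi_1(T^3_{\mathrm{base}}) \twoheadrightarrow G$, and $G$ acts on this summand through the character $\chi$.

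Next I would apply \cite[Theorem 1.2]{Han24a}: $(\Limm,\nabla^{\mathrm{base}})$ is mirror to the pushforward of a rank one sheaf $\mathcal{F}$ on a line $C \subset (\mathbb{C}^*)^3$, so the cohomology of the $\chi$-summand coincides with the stalk of $\mathcal{F}$ at the translate $\chi \cdot p \in (\mathbb{C}^*)^3$, viewing $\chi$ as an element of the $5$-torsion subgroup of $(\mathbb{C}^*)^3$ via the canonical identification of $\hat G$ with that subgroup. Because the van Geemen lines under consideration are not stabilized by any nontrivial element of $G$, each intersection $C \cap \chi^{-1}C$ for $\chi \neq 1$ is finite; for $p$ in the Zariski open complement of the union of these finite exceptional sets only the trivial isotypic component survives, and the $G$-action is trivial. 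The finitely many special $p$ where additional isotypic components could appear can be handled either by semicontinuity, or are irrelevant for the application to determining the support of the mirror sheaf carried out in the next subsection.

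The main obstacle is the rigorous identification of the $\chi$-isotypic component of the cover Floer complex with the twisted base Floer complex. While this is a standard covering-space argument in Floer theory, it requires careful tracking of how homotopy classes of Floer strips in the cover correspond to homotopy classes of strips in the base weighted by elements of $G$, and verification that the resulting character weights $\chi(g_u)$ combine correctly with parallel transport of $\nabla_p^{\mathrm{base}}$ to reproduce the monodromy of $\nabla_p^{\mathrm{base}} \otimes \mathcal{L}_\chi$ along the corresponding boundary arcs.
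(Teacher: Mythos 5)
Your proposal takes a genuinely different route from the paper's, and the contrast is instructive. The paper's proof is a short symplectic-geometric argument: the $(\mathbb{Z}/5)^3$-action on $T^*T^3$ is by translations in the torus direction, each of which is the time-1 map of a Hamiltonian isotopy, and the induced action on Floer cohomology is therefore trivial by Hamiltonian invariance. This is a uniform argument that applies to every local system $\nabla_p$. Your approach instead splits $CF^*((T^3,\nabla_p),(\tLimm,\nabla))$ into $\chi$-isotypic pieces for $\chi\in\widehat{(\mathbb{Z}/5)^3}$, identifies each piece with a twisted base complex $CF^*((T^3,\nabla_p\otimes\mathcal{L}_\chi),(\Limm,\nabla))$, and invokes the support computation of \cite[Theorem 1.2]{Han24a} to show the nontrivial pieces vanish. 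What your route buys is stronger information: it shows that the cohomology is literally concentrated in the trivial isotypic piece, which simultaneously establishes the conclusion of the subsequent Lemma identifying the $G$-invariants with $HF^*((T^3,\overline{\nabla}_p),(\Limm,\overline{\nabla}))$. What it costs is generality and precision: as you note, the vanishing of the $\chi\neq1$ pieces requires $\chi\cdot p\notin C$, hence only holds outside a finite exceptional set of $p$, and it requires knowing that the support curve is not stabilized by any nontrivial element of the $5$-torsion subgroup (which is true for van Geemen lines, whose orbit under $(\mathbb{Z}/5)^3$ has order $125$, but still needs to be tracked for general unobstructed $\nabla$). The paper's approach avoids both issues. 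Two remarks on the remaining gaps in your write-up: the identification of the $\chi$-isotypic summand with a twisted base complex, which you flag as the main obstacle, is in fact exactly the content needed for the next Lemma in the paper (which proceeds via the isomorphism between the invariant subcomplex and the quotient complex, i.e.~the $\chi=1$ case), so if you insist on this route it pays to state the chain-level covering argument once in a form that handles all characters simultaneously; and the appeal to "semicontinuity" for the special $p$ should be replaced by the coherence argument the paper uses in Proposition~\ref{globalsupport} (the stalks of a coherent sheaf cannot drop on a Zariski-closed subset), since semicontinuity of the full $G$-representation is not immediate from semicontinuity of its dimension.
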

\begin{proof}
The action of $(\mathbb{Z}/5)^3$ on Floer cohomology is induced by an action of $(\mathbb{Z}/5)^3$ on $T^*T^3$ by symplectomorphisms. Specifically, this group acts by rotations in the $T^3$-direction, which are Hamiltonian isotopies. Hence the action on homology is trivial.
\end{proof}
On the other hand, we have that the group of $(\mathbb{Z}/5)^3$-invariants can be written as
\begin{align*}
HF^*((T^3,\nabla_p),(\tLimm,\nabla))^{(\mathbb{Z}/5)^3} = H^*(CF^*((T^3,\nabla_p),(\tLimm,\nabla))^{(\mathbb{Z}/5)^3},\mathfrak{m}_1) \, ,
\end{align*}
the cohomology of the subcomplex of $(\mathbb{Z}/5)^3$-invariant chains. The following is immediate from the $(\mathbb{Z}/5)^3$-equivariance of the chain complex.
\begin{lemma}\label{supportinvariants}
The cohomology group $H^*(CF^*((T^3,\nabla_p),(\tLimm,\nabla))^{(\mathbb{Z}/5)^3},\mathfrak{m}_1)$ can be identified with $HF^*((T^3,\overline{\nabla}_p),(\Limm,\overline{\nabla}))$, where $\overline{\nabla}_p$ and $\overline{\nabla}$ denote the local systems on $T^3$ and $\Limm$ from which the local systems $\nabla_p$ and $\nabla$ are induced. Consequently
\begin{align*}
HF^*((T^3,\nabla_p),(\tLimm,\nabla))\cong HF^*((T^3,\overline{\nabla}_p),(\Limm,\overline{\nabla})) \, .
\end{align*}
\qed
\end{lemma}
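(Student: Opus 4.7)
The strategy is to establish a chain-level isomorphism between the $(\mathbb{Z}/5)^3$-invariant subcomplex upstairs and the downstairs Floer complex, and then to combine exactness of the invariants functor with Lemma~\ref{trivialactionsupp} to recover the second claim from the first.

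First, I would analyze the covering picture at the chain level. By Remark~\ref{125foldcover}, the immersion $\tLimm\to T^*T^3$ is the pullback of $\Limm\to T^*T^3$ along the $125$-fold symplectic cover $T^*T^3\to T^*T^3$ induced by the sublattice $(5\mathbb{Z})^3\subset\mathbb{Z}^3$, and the zero-section cover $T^3\to T^3$ is equivariant with respect to the free $(\mathbb{Z}/5)^3$-action. Thus each clean intersection component of $T^3\cap\Limm$ downstairs pulls back to a free $(\mathbb{Z}/5)^3$-orbit of clean intersection components of $T^3\cap\tLimm$ upstairs. Choosing Morse--Bott data (in the sense of Appendix~\ref{immersedfloerappendix}) downstairs and lifting it to an equivariant choice upstairs, one obtains a canonical vector-space isomorphism
\begin{align*}
CF^*((T^3,\nabla_p),(\tLimm,\nabla))^{(\mathbb{Z}/5)^3}\xrightarrow{\cong}CF^*((T^3,\overline{\nabla}_p),(\Limm,\overline{\nabla}))
\end{align*}
sending an invariant cochain to its value on any chosen representative of each orbit (equivalently, sending a downstairs cochain to the sum of its pullbacks over the orbit).

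Second, I would verify that this isomorphism intertwines the differentials. By Lemma~\ref{supportstrips}, every strip contributing to either differential is contained in a Weinstein neighborhood of the zero section that can be chosen $(\mathbb{Z}/5)^3$-invariantly, so the relevant Floer theory reduces to a computation in $T^*T^3$ with pullback almost complex structure. Any holomorphic strip downstairs lifts uniquely to an upstairs strip once a lift of its starting asymptote is fixed, and any upstairs strip projects to a downstairs strip, giving a $125$-to-$1$ correspondence between strip moduli. Because $\nabla=\pi^*\overline{\nabla}$ and $\nabla_p=\pi^*\overline{\nabla}_p$, the holonomy weights along the boundary of an upstairs strip equal those along its projection, and symplectic areas (hence Novikov weights) are preserved by the cover. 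When computing the coefficient of a downstairs generator $y$ in $\mathfrak{m}_1(x)$, the corresponding upstairs computation takes the invariant lift $\sum_g x_g$ and extracts the coefficient of a fixed lift $y_e$: among the $125$ lifts of each downstairs strip, exactly one ends at $y_e$, so the two counts agree term by term. Equivariance of the chosen perturbation data transfers regularity between the two settings.

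Finally, because $|(\mathbb{Z}/5)^3|=125$ is invertible in $\mathbb{C}$, Maschke's theorem implies that the functor of $(\mathbb{Z}/5)^3$-invariants is exact on complexes of $\mathbb{C}$-vector spaces, so taking cohomology commutes with taking invariants:
\begin{align*}
H^*\!\left(CF^*((T^3,\nabla_p),(\tLimm,\nabla))^{(\mathbb{Z}/5)^3}\right)\cong HF^*((T^3,\nabla_p),(\tLimm,\nabla))^{(\mathbb{Z}/5)^3}.
\end{align*}
Combined with Lemma~\ref{trivialactionsupp}, which asserts that the right-hand side equals the full upstairs Floer cohomology, the chain isomorphism of the previous step then yields both conclusions of the lemma. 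The main technical obstacle I anticipate is the clean chain-level matching of differentials in the Morse--Bott setting of Appendix~\ref{immersedfloerappendix}, where generators are represented by de Rham cochains on intersection components and differentials involve pearl-type integrals: one must ensure that Morse functions, connections, and abstract perturbations can simultaneously be chosen equivariantly so that the invariant subcomplex is literally a pullback complex, rather than only quasi-isomorphic to one via an auxiliary correction.
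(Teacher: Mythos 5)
Your proposal is correct and takes essentially the same approach the paper does: the paper simply declares the lemma "immediate from the $(\mathbb{Z}/5)^3$-equivariance of the chain complex," relying on exactly the chain-level identification of the invariant subcomplex with the downstairs complex that you spell out, followed by Maschke and Lemma~\ref{trivialactionsupp}. Your concern about choosing Morse--Bott and perturbation data equivariantly is a real (if routine) point that the paper leaves tacit; note that in this model generators are de Rham forms on the clean intersection components rather than Morse critical points, so the lifting/averaging argument applies to the form-valued chain spaces directly, and the free $(\mathbb{Z}/5)^3$-action by Hamiltonian rotations on $T^*T^3$ makes the equivariant choice of almost complex structure and Kuranishi data unproblematic.
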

\begin{remark}
One could also compute the Floer-theoretic support of $\tLimm$ in $T^*T^3$ directly by considering its geometric compositions with the Lagrangian correspondences discussed in Remark~\ref{differentmorse}. Instead of reducing the computation of $HF^*((T^3,\nabla_p),(\tLimm,\nabla))$ to a computation of the Floer-theoretic support (cf.~\cite{Han24a}) of a tropical pair of pants, one would instead need to consider the Floer-theoretic support of the Lagrangian lift of a non-smooth, trivalent tropical curve quintic plane curve. In this case, one can see directly that the nonvanishing Floer cohomology groups have rank $2$, in accordance with the results of Lemma~\ref{supportinvariants} and~\cite[Corollary 6.1]{Han24a}. We chose to describe a more abstract strategy of proof because we will also make use of it when we discuss the self Floer cohomology of $(\tLvg,\nabla)$.
\end{remark}
It follows from Lemma~\ref{supportinvariants} and~\cite[Theorem 1.2]{Han24a} that the mirror sheaf to $(\tLimm,\nabla)$ in $(\mathbb{C}^*)^3/(\mathbb{Z}/5)^3$ is supported on a rational curve with four punctures. The analogous result for the mirror sheaf to $(\tLvg,\nabla)$ follows almost immediately.
\begin{proposition}\label{globalsupport}
The mirror sheaf to $(\tLvg,\nabla)$ is supported on a line, i.e. it is the pushforward of a sheaf under an embedding of $\mathbb{P}^1$ in the mirror quintic. The stalks of this sheaf have rank $2$ in a Zariski open subset of $\mathbb{P}^1$. The line supporting the mirror to $(\tLvg,\nabla^{\vG}_{\omega})$, as defined in Example~\ref{vangeemenlocalsystem}, is a van Geemen line. 
\end{proposition}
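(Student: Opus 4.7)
The plan is to compute the stalk of the mirror sheaf $\mathcal{L}_{(\tLvg,\nabla)}$ at every point of $X^{5,\vee}$. Proposition~\ref{mirrors2points} identifies each point $p$ of the mirror quintic with a brane $(L_{\tau,0},\nabla_p)$ for some rank one local system $\nabla_p$, so under homological mirror symmetry the stalk of $\mathcal{L}_{(\tLvg,\nabla)}$ at $p$ coincides with $HF^0((\tLvg,\nabla),(L_{\tau,0},\nabla_p))$.

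First I would reduce this global Floer computation to a local one. Lemma~\ref{stripclassification} confines the contributing strips to the very affine quintic $X_{\tau}^5\setminus D_{\tau}$, exactness together with Lemma~\ref{supportstrips} further confines them to a small Weinstein neighborhood of $L_{\tau,0}$, and the $(\mathbb{Z}/5)^3$-invariance arguments of Lemmas~\ref{trivialactionsupp} and~\ref{supportinvariants} descend the calculation through the cover to $HF^*((T^3,\overline{\nabla}_p),(\Limm,\overline{\nabla}))$ in $T^*T^3$.

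The key input is then \cite[Theorem 1.2]{Han24a}, which shows that the locus of local systems $\overline{\nabla}_p$ on $T^3$ for which this Floer cohomology is non-vanishing is a rational curve with four punctures in $(\mathbb{C}^*)^3/(\mathbb{Z}/5)^3$. Under the mirror functor of~\cite{She15}, the affine chart of $X^{5,\vee}$ parametrizing rank one local systems on $L_{\tau,0}$ is identified at large radius with $(\mathbb{C}^*)^3/(\mathbb{Z}/5)^3$, so the support of $\mathcal{L}_{(\tLvg,\nabla)}$ in this chart is exactly this affine rational curve. Its closure in $X^{5,\vee}$ yields the embedded $\mathbb{P}^1$, and the rank-$2$ statement on a Zariski open subset follows from \cite[Corollary 6.1]{Han24a}. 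For the van Geemen identification, Example~\ref{vgexample} presents the affine support of the mirror to the corresponding descended local system on $\Limm$ by explicit linear forms in $(\mathbb{C}^*)^3$, whose projective closure in $\mathbb{CP}^3$ is the limiting cycle, in $X_0^5$, of a van Geemen line; combined with the above, this identifies the global supporting line with $C^{\omega}\subset X^{5,\vee}$.

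The most delicate bookkeeping is in the transition from the local large-radius picture in $(\mathbb{C}^*)^3/(\mathbb{Z}/5)^3$ to the global statement over the Novikov-field scheme $X^{5,\vee}$, which implicitly depends on the precise form of the mirror map of~\cite{She15}. Because the van Geemen lines are defined canonically as algebraic curves in $X^{5,\vee}$ and the defining equations appearing in Example~\ref{vgexample} match those of a van Geemen line on the nose, however, the comparison ultimately reduces to matching algebraic defining equations on a single large-radius chart.
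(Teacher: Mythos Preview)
Your approach is essentially the same as the paper's: reduce the stalk computation to the local Floer calculation in $T^*T^3$ via Lemmas~\ref{stripclassification}--\ref{supportinvariants}, invoke \cite[Theorem~1.2 and Corollary~6.1]{Han24a} for the support and generic rank, and identify the van Geemen line through Example~\ref{vgexample} together with the large-radius description of Sheridan's functor. The only place where the paper is more explicit is the passage from the affine support to the full $\mathbb{P}^1$: you write that ``its closure in $X^{5,\vee}$ yields the embedded $\mathbb{P}^1$,'' whereas the paper invokes coherence (upper semicontinuity of stalk rank) to conclude that the support, being Zariski closed, must contain the entire line and not merely the four-punctured affine curve visible from the torus chart.
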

\begin{proof}
It is immediate from our discussion above that the sheaf on $X^{5,\vee}$ mirror to $(\tLvg,\nabla)$ is supported on a Zariski open subset of a line. That the stalks are generically of rank $2$ follows from~\cite[Corollary 6.1]{Han24a}. Since the mirror is a complex of \textit{coherent} sheaves, the ranks of its stalks cannot decrease on a Zariski closed subset, and thus its support must be an entire embedded $\mathbb{P}^1$. We have already checked that $\nabla^{\vG}_{\omega}$ is unobstructed. Recall that the mirror functor of~\cite{She15} is obtained as a versal deformation of a fully faithful $A_{\infty}$ embedding
\begin{align*}
\mathrm{Perf}(X^{5,\vee}_0)\to D^{\pi}(\mathcal{F}(X_{\tau}^5))\,.
\end{align*}
We can thus determine the support of the mirror sheaf to $(\tLvg,\nabla^{\vG}_{\omega})$ by determining the restriction of the support to the central fiber. The lemma now follows from the explicit description of the support of the mirror from~\cite[\S{6.3}]{Han24a} and Example~\ref{vangeemenlocalsystem}.
\end{proof}

\subsection{Local Floer cohomology and the second Chern class}\label{localhfsection}
To prove Theorem~\ref{mainprelim}, we need a stronger version of Proposition~\ref{globalsupport}. More specifically, if we can show that the mirror sheaf to $(\tLvg,\nabla^{\vG}_{\omega})$ is the \textit{pushforward of a vector bundle} on $\mathbb{P}^1$, we will be able to conclude that its algebraic second Chern class is an integer multiple of the support, which is critical for understanding the extensions of Hodge structure associated to this object. Let $\mathcal{L}_{(\tLvg,\nabla)}$ denote the mirror object to the brane $(\tLvg,\nabla)$, where $\nabla$ is unobstructed. We have shown in Proposition~\ref{globalsupport} that this object is supported on a line in $X^{5,\vee}$, and we let $i\colon\mathbb{P}^1\to X^{5,\vee}$ denote the inclusion of the support. Then we have an isomorphism
\begin{align}
\mathcal{L}_{(\tLvg,\nabla)}\cong i_*(i^{-1}\mathcal{L}_{(\tLvg,\nabla)}) \,. \label{inversehseaf}
\end{align}
Since $i^{-1}\mathcal{L}_{(\tLvg,\nabla)}$ is a complex of \textit{coherent} sheaves on $\mathbb{P}^1$, it can be written as a direct sum of line bundles and skyscraper sheaves. The results of~\cite[\S{6.3}]{Han24a} give us a lower bound on the rank of the stalks this sheaf.

We can rule out the presence of skyscraper summands in this sheaf by establishing an upper bound on the rank of $HF^*(\tLvg,\nabla^{\vG}_{\omega})$. Luckily, it is possible to achieve this while only computing the differentials on the first page of the energy spectral sequence of Proposition~\ref{spectralseq}. This computation, combined with the mirror symmetry considerations above, then gives us enough information to completely determine the Floer cohomology of $(\tLvg,\nabla)$, and the precise object to which it is mirror.
\begin{theorem}\label{mirrorsheaf}
The Floer cohomology of $(\tLvg,\nabla)$, where $\nabla$ is an unobstructed local system, is the graded $\Lambda_0$-module given by
\begin{align}\label{floerhomology}
HF^*(\tLvg,\nabla)\cong\begin{cases}
\Lambda_0 & * = -1,4 \\
\Lambda_0^3 & * = 0,3 \\
\Lambda_0^4 & * = 1,2 \\
0 & \text{otherwise.}
\end{cases}
\end{align}
Moreover $(\tLvg,\nabla)$ is mirror to a direct sum of two copies of the same line bundle on a line in the mirror quintic, whose gradings differ by $1$.
\end{theorem}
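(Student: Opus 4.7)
The plan is to establish the rank formula in~\eqref{floerhomology} by analyzing the energy spectral sequence of Proposition~\ref{spectralseq} applied to $CF^*(\tLvg,\nabla)$, and then combine it with Proposition~\ref{globalsupport} and the decomposition~\eqref{inversehseaf} to identify the mirror object. The $E_0$ page is the $\mathbb{C}$-vector space $\overline{CF}^*(\tLvg)$ described in Lemma~\ref{floercochainsvg}, equipped with the energy-zero part of $\mathfrak{m}_1$, which is the de Rham differential on the smooth summands $\Omega^*(\tLvg)$ and $\Omega^*(T^2)[1]\oplus\Omega^*(T^2)[-2]$ at each neck, together with the Morse differentials of $\widetilde{h}$ and $-\widetilde{h}$ on the switching summands $CM^*(\widetilde{L}')[-1]$ and $CM^*(\widetilde{L}',\partial\widetilde{L}')[1]$. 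I compute the resulting $E_1$ page using the Mayer--Vietoris decomposition of $\tLvg$ as two copies of $\widetilde{L}'$ glued along their boundary tori; this already produces a finite-dimensional upper bound for the total rank of $HF^*(\tLvg,\nabla)$.

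Next I would compute the leading nonzero higher differentials. By the open-mapping argument of Lemma~\ref{stripclassification} together with the neck-stretching/rescaling technique of Proposition~\ref{m0svanishing}, the holomorphic strips of lowest positive energy contributing to $\mathfrak{m}_1$ are those supported inside the Darboux charts $\widetilde{B}_{\ell,m}$, and these are determined by the teardrops classified in Lemma~\ref{localtears}. Weighting them by the holonomies of an unobstructed local system reproduces exactly the factors on the left-hand sides of~\eqref{interiorconeobs}--\eqref{4obs}, so their contributions at each neck cancel. Equivalently, via the $(\mathbb{Z}/5)^3$-equivariance and the covering argument used in Lemma~\ref{supportinvariants}, the local Floer-theoretic contribution reduces to a computation in the model $T^*T^3$ already carried out in~\cite{Han24a}. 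Together with the Morse and de Rham differentials internal to the two copies of $\widetilde{L}'$, these local calculations cut the $E_1$ page down to the graded ranks $(1,3,4,4,3,1)$.

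For the converse bound and the identification of the mirror, I would invoke~\eqref{inversehseaf}: the mirror sheaf has the form $i_*\mathcal{E}$ for a bounded complex $\mathcal{E}$ on $\mathbb{P}^1$, and by Proposition~\ref{globalsupport} together with~\cite[Corollary 6.1]{Han24a}, $\mathcal{E}$ has generic rank $2$. Any torsion summand in $\mathcal{E}$ would contribute extra $\Ext$ classes to $\Ext^*(i_*\mathcal{E},i_*\mathcal{E})$ that exceed the upper bound just established, so $\mathcal{E}$ must be a rank-$2$ locally free sheaf on $\mathbb{P}^1$. By Grothendieck's theorem, $\mathcal{E}\cong\mathcal{O}(a)[p]\oplus\mathcal{O}(b)[q]$ for some integers $a,b,p,q$. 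Finally, using the local-to-global spectral sequence for $\Ext^*(i_*\mathcal{E},i_*\mathcal{E})$ whose local $\mathcal{E}xt$-sheaves are the exterior powers $\Lambda^j N_{\mathbb{P}^1/X^{5,\vee}}$, and using the fact that for a generic non-isolated line in the mirror quintic the normal bundle splits as $\mathcal{O}(1)\oplus\mathcal{O}(-3)$, matching the total graded-dimension pattern $(1,3,4,4,3,1)$ forces $a=b$ and $|p-q|=1$, yielding the second half of the theorem.

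The main obstacle is controlling the higher differentials $d_r$ on the energy spectral sequence and verifying that the only nontrivial cancellations among the very large $E_1$ page come from the small teardrops and the Morse trajectories they pair with. Contributions that mix the de Rham summand $\Omega^*(\tLvg)$ with the Morse summands $CM^*(\widetilde{L}')[-1]$ and $CM^*(\widetilde{L}',\partial\widetilde{L}')[1]$ are the most delicate to track, and the strategy for handling them is to reduce to the already-understood Floer theory of $\Limm$ via the $(\mathbb{Z}/5)^3$-cover, exactly as in the argument of \S\ref{supportsection}.
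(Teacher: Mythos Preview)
Your outline has the right architecture---bound the ranks from above via the energy spectral sequence, bound them from below via the mirror sheaf, and meet in the middle---but the spectral-sequence step contains a genuine error and the $B$-side step has a wrong input.

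The central gap is your claim that the lowest-energy teardrop contributions to $\mathfrak{m}_1$ cancel because the holonomy-weighted sums in~\eqref{interiorconeobs}--\eqref{4obs} vanish. That cancellation is the statement $\mathfrak{m}_0=0$: at each neck the three teardrops, with \emph{no} input, push forward the constant function and are weighted by coefficients summing to zero. When the same teardrops appear in $\mathfrak{m}_1$ they carry an extra smooth boundary marked point at which an input form is pulled back; the three teardrops have boundaries in \emph{different} classes in $H_1(\Lvg)$ (combinations of the $m_i$ and $\ell_i$), so they pair differently with a $1$- or $2$-form on $\Lvg$, and their outputs do not cancel. In the paper these non-cancelling teardrop contributions are exactly what make the components $H^2(\Lvg)\to\bigoplus H^1(T^2)[-2]$ and $H^1(\Lvg)\to\bigoplus H^0(T^2)[-2]$ of the $E_2$-differential have ranks $5$ and $4$. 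Without them, the Morse-trajectory strips alone would not cut the $E_2$-page down to $(1,3,4,4,3,1)$. The paper also classifies the low-energy strips differently from what you wrote: they are not all supported in the $\widetilde{B}_{\ell,m}$; the type-(ii) strips live in the complement and correspond to gradient flowlines of $\widetilde{h}$. The actual upper bound comes from reducing via the $(\mathbb{Z}/5)^3$-action to $\Lvg\subset\Wvg$ and then computing the ranks of $\delta^3,\delta^2,\delta^1$ explicitly (they are $4$, $8$, $10$), which is a substantial Morse-theoretic calculation you have not sketched.

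On the $B$-side, your normal bundle is wrong: for a generic non-isolated line in a Calabi--Yau threefold the normal bundle is $\mathcal{O}\oplus\mathcal{O}(-2)$, not $\mathcal{O}(1)\oplus\mathcal{O}(-3)$. With the latter, $\Ext^*(i_*\mathcal{L},i_*\mathcal{L})=(1,2,2,1)$ and the self-$\Ext$ of $\mathcal{L}\oplus\mathcal{L}[1]$ would be $(1,4,7,7,4,1)$, contradicting the bound. With the correct splitting one gets $(1,1,1,1)$ for a single line bundle and $(1,3,4,4,3,1)$ for $\mathcal{L}\oplus\mathcal{L}[1]$, matching~\eqref{floerhomology}. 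The paper does not actually compute the normal bundle; it deduces the sum-of-equal-line-bundles structure directly from the grading pattern of the upper bound together with Grothendieck splitting, which is more economical than your local-to-global $\Ext$ argument.
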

By construction, the differentials on the $E_2$-page of the energy spectral sequence of Proposition~\ref{spectralseq} are determined by the terms of the Floer differential counting (nonconstant) disks of the lowest energy bounded by $\tLvg$. The Weinstein neighborhood $\tWvg$ contains all of these low-energy disks, allowing us to rephrase the computation of the $E_2$-differential as a computation of local Floer homology. More precisely, the local Floer homology $HF^*_{\tWvg}(\tLvg,\nabla)$ calculated inside $\tWvg$ completely determines the $E_3$-page of the energy spectral sequence.
\begin{lemma}\label{localstrips}
There is a constant $\epsilon_0>0$ such that every holomorphic disk $u$ with corners and boundary on $\tLvg$ for which $\omega([u])\leq\epsilon_0$ is contained in $\tWvg$. In particular the only such holomorphic disks which contribute to the Floer differential are either
\begin{itemize}
\item[(i)] holomorphic teardrops as in Lemma~\ref{localtears} with an additional smooth boundary marked point; or
\item[(ii)] holomorphic strips with two corners on different switching components (at least one of which corresponds to a Morse critical point of $\widetilde{h}$).
\end{itemize}
\end{lemma}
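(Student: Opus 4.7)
The plan is two-stage: first confine low-energy disks to $\tWvg$ via a monotonicity argument, then classify the resulting disks using the local structure of $\tLvg$. For the first stage, choose $\tWvg$ so that $\tLvg$ sits at positive distance $\delta>0$ from $\partial\tWvg$. If the image of a holomorphic disk $u$ with boundary on $\tLvg$ exits $\tWvg$, then it contains a point $p$ at distance $\ge\delta$ from $\tLvg$, and inside the ball $B(p,\delta/2)$ the image of $u$ is disjoint from $\tLvg$. The standard monotonicity inequality for $J$-holomorphic curves near a Lagrangian then yields $\omega([u])\ge c\delta^2$ for some positive constant $c$ depending on the ambient geometry; setting $\epsilon_0$ smaller than this lower bound forces every low-energy disk into $\tWvg$.

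For the classification, fix such a $u$ with $\omega([u])\le\epsilon_0$ and at most two corners at switching components, as required for a contribution to $\mathfrak{m}_1$. Recall the decomposition of $\tWvg$ as a cotangent neighborhood $\Phi_{\widetilde{L}'}(U_{\widetilde{L}'})\subset T^*\widetilde{L}'$ of the smooth locus of $\tLvgsing$ glued to the Darboux balls $\widetilde{B}_{\ell,m}$, with $\tLvg\cap\widetilde{B}_{\ell,m}\cong L_Y$ and $\tLvg$ elsewhere the union of the graphs $\Gamma(\pm d\widetilde{h})$. If the image of $u$ enters some $\widetilde{B}_{\ell,m}$, then composing with the projection $w\colon\widetilde{B}_{\ell,m}\to\mathbb{C}$ of~\eqref{degeneration} and arguing as in the proof of Proposition~\ref{m0svanishing}, the restriction of $u$ to $u^{-1}(\widetilde{B}_{\ell,m})$ projects to a holomorphic map to $\mathbb{C}$ with boundary on $\Larc$; Lemma~\ref{localtears} classifies this piece as a teardrop bounded by $\Larc$, paired via~\eqref{commutative} with a constant or axis-aligned disk in $\mathbb{C}^2$ bounded by the Clifford torus. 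Each such excursion costs at least the area of the teardrop bounded by $\Larc$, so after shrinking $\Larc$ (and correspondingly $\epsilon_0$) we may assume $u$ contains at most one teardrop component and contributes at most one torus-type corner; this produces either case (i) (a teardrop with an extra smooth marked point) or a mixed-type instance of case (ii) (a strip with one torus corner and one Morse corner).

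If instead $u$ avoids every $\widetilde{B}_{\ell,m}$, then $u$ lies in $\Phi_{\widetilde{L}'}(U_{\widetilde{L}'})$ with boundary on $\Gamma(\pm d\widetilde{h})$, and by taking the $C^1$-norm of $\widetilde{h}$ sufficiently small, the standard Morse--Bott Floer theory for pairs of graphs of exact $1$-forms following~\cite{Fuk17} identifies low-energy holomorphic strips with (perturbations of) negative gradient flow lines of $\pm\widetilde{h}$; these are exactly strips with two corners at Morse critical points of $\widetilde{h}$, completing the pure instance of case (ii). The main obstacle I anticipate is justifying the uniform monotonicity constant near $\partial\tWvg$: since $\tWvg$ is assembled by gluing a cotangent neighborhood to Darboux balls along hypersurfaces modeled on the link of the Harvey--Lawson cone, a careful choice of almost complex structure compatible with both pieces is required so that the constant $c$ in the monotonicity inequality does not degenerate near the gluing loci, and a secondary concern is verifying that the Morse--Bott model in Appendix~\ref{immersedfloerappendix} correctly reproduces the Morse flow line count in case (ii) for the immersed Lagrangian $\tLvg$.
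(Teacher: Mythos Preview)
Your approach is correct but differs from the paper's in the first stage. The paper bypasses monotonicity entirely by invoking exactness: since $\tLvg$ is exact in $\tWvg$ (by the argument of Lemma~\ref{exactness}), the area of any disk with corners on $\tLvg$ lying in $\tWvg$ is determined by the values of the primitive at the corners, and these are controlled by the $C^0$-norm of $\widetilde{h}$ together with the area of the teardrop on $\Larc$. Scaling $\widetilde{h}$ and choosing $\Larc$ small makes all such areas arbitrarily small; any disk escaping $\tWvg$ then has strictly larger area, so one takes $\epsilon_0$ between these two scales. This is shorter because it exploits the exact structure already in place, whereas your monotonicity argument is agnostic to exactness but requires the uniformity check you flag. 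That check is in fact less delicate than you suggest: since $\tLvg$ sits at positive distance from $\partial\tWvg$, a disk crossing the boundary is boundary-free in a ball near $\partial\tWvg$, and the monotonicity constant there depends only on $(\omega,J)$ in $X_\tau^5$, not on how $\tWvg$ was assembled or on the Lagrangian.

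For the classification the paper is terser: once a disk is known to lie in $\tWvg$, exactness and the product/graph structure of $\tLvg$ immediately yield the two cases, with the strips in (ii) identified as gradient flowlines of $\pm\widetilde{h}$. Your case analysis via the projection $w$ and the Morse--Bott model is more explicit about the mechanism, but the step ``Lemma~\ref{localtears} classifies this piece'' needs care: that lemma concerns disks entirely contained in the chart $\widetilde{B}_{\ell,m}$, whereas you only know $u$ enters it. The paper handles the analogous issue elsewhere (Proposition~\ref{m0svanishing}, Lemma~\ref{stripclassification}) by applying the open mapping theorem to $w\circ u$, which is what confines the disk once it enters; you should invoke that rather than the energy-of-excursion bound, which by itself does not preclude a disk that enters and exits a ball without completing a teardrop.
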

\begin{proof}
Using the same arguments as in the proof of Lemma~\ref{exactness}, one can show that $\tLvg$ is exact for some choice of primitive on $\tWvg$. An easy consequence of this is that the areas of these disks are controlled by the choice of Morse function $\widetilde{h}\colon\widetilde{L}'\to\mathbb{R}$ and the area of the disk bounded by $\Larc$. These can both be made arbitrarily small by scaling $\widetilde{h}$ appropriately.

The classification of strips on $\tLvg$ follows immediately from this, where in particular the strips in (ii) correspond to gradient flowlines of $\pm\widetilde{h}'$. Notice that the $2$-fold covers of the teardrops on $\tLvg$ do not contribute to the Floer differential for degree reasons.
\end{proof}

Since $\tLvg$ is built using $125$-fold covers $\widetilde{L}'$ of the minimally-twisted five-component chain link complement, its de Rham cohomology $H^*(\tLvg;\Lambda_0)$ will have high rank as a free $\Lambda_0$-module. This makes directly computing even the $E_2$-differentials in the energy spectral sequence difficult. To remedy this, we will using a covering argument along the lines of Lemma~\ref{trivialactionsupp}, which allows us to compute the low energy terms of the Floer differential for the quotient Lagrangian $\Lvg$ instead.
\begin{lemma}~\label{trivialactionfloer}
The group $(\mathbb{Z}/5)^3$ acts trivially on $HF^*(\tLvg,\nabla)$. Thus we have isomorphisms
\begin{align}\label{symmetryisos1}
HF^*_{X_{\tau}^5}(\tLvg,\nabla)= HF^*_{X_{\tau}^5}(\tLvg,\nabla)^{(\mathbb{Z}/5)^3} = H^*(CF^*_{X_{\tau}^5}(\tLvg,\nabla)^{(\mathbb{Z}/5)^3},\mathfrak{m_1})
\end{align}
where the subscripts indicate that all Floer chain complexes and cohomology groups are taken in the Weinstein neighborhood $\tWvg$, and
\begin{align}\label{symmetryisos2}
H^*(CF^*_{\tWvg}(\tLvg,\nabla)^{(\mathbb{Z}/5)^3},\mathfrak{m_1})\cong HF^*_{\Wvg}(\Lvg,\overline{\nabla})
\end{align}
where the group on the right is the Floer cohomology of $(\Lvg,\overline{\nabla})$ in the quotient $\Wvg$.
\end{lemma}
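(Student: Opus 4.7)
The plan is to establish the two isomorphisms~\eqref{symmetryisos1} and~\eqref{symmetryisos2} in sequence, mirroring the strategy used in the proofs of Lemma~\ref{trivialactionsupp} and Lemma~\ref{supportinvariants}. For the first, I would argue that every element $g \in (\mathbb{Z}/5)^3$ acts as the identity on $HF^*_{X_{\tau}^5}(\tLvg,\nabla)$. The action of $(\mathbb{Z}/5)^3$ on $X_{\tau}^5$ is the restriction of the Hamiltonian $T^4$-action on $\mathbb{CP}^4$ to the subgroup $(\mathbb{Z}/5)^4/\mathbb{Z}/5$ preserving the defining polynomial, so each $g$ lies in the $T^4$-orbit of the identity within $\mathrm{Symp}(\mathbb{CP}^4)$. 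Although the intermediate Hamiltonian diffeomorphisms of $\mathbb{CP}^4$ do not preserve $X_{\tau}^5$, one can combine this $T^4$-flow with a Moser-type correction (using that all smooth quintics are symplectomorphic) to produce a genuine Hamiltonian isotopy from $\mathrm{id}$ to $g$ in $\mathrm{Symp}(X_{\tau}^5)$. Since the local system $\nabla$ is pulled back from $\Lvg$, it is $(\mathbb{Z}/5)^3$-invariant, so the induced continuation map is an automorphism of $HF^*(\tLvg,\nabla)$ canonically equal to the identity. The second equality in~\eqref{symmetryisos1} is tautological once the action is trivial.

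For the isomorphism~\eqref{symmetryisos2}, I would use the fact that $\tLvg \to \Lvg$ and $\tWvg \to \Wvg$ are both free $(\mathbb{Z}/5)^3$-covers of symplectic manifolds (the singular locus of $\Lvgsing$ has been smoothed out in $\Lvg$ by the same doubling construction used for $\tLvg$). Using the Morse--Bott presentation of Lemma~\ref{floercochainsvg}, the invariant subspace of $CF^*(\tLvg,\nabla)$ is canonically identified with $CF^*(\Lvg,\overline{\nabla})$ at the level of $\Lambda_0$-modules, since $(\mathbb{Z}/5)^3$-invariant differential forms and Morse cochains on $\tLvg$ descend to differential forms and Morse cochains on $\Lvg$, with compatible grading shifts inherited from the equivariant grading of Lemma~\ref{gradingchoice}. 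To identify the differentials, I would verify that every holomorphic disk contributing to $\mathfrak{m}_1$ on $(\Lvg,\overline{\nabla})$ in $\Wvg$ lifts to a $(\mathbb{Z}/5)^3$-orbit of holomorphic disks on $(\tLvg,\nabla)$ in $\tWvg$ carrying matched holonomy weights (because $\nabla$ is the pullback of $\overline{\nabla}$), and conversely that equivariant lifts of disks to the cover project to disks on the quotient; this is automatic from the uniqueness of lifts through the covering.

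The main obstacle I would expect is rigorously producing the Hamiltonian isotopy in the first step. Unlike the linear $T^*T^3$ setting of Lemma~\ref{trivialactionsupp}, where the rotations are visibly generated by moment map Hamiltonians, the $(\mathbb{Z}/5)^3$-action on $X_{\tau}^5$ is not the restriction of a continuous torus action on $X_{\tau}^5$ itself. A cleaner alternative, which I would pursue if the Moser correction proves unwieldy, is to stay entirely inside the equivariant Weinstein neighborhood $\tWvg$ and construct local Hamiltonian isotopies of $\tWvg$ that preserve $\tLvg$ setwise, using the explicit $(\mathbb{Z}/5)^3$-equivariant structure of the Darboux balls $\widetilde{B}_{\ell,m}$ and the graph-Lagrangian description of $\tLvg$ near $\Phi_{\widetilde{L}'}(U_{\widetilde{L}'})$. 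Either way, once the first isomorphism is in place, the second is essentially a formal consequence of equivariant descent.
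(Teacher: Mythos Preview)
Your approach is correct and follows the same two-step structure as the paper: first show the $(\mathbb{Z}/5)^3$-action on Floer cohomology is trivial via a Hamiltonian isotopy argument, then identify the invariant subcomplex with the Floer complex of the quotient.

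For the first step, the paper's route is slightly different from yours and avoids the Moser correction. Rather than working in $\mathbb{CP}^4$, the paper observes that the very affine quintic $X_{\tau}^5\setminus D_{\tau}$ sits inside the big torus $(\mathbb{C}^*)^4\subset\mathbb{CP}^4$, where the $(\mathbb{Z}/5)^3$-action is the restriction of a Hamiltonian torus action on $(\mathbb{C}^*)^4$ itself, and asserts that this passes to the hypersurface and extends over $D_{\tau}$. Your concern about the intermediate torus elements not preserving $X_{\tau}^5$ is legitimate and applies equally to the paper's argument; neither the paper nor your first approach makes this step airtight. Your proposed alternative---staying inside $\tWvg$ and using the explicit local model, where the action genuinely is by rotations in a cotangent bundle as in Lemma~\ref{trivialactionsupp}---is the cleanest way to close the gap, and is consistent with the paper's stated intent that the subscripts refer to computations in $\tWvg$.

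For the second step, the paper invokes the explicit classification of low-energy disks in Lemma~\ref{localstrips} (and its analogue in $\Wvg$) rather than the abstract covering-space lifting you describe. Both arguments give the same conclusion; yours is more categorical, while the paper's is tailored to the concrete moduli spaces that are actually used in the subsequent computation of $HF^*_{\Wvg}(\Lvg,\overline{\nabla})$.
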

\begin{proof}
The proof mostly uses the arguments of the previous subsection. Recall that the action of $(\mathbb{Z}/5)^3$ is inherited from an action of $(\mathbb{Z}/5)^3$ on $\mathbb{CP}^4$. The very affine quintic $X_{\tau}^5\setminus D_{\tau}$ is contained in the big torus $(\mathbb{C}^*)^4\subset\mathbb{CP}^4$. As before, we have that $(\mathbb{Z}/5)^3$ acts on $(\mathbb{C}^*)^4$ by Hamiltonian isotopies, and it follows that it acts on $X_{\tau}^5\setminus D_{\tau}$, and hence on $X_{\tau}^5$ by Hamiltonian isotopies as well. This proves the nontrivial equality~\eqref{symmetryisos1}.

The isomorphism of~\eqref{symmetryisos2} follows from Lemma~\ref{localstrips} and a completely analogous classification of holomorphic strips in $\Wvg$ with boundary on $\Lvg$. In particular, $(\mathbb{Z}/5)^3$ still acts on the Floer cochain space of $CF^*(\tLvg,\nabla)$, and the subcomplex of invariant Floer cochains is naturally identified with the Floer complex $CF^*(\Lvg,\overline{\nabla})$.
\end{proof}
The lemma above reduces the problem of computing $HF^*_{\tWvg}(\tLvg,\nabla)$, to the problem of computing the (local) Floer cohomology group $HF^*_{\Wvg}(\Lvg,\overline{\nabla})$ in $\Wvg$.
\begin{figure}
\begin{tikzpicture}
\begin{scope}[scale = 1.5, very thick,decoration={
    markings,
    mark=at position 0.5 with {\arrow{>}}}
    ]
    
\node[anchor = south east] at (0,2) {$0$};
\node[anchor = north] at (0,-2) {$0$};
\node[anchor = west] at (3,2) {$0$};
\node[anchor = north] at (1,1.9) {$0$};

\node[anchor = west] at (1,0) {$2$};
\node[anchor = east] at (-1,0) {$1$};
\node[anchor = south] at (2,4) {$4$};
\node[anchor = west] at (2,0) {$3$};

\draw[blue, postaction = {decorate}] (-1/4,3/2) -- (1/4,3/2);
\draw[blue, postaction = {decorate}] (-1/4,-3/2) -- (1/4,-3/2);
\draw[blue, postaction = {decorate}, dotted] (5/4,3/2) -- (5/4,5/2);
\draw[blue, postaction = {decorate}, dotted] (11/4,3/2) -- (11/4,5/2);

\node[anchor = north] at (0,3/2) {\color{blue} $m_0$};
\node[anchor = south] at (0,-3/2) {\color{blue} $m_0$};
\node[anchor = west] at (5/4,2) {\color{blue} $m_0$};
\node[anchor = east] at (11/4,2) {\color{blue} $m_0$};

\draw[blue, postaction = {decorate}, dotted] (-3/4,1/2) -- (-1/3,2/3); 
\draw[blue, postaction = {decorate}] (3/4,-1/2) -- (3/4,1/2); 
\draw[blue, postaction = {decorate}, dotted] (9/4,1/2) -- (7/4,1/2); 
\draw[blue, postaction = {decorate}, dotted] (7/4,7/2) -- (4/3,10/3); 

\node[anchor = west] at (-1/3,2/3) {\color{blue}$m_1$};
\node[anchor = east] at (3/4,0) {\color{blue}$m_2$};
\node[anchor = west] at (9/4,1/2) {\color{blue}$m_3$};
\node[anchor = south east] at (4/3,10/3) {\color{blue}$m_4$};

\draw[red, postaction = {decorate}, dotted] (1/2,5/2) -- (-1/4,3/2);
\draw[red, postaction = {decorate}] (1/4,-3/2) -- (2/3,-4/3);
\draw[red, postaction = {decorate}] (7/3,4/3) -- (11/4,3/2);
\draw[red, postaction = {decorate}, dotted]  (5/4,5/2) -- (1/2,3/2);

\node[anchor = south east] at (1/2,5/2) {\color{red} $\ell_0$};
\node[anchor = north west] at (2/3,-4/3) {\color{red} $\ell_0$};
\node[anchor = south east] at (7/3,4/3) {\color{red}$\ell_0$};
\node[anchor = west] at (1/2,2) {\color{red}$\ell_0$};

\draw[red, postaction = {decorate}] (-3/4,-1/2) -- (-3/4,1/2); 
\draw[red, postaction = {decorate}] (3/4,1/2) -- (3/2,1/2); 
\draw[red, postaction = {decorate}, dotted] (7/4,1/2) -- (3/2,-1/2); 
\draw[red, postaction = {decorate}, dotted] (9/4,7/2) -- (7/4,7/2); 

\node[anchor = west] at (-3/4,0) {\color{red}$\ell_1$};
\node[anchor = south] at (9/8,1/2) {\color{red}$\ell_2$};
\node[anchor = north west] at (3/2,-1/2) {\color{red}$\ell_3$};
\node[anchor = north] at (2,7/2) {\color{red}$\ell_4$};

\draw[thick] (0,2) -- (1,0) -- (0,-2) -- (-1,0) -- cycle;

\draw[thick, dotted] (2,4) -- (3,2) -- (2,0) -- (1,2) -- cycle;

\draw[thick] (0,2) -- (2,4);
\draw[thick] (1,0) -- (3,2);
\draw[thick] (0,-2) -- (2,0);
\draw[thick, dotted] (-1,0) -- (1,2);

\end{scope}
\end{tikzpicture}
\caption{The meridians and longitudes drawn as arcs on one of the ideal cubes of $L'$.}\label{loopscube}
\end{figure}
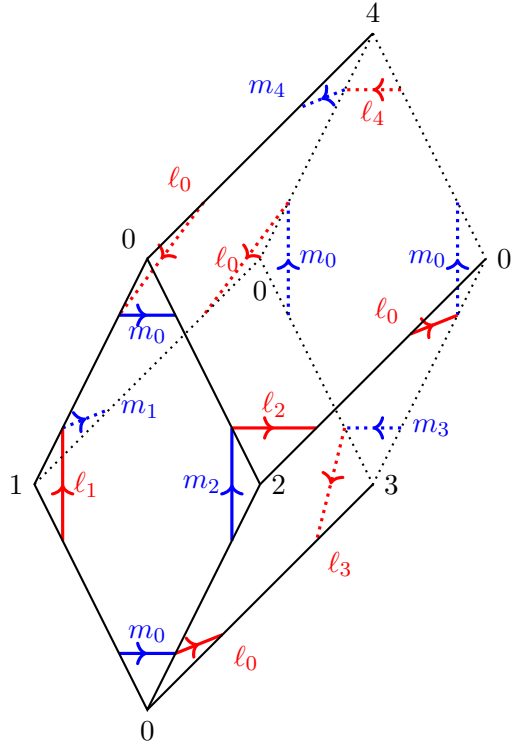
\begin{proposition}\label{hardlocal}
The Floer cohomology groups $HF^*_{\Wvg}(\Lvg,\overline{\nabla})$ coincide with those given in~\eqref{floerhomology}.
\end{proposition}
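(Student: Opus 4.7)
The plan is to run the energy spectral sequence of Proposition~\ref{spectralseq} on the Morse--Bott Floer complex $CF^*_{\Wvg}(\Lvg,\overline{\nabla})$. Adapting Lemma~\ref{floercochainsvg} to the quotient $\Lvg\subset\Wvg$, the underlying chain space decomposes as $\Omega^*(\Lvg;\overline{\nabla})$ together with five pairs of torus switching components (one per cone point of $\Lvg$) and the two shifted Morse pieces $CM^*(L')[-1]$ and $CM^*(L',\partial L')[1]$. By Lemma~\ref{localstrips} every holomorphic disk of energy below the fixed threshold $\epsilon_0$ is either a teardrop on $L_Y$ (with or without an added smooth boundary marked point) or a Morse gradient strip of $\pm h$ between corresponding critical points on the two branches, so the first nontrivial pages of the spectral sequence can be described explicitly.

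On the $E_1$-page, taking cohomology with respect to the twisted de Rham differential reduces the smooth piece to $H^*(\Lvg;\overline{\nabla})$ and each torus component to $H^*(T^2;\overline{\nabla}|_{T^2})$, while the Morse pieces survive unchanged. For the locus of unobstructed local systems containing $\nabla^{\vG}_{\omega}$ the holonomies~\eqref{vangeemenlocalsystem} are nontrivial on both the meridian and the longitude of each boundary torus of $L'$, so each torus cohomology $H^*(T^2;\overline{\nabla}|_{T^2})$ vanishes. The $E_2$-differential is then assembled from the two low-energy contributions of Lemma~\ref{localstrips}: the Morse strips implement the twisted Morse differential between the two shifted Morse complexes, and the teardrop-with-marked-point contributions map into the (already vanishing) torus summands, where the weighted counts are controlled by precisely the holonomy relations~\eqref{interiorconeobs}--\eqref{4obs} that characterize unobstructedness, guaranteeing that they contribute no surviving classes at $E_3$.

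After computing the $E_2$-cohomology, the remaining generators come from $H^*(\Lvg;\overline{\nabla})$ together with the cohomology of the twisted Morse differential on the two shifted Morse complexes. A rank count using the index data of Lemma~\ref{morsefn} (namely $2,6,4$ critical points of index $0,1,2$ for $h$ and $4,6,2$ of index $1,2,3$ for $-h$), together with twisted Poincar\'e duality on the closed $3$-manifold $\Lvg$ and the weak proper Calabi--Yau symmetry $HF^k\cong HF^{3-k}$ on the Fukaya $A_\infty$-algebra, yields the claimed ranks $(1,3,4,4,3,1)$ in degrees $(-1,0,1,2,3,4)$. All higher-page differentials vanish because any further contribution would require a disk whose symplectic area exceeds $\epsilon_0$, so $E_3=E_\infty$.

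The principal obstacle is to pin down $H^*(\Lvg;\overline{\nabla})$ and to compute the rank of the twisted Morse differential for the unobstructed local systems of interest. Both rely on delicate representation-theoretic input from~\eqref{vangeemenlocalsystem}: the closed 3-manifold $\Lvg$ is built by gluing two copies of the chain link complement along their cusp tori (as in Remark~\ref{immersiondomain}), and the Morse function on each copy is dictated by the ideal cubulation of Figure~\ref{idealtriangulation}. Keeping track of how these combinatorial data interact with the holonomies and produce the required reduction from the initial $2+6+4+4+6+2=24$ Morse generators to the final rank $16$ answer, while matching the contribution from $H^*(\Lvg;\overline{\nabla})$, is the core of the argument, and Poincar\'e duality will serve as the main internal consistency check.
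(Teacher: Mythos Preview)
Your proposal rests on a structural misreading of how the local system enters the Floer complex in this paper. In the conventions of Appendix~\ref{immersedfloerappendix}, the underlying graded vector space $\overline{CF}^*(L)$ is built from \emph{untwisted} de Rham forms (with orientation local systems $\Theta_a^-$ on switching components), and the rank-one local system $\overline{\nabla}$ enters only through holonomy weights $\hol_{\overline\nabla}(\partial\beta)$ on the structure maps. In particular $\mathfrak{m}_{1,0}=d$ is the ordinary de Rham differential, so the $E_2$-page carries $H^*(\Lvg)$ and five full copies of $H^*(T^2)$ in each shift, not the twisted groups $H^*(\Lvg;\overline\nabla)$ and $H^*(T^2;\overline\nabla|_{T^2})$ you invoke. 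Your key simplification---that the torus summands vanish on $E_2$ because the boundary holonomies are nontrivial---is therefore unavailable, and the entire architecture of your computation collapses.

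A second, related error: the type-(ii) strips of Lemma~\ref{localstrips} are not Morse trajectories linking the two shifted Morse complexes to each other. The paper's Lemmas~\ref{m3lemma}--4.23 show that most of these strips run from Morse critical points to the $T^2$ switching components (gradient trajectories of $h$ escaping toward the cusps of $L'$), and the remainder implement the ordinary Morse differential \emph{within} each shifted complex. There is no ``twisted Morse differential between the two shifted Morse complexes'' of the sort you describe. The actual proof computes, for each degree $p=1,2,3$, the rank of the induced map $\delta^p$ by enumerating these trajectories against the cubulation of Figure~\ref{idealtriangulation} and tracking how teardrop contributions (where the holonomy weights do appear, and cancel by unobstructedness) and Morse-to-torus strips interact; the ranks come out to $10,8,4$, and Poincar\'e duality handles the remaining degrees. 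Your rank count never engages with this combinatorics and, given the incorrect $E_2$-page, could not recover the answer.
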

We will break the proof of this proposition into several smaller lemmas. As a $\mathbb{Z}$-graded $\Lambda_0$-module, the Floer cochain complex of $(\Lvg,\overline{\nabla})$ in $\Wvg$ is the completed tensor product 
\begin{align*}
CF^*_{\Wvg}(\Lvg,\overline{\nabla}) = \overline{CF}^*_{\Wvg}(\Lvg,\overline{\nabla})\widehat{\otimes}_{\mathbb{C}}\Lambda_0
\end{align*}
where $\overline{CF}^*_{\Wvg}(\Lvg,\overline{\nabla})$ is the $\mathbb{Z}$-graded $\mathbb{C}$-vector space
\begin{align*}
\Omega^*(\Lvg)\oplus\bigoplus_{\ell = 0}^4(\Omega^*(T^2)[1]\oplus\Omega^*(T^2)[-2])\oplus CM^*(L')[-1]\oplus CM^*(L',\partial L')[1] \, .
\end{align*}
This implies that the $E_2$-page of the energy spectral sequence is determined by the $\mathbb{C}$-vector space
\begin{align*}
&H^{\bullet}(\overline{CF}^*_{\Wvg}(\Lvg,\overline{\nabla})) \\
&\coloneqq H^*(\Lvg)\oplus\bigoplus_{\ell = 0}^4 (H^*(T^2)[1]\oplus H^*(T^2)[-2])\oplus CM^*(L')[-1]\oplus CM^*(L',\partial L')[1]
\end{align*}
obtained by taking de Rham cohomology. The terms of the $E_2$-page are given by
\begin{align}
E_2^{p,q}\coloneqq H^p(\overline{CF}^*_{\Wvg}(\Lvg,\overline{\nabla}))\otimes(Q^{q\epsilon_0}\Lambda_0/Q^{(q+1)\epsilon_0}\Lambda_0) \label{ssdiffs}
\end{align}
where $\epsilon_0$ denotes the constant of Lemma~\ref{localstrips}. The differential $E_2^{p,q}\to E_2^{p+1,q+1}$ is determined from the Floer differential on $CF^*(\Lvg)$ by setting
\begin{align*}
\delta_2^{p,q}[x] = [\mathfrak{m}_1(x)] \in E_2^{p+1,q+1}
\end{align*}
for any $[x]\in E_2^{p,q}$. Notice that in $\Wvg$, the energy spectral sequence collapses at the $E_2$-page by exactness, so computing the differentials on this page amounts to computing the full Floer cohomology groups.

The differential $\delta_2^{p,q}$ is determined by $\mathfrak{m}_1^p$, the degree $p$ part of the differential
\begin{align*}
\mathfrak{m}_1^p\colon CF^p(\Lvg)\to CF^{p+1}(\Lvg) \, .
\end{align*}
Again by the exactness of $\Lvg$ in $\Wvg$, we can think of this as a $\mathbb{C}$-linear map. Consequently, the $E_2$-differentials in the spectral sequence are determined by $\mathbb{C}$-linear maps
\begin{align}
\delta^{p}\colon H^{p}(\overline{CF}^*_{\Wvg}(\Lvg,\overline{\nabla}))\to H^{p+1}(\overline{CF}^*_{\Wvg}(\Lvg,\overline{\nabla}))
\end{align}
induced from the Floer differential $\mathfrak{m}_1^p$ as in~\eqref{ssdiffs}. More specifically, $\delta^p$ can be written as a sum of linear maps \begin{align*}
\delta^p = \sum_{\beta\in H_2(\Wvg,\Lvg)}\delta^p_{\beta}
\end{align*}
where $\delta^p_{\beta}$ is defined as in~\eqref{ssdiffs} using $\mathfrak{m}^p_{1;\beta}$. We then have that $\delta^{p,q}_2$ is represented by
\begin{align*}
\delta^{p,q}_2 = \sum_{\beta\in H_2(\Wvg,\Lvg)}\delta^p_{\beta}\otimes Q^{\omega(\beta)}\id_{\Lambda_0}
\end{align*}
on $E^{p,q}_2$ for all $q$. In the following three lemmas, we will compute the maps $\delta^p$ for $p = 3,2,1$.

\begin{lemma}\label{m3lemma}
The map
\begin{align}
\delta^{3}\colon H^3(\Lvg)\oplus\bigoplus_{\ell = 0}^4 H^1(T^2)[-2]\oplus CM^2(L')[-1]\to\bigoplus_{\ell = 0}^4 H^2(T^2)[-2] \label{m3e2diff}
\end{align}
induced by $\mathfrak{m}_1^3$ has rank $4$.
\end{lemma}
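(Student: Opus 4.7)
The plan is to decompose $\delta^3$ according to the three summands of its source and invoke Lemma~\ref{localstrips} to classify the low-energy holomorphic disks contributing to each component. I will show that the contributions from $H^3(\Lvg)$ and from $\bigoplus_{\ell=0}^{4} H^1(T^2)[-2]$ both vanish on the $E_2$-page, while the contribution from $CM^2(L')[-1]$ induces the topological restriction map $H^2(L') \to H^2(\partial L')$, whose rank is exactly $4$.

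For the summand $H^3(\Lvg)$, any disk contributing to the map into $H^2(T^2)[-2]$ at a cone point $v_\ell$ must be of type (i) in the sense of Lemma~\ref{localstrips}, that is, a teardrop with an additional smooth boundary marked point. The induced operation is the pushforward, along the corner evaluation $\mathrm{ev}_{\mathrm{corner}}\colon\mathcal{M}\to T^2_{v_\ell}$, of the pullback of $\mathrm{vol}_\Lvg$ by the extra-marked-point evaluation. The fibers of $\mathrm{ev}_{\mathrm{corner}}$ are one-dimensional circles parametrizing the position of the extra boundary marked point on the disk, so the pullback of the three-form $\mathrm{vol}_\Lvg$ to such a fiber vanishes for dimensional reasons, and this component of $\delta^3$ is zero. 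For $\bigoplus_\ell H^1(T^2)[-2]$, a nontrivial contribution would require a low-energy disk with corners at two distinct $T^2$ switching components, which is excluded by Lemma~\ref{localstrips}; contributions from disks with both corners at the same $T^2$ switching are given to leading order by the de Rham differential on that torus and hence vanish on cohomology.

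It remains to analyze $\delta^3|_{CM^2(L')[-1]}$. From the dimension counts of Lemma~\ref{morsefn} together with the standard computation of $H^*(L')$, the Morse differential $CM^1(L') \to CM^2(L')$ is forced to vanish, so $CM^2(L')$ is identified with $H^2(L') \cong \mathbb{Z}^4$ at the $E_2$-page. For each index-$2$ critical point $p$ of $h$, the one-dimensional unstable manifold of $p$ under $+\nabla h$ has two ends which, by the outward-gradient condition of Lemma~\ref{morsefn}, exit through the cusps of $L'$ and limit to points on the corresponding $T^2$ critical manifolds at the $v_\ell$. A Morse--Bott analysis of type (ii) flowlines in $\Wvg$ shows that the resulting count defines a chain map $CM^\bullet(L') \to \bigoplus_\ell \Omega^\bullet(T^2)[-2]$ which, on cohomology, coincides with the restriction map $H^2(L') \to H^2(\partial L') = \bigoplus_\ell H^2(T^2)_\ell$. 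The long exact sequence of the pair $(L', \partial L')$, together with $H^2(L')=\mathbb{Z}^4$, $H^2(\partial L')=\mathbb{Z}^5$, $H^3(L',\partial L') = H_0(L') = \mathbb{Z}$, and $H^3(L') = 0$, forces this restriction map to have image of rank $5-1=4$, completing the computation.

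The main technical obstacle is the Morse--Bott verification that the type (ii) flowline count realizes the topological restriction map at the cohomological level, with consistent signs determined by the spin structure of \S\ref{tropicallagrsect}. I would carry this out by comparing the count of unstable-manifold rays exiting through each cusp with a cellular model of $L'$ subordinate to the ideal cubulation of Figure~\ref{idealtriangulation}, for which the boundary restriction map is explicitly computable.
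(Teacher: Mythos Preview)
Your overall decomposition matches the paper's, and your treatment of the $CM^2(L')[-1]$ summand is a pleasant alternative to the paper's direct count. The paper simply reads off from the ideal cubulation of Figure~\ref{idealtriangulation} that each of the four index-$2$ critical points emits exactly two flowlines, one to the $0$th cusp and one to a distinct cusp $i\in\{1,2,3,4\}$, so the image is visibly spanned by four independent vectors of the form $e_0\pm e_i$ in $\bigoplus_\ell H^2(T^2)$. Your identification of this map with the restriction $H^2(L')\to H^2(\partial L')$ and the resulting long-exact-sequence computation is correct and more conceptual, at the cost of the Morse--Bott comparison you flag at the end.

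Your argument that the teardrop contribution from $H^3(\Lvg)$ vanishes, however, does not work as stated. You say that the fibers of $\mathrm{ev}_{\mathrm{corner}}$ are one-dimensional, so the pullback of $\mathrm{vol}_{\Lvg}$ to such a fiber vanishes for dimensional reasons. This conflates restriction with fiber integration: $(\evb_1)^*\mathrm{vol}_{\Lvg}$ is a $3$-form on the $3$-dimensional moduli space $\mathcal{M}\cong T^2\times S^1$, and integrating it along the one-dimensional fibers of $\evb_0\colon\mathcal{M}\to T^2$ produces a $2$-form on $T^2$ which is generically nonzero --- for a single teardrop family one obtains a nonzero multiple of the area form on the splitting torus. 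A correct argument at the $E_2$-level is to observe that each neck region $L_Y\cap B_\ell\subset\Lvg$ has trivial top cohomology, so one may replace $\mathrm{vol}_{\Lvg}$ by a cohomologous form supported away from all teardrop boundaries, after which the pullback genuinely vanishes; alternatively one argues, as in the proof of Lemma~\ref{m2lemma}, that the three teardrop families at each cone point are weighted by holonomies whose signed sum vanishes by~\eqref{interiorconeobs}--\eqref{4obs}. The paper's own proof simply asserts that only the gradient trajectories contribute without spelling out either mechanism.
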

\begin{proof}
This component of the $E_2$-differential is induced, at the chain level, by the Floer differential
\begin{align*}
\mathfrak{m}_1^3\colon \Omega^3(\Lvg)\oplus\bigoplus_{\ell = 0}^4 \Omega^1(T^2)[-2]\oplus CM^2(L')[1]\to\bigoplus_{\ell = 0}^4 \Omega^2(T^2)[-2] \,. 
\end{align*}
The only holomorphic strips that contribute to this part of the Floer correspond to gradient trajectories of $h$ starting at index $2$ critical points and approaching the cusps of $L'$. This implies that the only nontrivial component of the Floer differential is
\begin{align*}
CM^2(L')[-1]\to\bigoplus_{\ell = 0}^4 \Omega^2(T^2)[-2] \,.
\end{align*}
The critical points of $h$ in degree $2$ correspond to edges of the cube in Figure~\ref{loopscube}. There are two gradient trajectories which emanate from each of these critical points: one of these approaches the $0$th cusp, and the other approaches the $i$th cusp, where $i\in\lbrace 1,2,3,4\rbrace$. From this, the image of $\mathfrak{m}_1^3$ is easily seen to be a free module of rank $4$, from which the statement of the lemma follows.
\end{proof}

Calculating other components of the $E_2$-differential requires the determination of some nontrivial values of the Floer differential. Let $\beta\in H_2(\Wvg,\Lvg;\mathbb{Z})$ denote a relative second homology class represented by one of the disks of Lemma~\ref{localstrips}, and consider the (Gromov compactified) moduli spaces
\begin{align}
\mathcal{M}_2(\beta) \label{modulispacelocaldiff}
\end{align}
of such disks. By the argument in the proof of Lemma 2.10 of~\cite{AS21}, all moduli spaces~\eqref{modulispacelocaldiff} are transversely cut out. These moduli spaces are either $3$-dimensional, in the case of Lemma~\ref{localstrips}(i), or $0$-dimensional, in the case of Lemma~\ref{localstrips}(ii). When these moduli spaces are $3$-dimensional they can be identified with the product of a $2$-torus with a closed interval.

Recall from Appendix~\ref{immersedfloerappendix} that the Floer differential is defined by pulling back differential forms on $CF^*(\Lvg)$ under the evaluation map at one of the boundary marked points, and pushing forward the resulting differential form under the evaluation map at the other marked point. The boundary evaluation maps are submersions, which we see because all switching components are either $0$-dimensional, or they are contained in neighborhoods in which $\Lvg$ can be written as the product of a $1$-dimensional Lagrangian with $T^2$, which is a Lie group, so that submersivity follows by the argument of~\cite[Example 1.5]{ST22}.

Also recall that $\Lvg$ is obtained by gluing two smooth manifolds with boundary diffeomorphic to the disjoint union of five copies of $T^2$. Below, we will call the images of these boundary tori in $\Lvg$ the \textit{splitting tori}. With the above understood, we can now compute the remaining terms of the $E_2$-differential.

\begin{lemma}\label{m2lemma}
The $\mathbb{C}$-linear map $\delta^2$ induced by $\mathfrak{m}_1^2$ has domain and codomain
\begin{align}\label{m2e2diff}
\begin{tikzcd}
H^2(\Lvg)\oplus\bigoplus_{\ell = 0}^4 H^0(T^2)[-2]\oplus CM^3(L',\partial L')[1]\oplus CM^1(L')[-1] \arrow{d} \\
H^3(\Lvg)\oplus\bigoplus_{\ell = 0}^4 H^1(T^2)[-2]\oplus CM^2(L')[-1]
\end{tikzcd}
\end{align}
and has rank $8$.
\end{lemma}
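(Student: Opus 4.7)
The plan is to extend the direct disk-counting strategy of Lemma~\ref{m3lemma} by enumerating the nonzero blocks of $\delta^2$ in the matrix decomposition induced by the direct sum structures of its domain and codomain, computing each block using the classification of low-energy disks in Lemma~\ref{localstrips}. Several blocks vanish automatically: $H^2(\Lvg) \to H^3(\Lvg)$ reduces to the de Rham differential on cohomology, while Lemma~\ref{localstrips}(ii) forces the block $\bigoplus_\ell H^0(T^2)[-2] \to \bigoplus_\ell H^1(T^2)[-2]$ to vanish, since disks with both corners on cusp tori do not contribute. Similarly the blocks with source $H^2(\Lvg)$ or target $H^3(\Lvg)$ landing in, or coming from, a Morse summand are zero because no disk contributing to $\mathfrak{m}_1$ combines a corner at a Morse critical point with a smooth boundary marked point.

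The Morse-theoretic block $CM^1(L')[-1] \to CM^2(L')[-1]$ is the Morse coboundary of $h$, which has rank zero by exactness: the Morse function of Lemma~\ref{morsefn} realizes the Betti numbers of $L'$ minimally, so $\dim CM^k(L') = \dim H^k(L')$ forces the coboundary to vanish. The cross-sheet block $CM^3(L', \partial L')[1] \to CM^2(L')[-1]$ counts holomorphic strips between self-intersection points of $\tLvg$ on different branches, which in the Weinstein neighborhood $T^*L' \subset \Wvg$ are identified with $\pm\nabla h$-Floer trajectories between the corresponding pair of critical points; the contribution is governed by the combinatorics of $h$ on the ideal cube decomposition of Figure~\ref{idealtriangulation}. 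The boundary blocks $CM^1(L')[-1], CM^3(L', \partial L')[1] \to \bigoplus_\ell H^1(T^2)[-2]$ come from half-infinite $\pm\nabla h$-flowlines escaping to the cusps of $L'$, extending directly the analysis of Lemma~\ref{m3lemma}. Finally, the teardrop-induced blocks $H^2(\Lvg) \to \bigoplus_\ell H^1(T^2)[-2]$ and $\bigoplus_\ell H^0(T^2)[-2] \to H^3(\Lvg)$ use the product structure $L_Y \cong T^2 \times \Larc$ and Lemma~\ref{localtears}: these teardrops factor as products of the $\Larc$-teardrop with either a constant or a Maslov-$2$ disk in $\mathbb{C}^2$, so the associated boundary evaluation maps give explicit pullback-pushforward contributions that can be read off directly, with each family weighted by the holonomy of $\overline{\nabla}$ around the relevant boundary class.

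Assembling the pieces with the correct signs (from the spin structure fixed in Proposition~\ref{localm0}) and holonomy weights (constrained by~\eqref{interiorconeobs}--\eqref{4obs}) yields the total rank $8$. \textbf{The main obstacle} is the combinatorial bookkeeping required to track all of these blocks simultaneously, together with the compensating cancellations forced by the locally unobstructed conditions; in particular, one must verify that the teardrop contributions and the cusp-escape flowlines interact so as to produce rank exactly $8$, rather than a larger number, and that the holonomies arising from the five distinct cusps align precisely in the way dictated by Proposition~\ref{localm0}.
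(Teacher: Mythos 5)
Your overall strategy mirrors the paper's: decompose $\delta^2$ into blocks according to the direct sum structures and classify the contributing disks via Lemma~\ref{localstrips}. However, there are two substantial issues.

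First, your justification for the Morse block $CM^1(L')[-1]\to CM^2(L')[-1]$ vanishing is incorrect. You assert that the Morse function of Lemma~\ref{morsefn} "realizes the Betti numbers of $L'$ minimally, so $\dim CM^k(L') = \dim H^k(L')$." But $h$ has $2$ index-$0$ and $6$ index-$1$ critical points, while $b_0(L')=1$ and $b_1(L')=5$, so $h$ is not perfect and your equality fails in degrees $0$ and $1$. Applied one degree lower, your reasoning would also imply $d^0\colon CM^0\to CM^1$ has rank zero, which is false (it has rank $1$). The Morse coboundary $d^1$ does happen to have rank zero, but only because $\ker d^1 = CM^1$ follows from the exact computation $\dim\ker d^1 = b_1 + \operatorname{rank} d^0 = 5+1 = 6 = \dim CM^1$; the vanishing is not a consequence of perfectness.

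Second, and more fundamentally, you never actually compute the rank. After setting up the block decomposition and identifying the disk types feeding each block, you write that assembling the pieces "yields the total rank $8$" and identify the bookkeeping as "the main obstacle," but this bookkeeping \emph{is} the content of the lemma. The paper's proof closes the gap by choosing explicit generators for $H^2(\Lvg)$ Poincar\'e dual to $s_1,\ldots,s_4$ and $\ell_0,\ldots,\ell_4$, showing that the $s_i$ map to zero, that the $CM^1(L')[-1]$ generators together with the $\ell_1,\ldots,\ell_4$ images span all of $\bigoplus_{\ell=1}^4 H^1(T^2)$, and then invoking the homological relations of Lemma~\ref{relations} to conclude that the image of $\ell_0$ is redundant. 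Without this step (or something equivalent) the lemma is not proved: merely knowing which blocks can be nonzero does not pin the rank down to $8$ rather than, say, $9$ or $10$, and the cusp-by-cusp interaction between the teardrop images (from $H^2(\Lvg)$) and the Morse flowline images (from $CM^1(L')[-1]$) is exactly where the cancellations you allude to must be tracked explicitly.
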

\begin{proof}
At the chain-level, the $E_2$-differentials are induced by the Floer differential $\mathfrak{m}_1^2$ in degree $2$. The only strips whose output is a class in $\Omega^3(\tLvg)$ come from teardrops as in Lemma~\ref{localstrips}(i). Since these teardrops are weighted by holonomies and signs whose sum vanishes, it follows that their total contribution to the differential vanishes. Therefore $\Omega^3(\tLvg)$ does not lie in the image of the Floer differential. Similarly, the component of the Floer differential mapping out of $CM^3(L',\partial L')[1]$ is trivial.

Fix a basis for $H_1(\Lvg)$ consisting of the classes $s_1,\ldots,s_4\in H_1(\Lvg)$ as in~\eqref{singularh1basis}, together with classes $\ell_0,\ldots,\ell_4$ representing the longitudes in $L'$. Using the de Rham isomorphism and Poincar{\'e} duality, we can identify these with generators for the de Rham cohomology $H^2(\tLvg)$. Fix differential forms in $\Omega^2(\tLvg)$ representing these classes. The images of these classes under $\mathfrak{m}_1^2$ are all closed $1$-forms in $\bigoplus_{\ell = 0}^4 \Omega^1(T^2)[-2]$ by general properties of integration along the fiber. We their images in $\bigoplus_{\ell = 0}^4 H^1(T^2)[-2]$ span a subspace of dimension $5$, since all of the forms dual to the classes $s_1,\ldots,s_4$ which are not generated by the longitudes in $L'$ map to zero.

The Floer cocycles in $CM^1(L')[-1]$ correspond to faces of the cube in Figure~\ref{loopscube}. The only holomorphic strips contributing to the part of $\mathfrak{m}_1^2$ that maps out of this summand are as in Lemma~\ref{localstrips}(ii), and correspond to gradient trajectories of $h$ starting at index $1$ critical points and approaching the cusps of $L'$. There are two such trajectories starting at any such critical point and emanating towards the zeroth cusp, as can be seen from Figure~\ref{loopscube}. The corresponding holomorphic strips contribute with the opposite signs, so their total contribution to the Floer differential is trivial. There are two more gradient trajectories starting at any such critical point, but they will approach different cusps of $L'$.

In total, the images of the classes considered above span an $8$-dimensional subspace of the codomain of~\eqref{m2e2diff}. To see this, notice the the images of generators in $CM^1(L')[-1]$, together with the images of the forms corresponding to $\ell_1,\ldots,\ell_4$, span all of $\bigoplus_{\ell=1}^4 H^1(T^2)[1]$. It then follows from Lemma~\ref{relations} that the image of the form corresponding to $\ell_0$ lies in the span of these classes. This implies that $\mathfrak{m}_1^2$ induces a map of rank $8$ on de Rham cohomology.
\end{proof}
The last $E_2$-differentials that we will need to compute come from the Floer differential in degree $1$.
\begin{lemma}
The $\mathbb{C}$-linear map $\delta^1$ induced by $\mathfrak{m}_1^1$ has domain and codomain
\begin{equation*}
\begin{tikzcd}
H^1(\Lvg)\oplus\bigoplus_{\ell = 0}^4 H^2(T^2)[1]\oplus CM^0(L')[-1]\oplus CM^2(L',\partial L')[1] \arrow{d} \\
H^2(\Lvg)\oplus\bigoplus_{\ell = 0}^4 H^0(T^2)[2]\oplus CM^1(L')[-1]\to CM^3(L',\partial L')[1]
\end{tikzcd}
\end{equation*}
and has rank $10$.
\end{lemma}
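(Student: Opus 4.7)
The plan is to decompose $\delta^1$ into block-matrix components using the direct sum decompositions of its domain and codomain, in close analogy with the proofs of Lemmas~\ref{m3lemma} and~\ref{m2lemma}. Many blocks vanish on cohomology for general reasons (the de Rham differential is zero on $H^*$, and there are no low-energy strips connecting $\Omega^*(\Lvg)$ directly to the Morse summands in the relevant degrees), so the rank computation reduces to tabulating contributions from (i) the Morse differentials between the two double-point summands, (ii) the low-energy teardrops in the Darboux balls $B_\ell$ with an extra boundary marked point, and (iii) Morse flow lines of $\pm h$ connecting critical points to the cusp switching components.

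I would first show that the two Morse-differential blocks each contribute rank one. The block $CM^0(L')[-1]\to CM^1(L')[-1]$ is the Morse differential $d^0$ of $h$, and since $L'$ is connected we have $\dim H^0(L')=1$, giving $\mathrm{rank}\,d^0 = \dim CM^0(L') - \dim H^0(L') = 2-1 = 1$. By Poincar\'e duality on $L'$, the block $CM^2(L',\partial L')[1]\to CM^3(L',\partial L')[1]$ is dual to $d^0$ and also has rank one, with its image sitting inside a summand disjoint from the first.

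Next I analyze the teardrop-induced contributions, which are the analog at this degree of the $H^2(\Lvg)$ contribution in Lemma~\ref{m2lemma}. By Lemma~\ref{localtears}, at each of the five cone points $v_\ell$ there are three $T^2$-families of teardrops weighted by holonomies whose holonomy-weighted sum vanishes by the unobstructedness equations~\eqref{interiorconeobs}--\eqref{4obs} of Proposition~\ref{localm0}. Adding one smooth boundary marked point produces a three-dimensional moduli space from which I extract two operations: a map $H^1(\Lvg)\to\bigoplus_\ell H^0(T^2_\ell)[-2]$ given at each $\ell$ by $\omega\mapsto\langle\omega,\Gamma_\ell\rangle$ for a holonomy-weighted $1$-cycle $\Gamma_\ell\in H_1(\Lvg;\mathbb{C})$, and a map $\bigoplus_\ell H^2(T^2_\ell)[1]\to H^2(\Lvg)$ whose image is Poincar\'e-dual to the same classes $\Gamma_\ell$. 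Using the basis~\eqref{singularh1basis} for $H_1(\Lvg)$ together with Lemmas~\ref{relations} and~\ref{inducedmap}, I would express the $\Gamma_\ell$ explicitly and compute their span.

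Finally I would handle the Morse-flow-to-cusp contributions. The two index-zero critical points of $h$ have three-dimensional unstable manifolds under $+\nabla h$, and the resulting flow lines induce a map $CM^0(L')[-1]\to\bigoplus_\ell H^0(T^2_\ell)[-2]$ of rank at most two which I would read off from the combinatorial picture in Figure~\ref{loopscube} and show is absorbed into the teardrop image above; the analogous contribution from $CM^2(L',\partial L')[1]$ vanishes, since the relevant $-\nabla h$ flow points away from the cusps. Combining everything, and verifying as at the end of the proof of Lemma~\ref{m2lemma} that the images of the two teardrop maps together contribute rank $8$ beyond the $2$ from the Morse-differential blocks, yields the claimed total of $10$. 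The main obstacle will be the explicit bookkeeping of the classes $\Gamma_\ell$ and the precise determination of how the teardrop and Morse-flow images overlap in $\bigoplus_\ell H^0(T^2_\ell)[-2]$ versus $H^2(\Lvg)$, which requires careful tracking of signs, orientations, and holonomy weights analogous to the end of the proof of Lemma~\ref{m2lemma}.
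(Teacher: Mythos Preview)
Your overall strategy—breaking $\delta^1$ into blocks, computing the two Morse-differential contributions (rank $1+1=2$), and computing the two teardrop contributions (rank $4+4=8$)—matches the paper's proof. However, your treatment of the Morse-flow-to-cusp cross-terms has a genuine gap.

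You address the component $CM^0(L')[-1]\to\bigoplus_\ell H^0(T^2_\ell)[-2]$ by asserting its image is ``absorbed into the teardrop image above,'' but you give no argument for this containment. Since $\bigoplus_\ell H^0(T^2_\ell)$ is only $5$-dimensional and the teardrop image there is $4$-dimensional, a non-absorbed contribution would push the total rank to $11$, so this step cannot be left vague. The paper handles it differently: it observes that for each cusp the two index-$0$ critical points of $h$ contribute strips with opposite signs, so this component \emph{vanishes} outright—cleaner than verifying containment.

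More seriously, you miss the component $\bigoplus_\ell H^2(T^2_\ell)[1]\to CM^3(L',\partial L')[1]$ entirely. You instead discuss a map \emph{out of} $CM^2(L',\partial L')[1]$, arguing it vanishes because $-\nabla h$ points inward at the cusps; but the relevant cross-term in this degree goes \emph{into} $CM^3(L',\partial L')[1]$. Those strips are exactly the same gradient trajectories as in the $CM^0(L')\to H^0(T^2)$ component (with input and output roles swapped), so your flow-direction argument does not apply to them. The paper dispatches this component with the same sign-cancellation observation as above.

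For the teardrop blocks themselves, the paper also takes a shorter route than your proposed explicit $\Gamma_\ell$ bookkeeping: it chooses the classes $s_i$ so that each $\delta_{s_i}$ pairs nontrivially with exactly the constant-disk teardrop family at a single cone point, while the meridian-dual forms $\delta_{m_i}$ are shown to map to zero. This yields rank $4$ for each teardrop block without tracking holonomy-weighted cycles across all five balls.
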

\begin{proof}
First observe that the Floer differential is nontrivial on the components
\begin{align*}
CM^2(L',\partial L')[1] &\to CM^3(L',\partial L')[1] \\
CM^0(L')[-1] &\to CM^1(L')[-1]
\end{align*}
since there are strips as in Lemma~\ref{localstrips}(ii) which correspond to gradient flow trajectories connecting the two index $0$ points to the index $1$ critical points. Both of the above components of the Floer differential have rank $1$ (cf. Figure~\ref{loopscube}).

By Lemma~\ref{localstrips}, the only other possibly nontrivial components of $\mathfrak{m}_1^1$ are
\begin{align}
\bigoplus_{\ell = 0}^4\Omega^2(T^2)[1]\to CM^3(L',\partial L')[1] \label{deg1trivial1} \\
\bigoplus_{\ell = 0}^4 CM^0(L')[-1]\to\bigoplus_{\ell = 0}^4\Omega^0(T^2)[-2] \label{deg1trivial2}
\end{align}
which count holomorphic strips described in Lemma~\ref{localstrips}(ii), or
\begin{align}
\Omega^1(\Lvg)\to\bigoplus_{\ell = 0}^4\Omega^0(T^2)[-2] \label{deg1nontriv1} \\
\bigoplus_{\ell = 0}^4\Omega^2(T^2)[1]\to \Omega^2(\Lvg) \label{deg1nontriv2}
\end{align}
which count holomorphic disks as in Lemma~\ref{localstrips}(i).

Observe that~\eqref{deg1trivial2} vanishes, because for any holomorphic strip with a corner at one of the two generators of $CM^0*(L')[-1]$ and another corner on a $T^2$-switching component, there is a corresponding holomorphic strip with a corner at the other generator of $CM^0*(L')[-1]$, and these two strips are counted with opposite signs. Since~\eqref{deg1trivial1} counts precisely the same strips, it also vanishes. Consequently, these components of the Floer differential induce trivial maps on the $E_2$-page of the spectral sequence.

We will compute the terms of the $E_2$-differential corresponding to~\eqref{deg1nontriv1}. To that end, choose closed $1$-forms in $\Omega^1(\Lvg)$ whose cohomology classes correspond to the generators in~\eqref{singularh1basis} under the de Rham isomorphism. The only disks which contribute to~\eqref{deg1nontriv1} come from teardrops as in Lemma~\ref{localstrips}(i). Let $\beta_{\ell}\in H_2(\Wvg,\Lvg)$ denote the relative homology class represented by such a disk. The moduli space of strips in this class, where one marked point is the corner of the teardrop, and the other is at a smooth point on the boundary, is denoted $\mathcal{M}_2(\beta_{\ell})$, as in~\eqref{modulispacelocaldiff}. Let $\evb_0^{\beta_{\ell}}$ denote the evaluation map at the corner, and $\evb_1^{\beta_{\ell}}$ denote the evaluation map at the smooth marked point. If $\delta_{s_i}$ denotes the differential form dual to class $s_i$, then the form
\begin{align}
(\evb_0^{\beta_{\ell}})_*(\evb_1^{\beta_{\ell}})^*\sigma_i\in\Omega^0(T^2)[-2] \label{nontrivialcoeff}
\end{align}
is a volume form on the $\ell$th splitting torus. The images of the classes $\delta_{s_i}$ under $\mathfrak{m}_1^1$ form a $4$-dimensional subspace, spanned by sums of volume forms. Note that for appropriately chosen classes $s_i$, the moduli space $\mathcal{M}_2(\beta_{\ell})$ will contribute nontrivially to the Floer differential of $\delta_{s_i}$ if and only if $\beta_{\ell}$ is represented by the product of a teardrop on $\Larc$ with a constant disk. In particular, this value of the Floer differential does not vanish. On the other hand, if $\delta_{m_i}$ is a differential form dual $m_i$, for $i = 0,\ldots,4$, then its image under the Floer differential vanishes. Hence the component~\eqref{deg1nontriv1} descends to a map of rank $4$ on de Rham cohomology. The map on homology induced by~\eqref{deg1nontriv2} is the dual of this map, and so it also has rank $4$. In total, this shows that the map in the statement of the lemma has rank $10$.
\end{proof}

\begin{proof}[Proof of Proposition~\ref{hardlocal}]
The results of the previous three lemmas, combined with the collapse of the energy spectral sequence computed in $\Wvg$ at the $E_2$-page, show that
\begin{align}
HF^*_{\Wvg}(\Lvg,\overline{\nabla})\cong\begin{cases}
\Lambda_0 & * = 4 \\
\Lambda_0^3 & * = 3 \\
\Lambda_0^4 & * = 2 \,.
\end{cases}
\end{align}
The values of the Floer cohomology groups for $* = -1,0,2$ follow from this by Poincar{\'e} duality.
\end{proof}

\begin{proof}[Proof of Theorem~\ref{mirrorsheaf}]
Observe that by Lemma~\ref{trivialactionfloer}, the result of Proposition~\ref{hardlocal} gives an upper bound on the ranks of $HF^*(\tLvg,\nabla)$. On the other hand, we have already seen in Proposition~\ref{globalsupport} that the mirror to $(\tLvg,\nabla)$ is a sheaf whose stalks all have rank at least $2$. Since each of these supports is a line contained in a $1$-dimensional family in a Calabi--Yau threefold, it follows that the rank cannot be greater than $2$ at any point in the support, or else the total rank of $HF^*(\tLvg,\nabla)$ would be higher than the rank of $HF^*_{\tWvg}(\tLvg,\nabla)$. As before, let $\mathcal{L}_{(\tLvg,\nabla)}$ denote the mirror object to $(\tLvg,\nabla)$, and let $i\colon\mathbb{P}^1\to X^{5,\vee}$ denote the inclusion map of its support. We know, by~\eqref{inversehseaf}, that $\mathcal{L}_{(\tLvg,\nabla)}$ is the pushforward of the sheaf $i^{-1}\mathcal{L}_{(\tLvg,\nabla)}$ on $\mathbb{P}^1$. In particular, it is the pushforward of a vector bundle of rank $2$. 

The grading of the $\Lambda_0$-module $HF^*_{\tWvg}(\tLvg,\nabla)$ computed in Proposition~\ref{hardlocal}, together with the fact that any vector bundle on $\mathbb{P}^1$ splits, implies that the mirror object to $(\tLvg,\nabla)$ is a direct sum of two rank $1$ vector bundles on a line, both of which have the same degree, but with gradings that differ by $1$.
\end{proof}

From the result of Theorem~\ref{mirrorsheaf} and the Grothendieck--Riemann--Roch theorem, we can compute the algebraic second Chern classes of the sheaves mirror to $(\tLvg,\nabla)$. 
\begin{corollary}\label{algc2computation}
The mirror sheaves $\mathcal{L}_{(\tLvg,\nabla)}$ to the objects $(\tLvg,\nabla)$ in $\mathcal{F}(X_{\tau}^5)$ have algebraic second Chern classes given by
\begin{align*}
c_2(\mathcal{L}_{(\tLvg,\nabla)}) = -2[C_{\nabla}]
\end{align*}
where $C_{\nabla}$ is the support  of $\mathcal{L}_{(\tLvg,\nabla)}$.
\end{corollary}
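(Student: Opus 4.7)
The plan is to apply Grothendieck--Riemann--Roch to the inclusion of the support curve, using the precise description of the mirror sheaf provided by Theorem~\ref{mirrorsheaf}. By that theorem, we may write $\mathcal{L}_{(\tLvg,\nabla)} \cong i_* E$, where $i\colon C_{\nabla} \hookrightarrow X^{5,\vee}$ is the inclusion of a line $C_\nabla \cong \mathbb{P}^1$ and $E$ is a rank $2$ vector bundle on $C_\nabla$ (splitting as the sum of two line bundles of equal degree $d$). Since the only invariant we need is $c_2$, the precise value of $d$ will not matter.

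First, I will compute the Chern character via GRR:
\begin{align*}
\mathrm{ch}(i_*E) \cdot \td(T X^{5,\vee}) = i_*\bigl(\mathrm{ch}(E) \cdot \td(T C_\nabla)\bigr).
\end{align*}
On $C_\nabla \cong \mathbb{P}^1$, we have $\mathrm{ch}(E) = 2 + d\,[\mathrm{pt}_{C_\nabla}]$ and $\td(TC_\nabla) = 1 + [\mathrm{pt}_{C_\nabla}]$, so the right-hand side evaluates, after applying $i_*$, to $2[C_\nabla] + (d+2)[\mathrm{pt}_{X^{5,\vee}}]$.

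Next I will extract $\mathrm{ch}_2(i_*E)$ from the left-hand side. Since $i_*E$ is supported in codimension two, its rank and first Chern class vanish, so $\mathrm{ch}(i_*E) = \mathrm{ch}_2(i_*E) + \mathrm{ch}_3(i_*E)$. The Calabi--Yau condition $c_1(TX^{5,\vee}) = 0$ gives $\td(TX^{5,\vee}) = 1 + \tfrac{1}{12}c_2(TX^{5,\vee})$, and the correction term $\tfrac{1}{12}c_2(TX^{5,\vee}) \cdot \mathrm{ch}_2(i_*E)$ lives in degree $8$, hence vanishes on a threefold. Matching the codimension-two components of both sides yields $\mathrm{ch}_2(i_*E) = 2[C_\nabla]$.

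Finally, using $c_1(i_*E) = 0$ together with the identity $\mathrm{ch}_2 = \tfrac{1}{2}(c_1^2 - 2c_2)$, I conclude $c_2(i_*E) = -\mathrm{ch}_2(i_*E) = -2[C_\nabla]$, which is the claimed formula. There is essentially no obstacle here: the only point requiring any care is confirming that the higher Todd corrections on $X^{5,\vee}$ do not contribute, which follows immediately from the Calabi--Yau condition and the dimension of the ambient threefold. Everything else is the rank and support information already supplied by Theorem~\ref{mirrorsheaf}.
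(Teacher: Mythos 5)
Your proof is correct and carries out the standard Grothendieck--Riemann--Roch computation for a rank $2$ bundle pushed forward from a curve in a Calabi--Yau threefold, including the correct observations that $c_1(i_*E)=0$ for codimension-two support and that the Todd correction $\tfrac{1}{12}c_2(TX^{5,\vee})\cdot\mathrm{ch}_2(i_*E)$ vanishes for degree reasons. The paper's proof is a one-line citation of Friedman (p. 29), which establishes the same formula by the same GRR argument, so the two proofs coincide in substance; you have simply written out the computation the reference performs.
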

\begin{proof}
See~\cite[p. 29]{Fri98}.
\end{proof}

\subsection{Lagrangian surgery and direct summands}\label{lsmthchern}
Theorem~\ref{mirrorsheaf} and the classification of coherent sheaves on $\mathbb{P}^1$ shows that any object of the form $(\tLvg,\nabla)$, where $\nabla$ is a local system satisfying~\eqref{interiorconeobs}--\eqref{4obs}, splits as a direct sum in the split-closed derived Fukaya category. In this subsection, we will show that the summands of such objects can be identified with Lagrangian branes supported on $\tLvgsmooth$, which will be enough to apply Theorem~\ref{main2} to calculate their open Gromov--Witten potentials. This is achieved by showing that two copies of $\tLvgsmooth$ can be obtained from a copy of $\tLvg$ under Lagrangian isotopy and a clean (anti-)surgery. 

Let $\tLvgsmooth\coloneqq\tLvgsmooth(i;\epsilon)$ denote one of the Lagrangian submanifolds constructed in Theorem~\ref{lsmthconst}, and let $\widetilde{\delta}$ the closed $1$-form on $\widetilde{L}'$ used to construct it. By taking $\epsilon$ sufficiently small, we can assume that $\widetilde{\delta}$ is arbitrarily $C^1$-small. Observe that $\widetilde{\delta}$ extends to a $1$-form on the domain of $\tLvg$, since we already assumed that it can be expressed as the product of a $1$-form on $T^2$ with a constant function on the real line near the cusps of $\widetilde{L}'$. Let $\widetilde{\delta}_{\mathrm{im}}\in\Omega^1(\tLvg)$ denote the $1$-form obtained by patching two copies of $\widetilde{\delta}$ defined on the sheets of $\tLvg$.
\begin{lemma}\label{isotopyequiv}
Let $(\tLvg,\nabla)$ denote an object of $\mathcal{F}(X_{\tau}^5)$, where $\nabla$ is an unobstructed local system. Deforming $\tLvg$ in a Weinstein neighborhood (contained in $\tWvg$) through the graphs $\tLvg(t)\coloneqq\Gamma(t\widetilde{\delta}_{\imm})$ of $t\widetilde{\delta}$, for $t\in[0,1]$ and equipping $\tLvg(1)$ with a suitable rank one $\Lambda$-local system $\nabla_{\Lambda}^{\imm}$ yields a Lagrangian brane $(\tLvg(1),\nabla_{\Lambda}^{\imm})$ for which we have a quasi-isomorphism
\begin{align}\label{isotopyqis}
CF^*(\tLvg,\nabla)\simeq CF^*(\tLvg(1),\nabla_{\Lambda}^{\imm}) \,. 
\end{align}
\end{lemma}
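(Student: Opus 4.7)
The plan is to treat the deformation $\{\tLvg(t)\}_{t\in[0,1]}$ as a Lagrangian (but not Hamiltonian) isotopy inside the Weinstein neighborhood $\tWvg$, and to absorb its nontrivial flux into a twist of the local system using the standard Novikov-ring trick. First I would check that, for $\epsilon$ sufficiently small, the graph $\tLvg(t)=\Gamma(t\widetilde{\delta}_{\imm})$ is a well-defined Lagrangian immersion with clean self-intersection for every $t\in[0,1]$: the form $\widetilde{\delta}_{\imm}$ is $C^1$-small by construction, it extends across the cusps of $\widetilde{L}'$ using the collar description of $\widetilde{h}$, and the only self-intersections of $\tLvg$ either lie in the two sheets of $\Phi_{\widetilde{L}'}(\Gamma(\pm d\widetilde{h}))$ (transverse double points coming from the critical points of $\widetilde{h}$) or lie inside the Darboux balls $\widetilde{B}_{\ell,m}$ (copies of the self-intersection torus of $L_Y$). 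Smallness of $\widetilde{\delta}_{\imm}$ keeps these clean throughout the isotopy.

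Next I would compute the flux of the isotopy. By definition of the graph construction, the generating vector field of $\{\tLvg(t)\}$ satisfies $\iota_{v_t}\omega = \widetilde{\delta}_{\imm}$ along $\tLvg(t)$, so the flux class is $[\widetilde{\delta}_{\imm}]\in H^1(\tLvg;\mathbb{R})$. The key observation is that although this class is generally nonzero, the symplectic area swept out by the isotopy along any loop $\gamma\subset\tLvg$ equals $\int_\gamma\widetilde{\delta}_{\imm}$. I would then define
\begin{align*}
\nabla_{\Lambda}^{\imm}\coloneqq\nabla\otimes\eta,
\end{align*}
where $\eta$ is the rank one $\Lambda$-local system whose holonomy around a loop $\gamma$ is $T^{\int_\gamma\widetilde{\delta}_{\imm}}$. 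This twist is precisely what is needed so that the energy filtration on the Floer cochain complex is preserved under the continuation map associated to the isotopy.

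I would then construct the quasi-isomorphism \eqref{isotopyqis} by a standard continuation argument: the trace of the isotopy is a Lagrangian cobordism in $[0,1]\times\tWvg$ (or equivalently, counts of holomorphic strips with moving Lagrangian boundary) gives a chain map $CF^*(\tLvg,\nabla)\to CF^*(\tLvg(1),\nabla_{\Lambda}^{\imm})$. Exactness of $\tLvg$ in $\tWvg$ (Lemma~\ref{exactness} applied locally), together with smallness of $\widetilde{\delta}_{\imm}$, ensures Gromov compactness and controls the energies of the relevant strips. The flux-absorbing twist by $\eta$ guarantees that this continuation map respects holonomy weights, so inverting the isotopy yields a homotopy inverse.

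The main obstacle will be bookkeeping the twist across the switching components. Because $\tLvg$ is immersed, $H_1$ of its domain includes classes that pass between the two sheets of $\Phi_{\widetilde{L}'}(\Gamma(\pm d\widetilde{h}))$ through a self-intersection point, and the Novikov-weighted holonomy character $\eta$ must be defined compatibly with the switching components appearing in the $A_{\infty}$-structure. One must verify that the contributions of holomorphic teardrops in the Darboux charts $\widetilde{B}_{\ell,m}$ (as classified in Lemma~\ref{localtears}) to the continuation map are weighted consistently with~\eqref{interiorconeobs}--\eqref{4obs} after the twist, so that the local unobstructedness of $\nabla$ transfers to $\nabla_{\Lambda}^{\imm}$ and no new obstruction terms are introduced.
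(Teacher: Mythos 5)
Your proof is correct and lands in essentially the same place as the paper's, but it travels by a somewhat different route. The paper's proof simply asserts a bijective correspondence between holomorphic disks bounded by $\tLvg$ and $\tLvg(1)$ (as $\beta$ ranges over $H_2(X^5_\tau,\tLvg)$), observes that the area of the corresponding disk changes by $\int_{\partial\beta}\widetilde{\delta}_{\imm}$, and absorbs the resulting factor of $Q^{-\int_{\partial\beta}\widetilde{\delta}_{\imm}}$ into the holonomy of $\nabla_\Lambda^{\imm}$. In other words, the paper claims a strict isomorphism of $A_\infty$-algebras (after the Novikov reweighting), which is stronger than a quasi-isomorphism but takes the moduli-space bijection for granted. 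You instead invoke the continuation map associated to the Lagrangian (non-Hamiltonian) isotopy, control energies via exactness in $\tWvg$ and the $C^1$-smallness of $\widetilde{\delta}_{\imm}$, and twist the local system by the flux character to make the continuation map respect the Novikov filtration. That is more machinery than the paper uses, but it also makes explicit why the correspondence holds, which the paper leaves implicit. Your identification of the twist ($\eta$ with holonomy $Q^{\pm\int_\gamma\widetilde{\delta}_{\imm}}$) agrees with the paper's up to sign conventions, and your remarks about tracking holonomy across switching components and verifying compatibility with the teardrop counts of Lemma~\ref{localtears} are a useful addition that the paper addresses only in Remark~\ref{localisotopy}. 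One small caution: the lemma asserts a quasi-isomorphism of curved $A_\infty$-algebras, so if you go the continuation route you should note that the cobordism/moving-boundary construction yields an $A_\infty$-quasi-isomorphism (with curvature terms matching), not merely a chain-level quasi-isomorphism of the underlying complexes.
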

\begin{remark}\label{localisotopy}
Before proceeding with the proof, we will describe the effect of this Lagrangian isotopy in the Darboux charts $\widetilde{B}_{\ell,m}$. Recall that these charts are identified with open neighborhoods of the origin in $Y$ of~\eqref{gssing}, and that in these charts $\tLvg$ coincides with the Lagrangian submanifold $L_Y$ of Definition~\ref{lydef}. In these charts, isotoping the intersection of $\tLvg$ through the graphs $\Gamma(t\widetilde{\delta}_{\mathrm{im}})$ determines a Lagrangian isotopy $L_Y(t)$ of Lagrangian submanifolds of $Y$. Each $L_Y(t)$ still projects to $\Larc$, as drawn in Figure~\ref{immersedarc}, under the projection $Y\to\mathbb{C}$ of~\eqref{degeneration}.
\end{remark}
\begin{proof}[Proof of Lemma~\ref{isotopyequiv}]
Observe that $\tLvg$ is isotoped to $\tLvg(1)$ in a way that determines a bijective correspondence between holomorphic disks bounded by the two immersed Lagrangian submanifolds. More precisely, for any class $\beta\in H_2(X^5_{\tau},\tLvg)$, there is a corresponding class $\beta_1\in H_2(X^5_{\tau},\tLvg(1))$ and bijections between the moduli spaces of holomorphic disks representing these homology classes. The contribution of any such disk to the Fukaya $A_{\infty}$-algebra of $\tLvg(1)$ differs from its contribution to the $A_{\infty}$-algebra of $\tLvg$ by a factor of $Q^{-\int_{\partial\beta}\widetilde{\delta}_{\imm}}$. This factor and $\nabla$ then determine the holonomy representation of the local system $\nabla^{\imm}_{\Lambda}$ on $\tLvg(1)$.
\end{proof}
We remark that the local system $\nabla^{\imm}_{\Lambda}$ is non-unitary over the Novikov field, but it is clear from its construction, and in particular~\eqref{isotopyqis}, that all sums involved in the definition of the $A_{\infty}$-algebra converge.

A key observation is that $\tLvg(1)\cap\widetilde{B}_{\ell,m} = L_Y(1)$ admits a description as a clean Lagrangian surgery on two smoothings of the Harvey--Lawson cone. More precisely, recall that $\delta$ restricts to a $1$-form on the complement of the cone point $C_{HL}\setminus\lbrace 0\rbrace$ in the Harvey--Lawson cone. This consequently determines a smooth Lagrangian solid torus $L_Y(i;\epsilon)$ in $Y$ given by one of~\eqref{acsmoothings}. The exact $1$-forms $\pm d\widetilde{h}$ used to construct $\tLvg$ restrict to exact $1$-forms on $C_{HL}\setminus\lbrace 0\rbrace$, and we can use these to construct a pair of Lagrangian solid tori $L_Y(i;\epsilon)_{\pm}$ intersecting cleanly in a circle and a $2$-torus. These two solid tori project to arcs in $\mathbb{C}$ under~\eqref{degeneration} as shown in Figure~\ref{solidtoriprojections}. The $S^1$ self-intersection component lies above the origin, and the $T^2$-component lies above the other intersection point in $\mathbb{C}$. The latter component corresponds to the switching component of $L_Y(1)$.
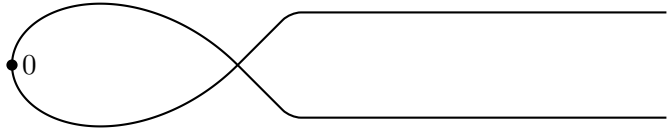
\begin{figure}
\begin{tikzpicture}
\draw[smooth, thick] 
  plot[domain=135:225,samples=200] (\x:{3*cos(2*\x)});
  
  \draw[rounded corners, thick] (0,0) -- (0.7,0.7) -- (5.7,0.7);
  \draw[rounded corners, thick] (0,0) -- (0.7,-0.7) -- (5.7,-0.7);
  
  \node[] at (-3.5,0) {};
  \node[circle, fill, inner sep = 1.5 pt] at (-3,0) {};
  \node[anchor = west] at (-3,0) {$0$};
\end{tikzpicture}
\caption{The images of $L_Y(i;\epsilon)_{\pm}$ in $\mathbb{C}$ . These Lagrangian solid tori intersect each other cleanly in a circle, lying in the fiber over $0$, and a $2$-torus, lying in the fiber over the other intersection point.}\label{solidtoriprojections}
\end{figure}
The following is immediate from this local description.
\begin{lemma}
By performing a clean Lagrangian surgery along the $S^1$-component of \begin{align*}
L_Y(i;\epsilon)_{-}\cap L_Y(i;\epsilon)_{+}
\end{align*}
we obtain an immersed Lagrangian submanifold of $Y$ Hamiltonian isotopic to $L_Y(1)$. \qed
\end{lemma}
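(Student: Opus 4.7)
The plan is to verify the Hamiltonian isotopy by performing the clean Lagrangian surgery explicitly in a Weinstein neighborhood of the $S^1$-component, and then tracking the result through the product decomposition \eqref{commutative}. In a neighborhood of the $S^1$ lying over $0 \in \mathbb{C}$, the two solid tori $L_Y(i;\epsilon)_\pm$ admit a standard local description: identifying a Weinstein neighborhood of this $S^1$ with a neighborhood of the zero-section in $T^*(S^1 \times \mathbb{R}^2)$, the two Lagrangians appear as the graphs of the closed 1-forms $\pm d\widetilde{h}$ (restricted to a collar of the cusp). Clean Lagrangian surgery along $S^1$ in this local model is the standard construction (cf.~\cite{MW} or~\cite{AS21}): it produces a smooth Lagrangian built as the graph of a 1-form which smoothly interpolates between $+d\widetilde{h}$ and $-d\widetilde{h}$, with a surgery neck of some small parameter $\delta>0$. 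By construction the output is exact in this local chart.

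Next I would track the surgery through the product decomposition \eqref{commutative}. Under this identification the pieces $L_Y(i;\epsilon)_\pm \cap \widetilde{B}_{\ell,m}$ are, after a $C^1$-small Hamiltonian isotopy supported away from the $S^1$-intersection, of the form $T^2_{\mathrm{Cl}} \times \gamma_\pm$, where $\gamma_+$ and $\gamma_-$ are the two arcs in $\mathbb{C} \setminus \{1\}$ depicted in Figure~\ref{solidtoriprojections}: both arcs emanate from $0$, run outward, and reconverge at the self-intersection point of $\Larc$. The clean surgery described above is compatible with this product structure, in the sense that its effect on the base $\mathbb{C}$ is to round off the corner at $w=0$ while leaving the $\mathbb{C}^2$-fiber equal to a Clifford torus. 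Hence the output of the surgery, viewed in $\mathbb{C}^2 \times (\mathbb{C}^* \setminus\{1\})$, is Hamiltonian isotopic to $T^2_{\mathrm{Cl}} \times \gamma$, where $\gamma$ is a smooth arc with one transverse self-intersection at the image of the $T^2$-component.

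Finally, by choosing the surgery neck parameter $\delta$ appropriately and applying a compactly supported Hamiltonian isotopy in $\mathbb{C} \setminus \{1\}$, I can arrange $\gamma$ to equal $\Larc$ (as pictured in Figure~\ref{immersedarc}). Consequently, the surgered Lagrangian is Hamiltonian isotopic to $T^2_{\mathrm{Cl}} \times \Larc$, which by Definition~\ref{lydef} is exactly $L_Y$, and this is in turn Hamiltonian isotopic to $L_Y(1)$ through the family $L_Y(t)$ described in Remark~\ref{localisotopy}.

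The main obstacle will be checking the compatibility of the surgery with the product decomposition \eqref{commutative}, i.e.\ verifying that the local surgery model can be chosen so that its projection to $\mathbb{C}$ under $w$ is the promised arc $\gamma$ while simultaneously respecting the Clifford factor in $\mathbb{C}^2$. This reduces to the observation that the $1$-form $\widetilde{\delta}$ used to construct $L_Y(i;\epsilon)$ (hence also $L_Y(i;\epsilon)_\pm$) can be taken to be arbitrarily $C^1$-small and to agree with a standard product form in the collar of the cusp, so the perturbation caused by $\pm d\widetilde{h}$ does not obstruct writing everything in product coordinates. Once this compatibility is in hand, the remainder of the argument is a direct application of the standard surgery model and the exactness of all Lagrangians involved, which promotes the resulting smooth isotopy to a Hamiltonian one.
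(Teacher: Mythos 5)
Your overall strategy — reduce everything through the product decomposition~\eqref{commutative}, so that the clean surgery along the $S^1$ over $0\in\mathbb{C}$ becomes a one-dimensional Polterovich-type surgery rounding off the crossing of the two arcs $\gamma_\pm$ at the origin, with the $\mathbb{C}^2$-fiber riding along — is the right idea, and it is precisely what the paper treats as immediate from the local description and Figure~\ref{solidtoriprojections}. However, the concluding step has a genuine gap. You arrange the surgery output to be $T^2_{\mathrm{Cl}}\times\Larc = L_Y$ and then invoke the family $L_Y(t)$ of Remark~\ref{localisotopy} to get from $L_Y$ to $L_Y(1)$ ``through a Hamiltonian isotopy.'' But $L_Y(t) = \Gamma(t\widetilde{\delta}_{\mathrm{im}})$ is only a \emph{Lagrangian} isotopy with nonzero flux: the class $[\widetilde{\delta}]\in H^1(L_Y;\mathbb{R}) \cong H^1(T^2;\mathbb{R})$ is nonzero (otherwise $C_{HL}(i;\epsilon)$ would be Hamiltonian isotopic to the cone, which it is not), and this is exactly why Lemma~\ref{isotopyequiv} has to change the local system by $Q^{-\int_{\partial\beta}\widetilde{\delta}_{\imm}}$ to get a quasi-isomorphism. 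So your final appeal to $L_Y(t)$ as a Hamiltonian isotopy fails.

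The fix is that you should not land on $L_Y$ at all. The solid tori $L_Y(i;\epsilon)_\pm$ are built as graphs of $\pm d\widetilde{h}$ over $L_Y(i;\epsilon)$, which is itself the $\delta$-smoothing of $C_{HL}\setminus\{0\}$; the $\delta$-shift is already baked in. Consequently, after your product reduction the $T^2$-fiber of the surgered Lagrangian is the \emph{shifted} torus $T^2_\delta$ (the image of $T^2_{\mathrm{Cl}}$ under the $\delta$-deformation), not $T^2_{\mathrm{Cl}}$ itself, and with the neck parameter chosen as you describe the surgery lands directly on (a compactly supported Hamiltonian isotope of) $T^2_\delta\times\Larc = L_Y(1)$. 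No appeal to the non-Hamiltonian family $L_Y(t)$ is needed. One smaller imprecision: clean surgery along the $S^1$ is not ``the graph of a 1-form which smoothly interpolates between $+d\widetilde{h}$ and $-d\widetilde{h}$''; it removes a neighborhood of the intersection circle and glues in a Lagrangian handle, which is not a graph over either sheet — the correct local description is the Polterovich neck in the arc factor, which is what you say further on.
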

Similarly, by using the exact $1$-form $\pm d\widetilde{h}$ to perturb the graph $\Gamma(t\delta)$ used to construct $\tLvgsmooth$, we obtain two embedded Lagrangian submanifolds $\widetilde{L}^5_{\mathrm{sm},\pm}$ of $X_{\tau}^5$ which intersect each other cleanly. Both of these are Hamiltonian isotopic to $\tLvgsmooth$, and the switching components of the self-intersection correspond either to
\begin{itemize}
\item[•] switching components of $\tLvg$; or
\item[•] switching components of $L_Y(i;\epsilon)_{-}\cap L_Y(i;\epsilon)_{+}$ diffeomorphic to $S^1$.
\end{itemize}
This follows from the fact that, by construction, the union $\widetilde{L}^5_{\mathrm{sm},-}\cup\widetilde{L}^5_{\mathrm{sm},+}$ agrees with (a copy of) $L_Y(1)_{-}\cup L_Y(1)_{+}$ inside the charts $\widetilde{B}_{\ell,m}$ and with $\tLvg(1)$ in the complement of these charts. 
\begin{definition}\label{antisurgerybranes}
We give $\widetilde{L}^5_{\mathrm{sm},-}\cup\widetilde{L}^5_{\mathrm{sm},+}$ the structure of a Lagrangian brane as follows.
\begin{itemize}
\item[•] Both of $\widetilde{L}^5_{\mathrm{sm},\pm}$ are equipped with the local system $\nabla_{\Lambda}$ and spin structure inherited from $\widetilde{L}'$ by Dehn filling;
\item[•] the gradings on $\widetilde{L}^5_{\mathrm{sm},\pm}$ by a shift of $1$; and
\item[•] the immersion $\widetilde{L}^5_{\mathrm{sm},-}\cup\widetilde{L}^5_{\mathrm{sm},+}$ is equipped with bounding cochain $0$.
\end{itemize}
In particular, the bounding cochain vanishes on the $S^1$-switching components along which surgery is performed.
\end{definition}

The main result of this subsection is a verification that the $A_{\infty}$-algebra of this Lagrangian brane structure on $\widetilde{L}^5_{\mathrm{sm},-}\cup\widetilde{L}^5_{\mathrm{sm},+}$ is quasi-isomorphic to the $A_{\infty}$-algebra of the corresponding object supported on $\tLvg(1)$.
Since the former $A_{\infty}$-algebra splits as a direct sum, this realizes the direct sum decomposition in the derived Fukaya category implicit in Theorem~\ref{mirrorsheaf}. This is inspired by the arguments of~\cite{PW19}, in that we use SFT neck stretching to compare holomorphic disks on $\tLvg(1)$ with holomorphic disks on $\tLvgsmooth(i;\epsilon)_{-}\cup\tLvgsmooth(i;\epsilon)_{+}$. The results of~\cite{PW19} do not apply directly in this setting, but a very simplified version of the techniques use in op. cit. will suffice for our purposes. To be more precise, we can appeal the form of the switching loci of $\tLvg(1)$ and $\tLvgsmooth(i;\epsilon)_{-}\cup\tLvgsmooth(i;\epsilon)_{+}$ to entirely rule out the existence of disks with corners on the $S^1$ self-intersection component of the latter Lagrangian immersion. This implies that we do not need to equip $\widetilde{L}^5_{\mathrm{sm},-}\cup\widetilde{L}^5_{\mathrm{sm},+}$ with a bounding cochain with nontrivial support on the $S^1$-switching components.

\begin{lemma}\label{surgeryequiv}
The Lagrangian brane on $\widetilde{L}^5_{\mathrm{sm},-}\cup\widetilde{L}^5_{\mathrm{sm},+}$ specified in Definition~\ref{antisurgerybranes} is quasi-isomorphic to the corresponding object $(\tLvg(1),\nabla_{\Lambda}^{\imm})$ in the split-closed derived Fukaya category.
\end{lemma}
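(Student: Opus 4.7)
The strategy is to define a chain-level quasi-isomorphism by setting up a bijection between generators and moduli spaces of pseudoholomorphic disks contributing to the $A_\infty$-structures on the two sides. First, I would identify the underlying graded $\Lambda$-modules. The switching components of $\widetilde{L}^5_{\mathrm{sm},-}\cup\widetilde{L}^5_{\mathrm{sm},+}$ come in two types: the $T^2$-components in the Darboux balls $\widetilde{B}_{\ell,m}$ and the switching components lying in the image of $\Phi_{\widetilde{L}'}$. The grading convention in Definition~\ref{antisurgerybranes} (which shifts the grading on $\widetilde{L}^5_{\mathrm{sm},+}$ by $1$ relative to $\widetilde{L}^5_{\mathrm{sm},-}$) is arranged precisely so that the $T^2$-components contribute copies of $\Omega^*(T^2)[1]\oplus\Omega^*(T^2)[-2]$ matching those in $CF^*(\tLvg(1),\nabla_\Lambda^{\imm})$ from Lemma~\ref{floercochainsvg}, and the clean intersections in $\Phi_{\widetilde{L}'}(U_{\widetilde{L}'})$ contribute the two Morse summands $CM^*(\widetilde{L}')[-1]\oplus CM^*(\widetilde{L}',\partial\widetilde{L}')[1]$. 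This gives a canonical isomorphism of graded $\Lambda$-modules between $CF^*(\tLvg(1),\nabla_\Lambda^{\imm})$ and $CF^*(\widetilde{L}^5_{\mathrm{sm},-}\cup\widetilde{L}^5_{\mathrm{sm},+},\nabla_\Lambda)$ with the convention that the bounding cochain on the $S^1$-components vanishes (and thus contributes nothing).

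Next, I would match the $A_\infty$-structures via SFT neck stretching along the contact-type hypersurfaces $\partial\widetilde{B}_{\ell,m}$, applying~\cite[Theorem 3.13]{Cha23} as in the proof of Proposition~\ref{m0svanishing}. Away from the neighborhoods $\widetilde{B}_{\ell,m}$, the two Lagrangians coincide (up to Hamiltonian isotopy), so in the limit any rigid disk contributing to either $A_\infty$-structure decomposes into an outer piece in $X_\tau^5\setminus\bigcup\widetilde{B}_{\ell,m}$ common to both sides, together with inner pieces inside $Y$ bounded by $L_Y(1)$ or $L_Y(i;\epsilon)_-\cup L_Y(i;\epsilon)_+$, respectively. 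The task then reduces to a bijection between moduli spaces of holomorphic disks in $Y$ with boundary on these two Lagrangians that agree near $\partial\widetilde{B}_{\ell,m}$, together with matching of output classes.

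The key local claim is that no rigid holomorphic disk in $Y$ bounded by $L_Y(i;\epsilon)_-\cup L_Y(i;\epsilon)_+$ can have a corner on the $S^1$-switching component. This should follow from the same projection argument used throughout the paper: such a disk projects under $w\colon Y\to\mathbb{C}$ from~\eqref{degeneration} to a holomorphic disk whose boundary lies in the image of $L_Y(i;\epsilon)_-\cup L_Y(i;\epsilon)_+$ and has a corner at the origin. The image (see Figure~\ref{solidtoriprojections}) together with the open mapping theorem shows the only option is the teardrop in the non-origin intersection point, whose boundary passes through the $T^2$ component rather than $S^1$. Disks with corners only on the $T^2$ components or on no switching component then correspond, under the surgery, to disks on $L_Y(1)$ with the same outputs: this is because Lagrangian surgery along a clean $S^1$ along which no disks have corners does not modify the relevant moduli spaces, as in the much more general analysis of~\cite{PW19}.

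The main obstacle will be carrying out the neck-stretching compactness argument carefully enough to rule out any sphere or disk bubbles with negative punctures that could obstruct the bijection of moduli spaces, and verifying that the signs and Novikov weights (which absorb the exact $1$-form $\widetilde{\delta}_{\imm}$ into $\nabla_\Lambda^{\imm}$) match up. With this in place, the chain-level map is an $A_\infty$-quasi-isomorphism, yielding the desired equivalence in the split-closed derived Fukaya category.
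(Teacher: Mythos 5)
Your proposal takes a genuinely different route from the paper, and there is a gap at the final step. The paper does not directly compare the self-$A_\infty$-algebras $CF^*(\tLvg(1),\nabla_{\Lambda}^{\imm})$ and $CF^*(\widetilde{L}^5_{\mathrm{sm},-}\cup\widetilde{L}^5_{\mathrm{sm},+},\nabla_\Lambda)$. Instead it fixes the split-generators $\mathcal{A}^5$ of $\mathcal{F}(X_\tau^5)$ from Sheridan's construction and shows that the two branes determine quasi-isomorphic $A_\infty$-\emph{modules} $CF^*(\mathcal{A}^5,-)$ over $CF^*(\mathcal{A}^5)$. Since $\mathcal{A}^5$ generates, this is equivalent to showing the two objects have the same image under the Yoneda embedding, which does give isomorphism in the split-closed derived category. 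The geometric content in the two arguments is essentially the same (neck stretching along $\partial\widetilde{B}_{\ell,m}$, the open mapping theorem applied to the projection $w$, and the observation that no relevant disk can have a corner at the $S^1$-switching component), and your local claim about disks in $Y$ bounded by $L_Y(i;\epsilon)_-\cup L_Y(i;\epsilon)_+$ is correct. The module approach has the additional advantage that one only needs to match intersection points with $\mathcal{A}^5$ (which lie away from the balls $\widetilde{B}_{\ell,m}$) and the disks defining the module structure, avoiding any bookkeeping over the immersed generators of the self-Floer complex.

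The gap: you conclude by saying an $A_\infty$-quasi-isomorphism $CF^*(\tLvg(1),\nabla_\Lambda^{\imm})\simeq CF^*(\widetilde{L}^5_{\mathrm{sm},-}\cup\widetilde{L}^5_{\mathrm{sm},+},\nabla_\Lambda)$ ``yields the desired equivalence in the split-closed derived Fukaya category.'' This step does not follow: two objects of an $A_\infty$-category can have quasi-isomorphic endomorphism $A_\infty$-algebras without being isomorphic objects (consider two non-isomorphic line bundles on a curve). To conclude isomorphism of objects you would need either an invertible degree-$0$ cocycle in $CF^*(L_1,L_2)$, or, as the paper does, a comparison of Yoneda modules over a generating subcategory. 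Your geometric set-up could presumably be adapted to either, but as written the argument establishes only the Corollary stated after the Lemma (quasi-isomorphism of $A_\infty$-algebras), not the Lemma itself.
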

\begin{proof}
Let $\mathcal{A}^5$ denote the union of split-generators for $\mathcal{F}(X_{\tau}^5)$ constructed in~\cite{She15}, where we have identified $X_{\tau}^5$ with $X_{\infty}^5$. It suffices to show that there is a quasi-isomorphism 
\begin{align*}
CF^*(\mathcal{A}^5,(\widetilde{L}^5_{\mathrm{sm},-}\cup\widetilde{L}^5_{\mathrm{sm},+},\nabla_{\Lambda}))\simeq CF^*(\mathcal{A}^5,(\tLvg(1),\nabla_{\Lambda}^{\imm})
\end{align*}
of $A_{\infty}$-modules over $CF^*(\mathcal{A}^5)$.

To that end, let $u\colon\Sigma\to X_{\tau}^5$ be a holomorphic map from a boundary punctured Riemann surface which contributes to $CF^*(\mathcal{L}^5,(\tLvg(1),\nabla_{\Lambda}^{\imm})$. If $u$ has a boundary component which passes through a $T^2$-neck of $\tLvg(1)$, then by the SFT neck-stretching argument used in the proofs of Proposition~\ref{m0svanishing} and Lemma~\ref{stripclassification}, we would be able to produce a holomorphic teardrop bounded by $\tLvg(1)$ contained in one of the charts $\widetilde{B}_{\ell,m}$. Remark~\ref{localisotopy} and Proposition~\ref{localtears} implies that no such teardrop can exist. The same argument, using SFT neck-stretching and the open mapping theorem, shows that if $u$ is a boundary punctured Riemann surface which constributes to the $A_{\infty}$-module structure on $CF^*(\mathcal{L}^5,(\widetilde{L}^5_{\mathrm{sm},-}\cup\widetilde{L}^5_{\mathrm{sm},+},\nabla_{\Lambda}))$, then it cannot have a corner at the $S^1$-switching component of $\widetilde{L}^5_{\mathrm{sm},-}\cup\widetilde{L}^5_{\mathrm{sm},+}$. In particular, we have a natural bijection between the sets of intersection points of $\mathcal{A}^5$ with the two Lagrangian branes in question, since $\mathcal{A}^5$ is contained away from the balls $\widetilde{B}_{\ell,m}$, and between the sets of holomoprhic disks contributing to the $CF^*(\mathcal{A}^5)$-module structures.
\end{proof}
One could also make this argument using Nohara--Ueda's split-generators~\cite{NU12} for the Fukaya category of the quintic. Combining this with Lemma~\ref{isotopyequiv} implies the following.
\begin{corollary}
There is a quasi-isomorphism of $A_{\infty}$-algebras
\begin{align*}
CF^*(\widetilde{L}^5_{\mathrm{sm},-}\cup\widetilde{L}^5_{\mathrm{sm},+},\nabla_{\Lambda})\simeq CF^*(\tLvg,\nabla)
\end{align*}
where the brane structures are as given in Definition~\ref{antisurgerybranes}. \qed
\end{corollary}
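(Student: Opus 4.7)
The plan is to deduce the corollary directly by chaining together the two preceding lemmas. Lemma~\ref{isotopyequiv} produces a quasi-isomorphism $CF^*(\tLvg,\nabla)\simeq CF^*(\tLvg(1),\nabla^{\imm}_{\Lambda})$ at the level of $A_\infty$-algebras, since its proof is entirely geometric: it identifies moduli spaces of disks bounded by $\tLvg$ with those bounded by $\tLvg(1)$ under the Lagrangian isotopy $\Gamma(t\widetilde{\delta}_{\imm})$, and the rescaling of areas is compensated by the choice of local system $\nabla^{\imm}_{\Lambda}$. So the essential work is to upgrade Lemma~\ref{surgeryequiv}, which was stated at the level of objects of the split-closed derived Fukaya category, to an equivalence of $A_\infty$-algebras between $CF^*(\tLvg(1),\nabla_{\Lambda}^{\imm})$ and $CF^*(\widetilde{L}^5_{\mathrm{sm},-}\cup\widetilde{L}^5_{\mathrm{sm},+},\nabla_{\Lambda})$.

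First I will invoke the general principle, see e.g.~\cite{She15} or the Yoneda formalism, that a quasi-isomorphism between two objects $L_0,L_1$ in a triangulated $A_\infty$-category $\mathcal{C}$ induces a quasi-isomorphism of endomorphism $A_\infty$-algebras $\hom_{\mathcal{C}}(L_0,L_0)\simeq\hom_{\mathcal{C}}(L_1,L_1)$, given by conjugation with a closed isomorphism and its inverse. Applied to $\mathcal{C} = D^{\pi}\mathcal{F}(X_\tau^5)$ and to $L_0 = (\tLvg(1),\nabla_{\Lambda}^{\imm})$, $L_1 = (\widetilde{L}^5_{\mathrm{sm},-}\cup\widetilde{L}^5_{\mathrm{sm},+},\nabla_{\Lambda})$, this yields the desired quasi-isomorphism of $A_\infty$-algebras provided that both Floer self-cochain complexes compute the endomorphism algebras in the split-closed derived Fukaya category. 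This holds because the Fukaya $A_\infty$-algebra of an unobstructed Lagrangian brane is by definition the endomorphism algebra of the corresponding object.

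The quasi-isomorphism in the derived category itself is extracted from Lemma~\ref{surgeryequiv} as follows. That lemma exhibits a quasi-isomorphism of $CF^*(\mathcal{A}^5)$-modules between the two Yoneda modules $CF^*(\mathcal{A}^5,L_0)$ and $CF^*(\mathcal{A}^5,L_1)$, where $\mathcal{A}^5$ is Sheridan's split-generating subcategory. Because $\mathcal{A}^5$ split-generates $D^\pi\mathcal{F}(X_\tau^5)$ (cf. Assumption~\ref{generationass} and~\cite{She15}), the Yoneda embedding restricted to $\mathcal{A}^5$ is cohomologically full and faithful on the split-closure, so a quasi-isomorphism of Yoneda modules lifts to a quasi-isomorphism of the underlying objects $L_0,L_1$. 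Composing this with the algebra-level quasi-isomorphism of Lemma~\ref{isotopyequiv} completes the proof.

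The only subtle step is checking that the $A_\infty$-algebra computed by Floer self-cochains on the immersed Lagrangian $\widetilde{L}^5_{\mathrm{sm},-}\cup\widetilde{L}^5_{\mathrm{sm},+}$ equipped with the brane structure of Definition~\ref{antisurgerybranes} is indeed the endomorphism algebra of the corresponding object; in particular, that the vanishing bounding cochain on the $S^1$-switching components is compatible with the $A_\infty$-structure. This is guaranteed by the SFT neck-stretching argument used in Lemma~\ref{surgeryequiv}, which rules out any holomorphic polygon with corners at these $S^1$-switching components, so the diagonal summand of the Fukaya $A_\infty$-algebra automatically splits off and contributes trivially to $\mathfrak{m}_0$ and all higher operations involving these intersection points. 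Hence the $A_\infty$-algebra structure is unambiguously defined, and the chain of quasi-isomorphisms assembles into the stated equivalence. \qed
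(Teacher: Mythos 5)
Your proposal is correct and takes essentially the same route the paper has in mind: Lemma~\ref{isotopyequiv} gives the first quasi-isomorphism of $A_\infty$-algebras directly (the isotopy identifies moduli spaces and the local system absorbs the area change), while Lemma~\ref{surgeryequiv} supplies a quasi-isomorphism of objects in $D^\pi\mathcal{F}(X^5_\tau)$ via Yoneda modules over the split-generating subcategory $\mathcal{A}^5$; passing from quasi-isomorphic objects to quasi-isomorphic endomorphism $A_\infty$-algebras, as you invoke, is exactly the intended bridge. The paper leaves this bridge implicit, so your making it explicit is a useful elaboration rather than a deviation.

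One small imprecision worth flagging: in your final paragraph, the phrase ``the diagonal summand of the Fukaya $A_\infty$-algebra automatically splits off and contributes trivially to $\mathfrak{m}_0$'' misdescribes the geometric content. The SFT neck-stretching argument rules out holomorphic polygons with corners \emph{at} the $S^1$-switching components; it does not make the diagonal summand ``split off.'' What it actually shows is that the curvature $\mathfrak{m}_0$ receives no contribution from teardrops cornered at the $S^1$'s, so the vanishing bounding cochain posited in Definition~\ref{antisurgerybranes} is indeed a Maurer--Cartan solution. The $S^1$-summands are still present in the Floer cochain complex of $\widetilde{L}^5_{\mathrm{sm},-}\cup\widetilde{L}^5_{\mathrm{sm},+}$ and do contribute to cohomology at the chain level; the point of routing the comparison through $D^\pi\mathcal{F}$ and split-generation, as both you and the paper do, is precisely to avoid having to track these surgery-level Mayer--Vietoris contributions by hand. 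Since your argument does not in fact rely on that misstated claim, this does not affect the correctness of the proof.
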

\begin{remark}
The quasi-isomorphism we have constructed tell us that there is an $A_{\infty}$-functor
\begin{align}\label{a-inf-functor}
(\tLvgsmooth,\nabla_{\Lambda})\to D^{\pi}\mathcal{F}(X^5)
\end{align}
from the $A_{\infty}$-category with one object and hom set $CF^*(\tLvgsmooth,\nabla_{\Lambda})$ to the split-closed derived Fukaya category. This functor takes the object in the former category to a direct summand of $(\tLvg,\nabla)$ mirror to the pushforward of a line bundle on a curve.
\end{remark}

\begin{corollary}
The functor~\eqref{a-inf-functor} carries $(\tLvgsmooth,\nabla_{\Lambda})$ to an object of the Fukaya category mirror to the pushforward of a line bundle on $C$, where $C$ is a generic line in the mirror quintic. The second Chern class of this object is represented by $-[C]$. Then there is an isomorphism of vector spaces
\begin{align*}
HF^*(\tLvgsmooth,\nabla_{\Lambda})\cong H^*(S^1\times S^2;\Lambda)
\end{align*}
over the Novikov field, where the right hand side refers to the ordinary cohomology of $S^1\times S^2$.
\end{corollary}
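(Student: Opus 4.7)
The plan is to deduce the three claims from the quasi-isomorphism of $A_\infty$-algebras established in the Corollary preceding this one, combined with Theorem~\ref{mirrorsheaf}, which identifies the mirror of $(\tLvg,\nabla)$ with the pushforward of a rank $2$ vector bundle splitting as $i_*\mathcal{L}\oplus i_*\mathcal{L}[1]$ for a line bundle $\mathcal{L}$ on a line $C=C_\nabla\subset X^{5,\vee}$.

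First I would identify the mirror object. Taking $\nabla$ to be a generic unobstructed local system in the $1$-parameter family of Proposition~\ref{localm0}, the line $C$ is generic in its family by Proposition~\ref{globalsupport}. By Lemma~\ref{surgeryequiv} and the remark preceding this corollary, $(\tLvgsmooth,\nabla_\Lambda)$ is quasi-isomorphic to the direct summand of $(\tLvg,\nabla)$ corresponding to a single sheet of the two-sheeted immersion $\widetilde{L}^5_{\mathrm{sm},-}\cup\widetilde{L}^5_{\mathrm{sm},+}$; by the relative grading of Definition~\ref{antisurgerybranes} the two sheets differ precisely by $[1]$, so the image of $(\tLvgsmooth,\nabla_\Lambda)$ under~\eqref{a-inf-functor} is $i_*\mathcal{L}$ (up to the shift). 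Additivity of $c_2$ on direct sums, together with invariance of $c_2$ under the grading shift $[1]$ and Corollary~\ref{algc2computation}, then gives
\[
-2[C]=c_2(\mathcal{L}_{(\tLvg,\nabla)})=c_2(i_*\mathcal{L})+c_2(i_*\mathcal{L}[1])=2\,c_2(i_*\mathcal{L}),
\]
so $c_2(i_*\mathcal{L})=-[C]$.

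For the Floer cohomology, set $A^*\coloneqq HF^*(\tLvgsmooth,\nabla_\Lambda)$. Expanding $\End$ of the direct sum $(\tLvgsmooth,\nabla_\Lambda)\oplus(\tLvgsmooth,\nabla_\Lambda)[1]$ using the cohomological shift identities $\Hom^n(M,M[1])=A^{n+1}$ and $\Hom^n(M[1],M)=A^{n-1}$ gives
\[
HF^n(\tLvg,\nabla)=2A^n\oplus A^{n-1}\oplus A^{n+1}
\]
for every $n$. Since $\tLvgsmooth$ is an embedded closed oriented $3$-manifold, its Floer cochain complex is modelled on $\Omega^*(\tLvgsmooth)$, so $A^n=0$ for $n\notin[0,3]$. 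Substituting the ranks $(1,3,4,4,3,1)$ in degrees $-1,\dots,4$ from Theorem~\ref{mirrorsheaf} yields a triangular linear system whose unique solution is $A^0=A^1=A^2=A^3=\Lambda$, matching $H^*(S^1\times S^2;\Lambda)$ rank-by-rank. The only delicate bookkeeping is tracking the grading shift between the two summands, but this is already built into the construction of the quasi-isomorphism in Lemma~\ref{surgeryequiv}; moreover the linear system above is symmetric under $A\mapsto A[1]$, so the final answer is independent of which summand is picked.
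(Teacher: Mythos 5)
Your argument is essentially the right one, and since the paper states this Corollary without a separate proof, your reconstruction is the natural deduction from Lemma~\ref{surgeryequiv}, Theorem~\ref{mirrorsheaf}, and Corollary~\ref{algc2computation}. In particular, your derivation of $HF^*(\tLvgsmooth,\nabla_\Lambda)$ by expanding $\Ext^*(i_*\mathcal{L}\oplus i_*\mathcal{L}[1],\,i_*\mathcal{L}\oplus i_*\mathcal{L}[1])$ against the ranks $(1,3,4,4,3,1)$ of Theorem~\ref{mirrorsheaf}, using the vanishing $A^n=0$ for $n\notin[0,3]$ (valid because $\tLvgsmooth$ is an embedded graded Lagrangian $3$-manifold), is clean and correct, and the observation that the system is symmetric under $A\mapsto A[1]$ is a useful sanity check.

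The one step to be wary of is the justification of $c_2(i_*\mathcal{L})=-[C]$. You invoke ``additivity of $c_2$ on direct sums, together with invariance of $c_2$ under the grading shift $[1]$.'' Strictly speaking, the derived shift $[1]$ reverses the sign of the K-theory class, hence of the Chern character; so in the literal additive sense one would have $\ch(i_*\mathcal{L}\oplus i_*\mathcal{L}[1])=0$, which is \emph{not} $-2[C]$. The paper itself treats $\mathcal{L}_{(\tLvg,\nabla)}$ in Corollary~\ref{algc2computation} as the pushforward of a genuine rank-$2$ sheaf when computing its second Chern class (the reference to Friedman is for $c_2$ of such torsion sheaves), and the $[1]$ in Theorem~\ref{mirrorsheaf} is being used to account for the Floer grading range $[-1,4]$, not to alter the underlying K-class. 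The correct and more robust way to obtain $c_2(i_*\mathcal{L})=-[C]$ is to apply the same Grothendieck--Riemann--Roch/Friedman computation directly to the rank-$1$ pushforward (this is exactly what the proof of Proposition~\ref{comparingbnormalfunctions} does when it identifies the HKR image of $\mathcal{L}_i$ with $m[C^i]$ for a rank-$m$ bundle), rather than attempting to deduce it by ``dividing by $2$'' from the rank-$2$ case, where the shift complicates the bookkeeping. Your final answer is correct, but the intermediate claim of shift-invariance of $c_2$ should be replaced by the direct rank-$1$ GRR computation.
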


\section{Open Gromov--Witten theory for Lagrangian immersions}\label{ogw-immersed}
The approach to defining the open Gromov--Witten potential taken in~\cite{Fuk11, ST21, ST23, Han24b} is to correct that na{\"i}ve count $\mathfrak{m}_{-1}^J$ of pseudoholomorphic disks in a closed symplectic manifold $M$ with boundary on a Lagrangian $L$, which depends nontrivially on the almost complex structure $J$, by equipping $L$ with a bounding cochain. For ease of notation, we will only define the open Gromov--Witten potential when that $L$ is a graded spin Lagrangian immersion with clean self-intersections in a Calabi--Yau $3$-fold. This implies that the moduli spaces $\mathcal{M}_0(\beta;J)$ have virtual dimension $0$, so for any nonzero $\beta\in H_2(M,L;\mathbb{Z})$, we define
\begin{align}
\mathfrak{m}_{-1}^{\beta;J} \coloneqq\int_{\mathcal{M}_0(\beta;J)}1\in\mathbb{C}
\end{align}
and we set $\mathfrak{m}_{-1}^{0;J} = 0$. These define an element of the Novikov ring 
\begin{align}
\mathfrak{m}_{-1}^J &\coloneqq\sum_{\beta\in H_2(M,L;\mathbb{Z})}\mathfrak{m}_{-1}^{\beta;J} Q^{\omega(\beta)} \in\Lambda_+
\end{align}
of positive valuation. For embedded Lagrangian submanifolds, a construction of the open Gromov--Witten potential was proposed by Fukaya~\cite{Fuk11}. When $L$ is embedded, the integration pairing $\langle\cdot,\cdot\rangle$ on $CF^*(L) = \Omega^*(L)\widehat{\otimes}_{\mathbb{C}}\Lambda_{+}$, given by
\begin{align}
\langle\alpha,\beta\rangle = (-1)^{\deg\beta}\int_M\alpha\wedge\beta
\end{align}
is strictly cyclically symmetric, meaning that
\begin{align}\label{strictcyclic}
\langle\mathfrak{m}_k(\alpha_1\otimes\cdots\otimes\alpha_k),\alpha_0\rangle = (-1)^{\clubsuit_k}\langle\mathfrak{m}_k(\alpha_0\otimes\cdots\otimes\alpha_{k-1}),\alpha_k\rangle
\end{align}
where $\clubsuit_k = (\deg'\alpha_0)\sum_{j=1}^k(\deg'\alpha_j)$ and $\deg'\alpha_j = \deg\alpha_j+1$. When~\eqref{strictcyclic} holds, one can define a function $\Psi\colon\mathcal{M}(L)\to\Lambda_{+}$ on the moduli space of bounding cochains for $CF^*(L)$ by
\begin{align}
\Psi(b)\coloneqq\mathfrak{m}_{-1}+\sum_{k=0}^{\infty}\frac{1}{k+1}\langle\mathfrak{m}_k(b^{\otimes k}),b\rangle \, .
\end{align}
Strict cyclicity is used to show that $\Psi(b)$ only depends on the gauge-equivalence class of $b\in\mathcal{M}(L)$~\cite[Proposition 2.2]{Fuk11}.

The proof of~\eqref{strictcyclic} in~\cite{Fuk10} uses the existence of a system of Kuranishi structures on the moduli spaces $\mathcal{M}_{k+1}(\beta)$ of disks with $k+1$ boundary marked points representing the class $\beta\in H_2(M,L)$ that is compatible with the forgetful maps of boundary marked points~\cite[Corollary 3.1]{Fuk10}. Compatibility with forgetful maps is used there to establish that the Kuranishi structures are invariant under cyclic permutations of boundary marked points, since they are pulled back from the Kuranishi structures on $\mathcal{M}_0(\beta)$. We cannot speak of such forgetful maps for disks with boundary on an \textit{immersed} Lagrangian, since some of the boundary marked points of such a disk are mapped to self-intersection points of the immersion. As such, it is unclear how to construct cyclically symmetric perturbations of the moduli spaces of disks in the immersed setting, and so one cannot immediately extend the construction of the open Gromov--Witten potential in~\cite{Fuk11} to immersed Lagrangians.

The main results of~\cite{Han24b}, however, show that only the existence of a \textit{homotopy cyclic} pairing is required to define the open Gromov--Witten potential, and such a pairing can be extracted from the trace associated to the cyclic open-closed map (cf. Assumption~\ref{traceass}). Let $\mathcal{A}\coloneqq CF^*(L,\nabla)$ denote the curved $A_{\infty}$-algebra of a graded clean Lagrangian immersion $\iota_L\colon L\looparrowright M$ equipped with a rank one $U_{\Lambda}$-local system $\nabla$ as constructed in Appendix~\ref{immersedfloerappendix}. A trace as given in Assumption~\ref{traceass} can be thought of as a positive cyclic cocycle in $CC^*_{+}(\mathcal{A})$, from which one can construct a \textit{homotopy cyclic $\infty$-inner product}, or a strong homotopy inner product as it is called in~\cite{Cho08}, on $\mathcal{A}$ following~\cite{CL}.

An $\infty$-inner product on $\mathcal{A}$, in the sense of Tradler~\cite{Tra08}, is an $A_{\infty}$-bimodule homormophism $\psi\colon\mathcal{A}_{\Delta}\to\mathcal{A}^{\vee}$ from the diagonal bimodule over $\mathcal{A}_{\Delta}$ to the (linear) dual. For a more thorough review of these notions and of Hochschild and cyclic (co)homology for gapped filtered $A_{\infty}$-algebras, see~\cite[\S{2}]{Han24b}. Recall that such an $A_{\infty}$-bimodule homomorphism consists of linear maps $\lbrace\psi_{p,q}\colon\mathcal{A}^{\otimes p}\otimes\underline{\mathcal{A}}\otimes\mathcal{A}\to\mathcal{A}^{\vee}\rbrace_{p,q\in\mathbb{Z}_{\geq0}}$, where the bimodule factor has been underlined for readability, for which the associated map of tensors algebras commute with the $A_{\infty}$-bimodule structure maps. The precise meaning of this condition is reviewed in~\cite[(2.10)]{Han24b}. Such a $\psi$ is said to be homotopy cyclic if it is \textit{skew-symmetric}, \textit{closed}, and \textit{homologically nondegenerate}, which mean, respectively, that
\begin{itemize}
\item[•] for $\alpha\in\mathcal{A}^{\otimes p}$, $\beta\in\mathcal{A}^{\otimes q}$, and $v,w\in\mathcal{A}$, we have that
\begin{align}
\psi_{p,q}(\alpha\otimes\underline{v}\otimes\beta)(w) = (-1)^{\kappa}\psi_{q,p}(\beta,w,\alpha)(v)
\end{align}
where
\begin{align*}
\kappa = \left(\sum_{i=1}^p |a_i|'+|v|'\right)\cdot\left(\sum_{j=1}^q|b_j|'+|w|'\right)
\end{align*}
\item[•] for $a_1\otimes\cdots\otimes a_{\ell+1}\in\mathcal{A}^{\otimes\ell+1}$ and any triple $1\leq i<j<k\leq\ell+1$, we have that
\begin{align}
&(-1)^{\kappa_i}\psi(\cdots\otimes\underline{a_i}\otimes\cdots)(a_j)+(-1)^{\kappa_j}\psi(\cdots\otimes\underline{a_j}\otimes\cdots)(a_k) \nonumber \\
&+(-1)^{\kappa_k}\psi(\cdots\otimes\underline{a_k}\otimes\cdots\otimes)(a_i) = 0
\end{align}
where the sign is determined by
\[ \kappa_* = \left(|a_1|'+\cdots+|a_*|'\right)\cdot\left(|a_{*+1}|'+\cdots+|a_k|'\right)\]
and where the inputs are cyclically ordered; and

\item[•] the pairing on the de Rham cohomology $H^*(\mathcal{A},\mathfrak{m}_{1,0})$ is nondegenerate.
\end{itemize}
Having obtained a positive cyclic cocycle from Assumption~\ref{traceass}, one obtain a negative cyclic cocycle using the connecting homomorphism in the long exact sequence for cyclic cohomology (cf.~\cite[Lemma 2.8]{Han24b}). From $\psi_0$, the part of this cocycle that is dual to the inclusion of Hochschild cycles, one defines $\psi\colon\mathcal{A}_{\Delta}\to\mathcal{A}^{\vee}$ by setting
\begin{align}
\psi_{p,q}\coloneqq\psi_0(\alpha\otimes v\otimes\beta)(w) - \psi_0(\beta\otimes w\otimes\alpha)(v) \, . \label{infinityinner}
\end{align}
That the $\infty$-inner product $\psi$ given by~\eqref{infinityinner} is skew-symmetric and closed is immediate from the definition. For the cocycle associated to the cyclic open-closed map, one can check that the $\infty$-inner product $\psi$ obtained this way induces the Poincar{\'e} duality pairing on de Rham cohomology, which shows that it is homologically nondegenerate. The Poincar{\'e} duality pairing for an immersed Lagrangian comes from a chain-level integration pairing on the Fukaya $A_{\infty}$-algebra of a clean Lagrangian immersion defined as follows.
\begin{definition}
Let $\iota\colon L\to M$ be a clean Lagrangian immersion, where $L$ is closed. Let $A$ denote the index set for the components of $L\times_{\iota}L$. The integration pairing on $\overline{CF}^*(L)$ is defined as follows. If $a_{\pm}\in A$ are a pair of labels that are swapped under the natural involution, then for a pair of forms
\begin{align*}
\alpha_{\pm}\in\Omega^*(L_{a_{\pm}};\Theta_{a_{\pm}}^-) \,.
\end{align*}
we define
\begin{align}
\langle\alpha_{-},\alpha_{+}\rangle\coloneqq(-1)^{\deg\alpha_{+}}\int_{L_{a_{-}}}\alpha_{-}\wedge\alpha_{+} \,.
\end{align}
Set $\langle\alpha_{-},\alpha_{+}\rangle = 0$ if $\alpha_{\pm}$ are forms on switching components that are not related this way. This naturally extends to a pairing on $CF^*(L)$.
\end{definition}
Strict cyclic symmetry of such a pairing would imply that it induces a homotopy cyclic inner product $\psi\colon\mathcal{A}_{\Delta}\to\mathcal{A}^{\vee}$ for which all terms $\psi_{p,q}$ with $p>0$ or $q>0$ vanish~\cite{Cho08}. This morally explains why the open Gromov--Witten potential defined below generalizes the one defined in~\cite{Fuk11}, as discussed in~\cite[\S{6}]{Han24b}.
\begin{definition}[{\cite{Han24b}}]\label{infinityogwdef}
The $(\infty-)$open Gromov--Witten potential on $CF^*(L,\nabla)$ is the function $\Psi\colon\mathcal{M}(L,\nabla)\to\Lambda_{+}$ given by
\begin{align}
\Psi_J(b)&\coloneqq\mathfrak{m}_{-1}^J+\Psi'_J(b) \\
&\coloneqq\mathfrak{m}_{-1}^J+\sum_{N=0}^{\infty}\sum_{p+q+k = N}\frac{1}{N+1}\psi_{p,q}(b^{\otimes p}\otimes\mathfrak{m}_k(b^{\otimes k})\otimes b^{\otimes q})(b)
\end{align}
where $\psi$ is the homotopy cyclic infinity inner product associated to the cyclic open-closed trace (Assumption~\ref{traceass}).
\end{definition}
The gauge-invariance of $\Psi$ in characteristic $0$ follows from~\cite[Theorem 2.17]{Han24b}. For general Lagrangians $L$, it will still not be the case that $\Psi(b)$ is independent of the almost complex structure $J$ used to define it. Instead, $\Psi$ will only satisfy a wall-crossing formula involving closed $\underline{J}$-holomorphic curves, where $\underline{J} = \lbrace J_t\rbrace_{t\in[0,1]}$ is a $1$-parameter family of almost complex structures. This wall-crossing formula is most cleanly stated using pseudo-isotopies of $A_{\infty}$-algebras, as reviewed in \S\ref{pseudoisotopiessection}. It is most convenient for us to think of a pseudo-isotopy as  a (gapped filtered) $A_{\infty}$-structure on the space of differential forms on $[0,1]\times(L\times_{\iota_L} L)$. The structure maps are denoted $\lbrace\widetilde{\mathfrak{m}}_k\rbrace_{k\geq0}$. 
\begin{remark}[Bounding cochains on a pseudo-isotopy]
A pseudo-isotopy determines $A_{\infty}$-quasi-isomorphisms
\begin{align*}
\mathfrak{c}^t\colon(CF^*(L,\nabla),\lbrace\mathfrak{m}_k^{J_0}\rbrace_{k\geq0})\to(CF^*(L,\nabla),\lbrace\mathfrak{m}_k^{J_t}\rbrace_{k\geq0})
\end{align*}
for all $t\in[0,1]$. An $A_{\infty}$-quasi-isomorphism is comprised of a sequence of linear maps
\begin{align*}
\mathfrak{c}^t_k\colon(CF^*(L,\nabla)[1])^{\otimes k}\to CF^*(L,\nabla)[1]
\end{align*}
of degree $0$ for all $k\geq0$. We set $b_t = \mathfrak{c}^t_*(b_0)$ for all $t\geq0$, where the pushforward of bounding cochains is defined by $\mathfrak{c}^t_*(b_0) = \sum_{k=0}^{\infty}\mathfrak{c}^t_k(b_0)$. This path of bounding cochains $\widetilde{b} = \lbrace b_t\rbrace_{t\in[0,1]}$ is a bounding cochain for the pseudo-isotopy, by the so-called pointwise condition ~\cite[Definition 21.27]{FOOO20}.
\end{remark}
\begin{theorem}[Wall-crossing]
For a path $\underline{J}$ as above and any path of bounding cochains $\widetilde{b} = \lbrace b_t\rbrace_{t\in[0,1]}$ as above, we have that
\begin{align}
\Psi_{J_0}(b_0) = \Psi_{J_1}(b_1)+\widetilde{GW}(L) \, .
\end{align}
The number $\widetilde{GW}(L)\in\Lambda_0$ is defined in~\eqref{wallcrossingterm} as the count of $\underline{J}$-holomorphic closed rational curves intersecting $L$ in a point.
\end{theorem}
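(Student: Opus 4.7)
The plan is to promote $\Psi$ to a functional $\widetilde{\Psi}$ on the space of bounding cochains for the pseudo-isotopy $\widetilde{\mathcal{A}} = \Omega^*([0,1]\times(L\times_{\iota_L} L))\,\widehat{\otimes}\,\Lambda_0$ associated to $\underline{J}$, and then to differentiate this extended potential in the time parameter $t\in[0,1]$. Concretely, I would first apply the cyclic open-closed trace of Assumption~\ref{traceass} on the level of the parametrized family to obtain a homotopy cyclic $\infty$-inner product $\widetilde{\psi}$ on $\widetilde{\mathcal{A}}$ whose restriction to the fiber over $t$ coincides with the $\infty$-inner product $\psi$ on $CF^*(L,\nabla)$ defined using $J_t$. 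In analogy with Definition~\ref{infinityogwdef}, set
\begin{align*}
\widetilde{\Psi}(\widetilde{b})\coloneqq\widetilde{\mathfrak{m}}_{-1}+\sum_{N=0}^{\infty}\sum_{p+q+k=N}\frac{1}{N+1}\widetilde{\psi}_{p,q}(\widetilde{b}^{\otimes p}\otimes\widetilde{\mathfrak{m}}_k(\widetilde{b}^{\otimes k})\otimes\widetilde{b}^{\otimes q})(\widetilde{b}),
\end{align*}
where $\widetilde{\mathfrak{m}}_{-1}$ is the count of $\underline{J}$-holomorphic disks without boundary marked points in the parametrized moduli space fibered over $[0,1]$, weighted in the usual way by the holonomy of the local system along the boundary.

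Next, I would argue that the gauge-invariance and convergence proof of~\cite[Theorem 2.17]{Han24b} carries over verbatim to the pseudo-isotopy setting, so that $\widetilde{\Psi}(\widetilde{b})$ depends only on the equivalence class of $\widetilde{b}$ in the moduli space of bounding cochains for $\widetilde{\mathcal{A}}$. By construction, pulling back along the inclusions $\lbrace t\rbrace\hookrightarrow[0,1]$ for $t\in\lbrace 0,1\rbrace$ sends $\widetilde{\Psi}(\widetilde{b})$ to $\Psi_{J_t}(b_t)$, as these pullbacks intertwine $\widetilde{\mathfrak{m}}_{-1}$ with $\mathfrak{m}_{-1}^{J_t}$ and $\widetilde{\psi}$ with $\psi^{J_t}$. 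Consequently, the difference $\Psi_{J_0}(b_0)-\Psi_{J_1}(b_1)$ can be computed by integrating $\partial_t\widetilde{\Psi}(\widetilde{b})$ over $[0,1]$, which is to say by identifying the failure of $\widetilde{\Psi}(\widetilde{b})$ to be closed as a $0$-form on $[0,1]$.

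The main obstacle is the identification of $\partial_t\widetilde{\Psi}$ in terms of geometric bubbling contributions on the boundaries of the parametrized moduli spaces. The codimension one strata come in two flavors. First, the disk can split into two smaller disks glued along a boundary point or a switching component; the contributions of these strata are organized by the $A_\infty$-relations for $\widetilde{\mathfrak{m}}_k$, and the defining skew-symmetry and closedness properties of $\widetilde{\psi}$ (together with the vanishing of $\widetilde{\mathfrak{m}}_*(\widetilde{b},\ldots,\widetilde{b})$ for $*\geq 0$ on bounding cochains) force them to cancel in pairs exactly as in the proof of~\cite[Theorem 2.17]{Han24b}. Second, an interior point of the disk can escape and bubble off a closed rational $\underline{J}$-holomorphic curve attached at the nodal point; since $L$ is graded and $3$-dimensional in a Calabi--Yau threefold, a transversality and dimension count shows the surviving disk component is forced to be constant at a point of $L$. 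Only this second type of stratum produces a nonzero contribution, and its total count is by definition $\widetilde{GW}(L)$. Summing these contributions and integrating over $t\in[0,1]$ yields the claimed wall-crossing formula.
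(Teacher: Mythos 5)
Your overall strategy—extending the potential to the pseudo-isotopy $\widetilde{\mathcal{A}}$, pulling back along the endpoints, and reading off the discrepancy from Stokes' theorem on the parametrized moduli spaces—is the same as the paper's, but the accounting of boundary strata has a genuine gap. You claim that the disk-splitting strata cancel among themselves purely algebraically, invoking "vanishing of $\widetilde{\mathfrak{m}}_*(\widetilde{b},\ldots,\widetilde{b})$ for $*\geq 0$." The Maurer--Cartan equation says the \emph{sum} $\sum_{k\geq 0}\widetilde{\mathfrak{m}}_k(\widetilde{b}^{\otimes k})$ vanishes, not each term individually; in particular $\widetilde{\mathfrak{m}}_0$, the curvature of the pseudo-isotopy, is nonzero. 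What actually happens in the paper is that after organizing the Stokes contributions~\eqref{ksplit}--\eqref{output} and applying the Maurer--Cartan equation, a residual error term survives, namely
\begin{align*}
\sum_{p,q\geq 0}\widetilde{\psi}_{p,q}(\widetilde{b}^{\otimes p}\otimes\underline{\widetilde{\mathfrak{m}}_0}\otimes\widetilde{b}^{\otimes q})(\widetilde{\mathfrak{m}}_0) \,,
\end{align*}
which after passing to a canonical model equals the Poincar\'e pairing $\langle\widetilde{\mathfrak{m}}_0,\widetilde{\mathfrak{m}}_0\rangle$. Your proposed cancellation "exactly as in the proof of~\cite[Theorem 2.17]{Han24b}" cites the gauge-invariance argument at fixed $J$, which does not have to confront this curvature term.

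The second, related gap is in your treatment of $\widetilde{\mathfrak{m}}_{-1}$. You only mention the stratum in which a closed rational curve bubbles off; but the codimension-one boundary of $\widetilde{\mathcal{M}}_{-1}(\beta;\underline{J})$ also contains fiber products of two moduli spaces $\widetilde{\mathcal{M}}_{0}(\vec{a};\beta;\underline{J})$ glued along an evaluation map, whose codomain may be a switching component of $L\times_\iota L$ (this is the one new subtlety in the immersed case). It is precisely these disk-disk fiber-product strata that contribute $-\langle\widetilde{\mathfrak{m}}_0,\widetilde{\mathfrak{m}}_0\rangle$ and cancel the residual error term from the $\widetilde{\psi}$-analysis. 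Without this cancellation the two halves of your computation do not close up, and the identity $\Psi_{J_0}(b_0)=\Psi_{J_1}(b_1)+\widetilde{GW}(L)$ does not follow. The closed-curve bubble is the \emph{remaining} stratum after this pairing, not the only nontrivial one.
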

\begin{proof}
The proof closely follows the proof of~\cite[Theorem 5.2]{Han24b}, with some modifications owing to the different chain-level model used in the present context. Recall that the terms of $\Psi'_{J_i}(b_i)$ are defined in terms of the open-closed map $\mathcal{OC}_0$.
Let $\widetilde{\mathcal{A}}$ denote the pseudo-isotopy defined using $\underline{J}$. By Assumption~\ref{traceass}, there is an $\infty$-inner product $\widetilde{\psi}$ on $\widetilde{\mathcal{A}}$ defined using the trace associated to the cyclic open-closed map on a pseudo-isotopy.
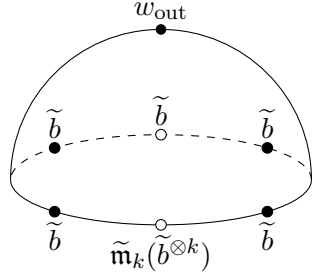
\begin{figure}
\begin{tikzpicture}
\node (N) at (0,0) {};
\node[anchor = south] (E) at (0,2) {$w_{\out}$};

\begin{scope}[very thick,decoration={
	markings,
	mark=at position 0.75 with {\arrow{>}}}]
\draw[postaction={decorate}, thick] (N) -- (E);
\end{scope}
\fill[white] (0,0) circle (2cm);
\fill[black] (0,2) circle (2pt);

\draw (2,0) arc (0:180:2);
\draw (-2,0) arc (180:360:2 and 0.6);
\draw[dashed] (2,0) arc (0:180:2 and 0.6);

\coordinate (A) at (270: 2 and 0.6);
\node[anchor = north] at (270: 2 and 0.6) {$\widetilde{\mathfrak{m}}_k(\widetilde{b}^{\otimes k})$};
\draw[fill = white] (A) circle (2pt);

\coordinate (B) at (225: 2 and 0.6);
\node[anchor = north] at (225: 2 and 0.6) {$\widetilde{b}$};
\draw[fill = black] (B) circle (2pt);

\coordinate (C) at (315:2 and 0.6);
\node[anchor = north] at (315: 2 and 0.6) {$\widetilde{b}$};
\draw[fill = black] (C) circle (2pt);

\coordinate (D) at (90:2 and 0.6);
\draw[fill = white] (D) circle (2pt);
\node[anchor = south] at (90:2 and 0.6) {$\widetilde{b}$};

\coordinate (E) at (45:2 and 0.6);
\draw[fill = black] (E) circle (2pt);
\node[anchor = south] at (45:2 and 0.6) {$\widetilde{b}$};

\coordinate (F) at (135:2 and 0.6);
\draw[fill = black] (F) circle (2pt);
\node[anchor = south] at (135:2 and 0.6) {$\widetilde{b}$};
\end{tikzpicture}\caption{Elements of~\eqref{ocmodpseudo}, with boundary marked points labeled by the relevant inputs. The marked points corresponding to the input and output of $\widetilde{\psi}$ are the white dots.}
\end{figure}
Applying Stokes' theorem~\cite[Proposition 4.2]{Fuk10} to the moduli spaces $\widetilde{\mathcal{M}}_{k,1}(L;\vec{a};\beta;\underline{J})$ of~\eqref{ocmodpseudo} and the definition of the $\infty$-inner products shows that
\begin{align}
&\Psi'_1(b_1)-\Psi'_{-1}(b_{-}) \\
&=\sum_{N=0}^{\infty}\sum_{\substack{p+q+k  = N \\ k_1+k_2 = k+1}}\frac{1}{N+1}\sum_{r+s = k_1-1}\widetilde{\psi}_{p,q}(\widetilde{b}^{\otimes p}\otimes\underline{\widetilde{\mathfrak{m}}_{k_1}(\widetilde{b}^{\otimes r}\otimes\widetilde{\mathfrak{m}}_{k_2}(\widetilde{b}^{\otimes k_2})\otimes\widetilde{b}^{\otimes s})}\otimes\widetilde{b}^{\otimes q})(\widetilde{b}) \label{ksplit}\\
&+\sum_{N=0}^{\infty}\sum_{\substack{p+q+k  = N \\ k_1+k_2 = k+1}}\frac{1}{N+1}\sum_{r+s = p-1}\widetilde{\psi}_{p,q}(\widetilde{b}^{\otimes r}\otimes\widetilde{\mathfrak{m}}_{k_2}(\widetilde{b}^{\otimes k_2})\otimes\widetilde{b}^{\otimes s}\otimes\underline{\mathfrak{m}_{k_1}(\widetilde{b}^{\otimes k_1})}\otimes\widetilde{b}^{\otimes q})(\widetilde{b}) \label{psplit}\\
&+\sum_{N=0}^{\infty}\sum_{\substack{p+q+k  = N \\ k_1+k_2 = k+1}}\frac{1}{N+1}\sum_{r+s = q-1}\widetilde{\psi}_{p,q}(\widetilde{b}^{\otimes p}\otimes\underline{\widetilde{\mathfrak{m}}_{k_1}(\widetilde{b}^{\otimes k_1})}\otimes\widetilde{b}^{\otimes r}\otimes\widetilde{\mathfrak{m}}_{k_2}(\widetilde{b}^{\otimes k_2})\otimes\widetilde{b}^{\otimes s})(\widetilde{b}) \label{qsplit}\\
&+\sum_{N=0}^{\infty}\sum_{\substack{p+q+k  = N \\ k_1+k_2 = k+1}}\frac{k_2}{N+1}\widetilde{\psi}_{p,q}(\widetilde{b}^{\otimes p}\otimes\underline{\widetilde{\mathfrak{m}}_{k_1}(\widetilde{b}^{\otimes k_1})}\otimes\widetilde{b}^{\otimes q})(\widetilde{\mathfrak{m}}_{k_2}(\widetilde{b}^{\otimes k_2})) \, . \label{output}
\end{align}
Because the value of $\widetilde{\psi}$ on inputs of the form under consideration are expressed as the difference
\[ \widetilde{\mathcal{OC}}_0(\widetilde{b}^{\otimes p}\otimes\widetilde{\mathfrak{m}}_k(\widetilde{b}^{\otimes k})\otimes\widetilde{b}^{\otimes q}\otimes\widetilde{b})-\widetilde{\mathcal{OC}}_0(\widetilde{b}^{\otimes p}\otimes\widetilde{b}\otimes\widetilde{b}^{\otimes q}\otimes\widetilde{\mathfrak{m}}_k(\widetilde{b}^{\otimes k}))\]
we prove the identity above by applying Stokes' theorem to two copies of the moduli space~\eqref{ocmodpseudo}. Since $\widetilde{\psi}_{p,q}$ is defined by taking a difference corresponding to these two moduli spaces, the contributions of Figure~\eqref{cancelingstrata} cancel, and thus they do not contribute to the sum above.

We can rewrite the sum of~\eqref{ksplit},~\eqref{psplit}, and~\eqref{qsplit} as
\begin{align}
&\sum_{N=0}^{\infty}\sum_{\substack{p+q+k  = N \\ k_1+k_2 = k+1}}\frac{N+1-k_2}{N+1}\widetilde{\psi}_{p,q}(\widetilde{b}^{\otimes p}\otimes\underline{\widetilde{\mathfrak{m}}_{k_1}(\widetilde{b}^{\otimes k_1})}\otimes\widetilde{b}^{\otimes q})(\widetilde{\mathfrak{m}}_{k_2}(\widetilde{b}^{\otimes k_2})) \label{clsum}
\end{align}
using~\cite[Lemma 2.15]{Han24b}. By the Maurer--Cartan eqation, the sum of~\eqref{output} and~\eqref{clsum} gives
\begin{align}
\sum_{p,q\geq0}\widetilde{\psi}_{p,q}(\widetilde{b}^{\otimes p}\otimes\underline{\widetilde{\mathfrak{m}}_0}\otimes\widetilde{b}^{\otimes q})(\widetilde{\mathfrak{m}}_0)\, . \label{errortermv0}
\end{align}
By passing to a canonical model for $\widetilde{\mathcal{A}}$ in which the $\infty$-inner product coincides with the Poincar{\'e} pairing, which exists by~\cite{CL}, we can rewrite this as
\begin{align}
\mathcal{OC}_0(\widetilde{\mathfrak{m}}_2(\widetilde{\mathfrak{m}}_0,\widetilde{\mathfrak{m}}_0)) = \langle\widetilde{\mathfrak{m}}_0,\widetilde{\mathfrak{m}}_0\rangle \label{errortermv2}
\end{align}
where the pairing on the right hand side is an integration pairing on $CF^*([0,1]\times L)$. Here we have used the fact that for degree reasons, only constant disks can contribute to $\widetilde{\mathfrak{m}}_2(\widetilde{\mathfrak{m}}_0,\widetilde{\mathfrak{m}}_0)$.

We can also analyze the leading terms $\mathfrak{m}_{-1}^{J_i}$ of $\Psi_{J_i}(b_i)$ simultaneously. To that end, consider the moduli spaces $\widetilde{\mathcal{M}}_{-1}(\beta;\underline{J})$. One type of boundary stratum in this moduli space is given by the fiber product of two moduli spaces of the form $\widetilde{\mathcal{M}}_{0}(\vec{a};\beta;\underline{J})$ along the evaluation map at the boundary marked point. This evaluation map can have a switching component of the immersion as its codomain, which is the only new geometric subtlety as compared to the case of embedded Lagrangians. Notice that the contributions of these terms are precisely given by~\eqref{errortermv2}, with the opposite sign. The contributions of the remaining boundary strata are given by the wall-crossing term $\widetilde{GW}(L)$ that counts closed rational curves, just as for embedded Lagrangians.
\end{proof}
\begin{figure}
\begin{tikzpicture}
\begin{scope}
\node (N) at (0,0) {};
\node[anchor = south] (E) at (0,2) {$w_{\out}$};

\begin{scope}[very thick,decoration={
	markings,
	mark=at position 0.75 with {\arrow{>}}}]
\draw[postaction={decorate}, thick] (N) -- (E);
\end{scope}
\fill[white] (0,0) circle (2cm);
\fill[black] (0,2) circle (2pt);

\draw (2,0) arc (0:180:2);
\draw (-2,0) arc (180:360:2 and 0.6);
\draw[dashed] (2,0) arc (0:180:2 and 0.6);

\begin{scope}[xshift = 3cm]
\draw (1,0) arc (0:180: 1 and 1);
\draw (-1,0) arc (180:360:1 and 0.3);
\draw[dashed] (1,0) arc (0:180:1 and 0.3);
\end{scope}

\coordinate (A) at (270: 2 and 0.6);
\node[anchor = north] at (270: 2 and 0.6) {$\widetilde{b}$};
\draw[fill = black] (A) circle (2pt);

\coordinate (B) at (225: 2 and 0.6);
\node[anchor = north] at (225: 2 and 0.6) {$\widetilde{b}$};
\draw[fill = black] (B) circle (2pt);

\coordinate (C) at (315:2 and 0.6);
\node[anchor = north] at (315: 2 and 0.6) {$\widetilde{b}$};
\draw[fill = black] (C) circle (2pt);

\coordinate (D) at (90:2 and 0.6);
\draw[fill = white] (D) circle (2pt);
\node[anchor = south] at (90:2 and 0.6) {$\widetilde{b}$};

\coordinate (G) at (0: 2 and 0.6);
\draw[fill = black] (G) circle (2pt);

\begin{scope}[xshift = 3cm]
\coordinate (E) at (90:1 and 0.3);
\draw[fill = black] (E) circle (2pt);
\node[anchor = south] at (90:1 and 0.3) {$\widetilde{b}$};

\coordinate (F) at (270:1 and 0.3);
\draw[fill = white] (F) circle (2pt);
\node[anchor = north] at (270:1 and 0.3) {$\widetilde{\mathfrak{m}}_k(\widetilde{b}^{\otimes k})$};
\end{scope}
\end{scope}
\end{tikzpicture}
\caption{Elements of the boundary stratum contributing to term~\eqref{ksplit}.}\label{ksplitstrata}
\end{figure}
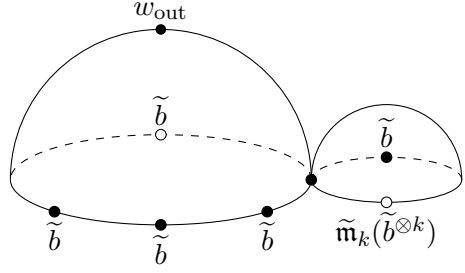
 
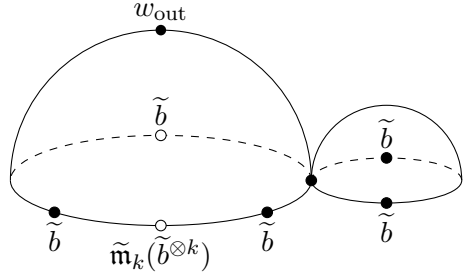
\begin{figure}
\begin{tikzpicture}
\begin{scope}
\node (N) at (0,0) {};
\node[anchor = south] (E) at (0,2) {$w_{\out}$};

\begin{scope}[very thick,decoration={
	markings,
	mark=at position 0.75 with {\arrow{>}}}]
\draw[postaction={decorate}, thick] (N) -- (E);
\end{scope}
\fill[white] (0,0) circle (2cm);
\fill[black] (0,2) circle (2pt);

\draw (2,0) arc (0:180:2);
\draw (-2,0) arc (180:360:2 and 0.6);
\draw[dashed] (2,0) arc (0:180:2 and 0.6);

\begin{scope}[xshift = 3cm]
\draw (1,0) arc (0:180: 1 and 1);
\draw (-1,0) arc (180:360:1 and 0.3);
\draw[dashed] (1,0) arc (0:180:1 and 0.3);
\end{scope}

\coordinate (A) at (270: 2 and 0.6);
\node[anchor = north] at (270: 2 and 0.6) {$\widetilde{\mathfrak{m}}_k(\widetilde{b}^{\otimes k})$};
\draw[fill = white] (A) circle (2pt);

\coordinate (B) at (225: 2 and 0.6);
\node[anchor = north] at (225: 2 and 0.6) {$\widetilde{b}$};
\draw[fill = black] (B) circle (2pt);

\coordinate (C) at (315:2 and 0.6);
\node[anchor = north] at (315: 2 and 0.6) {$\widetilde{b}$};
\draw[fill = black] (C) circle (2pt);

\coordinate (D) at (90:2 and 0.6);
\draw[fill = white] (D) circle (2pt);
\node[anchor = south] at (90:2 and 0.6) {$\widetilde{b}$};

\coordinate (G) at (0: 2 and 0.6);
\draw[fill = black] (G) circle (2pt);

\begin{scope}[xshift = 3cm]
\coordinate (E) at (90:1 and 0.3);
\draw[fill = black] (E) circle (2pt);
\node[anchor = south] at (90:1 and 0.3) {$\widetilde{b}$};

\coordinate (F) at (270:1 and 0.3);
\draw[fill = black] (F) circle (2pt);
\node[anchor = north] at (270:1 and 0.3) {$\widetilde{b}$};
\end{scope}
\end{scope}
\end{tikzpicture}
\caption{Boundary strata contributing to terms~\eqref{psplit} and~\eqref{qsplit}.}\label{pqsplitstrata}
\end{figure}

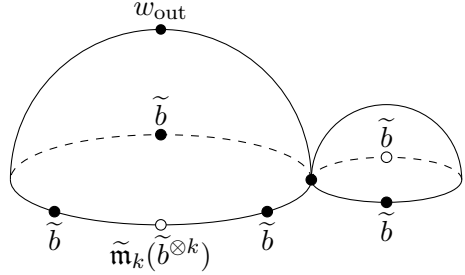
\begin{figure}
\begin{tikzpicture}
\begin{scope}
\node (N) at (0,0) {};
\node[anchor = south] (E) at (0,2) {$w_{\out}$};

\begin{scope}[very thick,decoration={
	markings,
	mark=at position 0.75 with {\arrow{>}}}]
\draw[postaction={decorate}, thick] (N) -- (E);
\end{scope}
\fill[white] (0,0) circle (2cm);
\fill[black] (0,2) circle (2pt);

\draw (2,0) arc (0:180:2);
\draw (-2,0) arc (180:360:2 and 0.6);
\draw[dashed] (2,0) arc (0:180:2 and 0.6);

\begin{scope}[xshift = 3cm]
\draw (1,0) arc (0:180: 1 and 1);
\draw (-1,0) arc (180:360:1 and 0.3);
\draw[dashed] (1,0) arc (0:180:1 and 0.3);
\end{scope}

\coordinate (A) at (270: 2 and 0.6);
\node[anchor = north] at (270: 2 and 0.6) {$\widetilde{\mathfrak{m}}_k(\widetilde{b}^{\otimes k})$};
\draw[fill = white] (A) circle (2pt);

\coordinate (B) at (225: 2 and 0.6);
\node[anchor = north] at (225: 2 and 0.6) {$\widetilde{b}$};
\draw[fill = black] (B) circle (2pt);

\coordinate (C) at (315:2 and 0.6);
\node[anchor = north] at (315: 2 and 0.6) {$\widetilde{b}$};
\draw[fill = black] (C) circle (2pt);

\coordinate (D) at (90:2 and 0.6);
\draw[fill = black] (D) circle (2pt);
\node[anchor = south] at (90:2 and 0.6) {$\widetilde{b}$};

\coordinate (G) at (0: 2 and 0.6);
\draw[fill = black] (G) circle (2pt);

\begin{scope}[xshift = 3cm]
\coordinate (E) at (90:1 and 0.3);
\draw[fill = white] (E) circle (2pt);
\node[anchor = south] at (90:1 and 0.3) {$\widetilde{b}$};

\coordinate (F) at (270:1 and 0.3);
\draw[fill = black] (F) circle (2pt);
\node[anchor = north] at (270:1 and 0.3) {$\widetilde{b}$};
\end{scope}
\end{scope}
\end{tikzpicture}
\caption{Boundary stratum contributing to the term~\eqref{output}.}\label{outputstrata}
\end{figure}

\begin{figure}
\begin{tikzpicture}
\begin{scope}
\node (N) at (0,0) {};
\node[anchor = south] (E) at (0,2) {$w_{\out}$};

\begin{scope}[very thick,decoration={
	markings,
	mark=at position 0.75 with {\arrow{>}}}]
\draw[postaction={decorate}, thick] (N) -- (E);
\end{scope}
\fill[white] (0,0) circle (2cm);
\fill[black] (0,2) circle (2pt);

\draw (2,0) arc (0:180:2);
\draw (-2,0) arc (180:360:2 and 0.6);
\draw[dashed] (2,0) arc (0:180:2 and 0.6);

\begin{scope}[xshift = 3cm]
\draw (1,0) arc (0:180: 1 and 1);
\draw (-1,0) arc (180:360:1 and 0.3);
\draw[dashed] (1,0) arc (0:180:1 and 0.3);
\end{scope}

\coordinate (A) at (270: 2 and 0.6);
\node[anchor = north] at (270: 2 and 0.6) {$\widetilde{b}$};
\draw[fill = black] (A) circle (2pt);

\coordinate (B) at (225: 2 and 0.6);
\node[anchor = north] at (225: 2 and 0.6) {$\widetilde{b}$};
\draw[fill = black] (B) circle (2pt);

\coordinate (C) at (315:2 and 0.6);
\node[anchor = north] at (315: 2 and 0.6) {$\widetilde{b}$};
\draw[fill = black] (C) circle (2pt);

\coordinate (D) at (90:2 and 0.6);
\draw[fill = black] (D) circle (2pt);
\node[anchor = south] at (90:2 and 0.6) {$\widetilde{b}$};

\coordinate (G) at (0: 2 and 0.6);
\draw[fill = black] (G) circle (2pt);

\begin{scope}[xshift = 3cm]
\coordinate (E) at (90:1 and 0.3);
\draw[fill = white] (E) circle (2pt);
\node[anchor = south] at (90:1 and 0.3) {$\widetilde{b}$};

\coordinate (F) at (270:1 and 0.3);
\draw[fill = white] (F) circle (2pt);
\node[anchor = north] at (270:1 and 0.3) {$\widetilde{\mathfrak{m}}_k(\widetilde{b}^{\otimes k})$};
\end{scope}
\end{scope}
\end{tikzpicture}
\caption{Canceling boundary strata.}\label{cancelingstrata}
\end{figure}
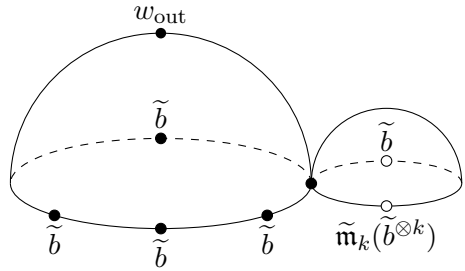
When $\iota\colon L\to M$ is nullhomologous, we can obtain an invariant from $\Psi^{J}(b)$ by considering a smooth singular $4$-chain $\Gamma\in H_4(M,L;\mathbb{Z})$ with boundary $\partial\Gamma = L$.
\begin{corollary}\label{bounding-4chain}
If $[L] = 0$ in $H_3(M,L;\mathbb{Z})$ and $\Gamma\in H_4(M,L;\mathbb{Z})$ is a bounding $4$-chain for $L$, then
\begin{align}
\Psi_{L,\Gamma}(b)\coloneqq\Psi^{J}(b)-\int_{\Gamma}\mathfrak{m}^J_{\emptyset}
\end{align}
is independent of the almost complex structure $J$, where $\mathfrak{m}^J_{\emptyset}$ is defined in~\eqref{closedoperators}.
\end{corollary}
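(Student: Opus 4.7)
The plan is to combine the wall-crossing formula of the preceding theorem with a Stokes-type argument to show that the correction term $-\int_\Gamma \mathfrak{m}^J_\emptyset$ exactly cancels the wall-crossing contribution. Given a path $\underline{J} = \{J_t\}_{t\in[0,1]}$ from $J_0$ to $J_1$ and a corresponding path of bounding cochains $\widetilde{b}$, the preceding theorem gives
$$\Psi^{J_0}(b_0) - \Psi^{J_1}(b_1) = \widetilde{GW}(L),$$
so it suffices to establish the parallel identity
$$\int_\Gamma \mathfrak{m}^{J_0}_\emptyset - \int_\Gamma \mathfrak{m}^{J_1}_\emptyset = \widetilde{GW}(L).$$

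The first step is to observe that $\mathfrak{m}^J_\emptyset$ is a closed $4$-form on $M$ for each $J$: the moduli space $\mathcal{M}_{0,1}(M,\beta;J)$ of $J$-holomorphic rational curves with one marked point has no codimension-one boundary, since the strata of the stable-map compactification arising from nodal domains have virtual codimension at least $2$ in a Calabi--Yau $3$-fold. Hence fiber integration along evaluation at the marked point produces a closed form of degree $4$. For the family $\underline{J}$, I would then assemble the parameterized moduli space $\mathcal{M}_{0,1}(M,\beta;\underline{J})$ and push forward under the product of evaluation with the projection to $[0,1]$ to obtain a closed form $\widetilde{\mathfrak{m}}^{\underline{J}}_\emptyset$ on $[0,1]\times M$ of total degree $4$, whose restriction to $\{t\}\times M$ recovers $\mathfrak{m}^{J_t}_\emptyset$.

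Applying Stokes' theorem to $[0,1]\times\Gamma$, whose boundary is $\{1\}\times\Gamma - \{0\}\times\Gamma - [0,1]\times L$ (using $\partial\Gamma = L$), together with the closedness of $\widetilde{\mathfrak{m}}^{\underline{J}}_\emptyset$, yields
$$\int_\Gamma \mathfrak{m}^{J_1}_\emptyset - \int_\Gamma \mathfrak{m}^{J_0}_\emptyset = \int_{[0,1]\times L} \widetilde{\mathfrak{m}}^{\underline{J}}_\emptyset.$$
The final step is to identify the right-hand side with $-\widetilde{GW}(L)$: unwinding the definition of the pushforward, this integral is a signed weighted count of pairs $(t,u)$ where $u$ is a $J_t$-holomorphic rational curve of some class $\beta$ whose marked point lies on $L$, which coincides, up to sign, with the enumeration of closed rational curves meeting $L$ that defines the wall-crossing term $\widetilde{GW}(L)$ in~\eqref{wallcrossingterm}. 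The main technical obstacle is matching orientations and signs consistently between the definition of $\widetilde{GW}(L)$ and the boundary contributions arising from Stokes' theorem; this amounts to a careful bookkeeping exercise using the standard gluing and orientation conventions for parameterized Kuranishi structures on moduli spaces of closed pseudoholomorphic curves, paralleling the corresponding analysis for the embedded case in~\cite{Han24b} and~\cite{ST23}.
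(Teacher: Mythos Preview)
Your proposal is correct and follows essentially the same approach as the paper. The paper organizes the argument slightly differently---first applying Stokes' theorem on the parameterized moduli space $\widetilde{\mathcal{M}}^{cl}_1(\beta;\underline{J})$ to obtain $\mathfrak{m}_{\emptyset}^{J_1}-\mathfrak{m}_{\emptyset}^{J_0} = -d((p_M)_*\widetilde{\mathfrak{m}}_{\emptyset})$ on $M$, and then integrating over $\Gamma$ and applying Stokes' theorem a second time---whereas you apply Stokes' theorem once directly on $[0,1]\times\Gamma$; these are equivalent packagings of the same computation, and both identify the boundary term $\int_{[0,1]\times L}\widetilde{\mathfrak{m}}_{\emptyset}$ with the wall-crossing term $\widetilde{GW}(L)$.
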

\begin{proof}
Consider the pseudo-isotopy associated to a path $\underline{J}$ of $\omega$-compatible almost complex structures, and let $p_M\colon[0,1]\times M\to M$ denote the projection. By applying Stokes' theorem to the moduli spaces $\widetilde{\mathcal{M}}^{cl}_1(\beta)$ defined in \S\ref{wallcrossingsubsection}, we find that
\begin{align}
\mathfrak{m}_{\emptyset}^{J_1}-\mathfrak{m}_{\emptyset}^{J_0} = -d((p_M)_*\widetilde{\mathfrak{m}}_{\emptyset}) \, . \label{stokesclosedoperators}
\end{align}
Integrating both sides of~\eqref{stokesclosedoperators} over $\Gamma$ and applying Stokes' theorem shows that
\begin{align}
\int_{\Gamma}\mathfrak{m}_{\emptyset}^{J_1}-\int_{\Gamma}\mathfrak{m}_{\emptyset}^{J_0} = -\widetilde{GW} \, . 
\end{align}
\end{proof}

\section{Background on Hodge structures and cyclic homology}\label{hodgecyclicsect}
In this section, we will review some background material on variations of semi-infinite Hodge structure (VSHS), including those obtained from the negative cyclic homology of an $A_{\infty}$-category, and the VSHSs appearing in closed string mirror symmetry, following~\cite{GPS15}.

Consider a complete discrete valuation ring $R$ with valuation $\nu$ and maximal ideal $\mathfrak{m}$ and residue field $\mathbb{C}$. Denote by $\mathbb{K}$ its field of fractions. We call $\mathcal{M}\coloneqq\Spec\mathbb{K}$ a formal punctured disk. We also denote by $q$ an element $q\in R$ with $\nu(q) = 1$. This determines an isomorhpism $R\cong\mathbb{C}[[q]]$. This can also be thought of as a coordinate on $\mathcal{M}$. Also define $\mathcal{O}_{\mathcal{M}}\coloneqq\mathbb{K}$ and $T\mathcal{M}\coloneqq\Der_{\mathbb{C}}\mathcal{M}$.

Variations of semi-infinite Hodge structure, introduced by Barranikov~\cite{Bar01}, can roughly be thought of as variations of Hodge structure, in the classical sense, without an integral lattice, provided that $\mathcal{M}$ trivial grading. It is nonetheless more natural to describe the VSHSs associated to negative cyclic homology and quantum cohomology slightly differently.
\begin{definition}\label{VSHSdef}
Let $u$ be a formal variable of degree $2$, and consider the ring $\mathbb{K}[[u]]$. For any $f\in\mathbb{K}[[u]]$, we define the element
\begin{align}
f^{\wedge}(u)\coloneqq f(-u) \,. \label{conjugate}
\end{align}

A ($\mathbb{Z}$-graded unpolarized) \textit{VSHS} over the formal punctured disk $\mathcal{M}$ is a pair $\mathcal{H} = (\mathcal{E},\nabla)$ where:
\begin{itemize}
\item[•] $\mathcal{E}$ is a graded, finitely-generated free $\mathbb{K}[[u]]$-module;
\item[•] $\nabla$ is a flat connection
\begin{align*}
\nabla\colon T\mathcal{M}\otimes\mathcal{E}\to u^{-1}\mathcal{E}
\end{align*}
of degree $0$.
\end{itemize}
Furthermore, a \textit{polarization} of dimension $n\in\mathbb{Z}/2$ for $\mathcal{H}$ is a pairing
\begin{align*}
(\cdot,\cdot)\colon\mathcal{E}\otimes\mathcal{E}\to\mathbb{K}[[u]]
\end{align*}
of degree $0$ which is:
\begin{itemize}
\item[•] sesquilinear:
\begin{align*}
(s_1+s_2,t) &= (s_1,t)+(s_2,t) \\
(t,s_1+s_2) &= (t,s_1)+(t,s_2)
\end{align*}
and
\begin{align*}
(fs_1,s_2) &= f(s_1,s_2) = (s_1,f^{\wedge}s_2)
\end{align*}
for all $f\in\mathbb{K}[[u]]$ and $s_1,s_2,t\in\mathcal{E}$;
\item[•] covariantly constant:
\begin{align*}
X(s_1,s_2) = (\nabla_X s_1,s_2)+(s_1,\nabla_X s_2)
\end{align*}
for all $X\in T\mathcal{M}$; and
\item[•] graded symmetric:
\begin{align*}
(s_1,s_2) = (-1)^{n+\deg s_1}(s_2,s_1)^{\wedge} \, .
\end{align*}
\end{itemize}
Moreover, we require that the induced pairing of $\mathbb{K}$-modules
\begin{align*}
\mathcal{E}/u\mathcal{E}\otimes_{\mathbb{K}}\mathcal{E}/u\mathcal{E}\to\mathbb{K}
\end{align*}
is nondegenerate.
\end{definition}
The relation between the notion of a VSHS and classical variations of Hodge structure is established in~\cite{GPS15}.
\begin{lemma}[{\cite[Lemma 2.7]{GPS15}}]\label{VSHSlemma}
A $\mathbb{Z}$-graded unpolarized  VSHS $\mathcal{H} = (\mathcal{E},\nabla)$ over a punctured formal disk $\mathcal{M}$ is equivalent to the following data:
\begin{itemize}
\item[•] a free, finite rank, $\mathbb{Z}/2$-graded $\mathbb{K}$-module $\mathcal{V}\coloneqq\mathcal{V}_{\even}\oplus\mathcal{V}_{\odd}$;
\item[•] a flat connection $\nabla$ on each $\mathcal{V}_{\sigma}$; and
\item[•] a pair of decreasing Hodge filtrations $\lbrace F^{\geq p}\mathcal{V}_{\even}\rbrace$ and $\lbrace F^{\geq p-\frac{1}{2}}\mathcal{V}_{\odd}\rbrace$ which satisfy Griffiths transversality:
\begin{align*}
\nabla F^{\geq p}\mathcal{V}_{\sigma}\subset F^{\geq p-1}\mathcal{V}_{\sigma}\, .
\end{align*}
\end{itemize}
An $n$-dimensional polarization on $\mathcal{H}$ is equivalent to a pair of covariantly constant bilinear pairings
\begin{align*}
(\cdot,\cdot)\colon\mathcal{V}_{\sigma}\otimes\mathcal{V}_{\sigma}\to\mathbb{K}
\end{align*}
such that $(\alpha,\beta) = (-1)^n(\beta,\alpha)$, and for which
\begin{align*}
(F^{\geq p}\mathcal{V}_{\sigma},F^{\geq q}\mathcal{V}_{\sigma}) = 0
\end{align*}
if $p+q>0$. Furthermore, the induced pairing on the associated graded modules
\begin{align*}
\Gr_F^p\mathcal{V}_{\sigma}\otimes\Gr_F^{-p}\mathcal{V}_{\sigma}\to\mathbb{K}
\end{align*}
is nondegenerate for all $p$. \qed
\end{lemma}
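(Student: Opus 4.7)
The strategy is to write down the correspondence explicitly in both directions via a Rees-type construction and verify that each structural axiom translates as claimed. For the forward direction, given $(\mathcal{E},\nabla)$, the $\mathbb{Z}$-grading on $\mathcal{E}$ together with $\deg u = 2$ means that each homogeneous element is determined up to $u$-scaling by its parity. I will define $\mathcal{V}_\sigma$ as the parity-$\sigma$ piece of the localization $\mathcal{E}\otimes_{\mathbb{K}[[u]]}\mathbb{K}((u))$ viewed as a $\mathbb{K}$-module via a choice of $u$-weight normalization in each parity, and check that this is a finitely generated free $\mathbb{K}$-module of the same total rank as $\mathcal{E}$. The filtration $F^{\geq p}\mathcal{V}_\sigma$ will be defined via the $u$-adic filtration on $\mathcal{E}$ intersected with $\mathcal{V}_\sigma$, with the half-integer shift on $\mathcal{V}_{\mathrm{odd}}$ recording that its natural filtration levels sit halfway between the even ones because $u$ shifts $\mathbb{Z}$-degree by $2$ but $u$-weight by $1$. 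The connection descends by flatness of $\mathbb{K}((u))$ over $\mathbb{K}[[u]]$, and Griffiths transversality $\nabla F^{\geq p}\subset F^{\geq p-1}$ becomes exactly the first-order-pole condition $\nabla\mathcal{E}\subset u^{-1}\mathcal{E}$.

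In the reverse direction, given $(\mathcal{V}_\sigma, \nabla, F^{\bullet})$, I will form the Rees-type module $\mathcal{E} \coloneqq \sum_p u^{-p} F^{\geq p}\mathcal{V}_{\mathrm{even}} \oplus \sum_p u^{-p+\frac{1}{2}} F^{\geq p-\frac{1}{2}}\mathcal{V}_{\mathrm{odd}}$ sitting inside an appropriate localization of $\mathcal{V}$. Finite-rank freeness over $\mathbb{K}[[u]]$ follows from the Hodge filtration being bounded, exhaustive, and strict on the finite-dimensional $\mathcal{V}_\sigma$, while the connection on $\mathcal{V}$ extends to one landing in $u^{-1}\mathcal{E}$ precisely by Griffiths transversality. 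The two constructions are mutually inverse by direct comparison of the $u$-adic filtration on $\mathcal{E}$ with its Rees-ification.

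For polarizations, I will translate the three axioms of Definition~\ref{VSHSdef} into the classical ones. Sesquilinearity with respect to $f^{\wedge}(u) = f(-u)$, combined with the requirement that the pairing land in $\mathbb{K}[[u]]$ rather than in $\mathbb{K}((u))$, will be shown to be equivalent to the orthogonality $(F^{\geq p}, F^{\geq q}) = 0$ for $p+q > 0$: the $\wedge$-involution is present precisely to absorb the sign arising from the unequal $u$-weights of the two arguments under the Rees correspondence. Covariant constancy transcribes verbatim, and the graded symmetry $(s_1, s_2) = (-1)^{n+\deg s_1}(s_2, s_1)^{\wedge}$ reduces modulo $u$ to the $(-1)^n$-symmetry on $\mathcal{V}_\sigma$, with the extra $(-1)^{\deg s_1}$ absorbed by the $\mathbb{Z}/2$-grading. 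Nondegeneracy on $\mathcal{E}/u\mathcal{E}$ will be identified with nondegeneracy on each $\Gr_F^p\mathcal{V}_\sigma$ since these are precisely the graded pieces of the $u$-adic filtration on $\mathcal{E}/u\mathcal{E}$.

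The main obstacle will be the careful bookkeeping of signs and of the half-integer shift on the odd parity, especially in verifying that the graded symmetry $(-1)^{n+\deg s_1}(\cdot)^{\wedge}$ matches exactly the classical weight-$n$ symmetry after passing to the residue $\mathcal{E}/u\mathcal{E}$. Beyond that I anticipate no essential difficulty, as the equivalence is fundamentally a variant of the Rees correspondence between filtered modules and $u$-graded modules with meromorphic connection.
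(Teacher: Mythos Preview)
Your proposal is correct and follows essentially the same approach as the paper's proof sketch: localize $\mathcal{E}$ to $\widetilde{\mathcal{E}} = \mathcal{E}\otimes_{\mathbb{K}[[u]]}\mathbb{K}((u))$, take the degree-$k$ pieces $\mathcal{V}_{[k]}\coloneqq\widetilde{\mathcal{E}}_k$ as the $\mathbb{K}$-modules, define the Hodge filtration via the $u$-adic filtration $F^{\geq p-k/2}\mathcal{V}_{[k]}\coloneqq(u^{\geq p}\cdot\mathcal{E})_k$, and read off Griffiths transversality from the first-order-pole condition on $\nabla$. The paper only sketches the forward direction and mentions the pairing in passing, whereas you also spell out the inverse Rees construction and the translation of the polarization axioms; but the underlying idea is the same Rees-type correspondence.
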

It will be illuminating to sketch the proof of one direction of this correspondence following the proof of~\cite[Lemma 2.7]{GPS15}.
\begin{proof}[Proof sketch]
If $\mathcal{H} = (\mathcal{E},\nabla)$ is as in Definition~\ref{VSHSdef}, then the construction of the $\mathbb{K}$-module and Hodge filtration of Lemma~\ref{VSHSlemma} can be summarized as follows. We define a $\mathbb{K}[u,u^{-1}]$-module by setting $\widetilde{\mathcal{E}}\coloneqq\mathcal{E}\otimes_{\mathbb{K}[[u]]}\mathbb{K}((u))$. Multiplication by $u$ induces isomorphisms
\begin{align}
\widetilde{\mathcal{E}}_k\xrightarrow{u\cdot}\widetilde{\mathcal{E}}_{k+2} \label{periodicity}
\end{align}
so we set
\begin{align*}
\mathcal{V}_{[k]}\coloneqq\widetilde{\mathcal{E}}_k \, .
\end{align*}
The Hodge filtration on $\mathcal{V}_{[k]}$ is given by powers of $u$:
\begin{align*}
F^{\geq p-\frac{k}{2}}\mathcal{V}_{[k]}\coloneqq\left(u^{\geq p}\cdot\mathcal{E}\right)_k\subset\widetilde{\mathcal{E}}_k \,.
\end{align*}
The connection on $\mathcal{V}_{[k]}$ is obtained from the connection on $\mathcal{E}$, and Griffiths transversality follows because $\nabla$ carries $\mathcal{E}$ to $u^{-1}\mathcal{E}$. The pairing is inherited from the polarization on $\mathcal{E}$, up to rescaling by a constant prefactor so that it respects~\eqref{periodicity}.
\end{proof}

Let $\mathcal{A}$ be a strictly unital uncurved $\mathbb{K}$-linear $A_{\infty}$-category. Further assume that $\mathcal{A}$ is proper and that it carries a weak proper Calabi--Yau structure.
\begin{remark}
When $\mathcal{A}$ is the Fukaya category of unobstructed Lagrangian branes, the weak proper Calabi--Yau structure is given by the negative cyclic open-closed map under Assumption~\ref{traceass}. On homology, this amounts to the Poincar{\'e} pairing, as noted in the previous section. If $\mathcal{A}$ is a $dg$-enhancement of the derived category of a smooth Calabi--Yau variety over $\mathbb{C}$, the weak proper Calabi--Yau structure is determined by Serre duality and a choice of volume form. For this purpose, we will always use the Hodge-theoretically normalized volume form, in the terminology of~\cite{CK99} (see also~\cite[\S{2.4}]{GPS15})
\end{remark}
In this setting, a VSHS on the negative cyclic homology $HC_*^{-}(\mathcal{A})$ is constructed by Sheridan~\cite{She20}. The description of the VSHS in following Theorem uses the characterization of Definition~\ref{VSHSdef}.
\begin{theorem}[\cite{She20}]
Recall that $HC_*^{-}(\mathcal{A})$ is a $\mathbb{K}[[u]]$-module. This can be equipped with a flat connection $\nabla^{\GGM}$ called the \textit{Getzler--Gauss--Manin} connection. There is a $\mathbb{K}[[u]]$ sesquilinear pairing
\begin{align*}
\langle\cdot,\cdot\rangle_{res}\colon HC_*^{-}(\mathcal{A})\otimes HC_*{-}(\mathcal{A})\to\mathbb{K}[[u]]
\end{align*}
called the \textit{Mukai pairing}, which is graded symmetric of dimension $n$ when $\mathcal{A}$ admits an $n$-dimensional weak proper Calabi--Yau structure.
\end{theorem}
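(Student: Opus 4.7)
The plan is to follow Sheridan's construction~\cite{She20}, which builds on Getzler's original work on the Gauss--Manin connection in cyclic homology together with Shklyarov's categorical Mukai pairing. The construction proceeds in three steps, all carried out at the chain level of the negative cyclic complex $CC_*^{-}(\mathcal{A}) = (CC_*(\mathcal{A})[[u]], b + uB)$.

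First, I would construct the Getzler--Gauss--Manin connection. For a vector field $X \in T\mathcal{M}$, the derivation $\partial_X$ on $\mathbb{K}$ does not directly act on $\mathcal{A}$, but its failure to do so is measured by a Hochschild $2$-cochain $K_X \in CC^2(\mathcal{A})$, the Kodaira--Spencer class, which satisfies a Maurer--Cartan identity modulo boundaries. Following Getzler, one sets
\[ \nabla^{\GGM}_X = \partial_X + u^{-1}\iota_{K_X}, \]
where $\iota_{K_X}$ is the cap-product action of Hochschild cochains on chains. One verifies that this commutes with $b + uB$ up to a chain homotopy produced by the Cartan homotopy formula relating $\iota_{K_X}$, the Lie action $L_{K_X}$, and the cyclic operator $B$, which yields a well-defined flat connection on cohomology.

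Second, I would construct the Mukai pairing. The weak proper Calabi--Yau structure on $\mathcal{A}$ provides an $n$-shifted quasi-isomorphism $\mathcal{A} \to \mathcal{A}^\vee[n]$ of $\mathcal{A}$-bimodules; this induces a degree $n$ isomorphism on Hochschild homology $HH_*(\mathcal{A}) \cong HH_*(\mathcal{A},\mathcal{A}^\vee)$. Composition with the tautological pairing $HH_*(\mathcal{A}) \otimes HH_*(\mathcal{A},\mathcal{A}^\vee) \to \mathbb{K}$ defines the Mukai pairing on Hochschild homology. To promote this to a $\mathbb{K}[[u]]$-sesquilinear pairing $\langle \cdot, \cdot \rangle_{\mathrm{res}}$ on $HC_*^{-}(\mathcal{A})$, one lifts the Hochschild-level pairing to the cyclic double complex and imposes the conjugation $f \mapsto f^{\wedge}$ from~\eqref{conjugate} on one factor, following Shklyarov's definition. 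The residue notation reflects the fact that, in the compact Calabi--Yau setting, this pairing is given by a residue formula involving the categorical trace.

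Third, one verifies the properties claimed in the statement. Sesquilinearity is immediate from the construction. Graded symmetry of dimension $n$ follows from the symmetry of the weak proper CY structure together with the compatibility of the Mukai pairing with the cyclic operator $B$ on both factors. Covariant constancy with respect to $\nabla^{\GGM}$ reduces to showing that $\iota_{K_X}$ acts as a derivation on the pairing at the chain level. The main obstacle is exactly this last verification: compatibility of the Kodaira--Spencer action with the Mukai pairing requires a delicate chain-level computation in the Connes double complex, together with a careful choice of chain-level representative for the weak CY structure that is compatible with the deformation along $X$. Nondegeneracy of the induced pairing on the quotient $HC_*^{-}(\mathcal{A})/u \cdot HC_*^{-}(\mathcal{A})$, which would be needed to promote the data to a polarization in the sense of Definition~\ref{VSHSdef}, follows from the nondegeneracy built into the weak proper Calabi--Yau structure on Hochschild homology.
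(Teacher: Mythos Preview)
The paper does not actually prove this theorem: it is stated as a citation of results from Sheridan~\cite{She20} and used as a black box, with no argument given. So there is no ``paper's own proof'' to compare against. Your proposal is therefore not so much an alternative as an attempt to unpack what Sheridan does.

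As a sketch of Sheridan's construction, your outline is broadly on target but oversimplified in a couple of places. For the Getzler--Gauss--Manin connection, the formula $\nabla^{\GGM}_X = \partial_X + u^{-1}\iota_{K_X}$ is not quite what Sheridan writes: in~\cite{She20} the chain-level connection involves, in addition to the cap product with the Kodaira--Spencer class, a length-one operation built from the derivative of the $A_\infty$-structure (often written $b^{1|1}(\partial_X\mathfrak{m})$ or similar), and the Cartan homotopy argument uses this extra term essentially. Your description would only be correct after passing to a model where that term vanishes. For the Mukai pairing, Sheridan follows Shklyarov's construction, which defines the pairing directly via a trace on the Hochschild complex rather than by first passing through the bimodule quasi-isomorphism $\mathcal{A}\to\mathcal{A}^\vee[n]$; the weak proper Calabi--Yau structure enters only to guarantee nondegeneracy and the correct symmetry, not to define the pairing itself. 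Finally, covariant constancy of the pairing in Sheridan's treatment is a nontrivial computation (his ``Proposition on the higher residue pairing''), and your one-line reduction to ``$\iota_{K_X}$ acts as a derivation'' hides the actual work. None of these are fatal to the outline, but if you intend this as a self-contained proof rather than a pointer to~\cite{She20}, each of these steps needs the sharper formulation.
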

The negative cyclic homology is only a \textit{pre-VSHS} \textit{a priori}, but for the Fukaya category or derived category, in the geometric setting to be considered below, the results of~\cite{GPS15} imply that it is in fact a VSHS. In the case of the Fukaya category, this requires Assumption~\ref{traceass}, which is used to construct a weak proper Calabi--Yau structure.

Extracting (closed string) Hodge-theoretic mirror symmetry~\cite{Mor93} from homological mirror symmetry amounts to comparing this VSHS to geometrically defined VSHSs associated to quantum cohomology and to a suitable family of Calabi--Yau varieties near a large complex structure limit point.

Let $(X,\omega)$ be a connected integral symplectic Calabi--Yau $3$-fold, meaning that $[\omega] = H^2(X;\mathbb{Z})$ and $c_1(TX) = 0$. Also let $R_A\coloneqq\mathbb{C}[[Q]]$ and $\mathbb{K}_A\coloneqq\mathbb{C}((Q))$, so that $R_A$ is a subring of the Novikov ring $\Lambda_0$.
\begin{definition}\label{AVSHS}
The small $A$-model VSHS, denoted $\mathcal{H}^A(X,\omega)\coloneqq(\mathcal{E},\nabla,(\cdot,\cdot))$ is given by the data
\begin{align*}
\mathcal{E} &\coloneqq H^*(X;\mathbb{C})\widehat{\otimes}_{\mathbb{C}}\mathbb{K}_A[[u]][n] \\
\nabla_{Q\partial_Q}\alpha &\coloneqq Q\partial_Q\alpha-u^{-1}[\omega]\star\alpha \\
(\alpha,\beta) &\coloneqq\int_X\alpha\cup\beta^{\wedge}
\end{align*}
where $[\omega]\star\alpha$ denotes the small quantum product and $\beta^*$ is defined in~\eqref{conjugate}.
\end{definition}
\begin{remark}\label{svwbasisA}
Suppose that $h^{1,1}(X) = h^{2,2}(X) = 1$. Then there is a basis $\lbrace e_3,e_2,e_1,e_0\rbrace$ for the even degree part of $H^*(X;\mathbb{C})$ given by
\begin{align*}
e_3 = [X] \,;\quad e_2 = [D] \,; \quad e_1 = -[\ell] \, ;\quad e_0 [\pt]
\end{align*}
where $[D]$ is the hyperplane class dual to the complexified K{\"a}hler form on $X$, and $e_1$ is chosen to have intersection number $e_1\cdot e_2 = 1$. These extend to sections of $\mathcal{E}$.

In this basis, a connection matrix for the quantum connection can be expressed as
\begin{align}
\begin{pmatrix} 0 & 0 & 0 & 0 \\ 1 & 0 & 0 & 0 \\ 0 & -\Phi'' & 0 & 0 \\ 0 & 0 & -1 & 0\end{pmatrix} \label{connectionmatrixA}
\end{align}
where $\Phi''$ is the closed Gromov--Witten potential with two interior constraints.
\end{remark}
The $B$-model VSHS is described in more classical terms, per Lemma~\ref{VSHSlemma}. Let $R_B\coloneqq\mathbb{C}((z))$ and $\mathbb{K}_B\coloneqq\mathbb{C}((z))$. Let $X^{\vee}\to\mathcal{M}_B$ be a smooth projective connected scheme of relative dimension $3$, with trivial relative canonical sheaf. We further assume that $X^{\vee}$ is \textit{maximally unipotent} (cf.~\cite[\S{1.1}]{GPS15}).

\begin{definition}\label{BVSHS}
The small $B$-model ($\mathbb{Z}$-graded polarized) VSHS $\mathcal{H}^B(X^{\vee})$ consists of
\begin{itemize}
\item[•] the relative de Rham cohomology $\mathcal{V}$ of $X^{\vee}$ over the formal punctured disk $\mathcal{M}_B$, with a $\mathbb{Z}/2$-grading induced from the cohomological $\mathbb{Z}$-grading;
\item[•] the filtration
\begin{align}
F^{\geq s}\mathcal{V}\coloneqq\bigoplus_{p} H^p\left(\Omega^{\geq p+2s}_{X^{\vee}/\mathcal{M}_B}\right) \label{Bfiltration}
\end{align}
which comes from the classical Hodge filtration;
\item[•] the Gauss--Manin connection; and
\item[•] the integration pairing.
\end{itemize}
\end{definition}
\begin{remark}
The filtration can be written
\begin{align*}
F^{\geq\frac{3}{2}}\mathcal{V}\subset F^{\geq\frac{1}{2}}\mathcal{V}\subset F^{\geq-\frac{1}{2}}\mathcal{V}\subset F^{\geq-\frac{3}{2}}\mathcal{V} \,.
\end{align*}
With this convention for the filtration, the lowest level $F^{\geq\frac{3}{2}}\mathcal{V} = H^0\left(\Omega^3_{X^{\vee}/\mathcal{M}_B}\right)$ is the space of (relative) holomorphic volume forms on $X^{\vee}$. Under classical conventions, this level of the filtration would be denoted $F^3\mathcal{V}$ (cf.~\cite{CK99, HW22}).
\end{remark}

\begin{remark}\label{svwbasisB}
Suppose $F^{\geq\frac{3}{2}}\mathcal{V}$ is one-dimensional and that rank of $F^{\geq p+\frac{1}{2}}\mathcal{V}$ increases by $1$ at each level, as is the case for the mirror quintic. In this setting~\cite{SVW17} constructs a basis $\lbrace e_3,e_2,e_1,e_0\rbrace$ for the odd-degree part of the module $\mathcal{V}$. These sections are such that $e_i\in F^{\geq i-\frac{3}{2}}\mathcal{V}_{\odd}$, and in particular we can take $e_3 = \Omega\in F^{\geq\frac{3}{2}}\mathcal{V}_{\odd}$ to the Hodge-theoretically normalized volume form. This basis is constructed using the fact that $0\in\Delta$ is a point of maximally unipotent monodromy, where the monodromy is associated to the Gauss--Manin connection on the $B$-side. 

In this basis, the connection matrix for the Gauss--Manin connection can be expressed as
\begin{align}
\begin{pmatrix} 0 & 0 & 0 & 0 \\ 1 & 0 & 0 & 0 \\ 0 & -\mathfrak{C} & 0 & 0 \\ 0 & 0 & -1 & 0\end{pmatrix} \label{connectionmatrixB}
\end{align}
where $\mathfrak{C}$ is the Yukawa coupling~\cite{CK99}.
\end{remark}
These VSHSs are related in the following way.
\begin{theorem}[{cf.~\cite[Theorem A]{GPS15}}]\label{gpsthm}
Suppose that $X$ and $X^{\vee}$ as above are homologically mirror. Under Assumption~\ref{connectionsass} and Assumption~\ref{traceass}, there are isomorphisms of VSHS given by
\begin{equation}
\begin{tikzcd}
HC_*^{-}(\mathcal{F}(X))\arrow{r}\arrow{d}{\mathcal{OC}^{-}} & HC_*^{-}(D^b_{dg}\Coh(X^{\vee}))\arrow{d}{\widetilde{\mathfrak{J}}_{\HKR}} \\\mathcal{H}^A(X,\omega) & \mathcal{H}^B(X^{\vee})
\end{tikzcd}\label{vhsisomorphisms}
\end{equation}
where the horizontal arrow is given by homological mirror symmetry and Morita invariance.
\end{theorem}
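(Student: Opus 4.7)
The plan is to verify that each arrow in the diagram~\eqref{vhsisomorphisms} is an isomorphism of (pre-)VSHSs intertwining connections and polarizations, following the overall strategy of Ganatra--Perutz--Sheridan~\cite{GPS15} adapted to the setting where Assumptions~\ref{connectionsass} and~\ref{traceass} are in force. I would organize the argument around the three arrows of the square, upgrading each from an isomorphism of $\mathbb{K}[[u]]$-modules to an isomorphism of VSHSs.

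First I would address the horizontal arrow. Homological mirror symmetry provides a $\mathbb{K}$-linear $A_\infty$-quasi-equivalence $\mathcal{F}(X)\simeq D^b_{dg}\Coh(X^{\vee})$ (this is one of the conclusions one extracts from Assumption~\ref{generationass} together with the mirror equivalence of~\cite{She15}). Negative cyclic homology, the Getzler--Gauss--Manin connection, the shifted Mukai pairing, and the $u$-filtration are all Morita invariants constructed purely from the cyclic $A_\infty$-structure, so the induced $\mathbb{K}[[u]]$-module isomorphism on $HC_*^-$ automatically intertwines all of this data. This gives an isomorphism of pre-VSHSs; it will upgrade to an isomorphism of VSHSs once both sides are shown to be genuine VSHSs.

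Next I would handle the right vertical map $\widetilde{\mathfrak{J}}_{\HKR}$. By Tu~\cite{Tu24}, the HKR isomorphism identifies $HC_*^-(D^b_{dg}\Coh(X^{\vee}))$ with the relative de Rham cohomology $\bigoplus_{p,q}H^q(\Omega^{p}_{X^{\vee}/\mathcal{M}_B})[[u]]$ in a manner respecting the $u$-filtration and the Hodge filtration~\eqref{Bfiltration}, and intertwines the Getzler--Gauss--Manin connection with the classical Gauss--Manin connection. C\u{a}ld\u{a}raru's identification of the categorical Mukai pairing with the Serre duality pairing, combined with the Calabi--Yau condition $c_1(X^{\vee})=0$, which trivializes the relevant components of the Todd class correction, yields compatibility of the polarizations after choosing the Hodge-theoretically normalized volume form. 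Thus $\widetilde{\mathfrak{J}}_{\HKR}$ is an isomorphism of VSHSs, and in particular $HC_*^-(D^b_{dg}\Coh(X^{\vee}))$ is a genuine VSHS. By the horizontal isomorphism, the same holds for $HC_*^-(\mathcal{F}(X))$.

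For the left vertical map, Assumption~\ref{connectionsass} furnishes a negative cyclic open-closed map $\mathcal{OC}^-$ intertwining the Getzler--Gauss--Manin connection on $HC_*^-(\mathcal{F}(X))$ with the small quantum connection on $\mathcal{H}^A(X,\omega)$ of Definition~\ref{AVSHS}, while Assumption~\ref{traceass} provides the weak proper Calabi--Yau trace whose induced pairing on the degree-zero part of cyclic homology is the Poincar\'e pairing on $H^*(X;\mathbb{C})$; this is precisely what matches the Mukai pairing on the categorical side to the integration pairing on the $A$-model side. To promote $\mathcal{OC}^-$ from a morphism of pre-VSHSs to an isomorphism of VSHSs, I would argue as in~\cite[\S{5}]{GPS15}: at $u=0$ the map reduces to the ordinary open-closed map whose image contains the unit of $H^*(X)$; together with the quantum connection's maximal unipotent behavior at $Q=0$ (the large volume limit), Griffiths transversality propagates the span across the formal disk, and Nakayama's lemma upgrades this to a $\mathbb{K}[[u]]$-module isomorphism since both sides are free of the same finite rank (as we have just verified using the right vertical iso and HMS).

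The main obstacle is the compatibility of the various pieces of structural data packaged into Assumptions~\ref{connectionsass} and~\ref{traceass}, in particular the check that the trace used to define the weak proper Calabi--Yau structure coincides with the one implicit in the negative cyclic open-closed map, so that the Mukai pairing and the Poincar\'e pairing really are intertwined by $\mathcal{OC}^-$. This compatibility must be part of the assumption package, as it is not known unconditionally. Once this is granted, the remaining work is a transcription of~\cite{GPS15}, with the small simplification that the Todd-class correction on the $B$-side is trivialized by $c_1(X^{\vee})=0$ and the choice of Hodge-theoretically normalized volume form.
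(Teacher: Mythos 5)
The paper does not give its own proof of Theorem~\ref{gpsthm}; the result is cited directly from~\cite[Theorem A]{GPS15}, with the surrounding text merely unpacking the definition of $\widetilde{\mathfrak{J}}_{\HKR}$ and recording that Tu~\cite[Remark 0.3]{Tu24} establishes that $\widetilde{\mathfrak{J}}_{\HKR}$ is an isomorphism of polarized VSHS. Your three-arrow decomposition and the strategy for each arrow (Morita invariance for the horizontal map; HKR plus Mukai/Serre duality for the right vertical; the open-closed map plus a Nakayama argument for the left vertical) is a faithful reconstruction of the GPS15 argument and is consistent with how the paper sets things up.

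There is, however, a concrete error in your treatment of the right vertical arrow. You assert that the Todd-class correction is \emph{trivialized} by $c_1(X^{\vee})=0$ together with the choice of Hodge-theoretically normalized volume form. This is false. For a Calabi--Yau threefold one has $\td(X^{\vee}) = 1 + \tfrac{1}{12}c_2(X^{\vee}) + \cdots$, and $c_2(X^{\vee})\neq 0$ in the cases of interest (in particular for the mirror quintic), so $\td^{1/2}(X^{\vee})\neq 1$. The paper explicitly defines $\widetilde{\mathfrak{J}}_{\HKR}$ as the composition of the HKR isomorphism $\widetilde{I}_{\HKR}$ with multiplication by $\td^{1/2}(X^{\vee})$, and this twist is \emph{essential}, not a removable simplification: it is precisely what makes the categorical Mukai pairing on $HC_*^-(D^b_{dg}\Coh(X^{\vee}))$ agree with the integration pairing on $\mathcal{H}^B(X^{\vee})$ (C\u{a}ld\u{a}raru--Markarian). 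The bare HKR isomorphism does not intertwine these pairings. The choice of Hodge-theoretically normalized volume form fixes the normalization of the Calabi--Yau structure and hence the top filtration step $F^{\geq\frac{3}{2}}$, but it has nothing to do with cancelling $\td^{1/2}$. Your argument for compatibility of polarizations therefore has a gap; you should instead quote Tu's result that $\widetilde{\mathfrak{J}}_{\HKR}$, \emph{with} the Todd twist, is a morphism of polarized VSHS.

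One smaller point: as written, Assumption~\ref{connectionsass} only asserts that $\mathcal{OC}^-$ intertwines the connections, not that it is an isomorphism, so your Nakayama-plus-Griffiths-transversality argument to upgrade the flat morphism to an isomorphism is genuinely needed rather than being subsumed by the hypotheses. That part of your proposal is sound and in line with GPS15.
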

The isomorphism $\widetilde{\mathfrak{J}}_{\HKR}$ is obtained from the HKR (Hochschild--Kostant--Rosenberg) isomorphism~\cite{Wei97}
\begin{align*}
\widetilde{I}_{\HKR}\colon HC_*^{-}(D^b_{dg}\Coh(X^{\vee})) \cong HC_*^{-}(X^{\vee})\to\mathcal{H}^B(X^{\vee})
\end{align*}
which is in turn a lit of the HKR isomorphism on Hochschild homology. We define $\widetilde{\mathfrak{J}}_{\HKR}$ to be the composition of $\widetilde{I}_{\HKR}$ with the square root of the Todd class
\begin{align*}
H^*(\Omega^{-*}X^{\vee})\xrightarrow{\td^{1/2}(X^{\vee})}H^*(\Omega^{-*}X^{\vee})\,.
\end{align*}
Tu~\cite[Remark 0.3]{Tu24} shows that $\widetilde{\mathfrak{J}}_{\HKR}$ is an isomorphism of polarized VSHS.
\begin{remark}
As explained in~\cite[\S{1.10}]{GPS15}, the mirror map appearing in Hodge-theoretic mirror symmetry (cf.~\cite{CK99}) arises as a change of coordinates relating $\mathbb{C}((Q))$ to $\mathbb{C}((z))$ in homological mirror symmetry, and the natural Calabi--Yau structure on $\mathcal{F}(X)$ associated to the negative cyclic open-closed map is mirror to the Calabi--Yau structure on the derived category determined by the Hodge-theoretically normalized volume form. These facts make it possible to recover closed string enumerative mirror symmetry from homological mirror symmetry, and they will also allow us to recover open string enumerative mirror symmetry.
\end{remark}

\section{Background on Extensions of VSHS}\label{extensionsect}
Morrison showed that enumerative mirror symmetry is encapsulated by an isomorphism of VSHS~\cite{Mor93}. Similarly, we will see that open enumerative mirror symmetry, as in e.g.~\cite{PSW08}, can be reformulated as the existence of an isomorphism of \textit{extensions of VSHS}. In this section, we will explain how to classify extensions of VSHS following~\cite{Hug24b} in a language consistent with the discussion of VSHS in the last section. We will also discuss the classical construction of an extension of VHS associated to a homologically trivial algebraic cycle, as discussed in~\cite{HW22}.

\subsection{Extensions as normal functions}
In this subsection, we will work with the characterization of VSHSs from Lemma~\ref{VSHSlemma}. Let $\mathcal{V}$ be a polarized VSHS over the formal disk $\mathcal{M}$, and consider an extension
\begin{align}
0\to\mathcal{V}\xrightarrow{a}\mathcal{V}'\xrightarrow{b}\mathbb{K}\to 0 \label{extvshs}
\end{align}
where $\mathbb{K}$ carries the trivial connection and $F^{\geq p}\mathbb{K} = \mathbb{K}$ and $F^{\geq p+1} = 0$ for a fixed $p\in\frac{1}{2}\mathbb{Z}$. Sufficiently well-behaved extensions of this form can be classified by \textit{normal functions} as we will now explain following~\cite[\S{6.1}]{Hug24b}.
\begin{definition}
We say that a variation of semi-infinite Hodge structures $\mathcal{V}$ is \textit{regular singular} if there is an $R$-submodule $\mathcal{V}_R\subset\mathcal{V}$ with $\nabla_{q\partial_q}\mathcal{V}_R\subset\mathcal{V}_R$. An extension of VSHS as in~\eqref{extvshs} is said to be regular singular if both $\mathcal{V}$ and $\mathcal{V}'$ are.
\end{definition}

The \textit{Deligne lattice} of $\mathcal{V}$ is an $R$-submodule $\widetilde{\mathcal{V}}\subset\mathcal{V}$ which is characterized by the requirement that the residue of the connection
\begin{align*}
N\colon\widetilde{\mathcal{V}}_0\to\widetilde{\mathcal{V}}_0
\end{align*}
has eigenvalues in $[0,1)$, where $\widetilde{\mathcal{V}}_0\coloneqq\widetilde{\mathcal{V}}/q\widetilde{\mathcal{V}}$ denote the fiber of $\widetilde{\mathcal{V}}$ over $0$. There is a filtration $\widetilde{F}^{\geq p}\widetilde{\mathcal{V}}$ on the Deligne lattice induced from the filtration on $\mathcal{V}$.

Taking Deligne lattices of the modules in~\eqref{extvshs} gives a short exact sequence of $R$-modules
\begin{align}
0\to\widetilde{\mathcal{V}}\to\widetilde{\mathcal{V}}'\to R\to 0
\end{align}
and restricting to the central fibers gives us a short exact sequence of $\mathbb{C}$-vector spaces
\begin{align}
0\to\widetilde{\mathcal{V}}_0\to\widetilde{\mathcal{V}}'_0\to\mathbb{C}\to 0 \,. \label{endses}
\end{align}
If we let $M$ denote the residue of the connection
\begin{align*}
M\colon\widetilde{\mathcal{V}}'_0\to\widetilde{\mathcal{V}}'_0
\end{align*}
then~\eqref{endses} is a short exact sequence of vector spaces with endomorphisms, where the endomorphism acting on $\mathbb{C}$ is $0$.

A regular singular extension of VSHS is called \textit{holomorphically flat} if~\eqref{endses} splits. In other words, this condition means that we can write
\begin{align*}
M = \begin{pmatrix} N & 0 \\ 0 & 0 \end{pmatrix}
\end{align*}
with respect to this splitting. By~\cite[Lemma 6.6]{Hug24b}, this is equivalent to the existence of an element $h\in\ker(\nabla^{\mathcal{V}'}_{q\partial_q})\subset\widetilde{\mathcal{V}'}\subset\mathcal{V}'$ such that $b(h) = 1\in R$. Hugtenburg classifies holomorphically flat regular singular extensions of VSHS in analogy with the classification of extensions of VHS due to Carlson~\cite{Car87}.
\begin{definition}\label{normalfunctiondef}
In the setting of~\eqref{extvshs}, the \textit{$k$th intermediate Jacobian} is defined to be
\begin{align*}
\widetilde{\mathcal{J}}^k\coloneqq\frac{\widetilde{\mathcal{V}}}{\widetilde{F}^{\geq k}\widetilde{\mathcal{V}}\oplus\ker(\nabla_{\partial_q})} \, .
\end{align*}
A \textit{normal function} is an element $\nu\in\widetilde{\mathcal{J}}^k$ of the intermediate Jacobian such that for a lift $\widetilde{\nu}\in\widetilde{\mathcal{V}}$ we have $\nabla_{q\partial_q}\widetilde{{\nu}}\in\widetilde{F}^{\geq k-1}\widetilde{\mathcal{V}}$.
\end{definition}
\begin{proposition}[{\cite[Proposition 6.12]{Hug24b}}]\label{normalfunprop}
There is a bijection between the set of holomorphically flat regular singular extension of VSHS with filtrations as in~\eqref{extvshs} and the set of normal functions in $\widetilde{\mathcal{J}}^k$. \qed
\end{proposition}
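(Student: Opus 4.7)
The plan is to prove the bijection by constructing explicit maps in both directions and verifying they are mutual inverses.

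For the forward direction, suppose we are given a holomorphically flat regular singular extension $0\to\mathcal{V}\xrightarrow{a}\mathcal{V}'\xrightarrow{b}\mathbb{K}\to 0$ with filtration jump at $p$ (so $F^{\geq p}\mathbb{K}=\mathbb{K}$ and $F^{\geq p+1}\mathbb{K}=0$). Holomorphic flatness provides an element $h\in\ker(\nabla^{\mathcal{V}'}_{q\partial_q})\cap\widetilde{\mathcal{V}}'$ with $b(h)=1$. Separately, since $b$ is a morphism of filtered modules and $F^{\geq p}\mathbb{K}=\mathbb{K}$, we can choose a filtered lift $h_F\in F^{\geq p}\mathcal{V}'$ of $1\in\mathbb{K}$. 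The candidate normal function is then
\[ \nu\coloneqq[h_F-h]\in\widetilde{\mathcal{J}}^k \]
for the appropriate $k$ corresponding to $p$. I would first check that $h_F-h$ indeed lies in $\widetilde{\mathcal{V}}$ (using regular singularity and the inclusion $a\colon\mathcal{V}\hookrightarrow\mathcal{V}'$), then verify that the class is independent of the choices: changing $h$ shifts $h_F-h$ by an element of $\ker(\nabla_{q\partial_q})\cap\widetilde{\mathcal{V}}$, while changing $h_F$ shifts it by an element of $\widetilde{F}^{\geq k}\widetilde{\mathcal{V}}$. Finally, the Griffiths transversality condition $\nabla_{q\partial_q}\widetilde{\nu}\in\widetilde{F}^{\geq k-1}\widetilde{\mathcal{V}}$ follows directly: $\nabla_{q\partial_q}(h_F-h)=\nabla_{q\partial_q}h_F$, since $h$ is flat, and Griffiths transversality on $\mathcal{V}'$ carries $F^{\geq p}\mathcal{V}'$ into $F^{\geq p-1}\mathcal{V}'$, which translates to the required statement on $\widetilde{\mathcal{V}}$.

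For the reverse direction, given a normal function $\nu\in\widetilde{\mathcal{J}}^k$ with lift $\widetilde{\nu}\in\widetilde{\mathcal{V}}$, I would build $\mathcal{V}'$ as follows. As a graded $\mathbb{K}[[u]]$-module (equivalently, as the pair of $\mathbb{K}$-modules in the sense of Lemma~\ref{VSHSlemma}), set $\mathcal{V}'\coloneqq\mathcal{V}\oplus\mathbb{K}$, with $a$ and $b$ the obvious inclusion and projection. Write the generator of the $\mathbb{K}$ summand as $h$ and define the connection so that $\nabla_{q\partial_q}h=-\nabla_{q\partial_q}\widetilde{\nu}$, extending $\nabla^{\mathcal{V}}$ by the Leibniz rule; the right-hand side lies in $\widetilde{F}^{\geq k-1}\widetilde{\mathcal{V}}\subset\mathcal{V}$ by the normal function condition, so the connection takes values where needed. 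Define the Hodge filtration by declaring $h_F\coloneqq h+\widetilde{\nu}$ to lie in $F^{\geq p}\mathcal{V}'$ and taking $F^{\geq p}\mathcal{V}'\coloneqq F^{\geq p}\mathcal{V}\oplus\mathbb{K}\langle h_F\rangle$ at the jump level, with $F^{\geq q}\mathcal{V}'=F^{\geq q}\mathcal{V}$ above and $F^{\geq q}\mathcal{V}'=F^{\geq q}\mathcal{V}\oplus\mathbb{K}\langle h_F\rangle$ below. One then verifies Griffiths transversality for $\mathcal{V}'$, which reduces precisely to $\nabla_{q\partial_q}\widetilde{\nu}\in\widetilde{F}^{\geq k-1}\widetilde{\mathcal{V}}$, and checks regular singularity and holomorphic flatness (the latter from the fact that $h$ is flat by construction). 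Well-definedness under the equivalences in Definition~\ref{normalfunctiondef} needs to be checked: shifting $\widetilde{\nu}$ by an element of $\widetilde{F}^{\geq k}\widetilde{\mathcal{V}}$ changes $h_F$ within $F^{\geq p}\mathcal{V}'$ without altering the extension class, while shifting by a flat section changes the choice of splitting $h$.

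The two constructions are inverse to one another essentially by tautology: starting from an extension, picking $h$ and $h_F$, forming $\nu=[h_F-h]$, and then rebuilding the extension recovers the original data up to the ambiguities already quotiented out. The main obstacle in the proof is bookkeeping around the Deligne lattice: one must be careful that lifts $h$ and $h_F$ chosen in $\mathcal{V}'$ actually lie in (or can be adjusted to lie in) the Deligne lattice $\widetilde{\mathcal{V}}'$, and that the filtered splitting is compatible with the integral structure of the residue of $\nabla_{q\partial_q}$. The holomorphic flatness hypothesis is exactly what guarantees a flat splitting of~\eqref{endses}, so it is used precisely at the step where we produce $h\in\ker(\nabla_{q\partial_q})\cap\widetilde{\mathcal{V}}'$; without it one only obtains $h$ with $\nabla_{q\partial_q}h\in\widetilde{\mathcal{V}}$, which would weaken the conclusion to a class in a larger quotient.
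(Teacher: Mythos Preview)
Your approach is essentially the same as the paper's proof sketch: extract the normal function as the class of a filtered lift minus a flat lift of $1\in\mathbb{K}$, and rebuild the extension on $\mathcal{V}\oplus\mathbb{K}$ with connection and filtration dictated by a lift $\widetilde{\nu}$.

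There is one slip in your reverse construction. You set $\nabla_{q\partial_q}h=-\nabla_{q\partial_q}\widetilde{\nu}$ and then assert that $h$ is flat; these are inconsistent unless $\widetilde{\nu}$ is already flat. With your formula it is $h_F=h+\widetilde{\nu}$ that becomes flat, reversing the roles of $h$ and $h_F$ relative to your forward direction. The paper (and the intended construction) takes the connection on $\mathbb{K}\oplus\mathcal{V}$ to be $q\partial_q\oplus\nabla^{\mathcal{V}}$, so the generator $h$ of the $\mathbb{K}$-summand is flat, and declares $h_F=(1,\widetilde{\nu})$ to sit in $F^{\geq p}$; then $\nabla_{q\partial_q}h_F=\nabla_{q\partial_q}\widetilde{\nu}\in\widetilde{F}^{\geq k-1}\widetilde{\mathcal{V}}$ gives Griffiths transversality, and holomorphic flatness is witnessed by $h$. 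With that correction your argument and the paper's coincide.
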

We sketch a proof of this Lemma, as some knowledge of the construction of a normal function will be helpful in the proof of Theorem~\ref{mainprelim} when we compare extensions of VSHS in the $B$-model.
\begin{proof}[Proof sketch]
Assume given an extension of VSHSs as in~\eqref{extvshs} which is regular singular and for which the short exact sequence~\eqref{endses} of complex vector spaces with endomorphims splits. Since the extension is regular singular, we an choose an element $f\in\widetilde{F}^{\geq k}\widetilde{\mathcal{V}}'$ with $b(f) = 1$. The existence of a splitting implies the existence of an element $h\in\ker\left(\nabla^{\mathcal{V}'}_{q\partial_q}\right)$ as discussed above. A lift of the normal function is obtained by setting
\begin{align*}
\widetilde{\nu}\coloneqq f-h\in\mathcal{\widetilde{V}}
\end{align*}
and one can check that the image $\nu\in\widetilde{\mathcal{J}}^k$ of $\widetilde{\nu}$ in the quotient is independent of the choices for $h$ and $f$. 

Conversely, given a normal function $\nu\in\widetilde{\mathcal{J}}^k$, we will construct an extension of VSHS with underlying $\mathbb{K}$-module
\begin{align*}
0\to\mathcal{V}\to\mathbb{K}\oplus\mathcal{V}\to\mathbb{K}\to 0 \,.
\end{align*}
Fix a lift $\widetilde{\nu}$ of the normal function. The connection on $\mathbb{K}\oplus\mathcal{V}$ is given by $q\partial_q\oplus\nabla^{\mathcal{V}}$ and the filtration is determined by setting
\begin{align*}
\mathcal{F}^{\geq i}(\mathbb{K}\oplus\mathcal{V})\coloneqq(\lbrace 0\rbrace\oplus F^{\geq i}\mathcal{V})+(F^{\geq i}\mathbb{K}\oplus\mathbb{K}\widetilde{\nu})
\end{align*}
where the filtration on $\mathbb{K}$ is such that $\mathcal{F}^{\geq k}\mathbb{K} = \mathbb{K}$ and $F^{\geq k+1}\mathbb{K} = 0$, consistent with~\eqref{extvshs}. Griffiths transverality follows from the definition of a normal function, and one can also check the VSHS obtained this way does not depend on the choice of lift.
\end{proof}

\subsection{Normal functions from algebraic cycles}
Let $\pi\colon\mathcal{X}^{\vee}\to\Delta^*$ be a family of smooth Calabi--Yau threefolds over $\mathbb{C}$, where $\Delta^*$ is the unit disk in $\mathbb{C}$ with the origin removed. Further assume that this family admits a semistable continuation over the unit disk $\Delta$, and that $0\in\Delta$ is a point of maximally unipotent monodromy (i.e. a large complex structure limit point in the complex moduli space). Note that up to a change of base, one can obtain a relative scheme $X^{\vee}\to\mathcal{M}_B$ as considered in~\cite{GPS15}.

Let $X_z^{\vee}$ denote the fiber of $\mathcal{X}^{\vee}$ over $z\in\Delta^*$, and assume that we are given algebraic curves $i\colon C_z^i\to X_z$, for $i = 0,1$, which are the fibers of smooth families $\mathcal{C}^i\to\Delta^*$. Suppose that these families admit semistable continuations $\overline{\mathcal{C}}^i$ over $\Delta$. These can be can be thought of as algebraic curves $C^i\subset X^{\vee}$ in the scheme over $\mathcal{M}_B$. If $[C^0_z]$ and $[C^1_z]$ lie in the same homology class, then for any integer $m>0$ we can consider a family of algebraic cycles $m\mathcal{C}$ whose fibers are the homologically trivial algebraic cycles $m C^0_z-m C^1_z$.

From such a family of homologically trivial algebraic cycles, one can construct an extension of VHS using classical techniques~\cite{Car87}. We will summarize this construction following~\cite{Voi} and~\cite{HW22}, in particular explaining how the normal function thus obtained gives a normal function in the sense of Definition~\ref{normalfunctiondef}.

Since each $m[C_z]\in H_2(X_z^{\vee};\mathbb{Z})$ vanishes, we have a short exact sequence
\begin{align}
0\to H_3(X_z^{\vee};\mathbb{Z})\to H_3(X_z^{\vee},mC_z;\mathbb{Z})\to\mathbb{Z}\to 0 \label{nullcycleses}
\end{align}
for all $z$, where the rightmost entry is thought of as a generator for $H_2(mC_z;\mathbb{Z})$. This allows us to associate a normal function to $m\mathcal{C}$ as follows. In each fiber, choose any $3$-chain $\Gamma_z\in H_3(X_z^{\vee},mC_z;\mathbb{Z})$ with $\partial\Gamma_z = mC_z$.
\begin{lemma}
The integral
\begin{align*}
\nu_z(\eta)\coloneqq(2\pi i)^2\int_{\Gamma_z}\eta \, ;\qquad \eta\in F^{\geq\frac{1}{2}}H^3(X_z^{\vee};\mathbb{C})
\end{align*}
is independent of $\Gamma_z$ up to periods of the form $\int_{A_z}\eta$ for $A_z\in H_3(X_z^{\vee};\mathbb{Z})$. In particular, $\nu$  can be thought of as an element of the (classical) intermediate Jacobian
\begin{align}
(F^{\geq\frac{1}{2}}H^3(X_z^{\vee};\mathbb{C}))^{\vee}/H^3(X_z^{\vee};\mathbb{Z})^{\vee}\cong\frac{H^3(X_z^{\vee};\mathbb{C})}{F^{\geq\frac{1}{2}}H^3(X^{\vee};\mathbb{C})\oplus H^3(X_z^{\vee};\mathbb{Z})} \,. \label{classicaljacobian}
\end{align}
\qed
\end{lemma}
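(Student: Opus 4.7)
The plan is to verify both the well-definedness statement and the identification of the intermediate Jacobian by separate direct arguments, using Stokes' theorem for the first and the Hodge--Riemann bilinear relations together with Poincar\'e duality for the second.

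For the well-definedness, I would argue as follows. Suppose $\Gamma_z,\Gamma_z'\in C_3(X_z^{\vee},mC_z;\mathbb{Z})$ both satisfy $\partial\Gamma_z=\partial\Gamma_z'=mC_z$. Then $A_z\coloneqq\Gamma_z-\Gamma_z'$ is an absolute $3$-cycle in $X_z^{\vee}$ and defines a class in $H_3(X_z^{\vee};\mathbb{Z})$. Since any $\eta\in F^{\geq 1/2}H^3(X_z^{\vee};\mathbb{C})$ is $d$-closed, Stokes' theorem gives
\begin{align*}
(2\pi i)^2\int_{\Gamma_z}\eta-(2\pi i)^2\int_{\Gamma_z'}\eta = (2\pi i)^2\int_{A_z}\eta \,.
\end{align*}
This shows $\nu_z(\eta)$ is well-defined modulo the subgroup of periods $(2\pi i)^2\int_{A}\eta$ with $A\in H_3(X_z^{\vee};\mathbb{Z})$, and varying $\eta$ we obtain a well-defined element of the dual quotient $(F^{\geq 1/2}H^3(X_z^{\vee};\mathbb{C}))^{\vee}/H_3(X_z^{\vee};\mathbb{Z})^{\vee}$, where the integer lattice acts by periods.

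For the identification with the quotient on the right of~\eqref{classicaljacobian}, I would use the polarization on $H^3$ coming from the cup product paired with integration against the fundamental class, which is a non-degenerate antisymmetric form. For a Calabi--Yau threefold with $h^{3,0}=1$, the Hodge--Riemann relations imply $F^p\cdot F^q=0$ in $H^6$ whenever $p+q>3$. In particular $F^{\geq 1/2}H^3(X_z^{\vee};\mathbb{C})=F^2H^3(X_z^{\vee};\mathbb{C})$ is a Lagrangian subspace, so the pairing induces a natural isomorphism
\begin{align*}
H^3(X_z^{\vee};\mathbb{C})/F^{\geq 1/2}H^3(X_z^{\vee};\mathbb{C})\xrightarrow{\cong}\left(F^{\geq 1/2}H^3(X_z^{\vee};\mathbb{C})\right)^{\vee}
\end{align*}
sending $[\alpha]\mapsto\left(\eta\mapsto\int_{X_z^{\vee}}\alpha\cup\eta\right)$. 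Poincar\'e duality $H^3(X_z^{\vee};\mathbb{Z})\cong H_3(X_z^{\vee};\mathbb{Z})$ then matches the integer lattice on the source with the period lattice on the target, yielding the stated isomorphism of Jacobians.

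This statement is a classical computation going back to Griffiths and Carlson, so there is no real geometric obstacle; the only care required is in translating between the semi-infinite filtration indexing $F^{\geq p}$ used in the paper and the classical indexing $F^p H^3$, together with keeping track of the normalization factor $(2\pi i)^2$, which is chosen so that the period lattice has the correct rational structure.
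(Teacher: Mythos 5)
Your proof is correct. The paper states this lemma with a bare \qed, treating it as the classical Carlson--Griffiths construction (citing \cite{Car87,Voi,HW22}), so there is no paper proof to compare against; your argument is precisely the standard one being alluded to. The Stokes-theorem step for well-definedness is right (note that the integral $\int_{\Gamma_z}\eta$ is itself independent of the choice of closed form representing $\eta$, because any exact correction $d\xi$ contributes $\int_{mC_z}\xi$, which vanishes by type since $F^{\geq 1/2}H^3$ has no $(1,2)$ or $(0,3)$ components and $C_z$ is a curve --- worth making explicit if you want a fully self-contained argument). For the second half, your identification of $F^{\geq 1/2}H^3$ with the classical $F^2 H^3 = H^{3,0}\oplus H^{2,1}$ matches the paper's normalization, and the Lagrangian property holds; one small remark is that the vanishing $F^p\cdot F^q=0$ for $p+q>3$ follows already from multiplicativity of the Hodge filtration ($F^pH^3\cup F^qH^3\subset F^{p+q}H^6 = 0$ for $p+q\geq 4$), so no appeal to the Hodge--Riemann relations is needed there; the Riemann relations (more precisely, $F^2\cap\overline{F^2}=0$) enter instead if one wants to check that $H^3(X_z^{\vee};\mathbb{Z})$ injects into the quotient $H^3/F^{\geq 1/2}$, a point you leave implicit but which is standard.
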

The group~\eqref{classicaljacobian} is a fiber of an intermediate Jacobian. Let $h = (2\pi i)^2\delta_{\Gamma_z}$, where $\delta_{\Gamma_z}$ is a differential form representing the class in $H^3(X_z^{\vee}\setminus C_z;\mathbb{C})$ Poincar{\'e} dual to $\Gamma_z$, and choose a form $f\in H^3(X_z^{\vee}\setminus C_z;(2\pi i)^2\mathbb{Z})$ such that $df = (2\pi i)^2\delta_{C_z}$, which exists because $C_z$ is nullhomologous. Then we can write
\begin{align}
\nu_z(\eta) = (2\pi i)^2\int_{X_z^{\vee}}(h-f)\wedge\eta = (2\pi i)^2\int_{X_z^{\vee}}h\wedge\eta = (2\pi i)^2\int_{\Gamma_z}\eta \label{normalfunctionintegral}
\end{align}
for any $\eta\in F^{\geq\frac{1}{2}}H^3(X_z^{\vee};\mathbb{C}) = H^{3,0}(X_z^{\vee})\oplus H^{2,1}(X_z^{\vee})$. The integral
\begin{align*}
\int_{X_z^{\vee}}f\wedge\eta
\end{align*}
vanishes by type considerations. Notice that the difference $h-f\in H^3(X_z^{\vee})$ is well-defined modulo $F^{\geq\frac{1}{2}}H^3(X_z^{\vee};\mathbb{C})\oplus H^3(X_z^{\vee};(2\pi i)^2\mathbb{Z})$. By carrying out this construction in each fiber $X_z^{\vee}$ over $z\in\Delta^*$, we obtain a function on $\Delta^*$. By a theorem of Griffiths, this function is holomorphic on $\Delta^*$ and horizontal~\cite[Lemma 7.9]{Voi}, where the latter condition is the analogue of the defining condition in Definition~\ref{normalfunctiondef}.

When $\mathcal{X}^{\vee}$ is the mirror quintic family, $\nu_z(\eta)$ takes a particularly simple form. Slightly more generally, we will assume that the Hodge numbers of $\mathcal{X}^{\vee}$ are as follows.
\begin{assumption}\label{BHodgenumberass}
The Hodge numbers of $X^{\vee}_z$ satisfy
\begin{align}
h^{3,0}(X^{\vee}_z) = h^{2,1}(X^{\vee}_z) = h^{1,2}(X^{\vee}_z) = h^{0,3}(X^{\vee}_z) = 1
\end{align}
and $X^{\vee}_z$ is simply connected.
\end{assumption}
Assumption~\ref{BHodgenumberass} enables us to express $\nu$ in terms of $[C_z]$ and the Hodge-theoretically normalized volume form. As noted in Remark~\ref{svwbasisB}, this Assumption allows us to write the connection matrix for the Gauss--Manin connection in the form~\eqref{connectionmatrixB}.

As explained in~\cite[\S{3.1}]{HW22}, one can choose a canonical lift of $\nu$ by considering the monodromy logarithm, which we write as
\begin{align*}
\widetilde{\nu} = \mathcal{W}_1e_1+\mathcal{W}_0e_0 \,.
\end{align*} 
This is a section of the local system $R^3\pi_*\mathbb{Z}\otimes\mathcal{O}_{\Delta^*}$, where $\pi\colon\mathcal{X}^{\vee}\to\Delta^*$ is the projection map for the family. Here we can write
\begin{align}
\mathcal{W}_0(z) = \int_{\Gamma_z}\Omega_z \label{relativeperiodintegral}
\end{align}
where $\Omega_z$ denotes the restriction of the Hodge-theoretically normalized relative volume form $\Omega$ to $X_z^{\vee}$. Using~\eqref{connectionmatrixB} and horizontality, it follows that $W_1 = z\partial_z \mathcal{W}_0$, meaning that the normal function, and hence the extension of Hodge structures, is completely determined by~\eqref{relativeperiodintegral}. It is shown in~\cite{HW22} that $\mathcal{W}_0$ can be written in the form
\begin{align*}
\mathcal{W}_0(z) = \frac{1}{(2\pi i)^2}\frac{\lambda}{r^2}\log^2(z)+\frac{1}{2\pi i}\frac{s}{r}\log(z)+c+w(z)
\end{align*}
where $w(z)$ extends to a holomorphic function which vanishes at $0\in\Delta$, up to passing to an $r$-fold cover
\begin{align*}
\Delta^*\to\Delta^* \\
z\mapsto z^r
\end{align*}
to account for branching of $\mathcal{C}$ near the origin.
\begin{remark}
The integer $r$ is the order of the large complex structure limit monodromy when restricted to the trivial VHS associated to $\mathcal{C}$. Here $\lambda$ and $s$ are both integers. It turns out that for (multiples of) the van Geemen lines, we have that $r = 1$, because they are preserved by the monodromy map on $X_z^{\vee}$. A direct calculation carried out by Jockers--Morrison--Walcher shows that $\lambda = s = 0$. These terms of the normal function are not included in the data of a VSHS, so we will not explain their significance any further.
\end{remark}
We obtain an element of the $\frac{1}{2}$th intermediate Jacobian of Definition~\ref{normalfunctiondef} by considering the power series in $\mathbb{C}[[z]]$ corresponding to the \textit{holomorphic part} $w(z)$ of the normal function. Denote by
\begin{align}
\mathcal{H}^B(\mathcal{X}^{\vee},\mathcal{C}) \label{geometricbext}
\end{align}
the extension of the trivial VSHS $\mathbb{K}_B[[u]]$ by $\mathcal{H}^B(\mathcal{X}^{\vee})$. Removing the $\log(z)$ and $\log^2(z)$ terms is necessary because normal functions, as we have defined them in Definition~\ref{normalfunctiondef}, are required to lie in a quotient of the Deligne lattice, which is a module over $\mathbb{C}[[z]]$, not $\mathbb{C}((z))$.
\begin{remark}
While the constant term $c$ is part of the normal function considered in Definition~\ref{normalfunctiondef}, we note that any constant section is flat, so the extension of VSHS does not depend on the constant term of the normal function. Classically, the constant term determines the integral lattice on an extension of VHS, so this reflects the fact that a VSHS does not include the data of an integral lattice. This is the reason that we are unable to directly give an interpretation of the integral structure on the $A$-model using homological mirror symmetry. Analogously, in the closed string case one cannot obtain the Gamma conjecture directly from Theorem~\ref{gpsthm}.
\end{remark}

\section{Open Gromov--Witten invariants and relative period integrals}\label{comparisonthms}
We will prove Theorem~\ref{mainprelim} in this section by establishing an analogue of Theorem~\ref{gpsthm} for extensions of VHS. Stating a fully general open string analogue of this Theorem would require an appropriate notion of a mirror pair of objects. It would be difficult to formulate this notion in a canonical way compatible with homological mirror symmetry, since the support of a sheaf is not a categorical invariant, i.e. autoequivalences of the derived category need not preserve the support of an object. Thus we will content ourselves with stating our comparisons of VSHSs in terms of a specific choice of mirror functor. This is part of the reason that the statement of Theorem~\ref{main1} referred to the quasi-equivalence constructed in~\cite{She15}. 
\begin{assumption}\label{functass}
$(X,\omega)$ is a symplectic Calabi--Yau threefold and $X^{\vee}\to\mathcal{M}_B$ is a relative scheme of dimension $3$ coming from a family of smooth Calabi--Yau threefolds with maximally unipotent monodromy. There is a fixed quasi-equivalence
\begin{align}
\mathcal{F}(X,\omega)\simeq D^b_{dg}\Coh(X^{\vee}) \label{promisedequiv}
\end{align}
where the coefficient fields are related by the mirror map $\mathbb{K}_A\to\mathbb{K}_B$.

Moreover let $(L_0,\nabla_0,b_0),(L_1,\nabla_1,b_1)\in\mathcal{F}(X,\omega)$ be a pair of cleanly immersed Lagrangian branes in $X$ such $L_0$ and $L_1$ intersect cleanly and such that the Lagrangian immersion $L\coloneqq L_0\sqcup L_1\looparrowright X$ is nullhomologous. The disjoint union carries a natural local system and bounding cochain, and the resulting Lagrangian brane is denoted $(L,\nabla,b)$.

We assume that $(L_i,\nabla_i,b_i)$ corresponds under~\eqref{promisedequiv} to the pushforward of a vector bundles of rank $m$, denoted $\mathcal{L}_i$, supported on a curve $C^i\subset X^{\vee}$, and that the restrictions  $C_z^i$ of these curves to the smooth fibers $X_z^{\vee}$ lie in the same homology class. In other words, we have a family $C_z\coloneqq C^0_z-C_z^1$ of homologically trivial algebraic cycles in $X_z^{\vee}$.
\end{assumption}
Considering a pair of branes, rather than a single brane as in~\cite{Hug24b}, does not lead to a loss of generality given the other parts of Assumption~\ref{functass}, since there are no nullhomologous algebraic \textit{curves} on the $B$-side. Keeping track of the homology classes of curves will be crucial to our comparison of $B$-model extensions of VSHS. 

We will also impose a rather restrictive assumption on the Hodge numbers of the $A$-model symplectic manifold $X$, which are satisfied by the quintic.
\begin{assumption} \label{AHodgenumberass}
The Hodge numbers of $X$ satisfy
\begin{align*}
h^{0,0}(X) = h^{1,1}(X) = h^{2,2}(X) = h^{3,3}(X) = 1 
\end{align*}
and $X$ is simply connected. In particular $X^{\vee}$ satisfies Assumption~\ref{BHodgenumberass} by mirror symmetry.
\end{assumption}
This assumption enables us to make use of a basis for the VSHS $\mathcal{H}^A(X,\omega)$ as described in Remark~\ref{svwbasisA}, which in turn lets us easily construct an $A$-model extension of VSHS. Despite their restrictive nature, Assumptions~\ref{functass} and~\ref{AHodgenumberass} encompass the examples of the real quintic~\cite{PSW08} and the Lagrangian $\tLvgsmooth$, which are the only examples of open mirror pairs, as defined in~\cite{HW22}, in the literature.

One would expect the $A$-model analogue of the extension of VHS associated to a nullhomologous algebraic cycle to come from Solomon--Tukachinsky's relative quantum cohomology~\cite{ST23} of a nullhomologous Lagrangian. We cannot appeal to their construction directly, since our definition of the open Gromov--Witten potential differs from theirs at the chain level. Although we could define a relative quantum connection using the results of \S\ref{immersedogw}, we would require a different proof of the flatness of this connection.

We will avoid this issue by constructing an extension of VSHS the trivial VSHS $\mathbb{K}_A$  by $\mathcal{H}^A(X,\omega)$ at the homological level by specifying a normal function in terms of the ($\infty$-cyclic) open Gromov--Witten potential. The isomorphisms of Theorem~\ref{gpsthm}, and in particular Assumption~\ref{functass}, then allow us to construct a $B$-model extension of the trivial VSHS $\mathbb{K}_B$ by $\mathcal{H}^B(X^{\vee})$. Our description of $B$-model normal functions is then rigid enough that we can prove this extension of VSHSs to be equivalent to the one described in the previous subsection.

Fix a bounding $4$-chain $\Gamma$ for the nullhomllogous Lagrangian immersion $L$. In what follows, we will write $\Psi$ for the open Gromov--Witten potential $\Psi_{L,\Gamma}(b)$ of Corollary~\ref{bounding-4chain}, which we will now think of as a function of the Novikov variable $Q$.
\begin{proposition}
Let $\lbrace e_3,e_2,e_1,e_0\rbrace$ denote the basis for the even degree part of $\mathcal{H}^A(X,\omega)$ specified in Remark~\ref{svwbasisA}. Then
\begin{align*}
\widetilde{\nu}_A\coloneqq Q\partial_Q\Psi(Q)e_1+\Psi(Q)e_0
\end{align*}
descends to a well-defined normal function $\nu_A$ for $\mathcal{H}^A(X,\omega)$.
\end{proposition}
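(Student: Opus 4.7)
The plan is to verify the two conditions of Definition~\ref{normalfunctiondef}: that $\widetilde{\nu}_A$ is a section of the Deligne lattice $\widetilde{\mathcal{V}}$ of $\mathcal{H}^A(X,\omega)$, and that it satisfies Griffiths transversality at filtration level $k=\tfrac{1}{2}$. I will pass between the two descriptions of a VSHS via Lemma~\ref{VSHSlemma}, so that the basis $\{e_3,e_2,e_1,e_0\}$ of Remark~\ref{svwbasisA} refines to a filtration
\begin{align*}
\widetilde{F}^{\geq 3/2}\widetilde{\mathcal{V}} \subset \widetilde{F}^{\geq 1/2}\widetilde{\mathcal{V}} \subset \widetilde{F}^{\geq -1/2}\widetilde{\mathcal{V}} \subset \widetilde{F}^{\geq -3/2}\widetilde{\mathcal{V}} = \widetilde{\mathcal{V}}
\end{align*}
in which $e_j$ is a generator of the rank-one quotient at level $j-\tfrac{3}{2}$, mirroring the convention of Remark~\ref{svwbasisB} on the $B$-side.

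First I would observe that by Corollary~\ref{bounding-4chain}, $\Psi = \Psi_{L,\Gamma}(b) \in \Lambda_+$ is independent of the almost complex structure. Inspection of Definition~\ref{infinityogwdef} together with Assumption~\ref{AHodgenumberass} (which, via integrality of $[\omega]$ and $\pi_1(X)=0$, forces relative disk areas to lie in $\mathbb{Z}_{\geq 0}$) shows that $\Psi(Q) \in Q\cdot\mathbb{C}[[Q]] \subset R_A$, and hence $Q\partial_Q\Psi(Q) \in R_A$ as well. Consequently $\widetilde{\nu}_A = Q\partial_Q\Psi(Q)\,e_1 + \Psi(Q)\,e_0$ is an honest section of $\widetilde{\mathcal{V}}$, indeed of $\widetilde{F}^{\geq -3/2}\widetilde{\mathcal{V}}$.

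Next I would check horizontality by a direct computation using the connection matrix~\eqref{connectionmatrixA}: $\nabla_{Q\partial_Q}e_1 = -e_0$ and $\nabla_{Q\partial_Q}e_0 = 0$. The Leibniz rule gives
\begin{align*}
\nabla_{Q\partial_Q}\widetilde{\nu}_A &= (Q\partial_Q)^2\Psi\cdot e_1 + Q\partial_Q\Psi\cdot(-e_0) + Q\partial_Q\Psi\cdot e_0 + \Psi\cdot 0 \\
&= (Q\partial_Q)^2\Psi\cdot e_1,
\end{align*}
so that $\nabla_{Q\partial_Q}\widetilde{\nu}_A \in R_A\cdot e_1 \subset \widetilde{F}^{\geq -1/2}\widetilde{\mathcal{V}}$. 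This is exactly the horizontality condition of Definition~\ref{normalfunctiondef} with $k = \tfrac{1}{2}$. It is precisely the cancellation of the two $\pm Q\partial_Q\Psi\cdot e_0$ terms that forces the specific form of $\widetilde{\nu}_A$: the coefficient $Q\partial_Q\Psi$ of $e_1$ is chosen so that the $-e_0$ produced by $\nabla_{Q\partial_Q}e_1$ cancels against the derivative of the $e_0$-coefficient, guaranteeing that the image lands one filtration level up.

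Finally, the image $\nu_A$ of $\widetilde{\nu}_A$ in the intermediate Jacobian $\widetilde{\mathcal{J}}^{1/2} = \widetilde{\mathcal{V}}/(\widetilde{F}^{\geq 1/2}\widetilde{\mathcal{V}} \oplus \ker\nabla_{\partial_Q})$ is well-defined as a coset, and by Proposition~\ref{normalfunprop} corresponds to a holomorphically flat regular singular extension of $\mathcal{H}^A(X,\omega)$ by the trivial VSHS $\mathbb{K}_A$ with filtration jumping at level $\tfrac{1}{2}$, matching the $B$-model extension~\eqref{geometricbext} under the mirror dictionary. The only real subtlety in the argument is the verification that $\Psi$ contributes a genuine power series (no logarithmic terms), which is why I invoke the bounding $4$-chain regularization of Corollary~\ref{bounding-4chain}; the remainder of the proof is a mechanical but suggestive computation with the connection matrix.
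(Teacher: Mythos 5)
Your approach is essentially the paper's: a direct Leibniz-rule computation against the connection matrix of Remark~\ref{svwbasisA}, plus an appeal to Corollary~\ref{bounding-4chain}. The key cancellation you identify — the $\mp Q\partial_Q\Psi\cdot e_0$ terms vanishing so that $\nabla_{Q\partial_Q}\widetilde{\nu}_A$ lands in $R_A\cdot e_1\subset\widetilde{F}^{\geq -1/2}\widetilde{\mathcal{V}}$ — is exactly the horizontality check the paper performs. (The paper records an overall minus sign on the result, but this is immaterial since either way the output is a multiple of $e_1$.)

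There is one genuine omission. The proposition asserts that $\widetilde{\nu}_A$ descends to a \emph{well-defined} normal function, and the paper explains that ``well-defined'' means well-defined up to \emph{two} auxiliary choices: the almost complex structure $J$ and the bounding $4$-chain $\Gamma$. You dispatch the $J$-dependence via Corollary~\ref{bounding-4chain}, which is correct, but you never address the $\Gamma$-dependence: you assert ``the image $\nu_A$ in the intermediate Jacobian is well-defined as a coset'' without showing that different choices of $\Gamma$ give the same coset. This is not a tautology — a different $\Gamma$ changes the correction term $\int_\Gamma\mathfrak{m}^J_\emptyset$ in $\Psi_{L,\Gamma}(b)$, hence changes $\widetilde{\nu}_A$ itself. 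The paper's proof closes this gap by observing that two bounding $4$-chains $\Gamma_1,\Gamma_2$ differ by a closed smooth singular $4$-cycle, so the resulting change in the open Gromov--Witten potential contributes only classes in $\widetilde{F}^{\geq k}\widetilde{\mathcal{V}}$, which are quotiented out when passing from $\widetilde{\nu}_A$ to $\nu_A\in\widetilde{\mathcal{J}}^k$. Your write-up should include this argument; without it, the descent to $\nu_A$ is not actually proved. (Separately, your claim that $\Psi(Q)\in Q\cdot\mathbb{C}[[Q]]$ via integrality of $[\omega]$ and $\pi_1(X)=0$ requires more care than a one-line assertion — relative classes in $H_2(X,L;\mathbb{Z})$ need not have integer symplectic area just because $[\omega]$ is integral — but the paper does not address this point either, so I would not count it as a gap relative to the intended proof.)
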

\begin{proof}
Horizontality follows from the form of the connection matrix in Remark~\ref{svwbasisA}, from which we can directly compute that
\begin{align*}
\nabla\widetilde{\nu}_A = -(Q\partial_Q)^2\Psi(Q)e_1 \,.
\end{align*}
That this is well-defined, i.e. independent of the almost complex structure on $X$, follows because $L$ is nullhomologous by Corollary~\ref{bounding-4chain}. Moreover, replacing $\Gamma$ with another bounding $4$-chain does not affect $\nu_A$ as an element in the intermediate Jacobian. To see this, note that if $\Gamma_1$ and $\Gamma_2$ are two choices of bounding $4$-chains, then their difference can be represented as a closed smooth singular $4$-cycle, which implies that OGW potentials obtained from these two chains only differ by classes in $\widetilde{F}^{\geq k}\widetilde{\mathcal{V}}$.  
\end{proof}
\begin{definition}
Denote by $\mathcal{H}^A(X)_L$ the extension of VSHSs associated to $\nu_{\mathcal{F}}$. 
\end{definition}
\begin{remark}
For a nullhomologous embedded Lagrangian, the extension of VSHSs constructed this way is just the relative quantum cohomology of~\cite{ST23}.
\end{remark}
Using our characterization of the open Gromov--Witten potential, we can lift this to a normal function for the negative cyclic homology of $\mathcal{F}(X,\omega)$.
\begin{proposition}
Under assumptions~\ref{qass}, \ref{connectionsass}, and~\ref{traceass}, there is a normal function $\nu_{\mathcal{F}}$ in $HC_*^{-}(\mathcal{F}(X,\omega))$ such that
\begin{align*}
\mathcal{OC}^{-}(\nu_{\mathcal{F}}) = \nu_A \,.
\end{align*}
It follows that the negative cyclic open-closed map induces an isomorphisms between the extensions of VSHS obtained from these normal functions.
\end{proposition}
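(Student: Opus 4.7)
The plan is to construct $\nu_{\mathcal{F}}$ as a negative cyclic cycle built out of the bounding cochain data $(L,\nabla,b)$ and the bounding $4$-chain $\Gamma$, and then match its image under $\mathcal{OC}^{-}$ with $\nu_A$ term by term. The natural Hochschild-level candidate is the ``Maurer--Cartan perturbed fundamental class'' $[L,\nabla,b] \coloneqq \sum_{k\geq 0} b^{\otimes k}\otimes \mathbf{1}_L$ associated to the brane, which is a Hochschild cycle precisely because of the Maurer--Cartan equation $\sum_k \mathfrak{m}_k(b^{\otimes k}) = 0$. Following Cho--Lee~\cite{CL} together with Assumption~\ref{traceass}, this lifts canonically to a class $\widetilde{\nu}_{\mathcal{F}}\in CC_*^{-}(\mathcal{F}(X,\omega))$; the $4$-chain $\Gamma$ enters as a correction at the closed-string level (via the pullback under $\mathcal{OC}^{-}$ of the closed operator $\mathfrak{m}_\emptyset$ integrated over $\Gamma$) so that the resulting class descends unambiguously to the intermediate Jacobian.

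First I would verify that this candidate is a negative cyclic cycle: the Hochschild closedness is the Maurer--Cartan equation, and the lift to the $u$-direction is governed by the standard Connes-type formulas. Next I would compute $\mathcal{OC}^{-}(\widetilde{\nu}_{\mathcal{F}})$ using the chain-level formulation of the cyclic OC trace from Assumption~\ref{traceass}. The key point is that, by the construction of Definition~\ref{infinityogwdef} and the passage~\eqref{infinityinner} from the positive cyclic cocycle to the $\infty$-inner product $\psi$, the pairings $\psi_{p,q}(b^{\otimes p}\otimes\mathfrak{m}_k(b^{\otimes k})\otimes b^{\otimes q})(b)$ are exactly the values of the OC trace on cyclic rotations of the Hochschild chain underlying $\widetilde{\nu}_{\mathcal{F}}$. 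The combinatorial factor $\tfrac{1}{N+1}$ then matches the averaging over cyclic orderings implicit in the passage to $CC^{-}_*$. Combined with the bounding $4$-chain correction, this yields $\Psi = \Psi_{L,\Gamma}$ as the coefficient of the unit in $\mathcal{OC}^{-}(\widetilde{\nu}_{\mathcal{F}})$. Compatibility of $\mathcal{OC}^{-}$ with the Getzler--Gauss--Manin and quantum connections (Assumption~\ref{connectionsass}) then forces the $u$-linear coefficient to be $Q\partial_Q \Psi$, matching $\widetilde{\nu}_A$ in the basis of Remark~\ref{svwbasisA}.

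Once the identity $\mathcal{OC}^{-}(\nu_{\mathcal{F}}) = \nu_A$ is established, horizontality of $\nu_{\mathcal{F}}$ follows from horizontality of $\nu_A$ by pulling back through the isomorphism $\mathcal{OC}^{-}$ of Theorem~\ref{gpsthm}; equivalently, $\nu_{\mathcal{F}}$ is automatically a normal function because $\mathcal{OC}^{-}$ is an isomorphism of VSHS in the sense of Lemma~\ref{VSHSlemma}. The second statement of the Proposition is then immediate: by Proposition~\ref{normalfunprop}, normal functions classify holomorphically flat regular singular extensions of the given base VSHS, and an isomorphism of base VSHSs sending one normal function to the other lifts uniquely to an isomorphism of the associated extensions.

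The main obstacle I anticipate is the chain-level identification $\mathcal{OC}^{-}(\widetilde{\nu}_{\mathcal{F}}) = \widetilde{\nu}_A$, which requires a careful bookkeeping of signs and combinatorial factors relating (i) the $\tfrac{1}{N+1}$ averaging in Definition~\ref{infinityogwdef}, (ii) the skew-symmetrization~\eqref{infinityinner} producing $\psi$ from the positive cyclic cocycle, and (iii) the Connes $B$-operator used to lift $[L,\nabla,b]$ from Hochschild to negative cyclic chains. Additionally, one must verify that the $4$-chain correction assembles with the open-string terms into a single element of $CC^{-}_*$ well-defined modulo the $\mathcal{OC}^{-}$-image of exact cyclic chains, so that $\nu_{\mathcal{F}}$ depends on $\Gamma$ only through its class in the relevant intermediate Jacobian. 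Once this chain-level analysis is complete, the comparison of extensions reduces to Proposition~\ref{normalfunprop} and is essentially formal.
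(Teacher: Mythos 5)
Your approach is genuinely different from the paper's, and unfortunately it runs into a real gap. The paper does not construct an explicit preimage cyclic chain at all: it shows directly that $\nabla\widetilde{\nu}_A = -(Q\partial_Q)^2\Psi(Q)\,e_1$ lies in the image of $\mathcal{OC}^-$, and then pulls back $\widetilde{\nu}_A$ \emph{abstractly} using the isomorphism-of-VSHS property of Assumption~\ref{connectionsass}. The explicit image-membership check has two parts: the $\Psi'(b)$ part is handled by observing that the $\infty$-inner product $\psi$ is built from the cyclic open-closed trace, and the $\mathfrak{m}_{-1}$ part requires the divisor axiom (Lemma~\ref{qass}) to write $(Q\partial_Q)^2\mathfrak{m}_{-1} = \mathfrak{q}_{-1,2}(\omega\otimes\omega)$ and the horocyclic identity $\mathfrak{q}_{-1,2}(\omega\otimes\omega) = \langle 1,\mathfrak{q}_{0,2;\perp_0}(\omega\otimes\omega)\rangle$. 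The second statement then follows from Proposition~\ref{normalfunprop} exactly as you say.

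The gap in your proposal is precisely the $\mathfrak{m}_{-1}$ term. The count $\mathfrak{m}_{-1}$ of disks with \emph{no} boundary marked points is not the value of the open-closed map on any Hochschild chain of the Fukaya category — the inputs of $\mathcal{OC}_0$ are Floer cochains. Your candidate chain $\sum_k b^{\otimes k}\otimes\mathbf{1}_L$ has no mechanism to produce the $\mathfrak{m}_{-1}$ contribution under $\mathcal{OC}^-$; the best it could produce is the $\Psi'(b)$ portion of the potential. The paper's essential insight is that only after applying $(Q\partial_Q)^2$ does the constant term become a categorical quantity, and this passage requires both the divisor axiom and the horocyclic operators, neither of which appears in your argument. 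Relatedly, your handling of the bounding $4$-chain $\Gamma$ is turned around: $\mathcal{OC}^-$ maps \emph{from} cyclic homology \emph{to} quantum cohomology, so ``pulling back $\mathfrak{m}_\emptyset$ under $\mathcal{OC}^-$'' already presupposes using the isomorphism abstractly — at which point the explicit chain construction buys nothing over the paper's argument. If you want to salvage the explicit-chain route, you would need to account for the $\mathfrak{m}_{-1}$ and $\Gamma$-correction terms, and the divisor/horocyclic identities would reappear in disguise.
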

\begin{proof}
Notice that the terms of $-(Q\partial_Q)^2\Psi_b(Q)$ involving $b$ lie in the image of the $\mathcal{OC}^{-}$ by construction, since the $\infty$-inner product used to define $\Psi$ is defined using values of the cyclic open-closed map. Assumption~\ref{qass}, the divisor axiom, and Assumption~\ref{AHodgenumberass} imply that we can write
\begin{align*}
-(Q\partial_Q)^2\mathfrak{m}_{-1} = -\mathfrak{q}_{-1,2}(\omega\otimes\omega) \,.
\end{align*}
On the other hand, the right hand side can be expressed in terms of operators with horocyclic constraints via
\begin{align*}
\mathfrak{q}_{-1,2}(\omega\otimes\omega) = \langle 1,\mathfrak{q}_{0,2;\perp_0}(\omega\otimes\omega)\rangle
\end{align*}
and the expression on the right hand side is in the image of the open-closed map. Hence we have shown that $\nabla\widetilde{\nu}_A$ lies in the image of the open-closed map. Using Assumption~\ref{connectionsass}, which says that $\mathcal{OC}^{-}$ is an isomorphism and that it intertwines the Getzler--Gauss--Manin and quantum connections establishes the lift of an element $\widetilde{\nu}_F\in HC_*^{-}(\mathcal{F}(X,\omega))$ whose open-closed image is $\widetilde{\nu}_A$, from which we obtain the normal function $\nu_{\mathcal{F}}$. The assumption that $\mathcal{OC}^{-}$ is an isomorphism of VSHSs implies that $\nu_{\mathcal{F}}$ is a normal function as well.
\end{proof}
\begin{definition}
Let $HC_*^{-}(\mathcal{F}(X))_L$ denote the extension of VSHSs determined by $\nu_{\mathcal{F}}$.
\end{definition}
\begin{remark}
We have not checked whether $\nu_{\mathcal{F}}$ is the normal function associated to the extension of VSHSs constructed from relative negative cyclic homology defined in~\cite[\S{2.3}]{Hug24b}. This statement is essentially the content of~\cite[\S{5}]{Hug24b}, but those arguments are phrased at the chain-level, and thus do not immediately adapt to our setting.
\end{remark}
\begin{corollary}\label{categoricalnormalfunctions}
Recall that $\mathcal{L}_i\in D^b_{dg}\Coh(X^{\vee})$ denotes the mirror sheaf to $(L_i,\nabla_i,b_i)$, for $i = 0,1$, which is a rank $m$ vector bundle over $C_i$. with respect to the particular mirror functor~\eqref{promisedequiv}. Then we have isomorphisms of (extensions of) VSHS that fit into the following diagram
\begin{equation}
\begin{tikzcd}
HC_*^{-}(\mathcal{F}(X))_L\arrow{r}\arrow{d}{\mathcal{OC}^{-}} & HC_*^{-}(D^b_{dg}\Coh(X^{\vee}))_{\mathcal{L}_0-\mathcal{L}_1}\arrow{d}{\widetilde{\mathfrak{J}}_{\HKR}} \\\mathcal{H}^A(X,\omega)_L & \mathcal{H}^B(X^{\vee})_{\mathcal{L}_0-\mathcal{L}_1} \,.
\end{tikzcd}\label{vhsextisomorphisms}
\end{equation}
Here the horizontal arrow is induced by our choice of mirror functor, and the vertical arrows come from $\mathcal{OC}^{-}$ and $\widetilde{\mathfrak{J}}_{\HKR}$ (cf. Proposition~\ref{normalfunprop}). 
\end{corollary}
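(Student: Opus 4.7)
The plan is to assemble the commutative square by transporting the normal function $\nu_{\mathcal{F}} \in HC_*^{-}(\mathcal{F}(X))$ across the equivalences of Theorem~\ref{gpsthm} and checking that the resulting $B$-model normal function agrees with the one constructed classically from the family of nullhomologous algebraic cycles $m(C^0-C^1)$ as in Section~\ref{extensionsect}.

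First, the left square is essentially immediate from the proposition just proved: by construction $\mathcal{OC}^{-}(\nu_{\mathcal{F}}) = \nu_A$, and since $\mathcal{OC}^{-}$ is an isomorphism of VSHSs by Assumption~\ref{connectionsass}, it carries the Griffiths transversality condition defining $\nu_{\mathcal{F}}$ to the one defining $\nu_A$, so by Proposition~\ref{normalfunprop} it induces an isomorphism of extensions $HC_*^{-}(\mathcal{F}(X))_L \cong \mathcal{H}^A(X,\omega)_L$. The top horizontal map of~\eqref{vhsextisomorphisms} is then defined by pushing $\nu_{\mathcal{F}}$ through the isomorphism on negative cyclic homology induced by the mirror equivalence of Assumption~\ref{functass}; this produces a normal function $\nu_{\Coh} \in HC_*^{-}(D^b_{dg}\Coh(X^{\vee}))$, and since the mirror equivalence is an isomorphism of VSHSs by Theorem~\ref{gpsthm} (together with Morita invariance of negative cyclic homology), it is automatically an isomorphism of the associated extensions.

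Next I would apply $\widetilde{\mathfrak{J}}_{\HKR}$, which is an isomorphism of polarized VSHSs by~\cite{Tu24}, to obtain a normal function $\nu_B' \coloneqq \widetilde{\mathfrak{J}}_{\HKR}(\nu_{\Coh}) \in \mathcal{H}^B(X^{\vee})$ and hence an extension of the $B$-model VSHS. The right vertical arrow of~\eqref{vhsextisomorphisms} will be an isomorphism of extensions provided this new extension coincides with the geometrically defined $\mathcal{H}^B(X^{\vee})_{\mathcal{L}_0 - \mathcal{L}_1}$ of~\eqref{geometricbext}. Commutativity of the square is then automatic from the construction, since every arrow is defined by transporting $\nu_{\mathcal{F}}$ along a fixed chain of VSHS isomorphisms.

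The main obstacle is thus the identification $\nu_B' = \nu_B$, where $\nu_B$ is the classical normal function associated to $m(C^0-C^1)$. For this I would invoke Caldararu's theorem~\cite{Cal04}, which says that $\widetilde{\mathfrak{J}}_{\HKR}$ sends the Hochschild class of a coherent sheaf $\mathcal{L}$ to its Chern character $\ch(\mathcal{L})$, the $\td^{1/2}$-twist in $\widetilde{\mathfrak{J}}_{\HKR}$ being precisely what is needed to absorb the Grothendieck--Riemann--Roch correction. Under Assumption~\ref{functass} each $\mathcal{L}_i$ is the pushforward of a rank $m$ vector bundle on the curve $C^i$, so the computation underlying Corollary~\ref{algc2computation} shows that the degree-$4$ component of $\ch(\mathcal{L}_0) - \ch(\mathcal{L}_1)$ is a nonzero integer multiple of $[C^0]-[C^1]$, with all other nonconstant components vanishing by our choice of homologous curves and Assumption~\ref{BHodgenumberass}. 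Combined with the horizontality of $\nu_{\Coh}$ (inherited from $\nu_{\mathcal{F}}$) and the explicit form of the Gauss--Manin connection matrix~\eqref{connectionmatrixB} in the basis of Remark~\ref{svwbasisB}, this forces $\nu_B'$ and $\nu_B$ to have the same holomorphic part $w(z)$ modulo flat sections of the Deligne lattice, and hence to define the same class in the relevant intermediate Jacobian. Proposition~\ref{normalfunprop} then identifies the extensions, completing the square.
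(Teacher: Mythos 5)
Your argument is largely sound but misreads the division of labor between the corollary and the proposition that follows it. In the paper, $\mathcal{H}^B(X^{\vee})_{\mathcal{L}_0-\mathcal{L}_1}$ and $HC_*^{-}(D^b_{dg}\Coh(X^{\vee}))_{\mathcal{L}_0-\mathcal{L}_1}$ are \emph{defined} to be the extensions corresponding, under Proposition~\ref{normalfunprop}, to the normal functions obtained by transporting $\nu_{\mathcal{F}}$ across the isomorphisms of Theorem~\ref{gpsthm}. So the corollary has essentially no geometric content: VSHS isomorphisms carry normal functions to normal functions (Griffiths transversality is a VSHS-intrinsic condition), and Proposition~\ref{normalfunprop} converts these into isomorphic extensions. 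Commutativity is then tautological. Your first two paragraphs recover exactly this, and there the proof should stop.

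Where you go further is in assuming that $\mathcal{H}^B(X^{\vee})_{\mathcal{L}_0-\mathcal{L}_1}$ is \emph{a priori} the geometrically defined extension~\eqref{geometricbext} coming from the algebraic cycle $mC$, and then setting out to show $\nu_B' = \nu_B$. That identification is precisely the content of the separate Proposition~\ref{comparingbnormalfunctions}, not of this corollary. Moreover your proposed argument for that identification is looser than the paper's. You invoke~\cite{Cal04} correctly to see that $\widetilde{\mathfrak{J}}_{\HKR}$ carries the Hochschild class of $\mathcal{L}_i$ to its Chern character, hence to $m[C^i]$ in degree $4$, but the claim that horizontality plus the form of the Gauss--Manin connection matrix~\eqref{connectionmatrixB} then ``forces'' the holomorphic parts of $\nu_B'$ and $\nu_B$ to agree modulo flat sections is not a complete argument: one must first identify the underlying $\mathbb{C}$-local systems of the two extensions with~\eqref{nullcycleses}, then characterize the normal function via a choice of pair $(f,h)$ as in the proof sketch of Proposition~\ref{normalfunprop}, and finally appeal to Poincar\'e duality and Hodge-type considerations to land on~\eqref{normalfunctionintegral}. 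Two horizontal sections with the same residue in degree $4$ can a priori differ by a section that is neither flat nor in $\widetilde{F}^{\geq k}\widetilde{\mathcal{V}}$, so horizontality alone does not pin down the class in the intermediate Jacobian without the extra structure the paper supplies.
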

Here the $B$-model VSHSs are defined such that all arrows in the diagram are isomorphisms. The corollary is immediate from Theorem~\ref{gpsthm}. We can think of the vector spaces underlying the categorical extensions of VSHS in~\eqref{vhsextisomorphisms} as the \textit{relative negative cyclic homology} of~\cite{Hug24b}.

To obtain predictions for open Gromov--Witten invariants from this, we must prove that:
\begin{proposition}\label{comparingbnormalfunctions}
The extension of VSHS $\mathcal{H}^B(X^{\vee})_{\mathcal{L}_0-\mathcal{L}_1}$ is isomorphic to the extension of VSHS associated to the family of cycles $mC$.
\end{proposition}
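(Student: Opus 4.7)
The strategy is to exhibit an isomorphism by explicitly computing the normal functions that classify both extensions, in the sense of Proposition~\ref{normalfunprop}, and showing that they coincide. The key input is Caldararu's comparison theorem~\cite{Cal04}, which identifies the image under the HKR-type isomorphism $\widetilde{\mathfrak{J}}_{\HKR}$ of the cyclic class represented by a coherent sheaf $\mathcal{E}$ with its algebraic Chern character $\ch(\mathcal{E}) \in H^*(X^{\vee};\mathbb{C})$. Applied to the difference $\mathcal{L}_0 - \mathcal{L}_1$, this lets us rewrite the categorical extension class underlying $\mathcal{H}^B(X^{\vee})_{\mathcal{L}_0-\mathcal{L}_1}$ in purely cohomological terms on $X^{\vee}$.

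I would next apply Grothendieck--Riemann--Roch to each pushforward $\mathcal{L}_i = (i_{C^i})_* V_i$, where $V_i$ is a rank $m$ vector bundle on the curve $C^i$. For codimensional reasons, the components of $\ch(\mathcal{L}_i)$ in $H^0$ and $H^2$ vanish, the component in $H^4(X^{\vee}_z;\mathbb{C})$ equals $m[C^i_z]$, and the component in $H^6$ is determined by the degree and first Chern class of $V_i$. The class that controls the normal function, via the recipe reviewed in the proof of Proposition~\ref{normalfunprop}, is the image of the extension class modulo the top Hodge filtration piece and modulo flat sections; under $\widetilde{\mathfrak{J}}_{\HKR}$ this image is precisely $m[C_z^0] - m[C_z^1] = m[C_z]$. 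Comparing with~\eqref{normalfunctionintegral}, where the representative $h - f$ is produced from a $3$-chain $\Gamma_z$ bounding $mC_z$ together with a form $f$ whose differential represents $(2\pi i)^2\delta_{mC_z}$, and which likewise determines the classical normal function by pairing with forms in $F^{\geq 1/2}$, yields the agreement of the two normal functions and hence of the two extensions.

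The main obstacle is keeping track of the various normalization conventions: the factor of $(2\pi i)^2$ in~\eqref{normalfunctionintegral}, the Hodge-theoretically normalized relative volume form used in the identification of negative cyclic homology with $\mathcal{H}^B(X^{\vee})$, and the Todd class twist built into $\widetilde{\mathfrak{J}}_{\HKR}$. The Calabi--Yau condition forces $\td_1(X^{\vee}) = 0$, so the Todd correction contributes nothing to the degree-$4$ component of $\widetilde{\mathfrak{J}}_{\HKR}(\ch(\mathcal{L}_i))$ beyond the classical class $m[C^i]$; the only remaining Todd-corrected contribution sits in degree $6$ and lies in the top Hodge filtration piece, hence drops out of the intermediate Jacobian $\widetilde{\mathcal{J}}^{1/2}$. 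Combined with the observation that the constant term of a normal function does not enter the data of a VSHS extension, as noted after Proposition~\ref{normalfunprop}, the comparison reduces to matching a single cohomology class, namely $m[C_z]$, as computed above.
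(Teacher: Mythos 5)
Your proposal follows essentially the same route as the paper's: invoke Cald\u{a}raru's factorization of the Chern character through $HH_*(X^{\vee})$ to identify $\widetilde{\mathfrak{J}}_{\HKR}(\mathcal{L}_i)$ with (a component of) $\ch(\mathcal{L}_i)$, compute this via Grothendieck--Riemann--Roch to extract $m[C^i]$ in degree four, and then compare the resulting extension class with the classical normal function~\eqref{normalfunctionintegral} determined by a bounding $3$-chain for $mC_z$. One small slip: you attribute the vanishing of the Todd correction in degree $4$ to $\td_1(X^{\vee})=0$, but the correction term $\ch_0(\mathcal{L}_i)\cdot\td^{1/2}_2$ is killed not by the Calabi--Yau condition but by the vanishing $\ch_0(\mathcal{L}_i)=0$ that you yourself established from GRR, since the sheaf is pushed forward from codimension two; both this and the $\ch_1\cdot\td^{1/2}_1$ term vanish for the same reason, so the conclusion stands.
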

\begin{proof}
We begin by determining the image of the sheaf $\mathcal{L}_i$ under the HKR isomorphism. Recall that $\widetilde{\mathfrak{J}}_{\HKR}$ is induced from the HKR isomorphism
\begin{align*}
HH_*(X^{\vee})\to H^*(X^{\vee})\,.
\end{align*}
On the other hand~\cite[Theorem 4.5]{Cal04} implies that the ordinary Chern character on $K$-theory $\ch\colon K_0(X^{\vee})\to H^*(X^{\vee})$ factors as the composition of the Chern character $K_0(X^{\vee})\to HH_*(X^{\vee})$ with the HKR isomorphism. Thus we see that image of $\mathcal{L}_i$ under $\widetilde{\mathfrak{J}}_{\HKR}$ is its algebraic second Chern class $c_2(\mathcal{L}_i)$. Our assumption that $\mathcal{L}_i$ is the pushforward of a vector bundle implies that the algebraic Chern class is represented by $mC^i$, where $m$ is the rank of the vector bundle and $C^i$ is the support of the sheaf.

There is a $\mathbb{C}$-local system on $\Delta^*$ associated to $\mathcal{H}^B(X^{\vee})_{\mathcal{L}_0-\mathcal{L}_1}$, and the previous paragraph implies that it is given fiberwise by~\eqref{nullcycleses}. As discussed in the proof of Proposition~\ref{normalfunprop} and around~\eqref{normalfunctionintegral}, a normal can be determined by specifying a suitable pair of elements in $\mathcal{H}^B(X^{\vee})_{\mathcal{L}_0-\mathcal{L}_1}$ which are both mapped to $1\in\mathbb{K}_B$. By~\eqref{nullcycleses}, such elements are given by relative chains with boundary $mC$. Hence by Poincar{\'e} duality and type considerations, the normal function for $\mathcal{H}^B(X^{\vee})_{\mathcal{L}_0-\mathcal{L}_1}$ can be written as~\eqref{normalfunctionintegral}. As an element of the intermediate Jacobian, the normal function thus described is independent of the choice of bounding chain.
\end{proof}
We can now complete the proofs of Theorems~\ref{main2} and~\ref{mainprelim} from the introduction.
bo\begin{proof}[Proof of Theorem~\ref{main2}]
This is an immediate consequence of Corollary~\ref{categoricalnormalfunctions} and Proposition~\ref{comparingbnormalfunctions}.
\end{proof}
\begin{proof}[Proof of Theorem~\ref{mainprelim}]
Using the notation of this section, let $L_0$ and $L_1$ both denote the Lagrangian branes specified in the statement of Theorem~\ref{mainprelim}. More precisely, these are copies of $\tLvgsmooth$ equipped with local systems obtained from those in Example~\ref{vangeemenlocalsystem} via Definition~\ref{antisurgerybranes}. The difference of open Gromov--Witten potentials of these two branes can be thought of as the open Gromov--Witten potential associated to the immersed Lagrangian consisting of the union of these two branes, where the orientation on $L_1$ is reversed. To define the open Gromov--Witten potential, we use the degenerate $4$-chain interpolating between these two copies of the same Lagrangian submanifold.

Using the $A_{\infty}$-functor~\eqref{a-inf-functor} and the result of Theorem~\ref{mirrorsheaf}, we can think of these branes as mirrors to pushforwards of line bundles on the van Geemen lines, so that the second Chern classes of these objects are represented by $-[C^{\omega^i}]$. The result of Theorem~\ref{mirrorsheaf} says that Assumption~\ref{functass} is satisfied is satisfied in this context, so we can appeal to Theorem~\ref{main2}. 

A solution to an inhomogeneous Picard--Fuchs equation associated to the van Geemen line $C_z^{\omega}$ was computed by Walcher~\cite[(6.12)]{Wal12}, and is written there after a change of coordinates given by the mirror map. The value of the relative period integral~\eqref{relativeperiodintegral} over a chain with boundary $C_z^{\omega}-C_z^{\omega^2}$ is twice the value of the solution to the Picard--Fuchs equation (cf.~\cite[(2.9)]{Wal12}). Since the branes $(\tLvg,\nabla^{\vG}_{\omega})$ and $(\tLvg,\nabla^{\vG}_{\omega^2})$ are mirror to vector bundles on the van Geemen lines of rank $m=1$, it follows that the values of the open Gromov--Witten invariants differ from the values calculated by Walcher by an overall factor of $2$.
\end{proof}

\begin{proof}[Proof of Corollary~\ref{oldmain}]
This follows the same strategy as the proof of Theorem~\ref{main2}, except in this case $m=2$, so the open Gromov--Witten invariants differ from the values calculated by Walcher by a factor of $4$.
\end{proof}

\appendix
\section{Background on immersed Floer theory} \label{immersedfloerappendix}
This appendix is meant to fix notation and to collect some mostly standard facts regarding the Floer complex of an immersed Lagrangian with clean self-intersections. In doing so, we will also explain how the constructions of~\cite{Fuk17} should be modified in the presence of a grading and rank $1$ local system, and we will construct a spectral sequence using the energy filtration that converges to the Floer cohomology of a Lagrangian immersion.

The Lagrangian Floer cochain spaces we consider have coefficients in various versions of the Novikov ring.
\begin{definition}\label{ringdef}
The \textit{Novikov ring} over $\mathbb{C}$ is the formal power series ring
\begin{align}
\Lambda_0 = \left\lbrace\sum_{j=0}^{\infty}a_j Q^{\lambda_j}\colon a_j\in\mathbb{C} \, , \; \lambda_i\in\mathbb{R}_{\geq0} \, , \; \lim_{j\to\infty} = \infty\right\rbrace \, . \label{novikovring}
\end{align}
Here $Q$ is called the Novikov variable, and the grading on $\Lambda_0$ is trivial. There is a natural valuation $\nu\colon\Lambda_0\to\mathbb{R}_{\geq0}$ given by
\begin{align}
\nu\left(\sum_{j=0}^{\infty}a_j Q^{\lambda_j}\right) = \mathrm{min}\left\lbrace\lambda_j\colon a_j\neq 0\right\rbrace \, .
\end{align}
Let $\Lambda_+\subset\Lambda_0$ denote the subset of elements with strictly positive valuation. Correspondingly, the set of \textit{unitary Novikov elements} $U_{\Lambda}\subset\Lambda_0$ is the set of valuation $0$ elements. More explicitly, we have that
\begin{align*}
U_{\Lambda}\coloneqq\left\lbrace a_0+\sum_{j=1}^{\infty}a_j Q^{\lambda_j}\in\Lambda\colon a_0\in\mathbb{C}^*\right\rbrace \, .
\end{align*}
The \textit{Novikov field} $\Lambda$ is obtained from $\Lambda_0$ by localizing at the ideal $(Q)$, meaning that
\begin{align}
\Lambda = \left\lbrace\sum_{j=0}^{\infty}a_j Q^{\lambda_j}\colon a_j\in\mathbb{C} \, , \; \lambda_i\in\mathbb{R} \, , \; \lim_{j\to\infty} = \infty\right\rbrace \, .
\end{align}
\end{definition}
Note that any element of $\mathbb{C}^*\subset\Lambda_0$ is unitary. In general, to obtain $\mathbb{Z}$-graded $A_{\infty}$-algebras, one would require the Novikov ring to carry a nontrivial grading to account for the Maslov indices of disks, but since we only consider \textit{graded} Lagrangian immersions in this paper, we do not need this.
\subsection{Closed-open operators for immersed Lagrangians}
\begin{definition}
Let $(M,\omega)$ be a closed $2n$-dimensional symplectic manifold and $L$ a closed $n$-dimensional manifold. A Lagrangian immersion with \textit{clean self-intersections}, also called a \textit{clean Lagrangian immersion}, is a Lagrangian immersion $\iota\colon L\to M$ such that
\begin{itemize}
\item[(i)] The fiber product
\[ L\times_{\iota}L = \lbrace (p_{-},p_{+})\in L\times L\colon\iota(p_{-}) = \iota(p_{+})\rbrace \]
is a smooth submanifold of $L\times L$.

\item[(ii)] At each point $(p_{-},p_{+})$ of the fiber product, the tangent space is given by the fiber product of tangent spaces, i.e.
\[ T_{(p_{-},p_{+})}(L\times_{\iota}L) = \lbrace (v_{-},v_{+})\in T_{p_{-}}L\times T_{p_{+}}L\colon d\iota_{p_{-}}(v_{-}) = d\iota_{p_{+}}(v_{+})\rbrace.\]
\end{itemize}
\end{definition}
We will decompose the fiber product above as a disjoint union
\[ L\times_{\iota}L = \coprod_{a\in A}L_a = L_0\sqcup\coprod_{a\in A\setminus\lbrace 0\rbrace} L_a,\]
where $A$ is an index set with a distinguished element $0\in A$. Here, $L_0 \coloneqq \Delta_{L}$ denotes the diagonal component of the fiber product (which is disconnected if $L$ is disconnected). Each other component $L_a$ of the disjoint union is a non-diagonal connected component of the fiber product. The submanifolds $L_a$, where $a\in A\setminus\lbrace 0\rbrace$ are called \textit{switching components}. Observe that there is a natural free involution on $A\setminus\lbrace 0\rbrace$ induced by the function on $L\times L$ which swaps coordinates.

Suppose that $L$ is equipped with a grading $\alpha^{\#}\colon L\to\mathbb{R}$ in the sense of~\cite{Sei00}. To describe the grading on the Floer cochain in this situation, we will recall the notion of index from~\cite{AB18} following~\cite{Han24a}.
\begin{definition}
Let $(V,\Omega)$ be a symplectic vector space with a compatible almost complex structure $J$, and let $\Lambda_0,\Lambda_1\subset V$ be Lagrangian subspaces. Choose a path $\Lambda_t$, for $t\in[0,1]$, of Lagrangian subspaces from $\Lambda_0$ to $\Lambda_1$ such that
\begin{itemize}
\item $\Lambda_0\cap\Lambda_1\subset\Lambda_t\subset\Lambda_0+\Lambda_1$ for all $t$; and
\item $\Lambda_t/(\Lambda_0\cap\Lambda_1)\subset(\Lambda_0+\Lambda_1)/(\Lambda_0\cap\Lambda_1)$ is a path of positive-definite subspaces (defined with respect to the metric induced from $\omega$ and $J$) from $\Lambda_0/(\Lambda_0\cap\Lambda_1)$ to $\Lambda_1/(\Lambda_0\cap\Lambda_1)$.
\end{itemize}
Choose a path $\alpha_t$ in $\mathbb{R}$ such that
\[ \exp(2\pi i\alpha_t) = \mathrm{det}^2(\Lambda_t) \]
for all $t\in[0,1]$. Then we define the angle between $\Lambda_0$ and $\Lambda_1$ to be
\begin{align*}
\mathrm{Angle}(\Lambda_0,\Lambda_1) = \alpha_1-\alpha_0
\end{align*}
\end{definition}
This definition gives rise to the appropriate notion of index for switching components of a Lagrangian immersion.
\begin{definition}\label{index}
Given a switching component $L_{a_{-}}$ of $L\times_{\iota}L$, let $L_{a_{+}}$ denote the corresponding switching component under the involution on $A$. Then for a pair of points $p_{-}\in L_{a_{-}}$ and $p_{+}\in L_{a_{+}}$ which are swapped by this involution, we define the index
\[ \deg(p_{-},p_{+})\coloneqq n+\alpha^{\#}(p_{+})-\alpha^{\#}(p_{-})-2\cdot \mathrm{Angle}(d\iota(T_{p_{-}}L),d\iota(T_{p_{+}}L)). \]
Since this is independent of $p_{-}\in L_{a_{-}}$, we set 
\[\deg(L_{a_{-}})\coloneqq \deg(L_{a_{-}},L_{a_{+}}) \coloneqq \deg(p_{-},p_{+}).\]
\end{definition}

Now suppose that we have equipped $L$ with a spin structure. As explained in~\cite[Ch. 8]{FOOOII}, the choice of spin structure on $L$ induces \textit{orientation local systems} on each of the switching components $L_a$. Let $\Theta_a^{-}$ denote the complex line bundle on $L_a$ associated to the orientation local system. For any $a\in A$, let $\Omega^*(L_a;\Theta_a^{-})$ denote the space of smooth differential forms valued in $\Theta_a^{-}$. 

\begin{definition}
As a graded module over $\Lambda_0$, we define the Floer cochain space of a graded spin Lagrangian immersion with clean self-intersections $\iota\colon L\to M$ to be the space obtained from the graded $\mathbb{C}$-vector space
\begin{align}
\overline{CF}^*(L)\coloneqq\Omega^*(L_0)\oplus\bigoplus_{a\in A}\Omega^*(L_a;\Theta^{-}_a)[\deg(L_{a_{-}})]  \label{underlyingvs}
\end{align}
by taking the completed tensor product with $\Lambda$, i.e.
\begin{align}
CF^*(L)\coloneqq\overline{CF}^*(L)\widehat{\otimes}_{\mathbb{C}}\Lambda_0 \, . \label{filteredmodule}
\end{align}
\end{definition}

Fix an $\omega$-compatible almost complex structure $J$ on $M$. The $A_{\infty}$-structure maps on $CF^*(L)$ count $J$-holomorphic disks with boundary and corners on the image $\iota(L)$ of the Lagrangian immersion.
\begin{definition}\label{diskswithcorners}
Let $k\geq-1$ and $\ell\geq0$ be integers. A $J$-holomorphic disk with corners of degree $\beta\in H_2(M,\iota(L);\mathbb{Z})$ consists of the data $(u,\vec{z},\vec{w},\vec{a},\widetilde{\partial u})$, where
\begin{itemize}
\item[(i)] $u\colon D^2\to M$ is $J$-holomorphic, and $u(\partial D^2)\subset \iota(L)$;
\item[(ii)] $[u] = \beta$;
\item[(iii)] $\vec{z} = (z_1,\ldots,z_k)$ is a cyclically ordered collection of mutually distinct boundary marked points, ordered counterclockwise (with respect to the boundary orientation inherited form the complex structure);
\item[(iv)] $\vec{w} = (w_1,\ldots,w_{\ell})$ is an ordered collection of mutually distinct interior marked points;
\item[(v)] $\vec{a} = (a_0,\ldots,a_k)\in A^{k+1}$ assigns a component of $L\times_{\iota}L$ to each boundary marked point $z_i$;
\item[(vi)] $\widetilde{\partial u}\colon\partial D^2\setminus\lbrace z_0,\ldots,z_k\rbrace\to L$ is a smooth map satisfying
\begin{align*}
\iota\circ\widetilde{\partial u} \equiv u\vert_{\partial D^2}
\end{align*}
which asymptotically approaches $L_{a_i}$ as one approaches a boundary marked points, i.e.
\begin{align*}
\left(\lim_{z\to z_i^{-}}\widetilde{\partial u}(z),\lim_{z\to z_i^{+}}\widetilde{\partial u}(z)\right) \in L_{a_i}
\end{align*}
for all $i = 0,\ldots,k$.
\item[(vii)] The set of biholomorphic maps $\phi\colon D^2\to D^2$ for which
\begin{itemize}
\item[(a)] $u\circ\phi = u$;
\item[(b)] $\phi(z_i) = z_i$ and $\phi(w_j) = w_j$;
\item[(c)] $\widetilde{\partial u}\circ\phi = \widetilde{\partial u}$
\end{itemize}
is finite.
\end{itemize}
Two $J$-holomorphic disks with corners are said to be equivalent if they are related by an automorphism as in (vii). The \textit{(uncompactified) moduli space} $\overset{\circ}{\mathcal{M}}_{k+1,\ell}(L;\vec{a};\beta;J)$ of $J$-holomorphic disks is the set of all such equivalence classes. In the case $k=-1$, this should be understood as the moduli space of disks with no boundary marked points, and hence no corners. We will drop $L$ and $J$ from the notation for these moduli spaces when it will not cause confusion.
\end{definition}
The moduli spaces defined in Definition~\ref{diskswithcorners} have Gromov compactifications denoted $\mathcal{M}(L;\vec{a};\beta)$, which are discussed in detail in~\cite[\S{3.2}]{Fuk17}. The only essential difference between the embedded and immersed Lagrangians is that disks with boundary on the latter can form nodes which lie on the switching components, but the data of (vi) above determines how such nodal disks can be glued. The elements of these moduli spaces are represented by bordered stable maps with nodes. Note that by stability, the moduli spaces $\mathcal{M}_{1,0}(L;\vec{a};0)$ and $\mathcal{M}_{2,0}(L;\vec{a};0)$ are empty.

There are natural evaluation maps 
\begin{align*}
\evb_i^{\vec{a};\beta}\colon\mathcal{M}_{k+1,\ell}(L;\vec{a};\beta) &\to L_{a_i} \\
\evi_j^{\vec{a};\beta}\colon\mathcal{M}(L;\vec{a};\beta) &\to M
\end{align*}
for $i = 0,\ldots,k$ and $j = 1,\ldots,\ell$ at the boundary and interior marked points on these moduli spaces. On the loci of irreducible curves, these are given explicitly by the formulas
\begin{align*}
\evb_i^{\vec{a};\beta}\colon\overset{\circ}{\mathcal{M}}_{k+1,\ell}(L;\vec{a};\beta) &\to L_{a_i}\subset L\times_{\iota}L \\
[(u,\vec{z},\vec{w},\vec{a},\widetilde{\partial u})] &\mapsto \left(\lim_{z\to z_i^{-}}\widetilde{\partial u}(z),\lim_{z\to z_i^{+}}\widetilde{\partial u}(z)\right)
\end{align*}
and
\begin{align*}
\evi_j^{\vec{a},\beta}\colon\overset{\circ}{\mathcal{M}}_{k+1,\ell}(L;\vec{a};\beta) &\to M \\
[(u,\vec{z},\vec{w},\vec{a},\widetilde{\partial u})] &\mapsto u(w_j) \, .
\end{align*}
By~\cite[Theorem 3.24, Proposition 3.30]{Fuk17}, one can construct a system of Kuranishi structures on the moduli spaces $\mathcal{M}_{k+1,0}(L;\vec{a};\beta)$, along with continuous families of perturbations on these moduli spaces which allow one to treat the evaluation maps $\evb_0$ as smooth submersions. This means that there is a well-defined pushforward of differential forms.
\begin{remark}\label{kstructstabilization}
Combining this with the results of~\cite{Fuk10}, one can also obtain such Kuranishi structures for the moduli spaces $\mathcal{M}_{k+1,\ell}(L;\vec{a};\beta)$ with little effort. More precisely, we equip these moduli spaces with Kuranishi structures obtained from those on $\mathcal{M}_{k+1}(L;\vec{a};\beta)$ via pullback with respect to the forgetful maps of interior marked points, as in~\cite{Fuk10}. Defining the Kuranishi structures this way means that the evaluation maps $\evi_j^{\vec{a};\beta}$ along interior marked points will \textit{not} be weakly submersive, as explained in~\cite[Remark 3.2]{Fuk10}, but we expect that this is not needed, since it is possible to define open-closed maps valued in differential currents, as we explain below. Since we have allowed ourselves to consider Lagrangian immersions, such a definition should be sufficient for defining cyclic open-closed maps on the full Fukaya category of Definition~\ref{fukcatclean}.
\end{remark}

Fix $\beta\in H_2(M,L)$ and integers $k,\ell\geq0$ for which $(k,\ell,\beta)\not\in\lbrace(0,0,0),(1,0,0)\rbrace$. Also fix a sequence $\vec{a} = (a_0,\ldots,a_k)$. We define degree $1$ operators
\begin{align*}
&\mathfrak{q}_{k,\ell;\vec{a};\beta}\colon(\overline{CF}^*(L)[1])^{\otimes k}\otimes(\Omega^*(M)[2])^{\otimes\ell}\to\overline{CF}^*(L)[1] \\
&\mathfrak{q}_{k,\ell;\vec{a};\beta}(\otimes_{i=1}^k\alpha_i;\otimes_{j=1}^{\ell}\gamma_j)\coloneqq(-1)^{*}(\evb_0^{\vec{a};\beta})_*\left(\bigwedge_{i=1}^k(\evb_i^{\vec{a};\beta})^*\alpha_i\wedge\bigwedge_{j=1}^{\ell}(\evi_j^{\vec{a};\beta})^*\gamma_j\right)
\end{align*}
where $\alpha_i\in\Omega^*(L_{a_i})$ for all $i = 1,\ldots,k$. The sign is determined by the formula
\begin{align}
* = \sum_{i=1}^k i(\deg\alpha_i + 1)+1 \label{gappedsign}
\end{align}
where $\deg\alpha_i$ denotes its degree in $\overline{CF}^*(L)$. These determine maps
\begin{align*}
\mathfrak{q}_{k,\ell;\beta}\colon(\overline{CF}^*(L)[1])^{\otimes k}&\to\overline{CF}^*(L)[1]
\end{align*}
in an obvious way by summing over all sequences $\vec{a}\in A^{k+1}$. Also set
\begin{align*}
\mathfrak{q}_{0,0;0} &= 0 \\
\mathfrak{q}_{1,0;0} &= d
\end{align*}
where $d$ denotes sum of de Rham differentials on the components of $\overline{CF}^*(L)$.

Suppose that $L$ has been equipped with a $U_{\Lambda}$-local system denoted $\nabla$. We can extend these to operators
\begin{align}
\mathfrak{q}_{k,\ell}\colon(CF^*(L)[1])^{\otimes k}\otimes(\Omega^*(M;\Lambda_+)[2])^{\otimes\ell}\to CF^*(L)[1]
\end{align}
by setting
\begin{align}
\mathfrak{q}_{k,\ell}\coloneqq\sum_{\beta\in H_2(M;\iota(L))}\hol_{\nabla}(\partial\beta)\mathfrak{q}_{k,,\ell;\beta}Q^{\omega(\beta)}\otimes\id_{\Lambda_0} \, .
\end{align}
Let $G_L$ denote the smallest submonoid of $\mathbb{R}_{\geq0}$ which contains the symplectic area of every $J$-holomoprhic disk with corners and boundary on $\iota(L)$. As a consequence of Remark~\ref{kstructstabilization}, these operations satisfy a \textit{divisor axiom} with respect to forgetting interior marked points.

\begin{lemma}[Divisor axiom]\label{qass}
For any $\gamma_1\in\Omega^2(X)$ with $d\gamma = 0$, the operators $\mathfrak{q}_{k,\ell;\beta}$ should satisfy the \textit{divisor axiom}
\begin{align}\label{divisorq}
\mathfrak{q}_{k,\ell;\beta}(\otimes_{i=1}^k\alpha_i;\otimes_{j=1}^{\ell}\gamma_j) = \left(\int_{\beta}\gamma_1\right)\cdot\mathfrak{q}_{k,\ell-1;\beta}(\otimes_{i=1}^k\alpha_i;\otimes_{j=2}^{\ell}\gamma_j) \, .
\end{align}
\qed
\end{lemma}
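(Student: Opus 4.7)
The plan is to derive the divisor axiom from the compatibility between the Kuranishi structures on $\mathcal{M}_{k+1,\ell}(L;\vec{a};\beta)$ and $\mathcal{M}_{k+1,\ell-1}(L;\vec{a};\beta)$ under the forgetful map
\begin{align*}
\mathfrak{f}_1 \colon \mathcal{M}_{k+1,\ell}(L;\vec{a};\beta) \to \mathcal{M}_{k+1,\ell-1}(L;\vec{a};\beta)
\end{align*}
that erases the first interior marked point (contracting any resulting unstable bubble). By Remark~\ref{kstructstabilization}, the Kuranishi structure on $\mathcal{M}_{k+1,\ell}(L;\vec{a};\beta)$ is pulled back from $\mathcal{M}_{k+1,0}(L;\vec{a};\beta)$ along the composite forgetful map, and in particular factors through $\mathfrak{f}_1$. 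This guarantees that $\mathfrak{f}_1$ is a weak submersion of Kuranishi spaces and that integration along its fibers is well-defined and commutes with the pushforward along $\evb_0^{\vec{a};\beta}$.

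First I would observe that all of the evaluation maps $\evb_i^{\vec{a};\beta}$ for $0 \leq i \leq k$ and $\evi_j^{\vec{a};\beta}$ for $2 \leq j \leq \ell$ factor through $\mathfrak{f}_1$, so the corresponding pulled-back forms are constant on the fibers of $\mathfrak{f}_1$ and can be extracted from any fiber integration. Applying the projection formula gives
\begin{align*}
\mathfrak{q}_{k,\ell;\beta}(\vec{\alpha};\vec{\gamma}) = \pm(\evb_0^{\vec{a};\beta})_* \Bigl( \mathfrak{f}_1^* \bigl[ \textstyle\bigwedge_{i=1}^k(\evb_i)^*\alpha_i \wedge \textstyle\bigwedge_{j=2}^{\ell}(\evi_j)^*\gamma_j \bigr] \wedge (\evi_1^{\vec{a};\beta})^*\gamma_1 \Bigr),
\end{align*}
and the problem reduces to computing the fiberwise pushforward $(\mathfrak{f}_1)_*(\evi_1^{\vec{a};\beta})^*\gamma_1$ on $\mathcal{M}_{k+1,\ell-1}(L;\vec{a};\beta)$.

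Next, over the open locus of irreducible stable maps, the fiber of $\mathfrak{f}_1$ at $(u,\vec{z},\vec{w}')$ is the domain $D^2$ with the remaining marked points removed, and the restriction of $\evi_1^{\vec{a};\beta}$ to this fiber is precisely $u \colon D^2 \to M$. Because $\gamma_1$ is a closed $2$-form and the fiber is $2$-dimensional, the fiber integral equals
\begin{align*}
\int_{D^2} u^*\gamma_1 = \int_{\beta}\gamma_1,
\end{align*}
which depends only on the homology class $\beta$. Over the boundary strata of the Gromov compactification where $\mathfrak{f}_1$ contracts an unstable component, the value of the fiber integral is computed component by component using the splitting of $\beta$ as a sum of homology classes of bubble domains and the closedness of $\gamma_1$, and these contributions sum to the same $\int_\beta \gamma_1$. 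Substituting back into the pushforward formula then yields the claimed identity, once one checks that the sign~\eqref{gappedsign} in the definition of $\mathfrak{q}_{k,\ell;\beta}$ is preserved by removal of an interior marked point (which carries no degree shift on the tensor factor $\Omega^*(M)[2]$ relative to a degree-$2$ form).

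The main obstacle will be the careful bookkeeping of the forgetful map on boundary strata of the compactified moduli space, where the unstabilized domain may need to be modified by contracting a bubble containing only the forgotten marked point. The required compatibilities of Kuranishi structures and continuous families of perturbations with such forgetful maps are standard and have been established in~\cite{Fuk10} and~\cite[\S3]{Fuk17} in the embedded and immersed settings, respectively; our stipulation in Remark~\ref{kstructstabilization} that the Kuranishi structures on moduli with interior marked points are pulled back along these forgetful maps is precisely what is needed to invoke those results.
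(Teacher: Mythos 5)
Your proof is correct and takes essentially the same approach the paper implicitly uses: the paper states this lemma with an immediate \qed after noting it follows ``as a consequence of Remark~\ref{kstructstabilization},'' and your argument --- projecting against the forgetful map of the first interior marked point, using the pulled-back Kuranishi structures of Remark~\ref{kstructstabilization} and~\cite{Fuk10} to make fiber integration well-defined, and computing the fiber integral as $\int_\beta\gamma_1$ via closedness of $\gamma_1$ together with the boundary-stratum bookkeeping --- fills in the details of exactly that observation. Your sign check (that~\eqref{gappedsign} involves only boundary inputs and so is unaffected by dropping a degree-$2$ interior constraint) is also correct.
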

The divisor axiom is used to relate derivatives of the open Gromov--Witten potential to the open Gromov--Witten invariants with interior constraints.

Imitating the proof of~\cite[Proposition 3.35]{Fuk17}, while keeping track of gradings and holonomy, we obtain the following.
\begin{proposition}
For all $k\geq0$, define $\mathfrak{m}_k\coloneqq\mathfrak{q}_{k,0}$. Then $CF^*(L,\nabla)\coloneqq(CF^*(L);\lbrace\mathfrak{m}_k\rbrace_{k=0}^{\infty})$ has the structure of a strictly unital $G_L$-gapped filtered $A_{\infty}$-algebra, as defined in~\cite[Ch. 3.2]{FOOOI}. In particular, the operators $\mathfrak{m})_k$ satisfy the curved $A_{\infty}$-relations
\begin{equation}
\sum_{i,\ell}(-1)^{\maltese_i}\mathfrak{m}_{k-\ell+1}(\alpha_1\otimes\cdots\otimes\mathfrak{m}_{\ell}(\alpha_{i+1}\otimes\cdots\otimes\alpha_{i+\ell})\otimes\cdots\otimes\alpha_k) = 0.\label{ainftyrels}
\end{equation}
\end{proposition}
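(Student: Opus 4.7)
The plan is to follow the proof of Proposition 3.35 in~\cite{Fuk17} essentially verbatim, with careful modifications to accommodate the grading, spin structure, and rank one local system $\nabla$. The substance of the argument is an application of Stokes' theorem for integration along the fiber to the Kuranishi structures on the compactified moduli spaces $\mathcal{M}_{k+1,0}(L;\vec{a};\beta)$. Since we have arranged (cf.~\cite[Theorem 3.24, Proposition 3.30]{Fuk17}) that $\evb_0^{\vec{a};\beta}$ is a weak submersion for every choice of $\vec{a}$ and $\beta$, the pushforward of differential forms is well-defined, and Stokes' theorem says that
\[ d\circ(\evb_0^{\vec{a};\beta})_* - (\evb_0^{\vec{a};\beta})_*\circ d = (\evb_0^{\vec{a};\beta}\vert_{\partial\mathcal{M}})_* \, . \]
Applying this to $\bigwedge_{i=1}^k(\evb_i^{\vec{a};\beta})^*\alpha_i$ converts the curved $A_\infty$-relations~\eqref{ainftyrels} into an identification of the codimension-one boundary strata of $\mathcal{M}_{k+1,0}(L;\vec{a};\beta)$ with the appropriate fiber products of smaller moduli spaces.

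First I would identify the codimension-one boundary strata. Since we are working with moduli spaces of disks without interior marked points, the only stable degenerations are disk bubblings along the boundary. Each such degeneration decomposes an element of $\mathcal{M}_{k+1,0}(L;\vec{a};\beta)$ as a fiber product
\[ \mathcal{M}_{\ell+1,0}(L;(a_0,a_1,\ldots,a_i,a',a_{i+\ell+1},\ldots,a_k);\beta_1)\,{}_{\evb_{i+1}}\!\!\times_{\evb_0}\mathcal{M}_{k-\ell+1,0}(L;(a',a_{i+1},\ldots,a_{i+\ell});\beta_2) \]
where $\beta_1+\beta_2 = \beta$ and the node label $a'\in A$ can be the diagonal component or a switching component. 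In the latter case, the node passes through a self-intersection point of $\iota$, and this is precisely where the grading shifts of Definition~\ref{index} enter: the identification of the two sheets near such a node changes the degree by $\deg(L_{a'_-})+\deg(L_{a'_+}) = n$, which is compatible with the $[1]$-shift in the $A_\infty$-relations. Summing over the possible splittings recovers the right-hand side of~\eqref{ainftyrels}, and the signs work out via~\eqref{gappedsign} exactly as in~\cite[Proposition 8.5.1]{FOOOII}.

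Second I would explain how $\nabla$ and the spin structure enter. The holonomy factor $\hol_{\nabla}(\partial\beta)$ is multiplicative under concatenation of boundary paths: if $\beta = \beta_1 + \beta_2$ arises from a boundary node, then $\hol_{\nabla}(\partial\beta) = \hol_{\nabla}(\partial\beta_1)\cdot\hol_{\nabla}(\partial\beta_2)$. Consequently each term in the decomposition of the boundary stratum carries the holonomy weight it needs to assemble into a product of $\mathfrak{m}$-operations. The orientation local systems $\Theta_a^-$ play an analogous role: the sign comparison between the boundary orientation on $\mathcal{M}_{k+1,0}(L;\vec{a};\beta)$ and the product orientation on the fiber product can be computed as in~\cite[Ch. 8]{FOOOII}, and the resulting discrepancies are absorbed into the $\Theta_a^-$ twists on the switching components.

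Third I would verify $G_L$-gappedness and strict unitality. Gappedness is a direct consequence of Gromov compactness applied to the moduli spaces of $J$-holomorphic disks: for any $E>0$, only finitely many classes $\beta\in H_2(M,\iota(L);\mathbb{Z})$ with $\omega(\beta)<E$ support a nonempty moduli space, so the energy values accumulate only at $+\infty$ and generate a discrete submonoid $G_L\subset\mathbb{R}_{\geq0}$. Strict unitality, with unit given by the constant function $1\in\Omega^0(L_0)$, follows because the moduli space of disks with a \emph{forgettable} boundary marked point at which the input is a constant function factors through the forgetful map, which collapses a $1$-dimensional factor, and hence pushforward vanishes in that factor --- the only surviving terms are the two axiomatic ones, $\mathfrak{m}_2(1,\alpha) = \alpha$ and $\mathfrak{m}_2(\alpha,1) = (-1)^{\deg\alpha}\alpha$, which are detected by the constant disk. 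The main obstacle in carrying out this program is not a conceptual one but a bookkeeping one: the combined grading, holonomy, and orientation signs at a boundary node passing through a switching component must be organized consistently with Seidel's sign conventions~\cite{Sei00} and with the sign~\eqref{gappedsign}, and this requires a careful case analysis that we defer to~\cite[Ch. 8]{FOOOII} and~\cite[\S{3}]{Fuk17}.
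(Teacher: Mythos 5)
Your proposal follows the same approach as the paper, which simply declares that one should imitate the proof of~\cite[Proposition~3.35]{Fuk17} while keeping track of gradings and holonomy. You fill in more detail than the paper does, and the main structure of your argument (Stokes' theorem for $(\evb_0)_*$, identification of codimension-one boundary strata with disk bubblings, multiplicativity of holonomy, Gromov compactness for gappedness, and the forgetful map argument for the unit) is the correct one and matches the implicit content of the paper's citation.

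One imprecision worth flagging: you assert that at a node passing through a switching component $L_{a'}$, one has $\deg(L_{a'_-})+\deg(L_{a'_+})=n$, and invoke this to justify consistency of the grading shift with the $A_\infty$-relations. That identity holds only for transverse double points ($\dim L_{a'}=0$). For a clean self-intersection component of positive dimension, the Poincar\'e duality on the Floer cochain complex shows that the index shifts of Definition~\ref{index} satisfy $\deg(L_{a'_-})+\deg(L_{a'_+})=n-\dim L_{a'}$, since the pairing on $CF^*(L)$ has degree $n$ and wedging nontrivially on $L_{a'}$ requires form degrees summing to $\dim L_{a'}$. More to the point, the relation between the index shift and the $A_\infty$-equations is not captured by this sum formula at all; the correct justification is the dimension count showing that each $\mathfrak{q}_{k,0;\vec{a};\beta}$ has degree $1$ with respect to the shifted grading on $CF^*(L)$, so that Stokes' theorem produces the $A_\infty$-relations with their standard signs. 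This is part of what the references you cite (\cite[Ch.~8]{FOOOII},~\cite[\S{3}]{Fuk17}) actually supply, and your overall conclusion is unaffected, but the intermediate sentence as written is incorrect.
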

The strict unit above is given by the constant $0$-form on the diagonal component with value $1$. We have denoted the $A_{\infty}$-algebra by $CF^*(L,\nabla)$ to emphasize the role of the local system. Gappedness is a consequence of Gromov compactness. By~\cite[Remark 3.44]{Fuk17}, the zeroth order part of $\mathfrak{m}_2$ is given by
\begin{align}
\mathfrak{m}_{2,0}(\alpha_1,\alpha_2) = (-1)^{\deg\alpha_1}\alpha_1\wedge\alpha_2
\end{align}
where the right hand side denotes the wedge product of two differential forms on the same connected component of $L\times_{\iota} L$.
\subsection{Horocyclic operators}
To understand how the open Gromov--Witten invariants are determined by the Fukaya category, we will need to introduce operators
\begin{align*}
\mathfrak{q}_{k,\ell;\perp_i}\colon(CF^*(L;\nabla)[1])^{\otimes k}\otimes(\Omega^*(M;\Lambda)[2])^{\otimes\ell}\to CF^*(L;\nabla)[1]
\end{align*}
using moduli spaces of disks with horocyclic constraints, as in~\cite{Hug24b}. Similar moduli spaces are used by~\cite{ST23} to prove flatness of the relative quantum connection. The irreducible loci of the moduli spaces we consider are the subsets $\overset{\circ}{\mathcal{M}}_{k+1,\ell,\perp_i}(\vec{a};\beta)\subset\overset{\circ}{\mathcal{M}}_{k+1,\ell}(\vec{a};\beta)$ defined by requiring the boundary marked point $z_i$ and the interior marked points $w_1$ and $w_2$ to lie on a horocycle in the domain (i.e. a circle in the disk tangent to a point on the boundary, in this case $z_i$). We require that $(z_i,w_1,w_2)$ are ordered counterclockwise on the horocycle. The moduli space $\overset{\circ}{\mathcal{M}}_{k+1,\ell,\perp_i}(\vec{a};\beta)$ can be written as a fiber product
\begin{align*}
\overset{\circ}{\mathcal{M}}_{k+1,\ell,\perp_i}(\vec{a};\beta) = I\times_{D^2}\overset{\circ}{\mathcal{M}}_{k+1,\ell}(\vec{a};\beta) \, .
\end{align*}
Thus, given a Kuranishi structure on ${\mathcal{M}}_{k+1,\ell}(\vec{a};\beta)$, it should also be possible to construct one on ${\mathcal{M}}_{k+1,\ell,\perp_i}(\vec{a};\beta)$. For any $\alpha = \alpha_1\otimes\cdots\otimes\alpha_k$ with $\alpha_i\in\Omega^*(L_{a_i})$ and $\gamma = \gamma_1\otimes\cdots\otimes\gamma_{\ell}\in\Omega^*(M)^{\otimes\ell}$ set
\begin{align*}
\mathfrak{q}_{k,\ell;\vec{a};\beta;\perp_i}(\alpha;\gamma)\coloneqq(-1)^{*_{\perp}}(\evb_0^{\vec{a};\beta})_*\left(\bigwedge_{j=1}^{\ell}(\evi_j^{\vec{a};\beta})^*\gamma_j\wedge\bigwedge_{i=1}^k(\evb_i^{\vec{a};\beta})^*\alpha_i\right) \,
\end{align*}
where
\begin{align*}
*_{\perp}\coloneqq *+\sum_{i=1}^k(\deg\alpha_i+1)+\sum_{j=1}^{\ell}\deg\gamma_j
\end{align*}
for $*$ as in~\eqref{gappedsign}. Then set
\begin{align*}
\mathfrak{q}_{k,\ell;\perp_i}\coloneqq\sum_{\vec{a}\in A^k}\sum_{\beta\in H_2(M,L;\mathbb{Z})}\hol_{\nabla}(\partial\beta)\mathfrak{q}_{k,\ell;\vec{a};\beta;\perp_i} Q^{\omega(\beta)}\otimes\id_{\Lambda_0} \,.
\end{align*}
\subsection{The Fukaya category and the open-closed map}
\begin{definition}
Let $b\in\overline{CF}^1(L)\widehat{\otimes}_{\mathbb{C}}\Lambda_{+}$ be a degree $1$ Floer cocycle of strictly positive valuation (cf. Definition~\ref{ringdef}). We say that it is a \textit{bounding cochain} if it is a solution to the \textit{Maurer--Cartan equation}
\begin{align}
\sum_{k=0}^{\infty}\mathfrak{m}_k(b^{\otimes k}) = 0 \, .
\end{align}
Let $\mathcal{M}(L,\nabla)$ denote the space of equivalence classes of bounding cochains on $CF^*(L,\nabla)$ up to \textit{gauge-equivalence}, in the sense of~\cite[Definition 4.3.1]{FOOOI}.
\end{definition}
In the presence of a bounding cochain, one can define uncurved $A_{\infty}$-operations
\begin{align*}
\mathfrak{m}_k^b(\alpha_1\otimes\cdots\otimes\alpha_k)\coloneqq\sum_{\substack{\ell\geq0 \\ \ell_0+\cdots+\ell_k = \ell}}\mathfrak{m}_{k+\ell}(b^{\otimes\ell_0}\otimes\alpha_1\otimes b^{\ell_1}\otimes\cdots\otimes b^{\ell_{k-1}}\otimes\alpha_k\otimes b^{\ell_k})
\end{align*}
where the sum is over all $k$-part partitions of arbitrary length. In this situation, we define the \textit{Lagrangian Floer homology} $HF^*(L,\nabla,b)$ to be the cohomology of $CF^*(L)$ with respect to the deformed differential $\mathfrak{m}_1^b$. The objects of the Fukaya category should roughly be \textit{unobstructed} Lagrangians. We formalize this notion in the following definition.

\begin{definition}
An \textit{(unobstructed) Lagrangian brane} consists of a Lagrangian immersion $\iota\colon L\to M$ with clean self-intersections, together with a rank one $U_{\Lambda}$-local system $\nabla$, a spin structure on $L$, a grading $\alpha^{\#}\colon L\to\mathbb{R}$, and a bounding cochain $b\in CF^1(L)$.
\end{definition}

The construction of (any finite subcategory of) the Fukaya category $\mathcal{F}(M)$, with coefficients in the Novikov field $\Lambda$, can be reduced to the construction of the $A_{\infty}$-algebra of a single \textit{immersed} Lagrangian. Specifically, fix a finite collection of unobstructed Lagrangian branes $\mathbb{L}\coloneqq\lbrace L_c\colon c\in\mathfrak{D}\rbrace$, where $\mathfrak{D}$ is an index set, which intersect each other cleanly. In particular, there is a clean Lagrangian immersion $L_{\mathfrak{D}}\coloneqq\coprod_{c\in\mathfrak{D}}L_c$ obtained by taking the union of Lagrangian immersions over $\mathfrak{D}$. We give this a brane structure by requiring the bounding cochain on $L_{\mathfrak{D}}$ to vanish on components of $L_{\mathfrak{D}}\times_M L_{\mathfrak{D}}$ corresponding to components of $L_c\cap L_{c'}$ for $c\neq c'\in\mathfrak{D}$ (and to restrict to the given bounding cochains on each $L_c$). The local system, spin structure, and grading are defined on the domain of an immersion, so the correct choices of such data for $L_{\mathfrak{D}}$ are clear. 
\begin{definition}\label{fukcatclean}
The Fukaya category $\mathcal{F}(M)$ has objects $\Ob\mathcal{F}(M)\coloneqq\mathbb{L}$. The hom sets and $A_{\infty}$-composition maps on $\mathcal{F}(M)$ are inherited from $CF^*(L_{\mathfrak{D}})\otimes\Lambda$ (cf.~\cite[\S{3.4}]{Fuk17}).
\end{definition}
\begin{remark}
This construction depends on a choice of branes $\mathbb{L}$. In practice, we can usually take a collection of objects whose disjoint union satisfies Abouzaid's generation criterion. For Calabi--Yau hypersurfaces in toric Fano varieties, a set of generators is identified in~\cite{She15}, and we can adjoin any finite collection of Lagrangian branes to this collection. This means that the split-derived category  $D^{\pi}\mathcal{F}(M)$ obtained from $\mathcal{F}(M)$ will be independent of the choice of $\mathbb{L}$ provided that it contains a suitable generating set. In particular, this construction should suffice for many applications in homological mirror symmetry. We remark that the proofs of our main results essentially only use \textit{homological} data, so we do not lose information by passing to the derived category.
\end{remark}

\begin{remark}
Recall that the proofs of homological mirror symmetry in~\cite{She15, SS21, GHHPS24} all involve computations in the \textit{relative Fukaya category} $\mathcal{F}(X,D)$, as constructed in~\cite{PS24}. Let $(X,\omega)$ be a closed symplectic manifold and $D\subset X$ a simple normal crossings divisor Poinca{\'e} dual to $D$. The objects of the relative Fukaya category are the same as the objects of the Fukaya category of compact (exact) Lagrangians $\mathcal{F}(X\setminus D)$, but the $A_{\infty}$-operations on $\mathcal{F}(X,D)$ count pseudoholomorphic disks in $X$ weighted by their intersection number with $D$. Moreover, it is natural to require that the objects of $\mathcal{F}(X\setminus D)$ only carry $\mathbb{C}^*$-local systems, since the Fukaya category of exact Lagrangians is naturally defined over $\mathbb{C}$, not just the Novikov field. A suitable analogue of the $A_{\infty}$ category of~\cite[Definition 1.6]{PS24}, which has coefficients in a universal Novikov field, can be obtained by defining $\mathcal{F}(X)$ as in Definition~\ref{fukcatclean} using a collection of objects that avoids $D$. Thus, in practice, we will not distinguish between the full and relative Fukaya categories.
\end{remark}

One benefit of this construction of the Fukaya category is that it allows us to define the open-closed map
\begin{align}
\mathcal{OC}_0\colon HH_*(\mathcal{F}(M))\to H^{*+n}(M;\Lambda_0)\label{ocfukcat}
\end{align}
in terms of the open-closed map for the $A_{\infty}$-algebra of an immersed Lagrangian. One would expect that this should be defined by pulling back differential forms on a Lagrangian immersion $L$ to an open-closed moduli space, and pushing the wedge product of these forms forward along the evaluation map at an interior output marked point $w_{\out}$ (using integration over the fiber). The techniques of~\cite{Fuk11} do not, however, imply that the evaluation maps $\evi_j$ at the interior marked points are submersions, but we can use this pairing to define the open-closed map whose chain-level outputs are differential currents. Given any sequence $\vec{a} = (a_1,\ldots,a_k)\in A^{k}$, relabel the boundary marked points on $\mathcal{M}_{k,1}(L;\vec{a};\beta)$ by $\vec{z} = (z_1,\ldots,z_k)$ and the interior marked point by $w_{\out}$ to get evaluation maps
\begin{align*}
\evb_i^{\vec{a};\beta}\colon\mathcal{M}_{k,1}(L;\vec{a};\beta)\to L_{a_i}
\end{align*}
for all $i = 1,\ldots,k$, and
\begin{align*}
\evi^{\vec{a};\beta}\colon\mathcal{M}_{k,1}(L;\vec{a};\beta)\to M \, .
\end{align*}
Let $\mathcal{D}^*(M;\mathbb{C})$ denote the space of differential currents on $M$ with coefficients in $\mathbb{C}$. For a sequence $\alpha = (\alpha_1,\ldots,\alpha_k)$, where $k\geq1$ and $\alpha_i\in\Omega^*(L_{a_i})$ for all $i = 1,\ldots,k$, we define the differential current
\begin{align*}
\mathfrak{p}_{k}^{\vec{a};\beta}(\alpha)\coloneqq(\evi^{\vec{a};\beta})_*\left(\bigwedge_{i=1}^k (\evb_i^{\vec{a};\beta})^*\alpha_i\right)\in\mathcal{D}^*(M;\mathbb{C}) \, .
\end{align*}
Here the pushforward is only a current because we cannot assume that $\evi^{\vec{a};\beta}$ is a submersion in any sense per Remark~\ref{kstructstabilization}. As before, we can extend this to a map
\begin{align*}
\mathfrak{p}_{k}\colon(CF^*(L;\nabla)[1])^{\otimes k}&\to\mathcal{D}^*(M;\Lambda) \\
\mathfrak{p}_{k} &\coloneqq\sum_{\vec{a}\in A^k}\sum_{\beta\in H_2(M,L)}\hol_{\nabla}(\partial\beta)\mathfrak{p}_{k}^{\vec{a};\beta} Q^{\omega(\beta)} \, .
\end{align*}
Examining the boundary strata of $\mathcal{M}_{k,1}(\beta)$ (cf.~\cite[Lemma 2.14]{T23}) shows that this induces a map
\begin{align}
\mathcal{OC}_0\colon HH_*(CF^*(L,\nabla))\to H^{*+n}(M;\Lambda_0)
\end{align}
on the Hochschild homology of $CF^*(L,\nabla)$ called the open-closed map. Letting $L_{\mathfrak{D}}$ denote the Lagrangian immersion of Definition~\ref{fukcatclean}, we obtain~\eqref{ocfukcat}.

\subsection{Energy spectral sequence}
Applying the algebraic results of~\cite[Ch. 6]{FOOOI} to the current setting, we can construct a spectral sequence from the energy filtration on $CF^*(L)$. Note that the description of the $E_2$-page of the spectral sequence simplifies as compared to~\cite[Theorem D]{FOOOI} because we have equipped $L$ with a grading (and the grading on our Novikov ring is trivial).
\begin{proposition}\label{spectralseq}
Let $\nabla$ be a rank one $U_{\Lambda}$-local system on $L$, and let $b$ be a bounding cochain for $(L,\nabla)$. Then for any sufficiently small positive real number $\epsilon_0>0$, there is a spectral sequence for which
\begin{itemize}
\item[(i)]
\begin{align*}
E_2^{p,q} = H^p(\overline{CF}^*(L))\otimes(Q^{q\epsilon_0}\Lambda_0/Q^{(q+1)\epsilon_0}\Lambda_0)
\end{align*}
where $H^p(\overline{CF}^*(L))$ is a complex vector space given by taking the degree $p$ cohomology of $\overline{CF}^*(L)$ equipped with the de Rham differential; and
\item[(ii)] there is a filtration $F^{\bullet}HF^*(L,\nabla,b)$ on the Floer cohomology of $L$ such that
\begin{align*}
E^{p,q}_{\infty}\cong F^q HF^p(L,\nabla,b)/F^{q+1} HF^p(L,\nabla,b) \, .
\end{align*}
\end{itemize}
\end{proposition}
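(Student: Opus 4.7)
The plan is to construct the spectral sequence as the one associated to the energy filtration on $CF^*(L,\nabla,b)$, following the algebraic framework of~\cite[Chapter 6]{FOOOI}. Since $CF^*(L,\nabla)$ is a $G_L$-gapped filtered $A_\infty$-algebra, the submonoid $G_L \subset \mathbb{R}_{\geq 0}$ is discrete near $0$, and I would take $\epsilon_0 > 0$ to be strictly smaller than the minimal positive element of $G_L$ (and smaller than $\nu(b)$). The filtration is then $F^q CF^*(L) \coloneqq \overline{CF}^*(L)\widehat{\otimes}_{\mathbb{C}} Q^{q\epsilon_0}\Lambda_0$, and I would check that the deformed differential $\mathfrak{m}_1^b$ preserves this filtration, which follows from the expansion
\[ \mathfrak{m}_1^b = \sum_{\ell \geq 0} \sum_{k_0 + k_1 = \ell} \mathfrak{m}_{1+\ell}(b^{\otimes k_0} \otimes (-) \otimes b^{\otimes k_1}) \]
combined with the facts that $\nu(b) > 0$ and that every nonconstant $\mathfrak{m}_k$ contribution is weighted by $Q^{\omega(\beta)}$ with $\omega(\beta) \geq \epsilon_0$.

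Next I would identify the $E_1$ and $E_2$ pages. By the choice of $\epsilon_0$, the only contributions to the associated graded differential on $\mathrm{Gr}^q CF^*(L)$ come from constant disks, that is from $\mathfrak{m}_{1,0} = d$, together with the zeroth-order wedge product $\mathfrak{m}_{2,0}(b, -)$ and $\mathfrak{m}_{2,0}(-, b)$. Since $b \in \overline{CF}^1(L)\widehat{\otimes}_{\mathbb{C}} \Lambda_+$ has strictly positive valuation, these latter terms actually raise the filtration and do not contribute to the $E_1$-differential; thus $E_1^{p,q} = \overline{CF}^p(L) \otimes (Q^{q\epsilon_0}\Lambda_0 / Q^{(q+1)\epsilon_0}\Lambda_0)$ with differential $d \otimes \mathrm{id}$. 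Taking cohomology yields the claimed $E_2$-page, using that cohomology commutes with tensoring by the free $\mathbb{C}$-modules $Q^{q\epsilon_0}\Lambda_0 / Q^{(q+1)\epsilon_0}\Lambda_0$.

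Convergence of the spectral sequence to $HF^*(L,\nabla,b)$ with the induced filtration is then the standard statement for a complete exhaustive filtration on a cochain complex; here completeness follows because $CF^*(L)$ is defined as a completed tensor product and $\Lambda_0$ is complete with respect to the $Q$-adic topology, while exhaustiveness is immediate from $\bigcup_q F^{-q} CF^*(L) = CF^*(L)$. Gappedness of the $A_\infty$-algebra ensures that on each finite stage only finitely many disk classes contribute, so the differentials on each page are well-defined.

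The main conceptual point (and the only place where the immersed setting requires care beyond the embedded case treated in~\cite[Theorem D]{FOOOI}) will be verifying that the $\mathfrak{m}_{1,0}$-cohomology of the full chain complex $\overline{CF}^*(L)$, which now includes the summands $\Omega^*(L_a;\Theta_a^-)[\deg L_a]$ over switching components, is still computed componentwise by the de Rham (with local coefficients) cohomology of each $L_a$. This is immediate once one notes that $\mathfrak{m}_{1,0}$ respects the decomposition~\eqref{underlyingvs} by connected components of $L \times_\iota L$, since a constant holomorphic disk cannot transition between different components of the fiber product. The rest of the argument then proceeds formally as in the embedded case.
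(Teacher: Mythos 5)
Your proposal takes a genuinely different route from the paper. The paper's proof is essentially a citation: it passes to a canonical model of $CF^*(L,\nabla)$ (via~\cite[Theorem 5.4.2]{FOOOI}), observes that this makes the $A_\infty$-algebra \emph{weakly finite} in the sense of~\cite[Definition 6.3.27]{FOOOI}, and then applies~\cite[Theorem 6.3.28]{FOOOI} directly. You instead construct the spectral sequence by hand from the energy filtration on the full de Rham model, without passing to the canonical model. The identification of the $E_2$-page is correct: your observations that nonconstant disks contribute $Q^{\omega(\beta)}$ with $\omega(\beta)$ exceeding $\epsilon_0$, that $b$ has strictly positive valuation so the $\mathfrak{m}_{2,0}(b,-)$ and $\mathfrak{m}_{2,0}(-,b)$ terms raise the filtration, and that $\mathfrak{m}_{1,0}$ preserves the decomposition~\eqref{underlyingvs} by components of $L\times_\iota L$, together show that the associated graded differential is the componentwise de Rham differential. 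That part of the argument is fine.

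The gap is in the convergence claim. You assert that convergence to $HF^*(L,\nabla,b)$ is ``the standard statement for a complete exhaustive filtration on a cochain complex,'' but completeness and exhaustiveness of the filtration only give \emph{conditional} convergence in the sense of Boardman; strong convergence (which is what item (ii) asserts, including the identification $E_\infty^{p,q}\cong F^q HF^p/F^{q+1}HF^p$) requires an additional input such as $RE_\infty=0$ or degeneration at a finite page. This is precisely the point that FOOO's weak-finiteness hypothesis is designed to control, and it is why the paper passes to a canonical model before invoking~\cite[Theorem 6.3.28]{FOOOI}. In your framework the correct supplement is available: since $L$ is compact and graded with finitely many switching components, the de Rham cohomology $H^p(\overline{CF}^*(L))$ vanishes outside a bounded range of $p$, every $E_r$-differential raises $p$ by $1$, and hence the differentials vanish for all $r$ greater than the length of that range, so the spectral sequence degenerates at a finite page; combined with conditional convergence from completeness of the $Q$-adic filtration, this yields strong convergence. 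You should state this explicitly rather than gesture at a ``standard statement'' that does not hold at this level of generality. As written, the proposal is essentially correct but would not compile as a rigorous proof of item (ii).
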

\begin{proof}
The existence of an appropriate constant $\epsilon_0>0$ follows because $CF^*(L,\nabla)$ is $G_L$-gapped. By~\cite[Theorem 5.4.2]{FOOOI}, one can construct a canonical model for $CF^*(L,\nabla)$, meaning that it is weakly finite in the sense of~\cite[Definition 6.3.27]{FOOOI}. The result then follows from~\cite[Theorem 6.3.28]{FOOOI}.
\end{proof}
\subsection{Pseudo-isotopies}\label{pseudoisotopiessection}
Studying the behavior of the open Gromov--Witten potential under changes of the almost complex structure requires an explicit geometric model of the cylinder object $CF^*(L,\nabla)\times[0,1]$ in the category of gapped filtered $A_{\infty}$-algebras. Consider the graded $\mathbb{C}$-vector space
\begin{align*}
\overline{CF}^*(L\times[0,1])\coloneqq\Omega^*([0,1]\times L_0)\oplus\bigoplus_{a\in A\setminus\lbrace 0\rbrace}\Omega^*([0,1]\times L_a;\Theta_a^{-})[\deg(L_a)]
\end{align*}
from which we obtain the $\Lambda_{+}$-module
\begin{align*}
CF^*([0,1]\times L)\coloneqq\overline{CF}^*([0,1]\times L)\widehat{\otimes}_{\mathbb{C}}\Lambda_{+} \, .
\end{align*}
The results of~\cite[\S{14}]{Fuk17} imply that for any path $\underline{J} = \lbrace J_t\rbrace_{t\in[0,1]}$ of $\omega$-compatible almost complex structures on $M$, one can construct a gapped filtered $A_{\infty}$-structure $CF^*(L\times[0,1])$. Consider the moduli spaces
\begin{align*}
\widetilde{\mathcal{M}}_{k+1}(L;\vec{a};\beta;\underline{J})\coloneqq\lbrace (t,[(u,\vec{a},\vec{z},\widetilde{\partial u})])\colon t\in[0,1] \, , \; [(u,\vec{a},\vec{z},\widetilde{\partial u})]\in\mathcal{M}_{k+1}(L;\vec{a};\beta;J_t)\rbrace 
\end{align*}
which are defined for any $k\geq-1$. There are natural evaluation maps
\begin{align*}
\widetilde{\evb}_i^{\vec{a};\beta}\colon\widetilde{\mathcal{M}}_{k+1}(L,\vec{a};\beta;\underline{J})\to[0,1]\times(L\times_{\iota} L) \, .
\end{align*}
By~\cite[Proposition 14.17]{Fuk17}, the evaluation maps $\widetilde{\evb}_0^{\vec{a};\beta}$ at the zeroth boundary marked points can be treated as smooth submersions, meaning that one can make sense of integration over the fiber with respect to these maps. For all $k\geq0$ with $(k,\beta)\neq(1,0)$ and any $\vec{a} = (a_0,\ldots,a_k)\in A^{k+1}$, we define degree $1$ linear maps
\begin{align*}
\widetilde{\mathfrak{m}}_{k;\vec{a};\beta}\colon(\overline{CF}^*([0,1]\times L)[1])^{\otimes k}&\to\overline{CF}^*([0,1]\times L)[1] \\
\widetilde{\mathfrak{m}}_{k;\vec{a};\beta}(\widetilde{\alpha}_1\otimes\cdots\otimes\widetilde{\alpha}_k)&\coloneqq(-1)^*(\widetilde{\evb}_0^{\vec{a};\beta})_*\left(\bigwedge_{i=1}^k(\widetilde{\evb}^{\vec{a};\beta}_i)^*\widetilde{\alpha}_i\right) \, .
\end{align*}
The sign $(-1)^*$ is defined the same way as in~\eqref{gappedsign}. By summing over all sequences $\vec{a}\in A^{k+1}$, we obtain operations $\widetilde{\mathfrak{m}}_{k,\beta}$. As before, set $\widetilde{\mathfrak{m}}_{0,0} = 0$ and $\widetilde{\mathfrak{m}}_{1,0}$. Now suppose that $L$ is equipped with a rank one $U_{\Lambda}$-local system. We extend these operations to $CF^*([0,1]\times L)[1]$ to obtain the $A_{\infty}$-structure maps
\begin{align*}
\widetilde{\mathfrak{m}}_k\colon(CF^*([0,1]\times L,\nabla)[1])^{\otimes k} &\to CF^*([0,1]\times L,\nabla)[1] \\
\widetilde{\mathfrak{m}}_k &\coloneqq \sum_{\beta}\hol_{\nabla}(\partial\beta)\widetilde{\mathfrak{m}}_{k;\beta} Q^{\omega(\beta)} \, .
\end{align*}
These operations define a pseudo-isotopy in the sense of~\cite[Definition 3.36]{Fuk17} by~\cite[Lemma 21.31]{FOOO20}.

There is also an analogue of the open-closed map on a pseudo-isotopy. These are defined using moduli spaces of $\underline{J}$-holomorphic curves with an interior marked point, where $k\geq1$ and $\vec{a}\in A^k$, defined by
\begin{align*}
\widetilde{\mathcal{M}}_{k,1}(L;\vec{a};\beta;\underline{J})\coloneqq\lbrace (t,[u])\colon t\in[0,1] \, , \; [u]\in\mathcal{M}_{k,1}(L;\vec{a};\beta;J_t)\rbrace \, .
\end{align*}
where we abuse notation and write $[u]$ for
\begin{align*}
[(u,\vec{a},\vec{z},w_{\out},\widetilde{\partial u})]\in\mathcal{M}_{k,1}(L;\vec{a};\beta;J_t)\, .
\end{align*}
We have relabeled the boundary marked points $\vec{z} = (z_1,\ldots,z_k)$ and the interior marked point $w_{\out}$. These moduli spaces carry evaluation maps
\begin{align*}
\widetilde{\evb}_i^{\vec{a};\beta}\colon\widetilde{\mathcal{M}}_{k,1}(L;\vec{a};\beta) &\to L_{a_i} \\
\widetilde{\evi}^{\vec{a};\beta}\colon\widetilde{\mathcal{M}}(L;\vec{a};\beta) &\to M \, .
\end{align*}
Since $\widetilde{\evi}$ cannot be treated as a submersion, we have linear operators $\widetilde{\mathfrak{p}}_k^{\vec{a};\beta}$, for all $\beta\in H_2(M,L)$, valued in differential currents $\mathcal{D}^*([0,1]\times M;\mathbb{C})$ defined by
\begin{align*}
\widetilde{\mathfrak{p}}_{k}^{\vec{a};\beta}(\widetilde{\alpha})\coloneqq(\widetilde{\evi}^{\vec{a};\beta})_*\left(\bigwedge_{i=1}^k (\widetilde{\evb}_i^{\vec{a};\beta})^*\widetilde{\alpha}_i\right)\in\mathcal{D}^*([0,1]\times M;\mathbb{C}) \, .
\end{align*}
for any sequence $\widetilde{\alpha} = (\widetilde{\alpha}_1,\ldots,\widetilde{\alpha}_k)$ with $\widetilde{\alpha}_i\in\Omega^*([0,1]\times L_{a_i})$ for all $i = 1,\ldots,k$. These extend to linear maps
\begin{align*}
\widetilde{\mathfrak{p}}_{k}\colon(CF^*([0,1]\times L;\nabla)[1])^{\otimes k}&\to\mathcal{D}^*([0,1]\times M;\Lambda) \\
\widetilde{\mathfrak{p}}_{k} &\coloneqq\sum_{\vec{a}\in A^k}\sum_{\beta\in H_2(M,L)}\hol_{\nabla}(\partial\beta)\widetilde{\mathfrak{p}}_{k}^{\vec{a};\beta} Q^{\omega(\beta)} \, .
\end{align*}
Similarly to the time-independent case (cf.~\cite[Lemma 2.18]{T23}), these operators can be assembled to obtain a map
\begin{align}
\widetilde{\mathcal{OC}}_0\colon HH_*(CF^*([0,1]\times L,\nabla))\to H^{*+n}([0,1]\times M;\Lambda_0) \, .
\end{align}

\subsection{Wall-crossing terms}\label{wallcrossingsubsection}
We will also need to consider counts of closed curves when studying the dependence of the open Gromov--Witten invariants on the almost complex structure $J$ used to define them. Let $\Sigma$ be a closed connected nodal Riemann surface of genus zero and let $J$ be an $\omega$-compatible almost complex structure on $M$. For any nonzero $\beta\in H_2(M;\mathbb{Z})$, consider the moduli space $\mathcal{M}_1^{cl}(\beta;J)$ of stable $J$-holomorphic maps $u\colon\Sigma\to M$ with one marked point, modulo reparametrizations. The elements of this moduli space are denoted $[(u,w)]$, where $w\in\Sigma$ denotes the marked point. Let $\evi^{\beta}\colon\mathcal{M}_1^{cl}(\beta;J)\to M$ denote the evaluation map defined by $\evi^{\beta}([(u,w)]) = u(w)$. The results of~\cite{Fuk10} imply that this moduli space can be equipped with a Kuranishi structure with respect to which $\evi^{\beta}$ can be treated as a smooth submersion. With this we define operators
\begin{align}
{\mathfrak{m}}_{\emptyset}^{\beta} &\coloneqq ({\evi}^{\beta})_*(1)\in\Omega^*([0,1]\times M) \nonumber \\
{\mathfrak{m}}_{\emptyset} &\coloneqq \sum_{\beta\in H_2(M;\mathbb{Z})}{\mathfrak{m}}_{\emptyset}^{\beta} Q^{\omega(\beta)} \in\Lambda_{+} \, . \label{closedoperators}
\end{align}
For a time-dependent $\omega$-compatible almost complex structure $\underline{J} = \lbrace J_t\rbrace_{t\in[0,1]}$, we define the moduli space
\begin{align}
\widetilde{\mathcal{M}}_1^{cl}(\beta;\underline{J})\coloneqq\lbrace (t,[(u,w)])\colon [(u,w)]\in\mathcal{M}_1^{cl}(\beta;J_t) \, , \; t\in[0,1]\rbrace \, . \label{ocmodpseudo}
\end{align}
There is a natural evaluation map
\begin{align*}
\widetilde{\evi}^{\beta}\colon\widetilde{\mathcal{M}}_1^{cl}(\beta;\underline{J}) &\to [0,1]\times M \\
\widetilde{\evi}^{\beta}(t,[u,w]) &\coloneqq (t,u(w)) \, .
\end{align*}
The results of~\cite{Fuk10} imply that this moduli space can be equipped with a Kuranishi structure with respect to which $\widetilde{\evi}^{\beta}$ can be treated as a smooth submersion. With this we define operators
\begin{align}
\widetilde{\mathfrak{m}}_{\emptyset}^{\beta} &\coloneqq (\widetilde{\evi}^{\beta})_*(1)\in\Omega^*([0,1]\times M) \nonumber \\
\widetilde{\mathfrak{m}}_{\emptyset} &\coloneqq \sum_{\beta\in H_2(M;\mathbb{Z})}\widetilde{\mathfrak{m}}_{\emptyset}^{\beta} Q^{\omega(\beta)} \in\Lambda_{+} \, . \label{pseudoclosedoperators}
\end{align}
For a Lagrangian immersion $\iota_L\colon L\to M$, we define a \textit{wall-crossing term}
\begin{align}
\widetilde{GW}(L)\coloneqq\int_L\iota_L^*\widetilde{\mathfrak{m}}_{\emptyset} \label{wallcrossingterm}
\end{align}
which can be interpreted as a count of closed curves in $M$ which intersect $L$ in a point.

\section{Assumptions on the cyclic open-closed map}\label{assumptionsappendix}
To construct the open Gromov--Witten invariants of a graded Lagrangian in a Calabi--Yau $3$-fold and relate them to the Fukaya category, we need to assume that the Fukaya category of~\cite{Fuk17} satisfies some additional algebraic properties. Analogues of these assumptions, in a different model for the Fukaya category, have been announced in~\cite{GPS15}. All of these assumptions require one to consider moduli spaces of disks with immersed Lagrangian boundary conditions and \textit{interior} marked points, which are absent from~\cite{Fuk17}.

\begin{assumption}\label{generationass}
The full Fukaya category $\mathcal{F}(X)$ of Definition~\ref{fukcatclean}, defined with a suitable choice of objects, satisfies all of the assumptions listed in~\cite[\S{2.5}]{SS21}.
\end{assumption}
These roughly say that $\mathcal{F}(X)$ should have enough algebraic structure to mimic the proof of Abouzaid's generation criterion~\cite{Abo10}. Such structures include closed-open and open-closed maps, about which we must make some additional assumptions for our treatment of (open) Gromov--Witten theory.

The closed-open operations are essentially given by the $\mathfrak{q}$-operators on $CF^*(L,\nabla)$. To relate the counts of disks $\mathfrak{m}_{-1}$ to the open-closed map, we will need to introduce operators defined using moduli spaces of disks with horocyclic constraints. These operations are usually used to show that the closed-open and open-closed maps give $HH_*(\mathcal{F}(M))$ the structure of a $QH^*(X)$-module, and thus verifying the following assumption would most likely be a byproduct of verifying Assumption~\ref{traceass} below.

Genus zero Gromov--Witten invariants are related to the Fukaya category in~\cite{GPS15} and~\cite{Hug24b} via a \textit{(negative) cyclic open-closed map}. A construction of a cyclic open-closed map (on the relative Fukaya category) satisfying the assumptions below was announced in~\cite{GPS15}. We only require that such a map has been constructed with values in the cohomology of $X$, as opposed to the de Rham complex.

For a review of Hocschild and cyclic homology, see e.g.~\cite{She20, Gan23, Han24b}. Let $\mathcal{A}$ be a (possibly curved) $A_{\infty}$-algebra, and let $(CH_{\bullet}(\mathcal{A}),b)$ denote its Hochschild chain complex, where $b$ is the cyclic bar differential. Also let $B$ denote the Connes operator on $CH_{\bullet}(\mathcal{A})$. These operators give $CH_{\bullet}(\mathcal{A})$ the structure of a strict $S^1$-complex (cf.~\cite[\S{2}]{Gan23}). When $\mathcal{A}$ is the curved $A_{\infty}$-algebra associated to a Lagrangian immersion equipped with a local systems, we require the existence of a cyclic open-closed map which intertwines this action with the trivial $S^1$-action on quantum cohomology. Versions of the cyclic open-closed map are used both to define the open Gromov--Witten invariants following~\cite{Han24b}, and to extract them from the Fukaya category.
\begin{assumption}\label{traceass}
Let $\mathcal{A}\coloneqq CF^*(L,\nabla)$ denote the (curved) $A_{\infty}$-algebra associated to a clean graded Lagrangian immersion in a Calabi--Yau $n$-fold. Then there is a sequence of maps $\lbrace\mathcal{OC}_m\rbrace_{m=0}^{\infty}$ of the form
\begin{align*}
\mathcal{OC}_m\colon CH_*(\mathcal{A})\to QH^{*+n-2m}(X)
\end{align*}
where $\mathcal{OC}_0$ induces the open-closed map of~\eqref{ocfukcat}. Additionally, these maps should satisfy
\begin{align*}
\mathcal{OC}_{m-1}\circ B + \mathcal{OC}_m\circ b = 0 \,.
\end{align*}
\end{assumption}
In principle, we could mimic the construction of~\cite{Gan23} or~\cite{Han24b} to verify Assumption~\ref{traceass}, though we prefer not to make any assumptions about how the cyclic open-closed map is defined at the chain level, given that one would also need to verify Assumption~\ref{connectionsass}. Let $u$ be a formal variable of degree $2$, and consider the positive cyclic chain complex
\begin{align*}
CC_{\bullet}^{+}(\mathcal{A}) = (CH_{\bullet}(\mathcal{A})\otimes_{\Lambda}\Lambda((u))/u\Lambda[[u]], b_{eq})
\end{align*}
where $b_{eq}\coloneqq b+uB$. The maps of Assumption~\ref{traceass} determine a chain map
\begin{align*}
\mathcal{OC}^{+}\colon CC_{\bullet}^{+}(\mathcal{A}) &\to QH^{\bullet+n}(X)\otimes_{\Lambda}\Lambda((u))/u\Lambda[[u]] \\
\mathcal{OC}^{+} &\coloneqq \sum_{m=0}^{\infty}\mathcal{OC}_m u^m \,.
\end{align*}
By projecting with the $u^0$-factor and integrating over $X$, we obtain a trace map
\begin{align}\label{cctrace}
\widetilde{tr}\colon CC_*^{+}(\mathcal{A})\to\Lambda[-n] \,.
\end{align}
The construction of the open Gromov--Witten invariants in~\cite{Han24b} only requires~\eqref{cctrace}.

One also has a negative cyclic chain complex 
\begin{align*}
CC_{\bullet}^{-}(\mathcal{A}) = (CH_{\bullet}(\mathcal{A})\widehat{\otimes}_{\Lambda}\Lambda[[u]], b_{eq}) \,.
\end{align*}
Let $HC_{\bullet}^{-}(\mathcal{F}(X))$ denote the negative cyclic homology of Fukaya category equipped with the \textit{Getzler--Gauss--Manin} connection $\nabla^{\mathrm{GGM}}$. Similarly, let 
\begin{align*}
QH^*(X;\Lambda)[[u]]\coloneqq QH^*(X;\Lambda)\widehat{\otimes}_{\Lambda}\Lambda[[u]]
\end{align*}
denote the quantum cohomology of $X$. The quantum connection is given in terms of the (small) quantum product $\omega\star(\cdot)$ by
\begin{align*}
\nabla^{\mathrm{QDE}} = Q\partial_Q(\cdot)+u^{-1}\omega\star(\cdot) \, .
\end{align*}
\begin{assumption}\label{connectionsass}
The negative cyclic open-closed map
\begin{align*}
\mathcal{OC}^{-}\colon HC_{\bullet}^{-}(\mathcal{F}(X))\to QH^{\bullet+n}(X)[[u]]
\end{align*}
induced by the maps in Assumption~\ref{traceass} respects connections, in the sense that
\begin{align*}
\mathcal{OC}^{-}\circ\nabla^{\mathrm{GGM}}_{Q\partial_Q} = \nabla^{\mathrm{QDE}}_{Q\partial_Q}\circ\mathcal{OC}^{-} \, .
\end{align*}
\end{assumption}

\begin{remark}
Let $\mathcal{A}$ be a curved $A_{\infty}$-algebra $\mathcal{A}$ for which $H^*(\mathcal{A},\mathfrak{m}_{1,0})$ is finite-dimensional. We say that a chain map $tr\colon CH_{*}(\mathcal{A})\to\Lambda[-n]$ is a weak proper Calabi--Yau structure if the composition
\begin{align}
H^*(\mathcal{A},\mathfrak{m}_{1,0})\otimes H^{n-*}(\mathcal{A},\mathfrak{m}_{1,0})\xrightarrow{\mathfrak{m}_{2,0}(\cdot,\cdot)} H^n(\mathcal{A},\mathfrak{m}_{1,0})\to HH_n(\mathcal{A})\xrightarrow{tr}\Lambda
\end{align}
is a perfect pairing. Restricting the trace $\widetilde{tr}$ of~\eqref{cctrace} to Hochschild chains yields a weak proper Calabi--Yau structure in the sense above, and is said to be a \textit{stronger} proper Calabi--Yau structure. We do not necessarily claim that this is the optimal definition of a proper Calabi--Yau structure on a curved $A_{\infty}$-algebra, but in the presence of a trace as in Assumption~\ref{traceass}, such a structure will exist.
\end{remark}

\begin{remark}[Forgetting interior marked points]\label{forgetinterior}
The verifications that the cyclic open-closed maps are chain maps in~\cite{Gan23} and~\cite{Han24b} both invoke forgetful maps of interior marked points, which is reasonable in our setting in view of Remark~\ref{kstructstabilization}. Assumptions~\ref{traceass} and~\ref{connectionsass} are both phrased in such a way that they would follow from the construction of a chain-level cyclic open-closed map valued in \textit{differential currents}, as in. Hugtenburg's construction~\cite{Hug24a} of the (cyclic) open-closed map for a single embedded Lagrangian is phrased in terms of Poincar{\'e} duality on $X$ essentially for this reason. By Definition~\ref{fukcatclean}, we can use this strategy to define cyclic open-closed maps on the entire Fukaya category provided that we can do so on the Floer cochain space of a single \textit{immersed} Lagrangian.
\end{remark}

\bibliographystyle{abbrv}
\bibliography{myref}

\end{document}